\documentclass[11pt]{amsart}
\usepackage{amssymb, adjustbox, amsbsy}
\usepackage{array, longtable, booktabs, tabularx}
\usepackage{geometry}

\newcolumntype{Y}{>{\centering\arraybackslash}X}

\usepackage{amsfonts, amscd}
\numberwithin{equation}{section}

\usepackage{bm}
\usepackage{verbatim}
\usepackage{graphicx}
\graphicspath{{images/}}
\usepackage{tikz, tikz-cd}
\usepackage{subcaption}
\usepackage{listings}
\usepackage{subfiles}
\usepackage{mathtools}
\usepackage{comment}
\usepackage{enumitem}
\usepackage[all]{xy}
\usepackage{hyperref}
\hypersetup{
    colorlinks=true,
    citecolor=red,
    linkcolor=blue,
    filecolor=magenta,      
    urlcolor=red,
}

\usepackage[textsize=footnotesize,bordercolor=red,linecolor=red,backgroundcolor=red!15]{todonotes}

\newcommand{\PP}{\mathbb{P}}

\newcommand{\vol}{\operatorname{vol}}
\newcommand{\Center}{\operatorname{center}}

\newcommand{\Exc}{\operatorname{Exc}}

\newcommand{\mld}{\operatorname{mld}}
\newcommand{\HJseq}{\operatorname{HJ}_{\operatorname{seq}}}

\newcommand{\lct}{\operatorname{lct}}

\newcommand{\Ii}{\mathcal{I}}

\newcommand{\Sing}{\mathrm{Sing}}

\newtheorem{thm}{Theorem}[section]

\newtheorem{cor}[thm]{Corollary}
\newtheorem{lem}[thm]{Lemma}
\newtheorem{prop}[thm]{Proposition}
\newtheorem{defthm}[thm]{Definition-Theorem}
\newtheorem{consthm}[thm]{Construction-Theorem}
\newtheorem{fil}[thm]{Filter}

\theoremstyle{definition}
\newtheorem{defn}[thm]{Definition}

\newtheorem{rem}[thm]{Remark}
\newtheorem{ex}[thm]{Example}
\newtheorem{nota}[thm]{Notation}

\tikzstyle{wbullet}=[circle, draw=black, fill=white, thick, inner sep=2pt, minimum size=1.5mm]
\tikzstyle{bbullet}=[circle, draw=black, fill=black, inner sep=2pt, minimum size=1.5mm]

\begin{document}

\title{The minimal volume of stable surfaces of rank one}

\author{Jihao Liu and Wenfei Liu}

\address{Department of Mathematics, Peking University, No. 5 Yiheyuan Road, Haidian District, Beijing 100871, China}
\address{Beijing International Center for Mathematical Research, Peking University, No. 5 Yiheyuan Road, Haidian District, Beijing 100871, China}
\email{liujihao@math.pku.edu.cn}

\address{School of Mathematical Sciences, Xiamen University, Siming South Road 422, Xiamen, Fujian 361005, P. R. China}
\email{wliu@xmu.edu.cn}

\subjclass[2020]{Primary 14J29; Secondary 14J17, 14E30, 14J10, 14B05, 68V05, 68T05, 68U35}
\date{\today}

\begin{abstract}
We determine the minimal volume of a stable surface of rank one, and show that the surface attaining this minimum is unique up to isomorphism. This resolves a conjecture of Alexeev and the second author.

Of independent interest, the decisive step of the proof uses a plurigenus inequality re-derived by an AI chatbot and applied as a pluricanonical filter; we further apply this filter to rule out additional cases in the classification of small-volume threefolds of general type, and in Koll\'ar's algebraic Montgomery--Yang problem. The underlying inequality has classical antecedents. To our knowledge this is the first paper in birational geometry to claim a C2-level human--AI collaboration in the sense of Feng et al., where the AI's contribution is the recognition that this inequality functions as the decisive pluricanonical filter in the basket analysis.
\end{abstract}

\maketitle
\tableofcontents

\section{Introduction}\label{sec:introduction}

We work over the field of complex numbers $\mathbb C$. By a \emph{stable surface of rank one} we mean a projective surface $X$ with semi-log canonical singularities, $K_X$ ample, and $\rho(X)=1$; see Definition~\ref{defn:stable}. Throughout, ``mld'' stands for \emph{minimal log discrepancy} (Definition~\ref{defn:mld}, defined for klt surface germs).

\subsection{Background and the minimal-volume problem}\label{ss:intro-history}

Bounding the volume $K_X^2$ from below for projective surfaces $X$ of general type, with $K_X$ ample and prescribed singularities, is a foundational question of explicit birational geometry. Among many such questions, four threads are particularly relevant for this paper.

\smallskip

\noindent\textit{(a) Bounded automorphism groups and Hurwitz-type bounds.} Following the classical Hurwitz bound $|\mathrm{Aut}(C)|\leq 84(g-1)$ for curves, G. Xiao~\cite{Xia94,Xia95} proved that for any smooth projective surface $X$ of general type one has $|\mathrm{Aut}(X)|\leq (42K_X)^2$. In particular, on the quotient $X/\mathrm{Aut}(X)$ the volume is bounded below by $1/42^2$, one of the earliest explicit lower bounds on a volume in the surface setting. Several related questions on the geometry of surfaces of general type --- on automorphism groups, on the number of irreducible components of moduli spaces, etc. --- were raised already at the 1988 Taniguchi conference on birational geometry (see, e.g., G.~Xiao's problem list~\cite{Xia88}), and are part of the same circle of ideas.

\smallskip

\noindent\textit{(b) Bounds on volumes of klt surfaces of rank one.} For projective klt surfaces with quotient singularities, $K_X$ ample and $\rho(X)=1$, Miyaoka~\cite{Miy84} bounded the number of quotient singularities given the standard numerical invariants, while Alexeev~\cite[Theorem~6.6]{Ale94} and Alexeev--Mori~\cite[Theorem~1.1]{AM04} proved that $K_X^2$ is bounded below by an explicit positive constant. The bound supplied by~\cite{AM04} is, however, of the order $10^{-3\cdot 10^{10}}$, very far from any conjecturally optimal value. The problem of identifying the smallest possible value of $K_X^2$ in this setting was explicitly raised by Koll\'ar~\cite[\S6]{Kol94} (in connection with several related conjectures on log surfaces of general type, cf.~also~\cite[Problem~26]{Kol08}). The rank-one version of the problem was made precise by Alexeev and the second author~\cite{AL19a}, who based on an explicit construction (recalled in Example~\ref{ex:6351} below) conjectured that the minimal volume of a stable surface of rank one with quotient singularities is exactly $1/6351$; this was confirmed under the quite restrictive condition that the minimal resolution of the surface, together with the exceptional divisors, maps into $\PP^2$ with four lines (\cite{AL19c}). Several refinements have appeared since then, both in the surface case~\cite[Theorem~1.1]{LS23},~\cite[Theorem~1.2]{Liu25} and in adjacent moduli problems~\cite{AAB24,ALS25}. Specific techniques drawn upon in the present paper -- classification of log canonical surface singularities (Alexeev~\cite{Ale92}), positivity of Euler characteristics on singular surfaces (Blache~\cite{Bla94}), bounds on the number of singular points on projective surfaces (Liu--Xie~\cite{LX25}), base-point-freeness for log canonical pairs (Fujino~\cite{Fuj09}), and explicit complement-style estimates (J.~Liu~\cite{Liu23}) -- are used throughout the paper.

\smallskip

\noindent\textit{(c) Higher-dimensional and foliated analogues.} In dimension $\geq 3$, klt varieties of general type with conjecturally minimal volume have been constructed by Totaro--Wang~\cite{TW23} and Totaro~\cite[Theorem~1.1]{Tot24}; the same authors with Esser have produced further explicit examples of klt Calabi--Yau and general-type varieties with extreme volumes~\cite{ETW22,ETW23}. In the threefold case, Birkar--Liu~\cite[Theorem~1.4]{BL23} obtained the first explicit lower bounds for volumes of stable threefolds of rank one and for related invariants, and Birkar--Lee~\cite{BL25a,BL25b} have very recently used explicit birational geometry to derive bounds in the closely-related context of $6$-dimensional supergravity and elliptic Calabi--Yau threefolds. The local theory of minimal log discrepancies on threefolds, on which much of the explicit machinery in dimension $3$ rests, has been developed by many authors, including Jiang~\cite{Jia21}, Liu--Xiao~\cite{LX21}, Liu--Luo~\cite{LL22}, Han--Liu~\cite{HL25}, Kawakita~\cite{Kaw21,Kaw24,Kaw26}. For foliations, the analogous lower-volume question (in the form of a positive minimum, or the DCC property for the set of volumes) goes back to Pereira and was later formulated explicitly by Hacon--Langer~\cite[Question~1.1]{HL21} for foliated surfaces and by Cascini~\cite[\S5]{Cas21} in arbitrary dimension. On surfaces, partial progress towards positivity and DCC of foliated volumes has been made by Spicer--Svaldi~\cite{SS23}, Codogni--Patakfalvi--Tasin~\cite{CPT25}, and Han--Jiao--Li--Liu~\cite{HJLL24}; see also Spicer--Tasin~\cite{ST26} for very recent work on the birational geometry of rank-one log canonical foliations. The general foliated analogue remains open even in the surface case.

\smallskip

\noindent\textit{(d) Relation to the algebraic Montgomery--Yang problem.} Koll\'ar's algebraic Montgomery--Yang problem~\cite[Conjecture~30]{Kol08} predicts that every klt rational surface $X$ with $\rho(X)=1$ and $\pi_1(X\setminus \Sing(X))=0$ has at most $3$ singular points. Hwang--Keum~\cite{HK11a,HK11b,HK12,HK13,HK14} reduced this conjecture to a finite computer-assisted check on $4$-singularity surfaces, and the most recent reduction~\cite{JPP25} leaves $16$ values of $p\leq 50000$ to be excluded. We will see (Corollary~\ref{cor:amy} in \S\ref{sec:remarks-AI}) that our AI-rediscovered filter excludes $8$ of those $16$ remaining values.

\smallskip

The main result of this paper resolves the conjecture of Alexeev and the second author~\cite{AL19a}.

\begin{thm}\label{thm:vol>=1/6351}
Let $X$ be a stable surface of rank one. Then
\begin{equation}\label{equ:main}
K_X^2\geq\frac{1}{6351}.
\end{equation}
Moreover, if $K_X^2=1/6351$, then $X$ is klt and is unique up to isomorphism.
\end{thm}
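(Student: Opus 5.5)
We first reduce to the klt case and then run a basket analysis on a klt surface of rank one. If $X$ is not normal, we pass to the normalization $\nu\colon X^\nu\to X$ with conductor $D$. Each component $(X_i,D_i)$ is then a log canonical pair with $K_{X_i}+D_i$ ample, and $\rho(X)=1$ forces strong restrictions. The volume $(K_{X_i}+D_i)^2$ lies in a set whose elements are bounded below far above $1/6351$, using Alexeev's classification of lc surface germs~\cite{Ale92}. If $X$ is normal but strictly lc, we take a dlt modification or extract a non-klt place $E$ and write $K_Y+E=f^*K_X$. Adjunction to $E$, with the standard/hyperstandard coefficients of the different, combined with the computation $(K_Y+E)\cdot E=K_X\cdot f_*E$, gives a lower bound on $K_X^2$ that is again much larger than $1/6351$. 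So we may assume $X$ is klt, with only cyclic, dihedral, or exceptional $E_6,E_7,E_8$ quotient singularities.

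In the klt case, let $\pi\colon \tilde X\to X$ be the minimal resolution. We write $K_X^2=K_{\tilde X}^2+\sum_p \delta_p$, where $\delta_p$ is the local correction term. We use Noether's formula together with the orbifold Bogomolov--Miyaoka--Yau inequality $K_X^2\le 3e_{\rm orb}(X)$ and Blache's positivity of the Euler characteristic~\cite{Bla94}. These show that the number of singular points is at most $4$ or $5$ (cf.~\cite{LX25}), and that the singularities must have small mld. Indeed, if every singularity had $\mld\ge\epsilon$, then the estimates of~\cite{Liu23} (via complements) would give $K_X^2\ge c(\epsilon)>1/6351$. Hence at least one singularity has very small mld, and its Hirzebruch--Jung continued fraction is long with a large index. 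In this regime the plurigenera are controlled by Riemann--Roch on the singular surface:
\[
P_m=\chi(\mathcal O_X)+\tfrac{m(m-1)}{2}K_X^2+\sum_p c_p(m),
\]
with $\chi(\mathcal O_X)\ge 0$ or a fixed small value since $q=0$, $p_g\le$ small. This is the basket formula for each candidate configuration.

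The decisive and hardest step is to eliminate the finitely many (but very many) baskets that the numerical constraints leave with $K_X^2\le 1/6351$. Here I would apply the pluricanonical filter. For each candidate basket, the Riemann--Roch expression forces $P_m\ge 0$ for every $m\ge 2$. It also forces the inequality $P_{m+n}\ge P_m+P_n-1$ (or monotonicity), whenever $|mK_X|$ is nonempty and the relevant system is, for instance, base point free or effective by Fujino's results~\cite{Fuj09}. Computing $P_m$ symbolically from the Dedekind-sum-type correction terms $c_p(m)$, I expect every candidate basket except one to violate one of these inequalities for some small $m$. The enumeration of baskets is organized by the Hirzebruch--Jung sequences and is finite, given the bounds on the number of singularities and the index. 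I expect this step to be the main obstacle, since crude Bogomolov-type bounds leave a huge list, and it is not clear a priori which plurigenus inequality cuts it down. The unique surviving basket should be that of Example~\ref{ex:6351}.

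Finally, for uniqueness I would show that a surface with this basket and $K_X^2=1/6351$ has a minimal resolution whose configuration of exceptional curves, together with the $(-1)$-curves, contracts to a fixed arrangement in $\PP^2$ (four lines, as in~\cite{AL19c}). Such an arrangement is unique up to projective equivalence, so $X$ is isomorphic to the surface of Example~\ref{ex:6351}.
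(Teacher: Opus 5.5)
Your proposal has a genuine gap. The problem is that your decomposition is inverted relative to what can actually be proved.

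\textbf{Small vs.\ large mld.} You dispose of the case where all singularities have $\mathrm{mld}\ge\epsilon$ by an unspecified estimate $K_X^2\ge c(\epsilon)>1/6351$ attributed to \cite{Liu23}. You then propose to \emph{enumerate} baskets containing a singularity of very small mld. Neither half works.
\begin{enumerate}
\item You give no $\epsilon$ or $c(\epsilon)$, and no such estimate is known in the range where it would matter. With $\mathrm{mld}(X)>5/46$ there remain $252$ baskets with $K_X^2<1/6351$, some as small as $1/58362$, that pass the Bogomolov, $\gamma$, Hwang--Keum, Blache, tail and non-tail tests. Ruling out that regime \emph{is} the finite basket analysis.
\item The small-mld regime is not finite. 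The chain $[2,m,2,2,2,2]$ has log discrepancy $7/(10m-13)$ at its $(-m)$-curve for every $m$. Nothing you invoke (Bogomolov, orbifold BMY, $\gamma$, Blache) bounds the index, so your finiteness claim ``given the bounds on the index'' rests on a bound you never obtain.
\end{enumerate}
The missing idea is the structural treatment of the small-mld regime:
\begin{enumerate}
\item Extract the mld-computing special divisor $E$.
\item Run one MMP step $g\colon Z\to T$ to a rank-one Fano $T$ with $K_T+p_EE_T\equiv 0$.
\item Use $K_X^2=(c_E-p_E)^2/\bigl(\frac{1-p_E}{\lambda}+\frac{1}{e_E}\bigr)$ with $\lambda\ge 1/42$ to force $p_E>6/7+1/938$.
\item Apply the strengthened Liu--Shokurov classification (Theorem~\ref{thm:-ls23-6/7}). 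After this, $K_X^2$ becomes an explicit quadratic rational function of $m=-E_Y^2$.
\end{enumerate}
This is exactly the regime containing the extremal surface (its mld is $7/87<5/46$). The paper obtains both the equality case and its uniqueness there, not from a plurigenus filter.

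\textbf{Two further steps fail.}
\begin{enumerate}
\item The inequality $P_{a+b}\ge P_a+P_b-1$ holds whenever $P_a,P_b>0$ by Hopf's lemma (Lemma~\ref{lem:pluricanonical-product-obstruction-general}). It does not come from Riemann--Roch and needs no base-point-freeness. But even in the finite regime it does not leave a unique survivor. The basket $\{[2,7,2,2,2],[2,2,5,2,3],[2,2,2,2,2,3,3,2]\}$ with $K_X^2=1/8533$ passes every numerical filter. It must be excluded geometrically, as in Theorem~\ref{thm:exclude-last-case}:
\begin{enumerate}
\item extract the $(-7)$-curve;
\item show $(T,E_T)$ is plt, with $g(C)$ of type $\frac18(1,3)$ and $p_E=6/7$;
\item extract the $(-5)$-curve and contradict \cite[5.1.3]{Sho00}.
\end{enumerate}
\item Your uniqueness argument assumes precisely the restrictive hypothesis under which \cite{AL19c} proved the conjecture: that the minimal resolution with its exceptional curves maps onto four lines in $\PP^2$. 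Nothing in your argument shows every such $X$ arises this way. The paper instead gets uniqueness from the uniqueness of $(T,E_T)$ in case (2.b) of Theorem~\ref{thm:-ls23-6/7}, together with the uniqueness of $g$.
\end{enumerate}

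\textbf{Smaller points.}
\begin{enumerate}
\item Your non-klt reduction is vacuous as written. For an exceptional non-klt place, $(K_Y+E)\cdot E=f^*K_X\cdot E=0$, so it gives no bound. The needed bounds $1/462$ and $1/825$ are \cite[Corollary~3.6, Theorem~1.1]{LL23}.
\item Exact plurigenera require $\chi(\mathcal O_X)=1$, which uses the rationality of $X$ \cite[Corollary~6.12]{Liu25}, not merely that $\chi$ is small.
\end{enumerate}
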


Theorem~\ref{thm:vol>=1/6351} reduces to the klt case, as follows. Let $X$ be a stable surface of rank one in the sense of Definition~\ref{defn:stable}. If $X$ is non-normal then $X$ is strictly slc (and not lc), and \cite[Corollary~3.6]{LL23} (a previous result of the present authors) gives $K_X^2>1/462>1/6351$. If $X$ is normal but $X$ has a non-empty non-klt locus then \cite[Theorem~1.1]{LL23} gives $K_X^2\geq 1/825>1/6351$. In both situations the conclusion of Theorem~\ref{thm:vol>=1/6351} is automatic, so it suffices to prove Theorem~\ref{thm:vol>=1/6351} when $X$ is klt (in which case $X$ is automatically normal and has quotient singularities). 

The bound in Theorem~\ref{thm:vol>=1/6351} is sharp:

\begin{ex}[{\cite[Theorem~8.2]{AL19a}}]\label{ex:6351}
There exists a projective klt surface $X$ such that $K_X$ is ample, $\rho(X)=1$, and $K_X^2=1/6351$.
\end{ex}

\subsection{A strengthening of Liu--Shokurov}\label{ss:intro-LS}

Along the way we sharpen one of the main quantitative inputs of Liu--Shokurov~\cite{LS23}. With $b$ as in \cite[Theorem~1.1]{LS23} (a global log canonical threshold; cf.\ Theorem~\ref{thm:-ls23-6/7} for the precise definition), \cite[Theorem~1.1]{LS23} classifies the pairs $(X,bS)$ with $b\geq 10/11$ (or, in a separate case, $b\geq 12/13$); we extend the classification to the broader range $b\geq 6/7+1/938$:

\begin{thm}[Strengthened Liu--Shokurov; Theorem~\ref{thm:-ls23-6/7}]\label{thm:intro-ls-6/7}
With the notation of~\cite[Theorem~1.1]{LS23} (recalled in Definition~\ref{defn:q-x-s} below), the pairs $(X,bS)$ with $b\geq 6/7+1/938$ are explicitly classified; the cases attaining the smallest values of $b$ in this range are listed in Theorem~\ref{thm:-ls23-6/7}\,(2.a)--(2.c).
\end{thm}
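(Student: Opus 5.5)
The plan follows the strategy of \cite{LS23} and pushes each of its steps further down in $b$. Let $f\colon Y\to X$ be the minimal resolution and $S$ the boundary as in \cite[Theorem~1.1]{LS23} (Definition~\ref{defn:q-x-s}). The number $b$ is the global log canonical threshold of $(X,S)$. Since $\rho(X)=1$, we may run a $(K+bS)$-MMP, or equivalently a $-(K_X+bS)$-type MMP, on a suitable extraction. This reduces us to a Mori fibre space or a rank-one del Pezzo-type model $X'$ on which $-(K_{X'}+bS')$ is nef and the pair is lc but not klt. The pair $(X',bS')$ then has a log canonical place. The condition $b>6/7$ forces the mld of every singular point of $(X',bS')$ away from the lc centre to be small. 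In particular, $1-b<1/7$ controls the possible coefficients and local indices.

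The first step is to classify the possible germs of $(X',bS')$ near points of $S'$ with $b\ge 6/7+1/938$. I would use the classification of lc surface singularities \cite{Ale92} together with the fact that a coefficient $b$ close to $1$ forces the dual graph to be a chain or a star with branches of type $(2,3,k)$. Explicitly, let $w$ be the weight of the central vertex and $\mathrm{HJ}$ the Hirzebruch--Jung continued fractions of the branches. The discrepancy of the central component must be $\ge -1$. This yields an inequality of the form $\sum(1-1/n_i)\le 2-\epsilon(b)$ with $\epsilon(b)$ of order $1-b$, which bounds the branch indices $n_i$ by an explicit function of $1/(1-b)$. Hence only finitely many local types remain, apart from explicit infinite chains.

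The second step is global. We combine Noether's formula and Bogomolov--Miyaoka--Yau-type inequalities for the orbifold Euler characteristic (Blache \cite{Bla94}) with the bound on the number of singular points \cite{LX25}. Nefness of $-(K_{X'}+bS')$ together with $\rho=1$ gives an equation relating $K^2$, $S'^2$ and the local correction terms. I would enumerate the resulting baskets by computer, exactly as in \cite{LS23}. For each basket I would test Riemann--Roch integrality and the pluricanonical filter (positivity of $h^0(mK)$ predicted by the plurigenus inequality) to discard non-realizable cases. The survivors with the smallest $b$ should be exactly (2.a)--(2.c), and for each I would exhibit an explicit construction from line or conic arrangements in $\PP^2$.

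The main obstacle is the finiteness and termination of the enumeration in the narrow window $6/7<b<10/11$. Near $6/7$ the local index bounds blow up like $1/(b-6/7)$, which is why the margin $1/938$ appears. I would first try to show that for $b$ in this window the lc centre must be a curve with at most three singular points on it, so that the adjunction/different computation on $S'\cong\PP^1$ gives $\sum(1-1/n_i)+b\cdot(\text{correction})<2$. This reduces the problem to the platonic-triple analysis with the bound $n_i\le 938$-type. I would then handle the residual long chains by a monotonicity argument in the chain length, rather than by enumeration.
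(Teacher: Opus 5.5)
Your proposal has a genuine gap at the point you yourself call the main obstacle: it never produces a uniform bound on the orders $r_i$ of the singular points of $X$ lying on $S$, and the mechanism you sketch points the wrong way.

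\textbf{Finiteness.} The opening MMP to an lc-but-not-klt model is misplaced. The pair $(X,bS)$ is already a klt Calabi--Yau pair with $\rho(X)=1$ and $K_X+bS\equiv 0$, so there is nothing to run. An lc/klt inequality at a fork cannot bound cyclic quotient points on $S$, whose dual graphs have no fork, and that is exactly where unboundedness would arise. Your ``monotonicity in chain length'' does not replace such a bound. Moreover, a bound of order $1/(1-b)$ degenerates as $b\to 1$, while $b$ is not known in advance; you need a bound uniform on $[6/7+1/938,1)$.

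The paper obtains this from the opposite of what you assert. Using Shokurov's complement results \cite[5.1.2, 5.1.3]{Sho00} and Abe's table \cite{Abe23}, it reduces to $S\cong\PP^1$ and $(X,bS)$ being $\tfrac17$-lt, i.e.\ $a(E,X,bS)>1/7$ for every exceptional $E$. So log discrepancies are bounded \emph{below}, not small. The same reduction settles case~(1), where $S$ is singular, which you never treat. Then \cite[Lemma~5.1]{LS23} shows each $x_i\in S$ is cyclic, $(X\ni x_i,S)$ is plt, and $r_i\leq 7b/(7b-6)$. For instance, at an $A_{r-1}$ point the curve meeting $S_Y$ has log discrepancy $1-b+b/r$, which exceeds $1/7$ iff $r<7b/(7b-6)$. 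This is the $1/(b-6/7)$ blow-up you allude to, but it is only available after the $\tfrac17$-lt reduction.

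\textbf{The global step.} There is a sign error here and an inapplicable filter.

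\emph{Sign of the adjunction.} Since $b>5/6$, Theorem~\ref{thm:surface-lct-gap} gives that $(X,S)$ is lc. Also $K_X+S\equiv(1-b)S$ is ample, so adjunction gives $-2+\sum_i(1-1/r_i)=(K_X+S)\cdot S>0$. Hence there are at least three points on $S$ with $\sum 1/r_i<1$. This is the hyperbolic regime, not the platonic one, and your inequality ``$<2$'' has the wrong sign. The paper then uses Theorem~\ref{thm:four-sing} (at most four singular points) and Theorem~\ref{thm:bogomolov-bound} to bound a possible fourth point off $S$ by $r_4\leq 42$.

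\emph{The pluricanonical filter.} Since $-K_X\equiv bS$ is ample, $P_m(X)=0$ for all $m\geq 1$. So $P_{a+b}\geq P_a+P_b-1$ has no content here.

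\emph{The filters that actually do the work} are:
\begin{itemize}
\item rationality of the discriminant forced by $\gamma(X)=9-\frac{b^2}{1-b}(K_X+S)\cdot S$ (Theorem~\ref{thm:gamma-invariant-formula});
\item integrality of $S_Y^2=S^2-\sum q_i/r_i$;
\item the explicit $\tfrac17$-lt check;
\item geometric arguments on the minimal resolution: a $(-1)$-curve forces $cb+\sum c_{i,j}b_{i,j}=1$ with $c,c_{i,j}\in\mathbb Z_{\geq 0}$, and an MMP variant of this handles Cases~3.6, 3.11 and~3.13;
\item the uniqueness theorem \cite[Theorem~3.4]{LL23} for $(K+S)^2=1/462$.
\end{itemize}
The last item is needed to exclude Case~3.1, which survives all the preceding filters, and it also gives uniqueness in (2.b). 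Your generic Noether, BMY and Riemann--Roch tests are not shown to reach any of this. No constructions are required either: the theorem lists necessary configurations, and (2.c) is not realized in the paper.
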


Theorem~\ref{thm:intro-ls-6/7} is of independent interest beyond Theorem~\ref{thm:vol>=1/6351}.

\subsection{An algorithmic reduction and the small-mld dichotomy}\label{ss:intro-algorithm}

The proof of Theorem~\ref{thm:vol>=1/6351} proceeds via a small-mld vs.\ large-mld dichotomy. On the small-mld side ($\mathrm{mld}(X)\leq 5/46$) we extract a special divisorial valuation and apply Theorem~\ref{thm:intro-ls-6/7}. On the large-mld side we rely on the following algorithmic input.

\begin{thm}[Algorithm; Theorem~\ref{thm:algorithm-mld-low-bound}]\label{thm:intro-algorithm}
There is an algorithm which, given a positive rational number $a$, outputs the (finite) list of all isomorphism classes of klt surface singularity germs $X\ni x$ with $\mathrm{mld}(X\ni x)\geq a$.
\end{thm}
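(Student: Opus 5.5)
Since $\mathrm{mld}(X\ni x)\geq a>0$, every germ in question is klt, hence a quotient singularity $\mathbb C^2/G$ with $G\subset GL_2(\mathbb C)$ finite and small. I will use the classical description of the minimal resolution. The dual graph is either a chain (cyclic quotients $\frac1n(1,q)$, given by Hirzebruch--Jung continued fractions $[b_1,\dots,b_r]$ with $b_i\geq2$), or a star-shaped tree with one branch curve of self-intersection $-b$ and three chains of types $(2,2,m)$, $(2,3,3)$, $(2,3,4)$ or $(2,3,5)$. The mld is computed on the minimal resolution $f\colon Y\to X$. Writing $K_Y=f^*K_X+\sum(a_i-1)E_i$, the discrepancy vector is determined by the linear system $\sum_j (a_j-1)E_j\cdot E_i=-K_Y\cdot E_i=2-b_i$. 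For surfaces, $\mathrm{mld}(X\ni x)=\min_i a_i$ when $x$ is singular, and it equals $2$ when $x$ is smooth. This is because blowing up points of $Y$ only produces divisors with larger log discrepancy, using $a_i\leq 1$ and the nefness of $-\sum(a_i-1)E_i$ over $X$. So the algorithm will enumerate dual graphs and compute $\min_i a_i$ from the Cartan-type matrix, which is an effective rational computation. The whole content lies in showing that only finitely many graphs can satisfy $\min a_i\geq a$, with an explicit bound.

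\textbf{Bounding the self-intersections.} If $E_i$ has $b_i\geq 3$, I will show $a_i\leq 2/b_i$ (roughly). The idea is to intersect the relation with $E_i$: $(a_i-1)(-b_i)+\sum_{j\sim i}(a_j-1)=2-b_i$. Since $a_j-1\leq 0$, this gives $b_i a_i\leq 2+\sum_{j\sim i}(a_j-1)\cdot(-1)\cdot 0$, that is, $b_i a_i\leq 2$ after discarding the nonpositive terms with the correct sign. Hence $b_i\leq 2/a$.

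\textbf{Bounding the length.} Next I need to bound the length of the chains. Along a $(-2)$-chain the log discrepancies are affine in the position index, because the relation at a $(-2)$-vertex reads $2a_i=a_{i-1}+a_{i+1}$. At an end vertex the missing neighbour contributes like a value $1$, and $a_i\leq1$ throughout. In a chain $[b_1,\dots,b_r]$, extend $a_0=a_{r+1}=1$. The relation $b_ia_i=a_{i-1}+a_{i+1}$ then shows that the sequence $(a_i)$ is convex. Moreover, every vertex with $b_i\geq3$ forces a strict kink of size at least $(b_i-2)a_i\geq a$ in the slope. The slopes lie in $[-1,1]$; more precisely, the first slope is $a_1-1\geq -1$ and the last is $1-a_r\leq 1$. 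Therefore the number of vertices with $b_i\geq 3$ is at most $2/a$.

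The $(-2)$-blocks also need control. Between consecutive kinks the sequence is linear, and it stays at least $a$ over a segment of length $\ell$. In a long block of slope $s$ the values must stay in $[a,1]$, so $|s|\,\ell\leq 1$. The slope also cannot be $0$ unless the block reaches value $1$ in the interior, which is impossible since the interior values satisfy $a_i<1$ for exceptional curves of a klt singularity. A convex sequence with at most $2/a$ kinks, each of size at least $a$, has slopes in a finite set of nonzero values bounded away from zero. I will make this precise by an induction from the ends. One shows that $a_1\leq 1/(r')$-type bounds hold, in the spirit of the estimate $\mathrm{mld}\leq 2/(\text{length}+1)$ for $A_n$. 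This gives $r\leq N(a)$ explicitly.

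\textbf{The non-chain case.} For the star-shaped graphs I will apply the same convexity argument along each arm, using the branch vertex as an endpoint. The $D$-type arm of $(-2)$'s is bounded in exactly the same way. The three arms and the central $b$ are then bounded.

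The main obstacle is the length bound. It requires turning the convexity heuristic into an explicit inequality, and in particular handling long $(-2)$-blocks with small slope. I would first try to prove the clean inequality $\sum_i (b_i-2)a_i + \#\{\text{ends}\}\cdot\min a_i\leq 2$, together with $\ell\leq 1/a$ for each $(-2)$-block. Once $r$ and all $b_i$ are bounded by computable functions of $a$, the algorithm enumerates all the finitely many graphs, computes the mld of each, and keeps those with $\mathrm{mld}\geq a$. Since a quotient germ is determined by its weighted dual graph (and for cyclic quotients by the ordered continued fraction up to reversal), the surviving graphs give the isomorphism classes.
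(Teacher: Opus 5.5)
\textbf{The length bound fails, and so does the finiteness you are aiming for.}

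\emph{Du Val germs.} The families $A_n$ and $D_n$ have $\mld=1$ for every $n$, and you never set them aside. On them $a_i=1$ for every exceptional curve. So both your ``$a_i<1$'' and your ``$\mld\le 2/(\mathrm{length}+1)$ for $A_n$'' are false.

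\emph{Non-Du Val germs.} Infinite families exist here too.
For the chain $[3,2,\dots,2,3]$ with any number of interior $(-2)$-curves, $a_i=\tfrac12$ for all $i$; check $3\cdot\tfrac12=1+\tfrac12$ and $2\cdot\tfrac12=\tfrac12+\tfrac12$. So the convex sequence has a slope-$0$ block of arbitrary length. This refutes both ``the slope cannot be $0$'' (a flat block need not reach the value $1$) and ``$\ell\le 1/a$''.
For $[3,2,\dots,2]$ with $n$ curves, $\mld=a_1=\frac{n+1}{2n+1}$ and the two slopes are $-\frac{n}{2n+1}$ and $\frac{1}{2n+1}$. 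So the slopes neither lie in a finite set nor stay away from $0$.
The D-II germs whose third branch is $[2,\dots,2,3]$ all have $\mld=\tfrac12$.

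Hence for every $a\le\tfrac12$ there are infinitely many non-Du Val germs with $\mld\ge a$, and no algorithm can output them as finitely many graphs. The parenthetical ``(finite)'' in the statement is an imprecise summary. What the paper proves is Theorem~\ref{thm:algorithm-mld-low-bound}: its output \eqref{equ:algorithm-compute-mld} is a finite list of isolated germs \emph{together with finitely many one-parameter families} of resolution configurations, in each of which one string of $(-2)$-curves has unbounded length.

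\textbf{What survives and how to repair it.} Your bounds $b_i\le 2/a$ and $\#\{i: b_i\ge 3\}\le 2/a$ are correct. Your convexity picture can be pushed to the right statement.
A $(-2)$-block of length $\ell$ and slope $s$ satisfies $|s|(\ell+1)\le 1-a$. Distinct blocks are separated by a kink of size $\ge a$, so at most one block can have length $>2/a$.
On the third arm of a D-type germ with central curve of self-intersection $\le -3$, the slope starts at $(b_0-2)a_0\ge a$, so that arm has no long block. With a central $(-2)$-curve the slope starts at $0$, which is where the D-II families come from.
This bounds everything except a single block, and that block produces the families.

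What is still missing is an effective way to decide which members of each family satisfy $\mld\ge a$. The paper does this for cyclic quotients with three ingredients:
\begin{enumerate}
\item Alexeev's explicit formula $a(E_k,X)=\frac{q_2+A}{Bq_2+C}$ for the mld-computing curve;
\item Lemma~\ref{lem:mld-decrease-add-curve}, which says inserting a curve does not increase the mld;
\item the limit value $1/B$ of that formula as $q_2\to\infty$.
\end{enumerate}
If $1/B<a$, only finitely many computable members survive; if $1/B\ge a$, the whole family does. D-II germs are handled directly by $\mld=1/(r-q)$, and Du Val germs are set aside. Without some monotonicity-plus-limit step of this kind, your enumeration cannot terminate with a correct output.
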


\begin{thm}[Reduction to finite baskets; Theorem~\ref{thm:finite singularity type}]\label{thm:intro-finite}
The proof of Theorem~\ref{thm:vol>=1/6351} reduces, by an explicit algorithm built on Theorem~\ref{thm:intro-algorithm}, to the verification of finitely many singularity baskets, all of which are algorithmically computable.
\end{thm}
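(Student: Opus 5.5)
We work only in the klt case, by the reduction explained after Theorem~\ref{thm:vol>=1/6351}. The idea is to split according to $\mathrm{mld}(X)$ and to show that on the large-mld side the vanishing of $K_X^2$ below $1/6351$ can be tested on a finite, computable list of baskets. Let $X$ be a klt stable surface of rank one with $K_X^2<1/6351$ (or $\le 1/6351$ for the uniqueness statement). If $\mathrm{mld}(X)\le 5/46$, the small-mld argument handles $X$ by extracting a special divisor and applying Theorem~\ref{thm:intro-ls-6/7}, so no basket verification is needed there. We may therefore assume $\mathrm{mld}(X)> 5/46$. Theorem~\ref{thm:intro-algorithm} with $a=5/46$ then produces a finite list $\mathcal S$ of germs $X\ni x$ with $\mathrm{mld}(X\ni x)\ge 5/46$. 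Every singular point of $X$ is isomorphic to a germ in $\mathcal S$.

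The next step is to bound the number of singular points. I would invoke the bound of Liu--Xie~\cite{LX25}, or a Bogomolov--Miyaoka--Yau type orbifold inequality, to get an explicit $N$, independent of $X$, with $|\mathrm{Sing}(X)|\le N$. The basket $B(X)$, meaning the multiset of singularity types, is then a multiset of size at most $N$ drawn from the finite set $\mathcal S$. Hence there are only finitely many possible baskets, and they can be listed algorithmically. This already proves the statement at the level of enumeration. To make the list effective, I would prune it using numerical constraints that depend only on the basket:
\begin{enumerate}
\item the Noether-type formula $K_X^2+e_{\mathrm{orb}}(X)=12\chi(\mathcal O_X)$ together with the local corrections;
\item rationality of $K_X^2$ with denominator dividing the lcm of the local indices;
\item the requirement $0<K_X^2<1/6351$;
\item orbifold BMY, $K_X^2\le 3e_{\mathrm{orb}}(X)$;
\item Riemann--Roch for $h^0(mK_X)$, using Kawamata--Viehweg vanishing for $m\ge 2$ and nonnegativity of plurigenera.
\end{enumerate}
Since $K_X^2$ is determined by $\chi(\mathcal O_X)$, which is $0$ or $1$ for a rational or $\mathbb Q$-homology-plane type $X$ with $\rho=1$, and by the basket, each basket is a finite check. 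The survivors are exactly the baskets that remain to be verified.

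The main obstacle is making $N$ explicit and small enough, and keeping the combinatorics tractable. The germs in $\mathcal S$ form long families, for instance cyclic quotients with many Hirzebruch--Jung continued fractions of bounded entries. The number of multisets of size up to $N$ from $\mathcal S$ is astronomically large if handled naively. To control this I would order the enumeration by each germ's contribution to $K_X^2$ and to $e_{\mathrm{orb}}$, and use monotonicity to prune branches early: adding a germ with a positive local correction can only be compensated in limited ways. I would also use the $\rho=1$ and $\chi$ constraints to fix the global correction term before enumerating. Proving that this branch-and-bound terminates follows from the finiteness of $\mathcal S$ and the bound $N$. The harder practical issue is showing that the output list is small, which is the remaining basket analysis that the paper then carries out.
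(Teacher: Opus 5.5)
There is a genuine gap where you say that Theorem~\ref{thm:intro-algorithm} with $a=5/46$ produces a \emph{finite} list $\mathcal S$ of germs. The set of klt germs with $\mathrm{mld}\geq 5/46$ is infinite. Every $A_n$ and $D_n$ point has $\mathrm{mld}=1$. A non-Du Val chain such as $[3,2,\dots,2]$ has $\mathrm{mld}=(n+1)/(2n+1)>1/2$ for any number $n-1$ of $(-2)$-curves. More generally, the tail and middle families $[e_1,\dots,e_k,2,\dots,2]$ and $[e_1,\dots,e_k,2,\dots,2,e_l,\dots,e_n]$ have mld bounded below uniformly in the length of the $(-2)$-string. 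The wording of Theorem~\ref{thm:intro-algorithm} is misleading on this point. The precise version, Theorem~\ref{thm:algorithm-mld-low-bound}, outputs finitely many isolated germs \emph{together with finitely many one-parameter families}. So ``multisets of size $\le N$ from a finite set'' does not give finiteness of baskets. Your termination argument for the branch-and-bound rests on the finiteness of $\mathcal S$, so it fails too.

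The missing ingredient is a global bound on the number $l_x$ of exceptional curves over a singular point lying in one of these families. This bound is the real content of the paper's proof of Theorem~\ref{thm:finite singularity type}, and it goes in four steps.
\begin{enumerate}
\item Rationality of $X$ is imported from \cite[Corollary~6.12]{Liu25}. Your assertion $\chi(\mathcal O_X)\in\{0,1\}$ is unjustified, and Noether's formula determines $K_X^2$ from the basket only once $p_g=q=0$.
\item Rationality gives at most $6$ singular points by Theorem~\ref{thm:bogomolov-bound}, and $\sum_i\gamma_{x_i}=9-K_X^2<9$ by Theorem~\ref{thm:gamma-invariant-formula}.
\item In each family only the finitely many curves with self-intersection $\le -3$ contribute to $\sum_i b_i(e_i-2)$, since $(-2)$-curves contribute $0$ and $b_i<1$. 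Hence there is a computable $N$ with $\gamma_x\ge l_x-N\ge -N$ at every singular point.
\item Combining, $l_x-N\le \gamma_x\le 9+5N$, so $l_x\le 9+6N$.
\end{enumerate}
Your pruning item (1), Noether's formula with local corrections, is equivalent to this $\gamma$-identity. The repair is to use it \emph{before} the enumeration, as the finiteness argument itself, rather than as a filter on a list not yet shown to be finite.
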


\subsection{An AI-rediscovered plurigenus filter}\label{ss:intro-AI-filter}

Our second main contribution is a plurigenus filter which provides the decisive step of the proof and was \emph{re-derived by an AI chatbot}. For a normal integral projective variety $X$ over $\mathbb C$ and an integer $n\geq 0$, write
\[
P_n(X) := h^0\!\bigl(X,\mathcal O_X(nK_X)\bigr),
\]
where $K_X$ is interpreted as a Weil divisor and $\mathcal O_X(nK_X)$ as the corresponding rank-one reflexive sheaf (so no $\mathbb Q$-Cartier hypothesis on $K_X$ is needed; cf.\ Definition~\ref{defn:plurigenus}).

\begin{thm}[Pluricanonical product-dimension filter]\label{thm:intro-ai}
Let $X$ be a normal projective variety of arbitrary dimension over $\mathbb C$. For any integers $a,b\geq 1$ with $P_a(X)>0$ and $P_b(X)>0$,
\begin{equation}\label{equ:intro-ai}
P_{a+b}(X)\;\geq\;P_a(X)+P_b(X)-1.
\end{equation}
Moreover, applied with $X$ a normal projective surface as in Theorem~\ref{thm:intro-finite}, the inequality~\eqref{equ:intro-ai} excludes exactly $251$ of the $252$ residual singularity baskets that survive all previously known filters in the proof of Theorem~\ref{thm:vol>=1/6351}, leaving a single residual basket
\begin{equation}\label{equ:intro-residual-basket}
\{[2,7,2,2,2],\ [2,2,5,2,3],\ [2,2,2,2,2,3,3,2]\}\quad \text{with }K_X^2=1/8533,
\end{equation}
which is then excluded by a separate MMP-theoretic argument (Theorem~\ref{thm:exclude-last-case}).
\end{thm}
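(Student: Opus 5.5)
The inequality~\eqref{equ:intro-ai} follows from the classical dimension bound for the multiplication map of linear systems; the computational part is then a matter of running the filter over the finite list of baskets from Theorem~\ref{thm:intro-finite}.

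\textbf{The inequality.} Set $V_a=H^0(X,\mathcal O_X(aK_X))$ and $V_b=H^0(X,\mathcal O_X(bK_X))$, viewed inside the function field as $V_a=\{f : \mathrm{div}(f)+aK_X\geq 0\}$, and similarly for $V_b$. Since $X$ is normal, products satisfy $\mathrm{div}(fg)+(a+b)K_X\geq 0$, so multiplication gives a well-defined bilinear map $\mu\colon V_a\times V_b\to V_{a+b}$. This works for Weil divisors by reflexivity, with no $\mathbb Q$-Cartier hypothesis needed. Because $X$ is integral, $\mu(s,t)=0$ forces $s=0$ or $t=0$.

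The classical Hopf lemma then gives $\dim V_{a+b}\geq \dim V_a+\dim V_b-1$. I would use the projective formulation. Consider the induced morphism
\[
\PP(V_a)\times\PP(V_b)\to\PP(V_{a+b}),\qquad ([s],[t])\mapsto[st].
\]
Its fibres are finite. Indeed, if $st=s't'$, then $\mathrm{div}(s)+aK_X$ is an effective Weil divisor contained in the fixed effective divisor $\mathrm{div}(st)+(a+b)K_X$. Such a divisor has only finitely many possibilities. Once $\mathrm{div}(s)$ is fixed, $[s]$ is determined, since $X$ is projective, so $s$ is determined up to a scalar, and then $[t]$ is determined too. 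A finite morphism does not drop dimension, so
\[
\dim\PP(V_{a+b})\geq (P_a-1)+(P_b-1),
\]
which is the claim. The hypotheses $P_a,P_b>0$ ensure that both projective spaces are nonempty.

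\textbf{Applying the filter.} For a klt surface with basket $B$ and $K_X$ ample, I compute $P_n(X)$ for $n\geq 2$ by Riemann--Roch for Weil divisors on surfaces with quotient singularities (Reid/Blache):
\[
P_n=\chi(\mathcal O_X)+\tfrac{n(n-1)}{2}K_X^2+\sum_{x\in B}c_x(n).
\]
This uses Kawamata--Viehweg vanishing, $h^i(nK_X)=0$ for $i>0$ and $n\geq 2$. Here $c_x(n)$ are the explicit local correction terms computed from the Hirzebruch--Jung strings. The value of $\chi(\mathcal O_X)$ is constrained, via Blache's bounds and the Noether-type inequalities used earlier, to finitely many values for each candidate basket.

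For each of the $252$ residual baskets and each admissible $\chi$, I tabulate $P_n$ for small $n$, say $n\leq 30$. A basket is ruled out if some pair $(a,b)$ with $P_a,P_b>0$ violates~\eqref{equ:intro-ai}; in particular, any negative computed $P_n$ excludes the basket outright. This is a finite mechanical computation.

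\textbf{Main obstacle.} The hard part is the bookkeeping, not the inequality. Two things need care: getting the correction terms $c_x(n)$ exactly right for long chains, and showing that the single survivor~\eqref{equ:intro-residual-basket} genuinely passes every pair $(a,b)$. For the survivor I would argue that past a threshold the quadratic term dominates, so only finitely many pairs need checking. That case is then handed off to Theorem~\ref{thm:exclude-last-case}.
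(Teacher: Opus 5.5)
\textbf{The inequality.} Your proof of the inequality is correct, but it takes a different route from the paper. The paper does not argue directly: Lemma~\ref{lem:pluricanonical-product-obstruction-general} just cites Hopf and Koll\'ar. Section~\ref{sec:remarks-AI} sketches Hopf's cohomological argument: pull back the hyperplane class along $\PP(A)\times\PP(B)\to\PP(AB)$ and use $(u+v)^{\dim\PP(A)+\dim\PP(B)}\neq 0$. Appendix~\ref{app:filter ai} instead cuts down to a curve and uses bases with strictly increasing $\operatorname{ord}_q$-values.

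You use the same product map, but replace the cohomology computation by the observation that the map is finite-to-one. This rests on two facts: an effective Weil divisor has only finitely many effective sub-divisors, and on a normal projective variety a rational function is determined up to a scalar by its divisor. Your route is purely algebraic and uses normality and properness of $X$ directly. The Hopf and valuation arguments are statements about arbitrary finite-dimensional subspaces of a field and need no divisor theory. Either suffices here.

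\textbf{The computation.} The computational half essentially matches the paper's (Blache's formula followed by a finite check), but one input is misidentified, and as written this is a gap.

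\begin{itemize}
\item You let $\chi(\mathcal O_X)$ range over ``finitely many admissible values'' via unspecified Noether-type bounds. No such bounds are used in the paper.
\item You also never say where $K_X^2$ for a basket comes from.
\end{itemize}

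In the paper both are fixed by rationality of $X$ (\cite[Corollary~6.12]{Liu25}, already used in Theorem~\ref{thm:finite singularity type} and Filters~\ref{fil:gamma} and~\ref{fil:blache-weak}). Rationality gives $\chi(\mathcal O_X)=1$, and Theorem~\ref{thm:gamma-invariant-formula} then gives $K_X^2=9-\sum_i\gamma_{x_i}$.

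This is not cosmetic, because
$P_{a+b}-P_a-P_b+1=(1-\chi(\mathcal O_X))+abK_X^2+\delta_{a+b}(X)-\delta_a(X)-\delta_b(X)$.
So which baskets are excluded depends on $\chi$. With several admissible values your procedure does not establish the count $251$.

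Once you insert $\chi=1$, your computation is the paper's. Your range $n\le 30$ is enough to find the failures, since every first failing pair in Table~\ref{table:251-baskets} has $a+b\le 8$. The local corrections must be computed from the full minimal resolution graph, not only from Hirzebruch--Jung strings, because many of these baskets contain D- and E-type germs.

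\textbf{The survivor.} You do not need to show that the survivor passes every pair. The paper's count refers only to $a,b\ge 2$ with $a+b\le 500$ (Proposition~\ref{prop:product-dimension-computation}), and Theorem~\ref{thm:exclude-last-case} kills the survivor anyway. Your threshold argument is sound in principle, since $P_{a+b}-P_a-P_b+1-abK_X^2$ is bounded. But with $K_X^2=1/8533$ the quadratic term only dominates for $ab$ of order $8533$ times that bound, far beyond $n\le 30$.
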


\medskip

\noindent\textbf{Postscript remark.} \eqref{equ:intro-ai} is not new in the literature. The AI found the corresponding algebraic lemma (Lemma~\ref{lem:product-space-dimension}) in \cite[Theorem~2]{BSZ18} (which in turn referred to a result of additive combinatorics in the spirit of Kneser's theorem, going back to the number-theoretic paper~\cite{HLX02}), and deduced a version of \eqref{equ:intro-ai} on its own. We were aware that the curve version of \eqref{equ:intro-ai} appeared in \cite[Lemma~IV.5.5]{Har77}, but the same lines of proof apply in arbitrary dimension. 

After the first version of the paper was posted to arXiv, we were informed by Chen Jiang of \cite[Lemma~15.6.2]{Kol96} (which immediately implies \eqref{equ:intro-ai}) and \cite[(2.3)]{CC08} (the anti-pluricanonical case, citing \cite[Lemma~15.6.2]{Kol96}); by Totaro of Hopf via \cite[p.~108]{ACGH85}; by Koll\'ar of \cite[Lemma~9.5.1]{Kol93} (and \cite[Corollary~9.5.2]{Kol93}); and by Hacon of Hopf via \cite[p.~108]{ACGH85} and of \cite[Proof of Lemma~9(4)]{CH09}. See Section~\ref{sec:remarks-AI} for a detailed discussion, and two concrete applications of Theorem~\ref{thm:intro-ai}, to the minimal volume of threefolds (Corollary~\ref{cor:cc15}) and to the algebraic Montgomery--Yang problem (Corollary~\ref{cor:amy}). 

\subsection{Outline of the strategy}\label{ss:intro-outline}

The proof of Theorem~\ref{thm:vol>=1/6351} runs as follows. Starting from a hypothetical $X$ with $K_X^2\leq 1/6351$, we first reduce to klt by~\cite[Theorem~1.1]{LL23}, then split into two cases according to whether $\mathrm{mld}(X)\leq 5/46$ or $\mathrm{mld}(X)>5/46$.

\textbf{Small-mld case} (\S\ref{sec:small-mld}): we extract a special divisorial valuation $E$ over a singular point $x$ of small log discrepancy. The conditions of Construction-Theorem~\ref{consthm:extract-contract} are satisfied; combined with Theorem~\ref{thm:intro-ls-6/7} (proved in \S\ref{sec:ls23-6/7}) we exclude this case via Theorem~\ref{thm:small-mld-case}.

\textbf{Large-mld case} (\S\ref{sec:reduction-and-filters}): we use Theorem~\ref{thm:intro-algorithm} to enumerate all candidate singularities, then Theorem~\ref{thm:intro-finite} to reduce to finitely many candidate baskets. We then apply, in succession, six classical filters --- the Bogomolov bound (Filter~\ref{fil:bogomolov}), the Liu--Shokurov $\gamma$-invariant filter (Filter~\ref{fil:gamma}), the Hwang--Keum complete-square filter (Filter~\ref{fil:complete-square}), the tail filter (Filter~\ref{fil:tail}), the Blache filter (weak form) (Filter~\ref{fil:blache-weak}), and the non-tail filter (Filter~\ref{fil:non-vertex-strong}) --- and finally the AI filter (Filter~\ref{fil:ai}, Theorem~\ref{thm:intro-ai}). After all filters, the unique residual basket is~\eqref{equ:intro-residual-basket}, which is excluded by an MMP argument in \S\ref{sec:proof-main} (Theorem~\ref{thm:exclude-last-case}, using the classification of \cite[5.1.3]{Sho00}).

\subsection*{Acknowledgments}

This work is supported by the National Key R\&D Program of China \#2024YFA1014400 and the NSFC (no.~12571046). The authors would like to thank Professors Valery Alexeev and Vyacheslav V.\ Shokurov for useful discussions. The first author would like to thank Professors Bin Dong, Gang Tian, Liang Xiao, and Zheng Xu for useful discussions on AI-assisted pure mathematics research. The authors are grateful to Professors Christopher D. Hacon, Chen Jiang, J\'anos Koll\'ar, and Burt Totaro for pointing out references related to \eqref{equ:intro-ai} (Hopf via \cite{ACGH85}, and \cite{Kol93,Kol96,CC08,CH09}).

\section{Preliminaries}\label{sec:preliminaries}

We adopt the standard notation and definitions in \cite{Sho92,KM98,BCHM10} and will freely use them. 

\begin{defn}\label{defn:surface}
A \emph{surface} is a normal variety of dimension $2$. A \emph{surface germ} $X\ni x$ consists of a surface $X$ and a closed point $x\in X$. We say that $X\ni x$ is smooth if $X$ is smooth near $x$. For any surface $X$ and closed point $x\in X$, if $X\ni x$ is not smooth, then $x$ is called a \emph{singularity} of $X$ and we say that $X\ni x$ is a \emph{surface singularity}. The \emph{order} of a surface germ $X\ni x$ is the order of the local fundamental group of $X\ni x$ and is denoted by $r_x$.

A \emph{Fano surface} is a normal projective surface $X$ such that $-K_X$ is ample. A klt Fano surface is also called a \emph{log del Pezzo surface}.
\end{defn}

\begin{defn}\label{defn:stable}
A projective surface $X$ is called \emph{stable of rank one} if $X$ has semi-log canonical (slc) singularities, $K_X$ is ample, and $\rho(X)=1$. Throughout the paper, ``rank one'' is synonymous with ``Picard number one''. Following standard usage in the moduli theory of surfaces, ``stable'' is the analog of KSBA-stability and only requires slc singularities and $K_X$ ample; in particular a stable surface need not be normal. Nevertheless, our main theorem can be easily reduced to the klt case via~\cite[Corollary~3.6]{LL23} (cf.\ the discussion below Theorem~\ref{thm:vol>=1/6351}).
\end{defn}

\subsection{Notation}\label{ss:notation}

We collect, for the reader's convenience, the recurring notation used throughout the paper. Each item is properly introduced (with full definitions and references) at its first use; the present list serves only as a quick lookup, with explicit pointers.

\begin{itemize}
\item $X\ni x$: a surface germ; cf.\ Definition~\ref{defn:surface}.
\item $r_x$: the order of $X\ni x$, i.e.\ the order of the local fundamental group; cf.\ Definition~\ref{defn:surface}.
\item Extended A type pair $(X\ni x,S)$: a klt surface germ $X\ni x$ with $x\in S$ such that $(X\ni x,S)$ is plt; cf.\ Definition~\ref{defn:q-x-s} (which also defines $q_x(S)$, denoted $q(X\ni x,S)$ in \cite{LS23}).
\item HJS, eHJS, $\HJseq$: Hirzebruch--Jung string (a chain of curves $[E_1,\dots,E_n]$), extended Hirzebruch--Jung string ($[S;E_1,\dots,E_n]$), and Hirzebruch--Jung sequence (a sequence of integers $[e_1,\dots,e_n]$, each $\geq 2$); cf.\ Definition--Theorem~\ref{defthm:HJS-CQR} and Definition~\ref{defn:HJC eHJC}.
\item Resolution configuration of $X\ni x$: the bracket-notation chain $[E_1,\dots,E_n]$ (cyclic quotient case) or $[E_0;E_{1,1},\dots;E_{2,1},\dots;E_{3,1},\dots]$ (non-cyclic D/E case). Extended configuration of $(X\ni x,S)$: the chain $[S_Y;E_1,\dots,E_n]$. The dual graph of $X\ni x$ is the underlying combinatorial graph of its resolution configuration; cf.\ \S\ref{sec:preliminaries}.
\item $\gamma_x$, $\gamma(X)$: Liu--Shokurov's $\gamma$-invariants; cf.\ Definitions~\ref{defn:gamma-x}--\ref{defn:gamma-X} (and \cite[Notation~4.2]{LS23}, \cite[Lemma~4.3]{LS23}).
\item Special divisorial valuation over $x$: cf.\ Definition~\ref{defn:special-curve}.
\item For a klt singularity $X\ni x$ that is not Du Val and a special divisorial valuation $E$ over $x$: invariants $e_E,c_E$ (Definition~\ref{defn:e-invariant}). When $X$ is in addition a projective surface with $K_X$ ample, we further attach to $E$ the invariants $n_E,p_E$ (Definition~\ref{defn:n-p}). When $X\ni x$ is not a cyclic quotient singularity, the special divisorial valuation over $x$ is unique, and we set $n_x:=n_E$, $c_x:=c_E$, $e_x:=e_E$.
\item Construction-Theorem~\ref{consthm:extract-contract} attaches further data $h,Z,g,T,C,E_T$ and a positive rational number $\lambda$ to a special divisorial valuation $E$ in the small-volume setting.
\item $\mathrm{mld}(X\ni x)$, $\mathrm{mld}(X)$: minimal log discrepancies; cf.\ Definition~\ref{defn:mld}.
\item For a normal projective variety $X$ and $n\in\mathbb Z_{\geq 0}$: $P_n(X):=h^0(X,\mathcal O_X(nK_X))$, where $K_X$ is interpreted as a Weil divisor on $X$ and $\mathcal O_X(nK_X)$ as the corresponding rank-one reflexive sheaf (so no $\mathbb Q$-Cartier hypothesis is required); cf.\ Definition~\ref{defn:plurigenus}. Blache's correction term $\delta_n(X)$ is recalled there as well.
\item $\mathcal{I}_0$: the (large but finite, algorithmically computable) collection of singularity baskets entering the filter analysis of \S\ref{ssec:filter}; cf.\ Notation~\ref{nota:I0}.
\end{itemize}

\subsection{Frequently used criteria}

The following criteria will be frequently used in this paper.

\begin{prop}[cf. {\cite[Proposition 4.11]{KM98}}]\label{prop:klt-surface-q-factorial}
Let $(X,B)$ be a dlt surface pair. Then $X$ is $\mathbb Q$-factorial klt.
\end{prop}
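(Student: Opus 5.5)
The approach is to pass to a log resolution and reduce to the classical statement for surfaces in the sense of \cite{KM98}. Let $f\colon Y\to X$ be a log resolution of $(X,B)$, and write $K_Y+B_Y=f^*(K_X+B)+\sum a_iE_i$, where $B_Y$ is the strict transform of $B$ and the $E_i$ are the $f$-exceptional curves. Pullback is taken in the sense of Mumford, which is available for any normal surface even before we know $X$ is $\mathbb Q$-factorial. Because the pair is dlt, there is a closed subset $Z\subset X$ such that $(X\setminus Z,B|_{X\setminus Z})$ is log smooth, and every divisor with center in $Z$ has discrepancy $>-1$.

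The first step is to show $X$ is klt. Any singular point of $X$ lies in $Z$, since $X\setminus Z$ is smooth. Hence every exceptional curve over a singular point $x$ has $a(E,X,B)>-1$. Since $B\geq 0$, we get $a(E,X,0)\geq a(E,X,B)>-1$ for these curves. Discrepancy is local, so $X$ is numerically klt near each singular point and smooth elsewhere; that is, $(X,0)$ is numerically klt.

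The second step is $\mathbb Q$-factoriality, and I expect this to be the main obstacle: it needs the fact that numerically dlt surface singularities are rational. I would argue as in \cite[Proposition~4.11]{KM98}. From $K_Y+\sum(-a_i)E_i\equiv_f 0$ with $-a_i<1$, one writes $-\lceil \sum a_iE_i\rceil$ in the form $K_Y+\{\text{fractional}\}$ plus an $f$-nef divisor. Relative Kawamata--Viehweg vanishing then gives $R^1f_*\mathcal O_Y=0$, so the singularity is rational. Alternatively, one can cite \cite[Proposition~4.11]{KM98} directly, since the statement is exactly that result.

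Finally, a rational surface singularity has finite local divisor class group: by Lipman's theorem, $\mathrm{Cl}$ is isomorphic to the discriminant group of the intersection matrix of the exceptional curves. So every Weil divisor $D$ has a multiple $mD$ that is Cartier. Concretely, one takes the Mumford pullback $f^*D$, which has rational coefficients with denominator dividing $\det$ of the intersection matrix; $m f^*D$ is then integral and numerically $f$-trivial, hence $f$-linearly trivial by rationality. This shows $X$ is $\mathbb Q$-factorial. Combined with the first step, $X$ is $\mathbb Q$-factorial klt, as claimed.
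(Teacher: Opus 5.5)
Your argument is correct and is essentially the standard argument behind \cite[Proposition~4.11]{KM98}: numerically klt, hence rational singularities by relative Kawamata--Viehweg vanishing, hence finite local class groups by Lipman. The paper itself gives no proof and simply cites that result.

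One minor slip in Step~2. With $K_Y\equiv_f\sum a_iE_i$ and $a_i>-1$, the divisor to which vanishing applies is
\[
\lceil\textstyle\sum a_iE_i\rceil\equiv_f K_Y+\sum(\lceil a_i\rceil-a_i)E_i,
\]
not its negative. You then need $H^0(A,\mathcal O_A(A))=0$ for every effective exceptional $A>0$, which gives $R^1f_*\mathcal O_Y\hookrightarrow R^1f_*\mathcal O_Y(\lceil\sum a_iE_i\rceil)=0$.
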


\begin{thm}[{\cite[Theorem 1.2]{Bel08},\cite[Theorem 1.1]{Bel09}}]\label{thm:four-sing}
Let $X$ be a log del Pezzo surface such that $\rho(X)=1$. Then $X$ has at most $4$ singularities.
\end{thm}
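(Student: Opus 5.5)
Since the statement is quoted from Belousov, within this paper we simply invoke \cite{Bel08,Bel09}. If we were to reprove it, we would argue as follows. Let $X$ be a log del Pezzo surface with $\rho(X)=1$ and singular points $x_1,\dots,x_k$ with local fundamental groups of orders $r_i=r_{x_i}\geq 2$. A log del Pezzo surface is rational, so $b_1=b_3=0$. Together with $\rho(X)=1$ this gives topological Euler number $e(X)=3$.

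\textbf{Step 1: orbifold BMY bound.} The first step is a Bogomolov--Miyaoka--Yau type inequality for the orbifold Euler number,
\[
e_{\mathrm{orb}}(X)=e(X)-\sum_{i=1}^k\Bigl(1-\frac{1}{r_i}\Bigr)\geq 0 .
\]
In the del Pezzo setting this is the Keel--McKernan inequality. One obtains it from semistability of the orbifold cotangent sheaf, or from a Bogomolov argument on the minimal resolution applied to $\Omega_Y(\log E)$ twisted appropriately. Since each term $1-1/r_i\geq 1/2$, this gives $k\leq 6$. If $k=6$, every $r_i=2$, so all six points are $A_1$. If $k=5$, we have $\sum(1-1/r_i)\leq 3$, which forces at least four of the points to be $A_1$ and the fifth to have small order. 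So only finitely many numerical configurations survive, and they consist mostly of $A_1$ points.

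\textbf{Step 2: exclude the surviving configurations.} On the minimal resolution $Y\to X$ we have $\rho(Y)=1+\#\{\text{exceptional curves}\}$ and $K_Y^2=10-\rho(Y)$. The plan is to combine this with the following constraint. Four or more disjoint $(-2)$-curves on a rational surface with $\rho(X)=1$ after contraction produce a $2$-divisibility relation in $\mathrm{Pic}(Y)$, coming from the lattice $H^2(Y,\mathbb Z)$ being unimodular and the exceptional lattice embedding primitively modulo torsion of $\pi_1$ of the smooth locus. Such a relation yields a double cover of $X$ branched only at $A_1$ points. That cover is again a rank-constrained log del Pezzo (or its quotient), and its Euler number must satisfy Step 1 again, which fails when $k=5$ or $6$. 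For the remaining mixed configurations with one non-$A_1$ point, we would use Kawamata--Viehweg vanishing and Riemann--Roch on $Y$ to compute $h^0(-K_Y)$ and $h^0(-2K_Y)$, and derive a contradiction with the intersection numbers forced by $\rho(X)=1$.

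\textbf{Main obstacle.} We expect the main obstacle to be Step 2 for $k=5$: the orbifold inequality alone does not rule it out, and one has to exclude each borderline basket individually. This requires a careful analysis of the divisibility of the exceptional lattice and of $\pi_1$ of the smooth locus, which is the case-by-case work carried out by Belousov.
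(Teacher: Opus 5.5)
Citing \cite{Bel08,Bel09} is exactly what the paper does: the statement is imported without proof, so as a proof in context your proposal is fine. The reproof you sketch, however, contains errors you should not rely on. For $k=5$, the bound $\sum_{i=1}^5(1-1/r_i)\le 3$ is equivalent to $\sum_{i=1}^5 1/r_i\ge 2$. This holds for $(r_i)=(2,2,3,3,3)$, $(2,2,2,4,4)$, $(2,2,2,3,r)$ with $r\le 6$, and $(2,2,2,2,r)$ with \emph{arbitrary} $r\ge 2$. So there need not be four $A_1$ points, and the fifth point need not have small order. The surviving configurations are also infinite in number, since four $A_1$ points plus any fifth singularity pass. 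Step~2 therefore cannot be a finite check of the kind you describe.

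In Step~2, re-applying the inequality of Step~1 to a double cover $\widetilde X\to X$ étale in codimension one gives nothing new. The orbifold Euler number is multiplicative under such covers, $e_{\mathrm{orb}}(\widetilde X)=2e_{\mathrm{orb}}(X)$, and $\rho(\widetilde X)=1$ is not guaranteed anyway. What does work when only $A_1$ points are involved is the ordinary Euler number. Suppose $\sum_{i\in I}E_i=2D$ in $\mathrm{Pic}(Y)$ with the $E_i$ disjoint $(-2)$-curves. Then $D\cdot K_Y=0$ and $D^2=-|I|/2$, and $D^2\equiv D\cdot K_Y\pmod 2$ forces $|I|\equiv 0\pmod 4$. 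Hence $|I|=4$ and $e(\widetilde X)=2e(X)-4=2$. This is impossible, because $\widetilde X$ is again a log del Pezzo surface, hence rational with $e(\widetilde X)=2+b_2(\widetilde X)\ge 3$.

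This mechanism breaks down once a point of even order $r>2$ enters the $2$-divisibility relation. For example, take $E_1+E_2+E'_1+E'_3\in 2\,\mathrm{Pic}(Y)$, with $E'_1,E'_2,E'_3$ an $A_3$ chain. The parity condition is satisfied, the cover is branched at $3$ points, $e(\widetilde X)=3$, and there is no contradiction. Your final step (``Kawamata--Viehweg and Riemann--Roch on $Y$'') names no concrete mechanism for these mixed cases. It also does not address the configurations with two or three non-$A_1$ points that your numerics missed. Handling these cases is precisely the content of Belousov's theorem, so the sketch does not stand in for the citation.
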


\begin{thm}[Bogomolov bound, {\cite[Corollary 9.2]{KM99}}]\label{thm:bogomolov-bound}
Let $X$ be a klt rational surface such that $\rho(X)=1$. Let $x_1,\dots,x_n$ be the singular points of $X$ for some non-negative integer $n$ and let $r_i:=r_{x_i}$ for any $1\leq i\leq n$. Then
\begin{equation}
\sum_{i=1}^n\frac{r_i-1}{r_i}\leq 3.
\end{equation}
\end{thm}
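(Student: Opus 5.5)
The plan is the orbifold Bogomolov–Miyaoka–Yau route of Keel--McKernan: rewrite the inequality as nonnegativity of an orbifold Euler number and then prove that nonnegativity by a stability argument for the orbifold tangent sheaf.

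\emph{Step 1: $X$ is a log del Pezzo surface.} Since $\rho(X)=1$, the divisor $K_X$ is ample, numerically trivial, or anti-ample. In the first two cases log abundance for klt surfaces gives $\kappa(X,K_X)\geq 0$. Pulling back to the minimal resolution $\widetilde X$, where $K_{\widetilde X}=\pi^*K_X-\sum a_iE_i$ with $0\le a_i<1$, this gives $\kappa(\widetilde X)\geq 0$. That is impossible since $\widetilde X$ is rational. Hence $-K_X$ is ample, so $X$ is a log del Pezzo surface with quotient singularities and $K_X^2>0$.

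\emph{Step 2: topological reformulation.} Since $X$ is rational with rational (quotient) singularities, we have $b_1(X)=b_3(X)=0$ and $b_2(X)=\rho(X)=1$, so $e_{\mathrm{top}}(X)=3$. Define the orbifold Euler number
\[
e_{\mathrm{orb}}(X):=e_{\mathrm{top}}(X)-\sum_{i=1}^n\Bigl(1-\frac1{r_i}\Bigr).
\]
The claimed inequality is then exactly $e_{\mathrm{orb}}(X)\geq 0$. Equivalently, $c_2$ of the orbifold (reflexive, $\mathbb Q$-) tangent sheaf $\mathcal T$ of $X$ is nonnegative. Here I use that $e_{\mathrm{orb}}(X)$ equals the orbifold $c_2$, which one checks locally on the uniformizing charts $\mathbb C^2/G_i$. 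I also use $c_1(\mathcal T)=-K_X$.

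\emph{Step 3: proving $c_2(\mathcal T)\ge 0$.} Suppose $c_2(\mathcal T)<0$. The orbifold Bogomolov inequality $c_1^2\leq 4c_2$ holds for semistable rank-two reflexive sheaves, proved on a local uniformizing cover or a global Kawamata cover. Since $c_1^2=K_X^2>0>4c_2$, the sheaf $\mathcal T$ is not semistable with respect to $-K_X$. So there is a saturated rank-one subsheaf $L\subset\mathcal T$ with $L\cdot(-K_X)>\tfrac12(-K_X)^2>0$, which defines a foliation $\mathcal F$ with positive slope. Moreover the quotient $Q=\mathcal T/L$ satisfies $c_1(L)c_1(Q)=c_2-(\text{correction})<0$, so $L\cdot Q<0$ up to local correction terms.

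\emph{Step 4: contradiction (main obstacle).} By Miyaoka's theorem (in the form of Bogomolov--McQuillan or Keel--McKernan), a foliation of positive slope on a surface is algebraic with rational leaves. So $\mathcal F$ is induced by a covering family of rational curves $C$ with $\mathcal F|_C$ relatively ample. Because $\rho=1$, every such $C$ is ample. The delicate part is the local analysis at the singular points and along the tangency locus. I would first try Keel--McKernan's computation: write $K_{\mathcal F}=-L$ and compare $K_X=K_{\mathcal F}+(\text{normal part})$ along a general leaf. A general leaf $C$ meets the singular points with orbifold multiplicities, and the leaf adjunction $-K_{\mathcal F}\cdot C\le 2-\sum(1-1/m_j)$ must be compared with $L\cdot Q<0$ and $L\cdot(-K_X)>\tfrac12K_X^2$. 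Through a general point there is a unique leaf, so leaves meet each other only at the singular points. Together with $C^2>0$, this forces all leaves to pass through at most two singular points, which behave like the base of a pencil. Blowing up these points and bounding the local contributions should contradict $L\cdot Q<0$ and yield $c_2\ge0$.

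This last foliation/local-contribution bookkeeping is the step I expect to be hardest. It is precisely the content of \cite[Corollary~9.2]{KM99}.
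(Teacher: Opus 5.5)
Your Step~1 is false, and it throws away exactly the case in which the paper uses this theorem. From $K_{\widetilde X}=\pi^*K_X-\sum a_iE_i$ with $a_i\ge 0$ you only get $H^0(\widetilde X,mK_{\widetilde X})\subseteq H^0(X,mK_X)$, not the reverse. A section of $\mathcal O_X(mK_X)$ lifts to a section of $mK_{\widetilde X}$ only if its pullback vanishes to order at least $ma_i$ along each $E_i$. So $\kappa(X,K_X)\ge 0$ does not imply $\kappa(\widetilde X)\ge 0$; your deduction works only when all $a_i=0$, i.e.\ for Du Val singularities.

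For genuinely klt singularities, $K_X$ can be ample on a rational surface of rank one. Example~\ref{ex:6351} is such a surface: it arises from the log del Pezzo surface $T$ by an extraction followed by a contraction. Filter~\ref{fil:bogomolov} applies the theorem precisely to hypothetical rational surfaces with $\rho(X)=1$ and $K_X$ ample. As written, your proposal therefore at best covers the log del Pezzo case. That case handles the use in the proof of Theorem~\ref{thm:-ls23-6/7}, where $K_X\equiv -bS$, but not the uses in \S\ref{ssec:filter}.

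To close the gap, split into cases instead of trying to exclude them. If $K_X$ is nef (ample or numerically trivial), use the orbifold Bogomolov--Miyaoka--Yau inequality $K_X^2\le 3e_{\mathrm{orb}}(X)$ for projective surfaces with quotient singularities and nef canonical class (Miyaoka~\cite{Miy84}; later Kobayashi--Nakamura--Sakai and Megyesi). It gives $e_{\mathrm{orb}}(X)\ge 0$, and your Step~2 computation $e_{\mathrm{top}}(X)=3$ turns this into $\sum(1-1/r_i)\le 3$.

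Only when $-K_X$ is ample do you need Keel--McKernan, \cite[Corollary~9.2]{KM99}, which is the only justification the paper gives. Your Steps~3--4 outline that strategy reasonably, but Step~4 is not a proof. Distinct leaves of $\mathcal F$ can only meet at singular points of the foliation, which need not be singular points of $X$. The ``at most two points'' conclusion is also unsupported. In that case you are relying on the citation, just as the paper does, and you should say so explicitly rather than present it as an argument.
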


\begin{thm}[{\cite{Kuw99},~\cite[Corollary 3.3]{Pro02}}]\label{thm:surface-lct-gap}
Let $(X,T)$ be a surface pair such that $T$ is a Weil divisor. Let $S\geq 0$ be a non-zero $\mathbb Q$-Cartier Weil divisor on $X$. Then
\begin{equation}
\lct(X,T;S)\in\{1\}\cup \left[0,\frac{5}{6}\right].
\end{equation}
\end{thm}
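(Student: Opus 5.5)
The plan is to argue by contradiction. Set $c:=\lct(X,T;S)$ and suppose $5/6<c<1$. Then $(X,T+cS)$ is lc, and $(X,T+(c+\epsilon)S)$ fails to be lc for every $\epsilon>0$. We first locate where lc-ness fails, and then reduce to a local computation at a single point $x$.

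\emph{Step 1: curve centres and non-klt points.}
\begin{itemize}
\item If the threshold is computed by a prime divisor $P\subset\operatorname{Supp}S$, then $\operatorname{mult}_P T+c\operatorname{mult}_P S=1$. Here $\operatorname{mult}_P T\in\{0,1\}$ and $\operatorname{mult}_P S\in\mathbb Z_{\geq1}$. This forces either $c=0$ or $c=1/\operatorname{mult}_P S\in\{1\}\cup[0,1/2]$.
\item Hence the threshold is computed at a closed point $x$, namely the centre of an exceptional divisor $E$ with $a(E,X,T+cS)=0$.
\item If $X$ is not klt at $x$, or if $(X,T)$ is lc but not plt at $x$, then $x$ is already an lc centre of $(X,T)$. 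Since $S$ is effective and passes through $x$, this gives $c=0$.
\end{itemize}

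\emph{Step 2: $x\in T$ with $(X,T)$ plt at $x$.} Here we use inversion of adjunction on the normalisation $T^\nu$.
\begin{itemize}
\item Near $x$, the germ $X$ is a cyclic quotient of some index $m$ and $T$ is smooth. So $\mathrm{Diff}_{T^\nu}(0)=(1-1/m)\,x$.
\item The degree $d:=(S\cdot T)_x$ lies in $\frac1m\mathbb Z_{>0}$.
\item The pair is lc at $x$ exactly when $1-1/m+cd\leq1$, so $c=1/(md)$. This equals $1$ if $md=1$ and is at most $1/2$ otherwise.
\end{itemize}

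\emph{Step 3: $x\notin T$ and $X$ klt at $x$.} Now $X\ni x$ is a quotient singularity $\mathbb C^2/G$, and we take the quotient map $\pi\colon (\mathbb C^2\ni 0)\to (X\ni x)$, which is étale in codimension one. Since $\pi$ is étale in codimension one, $K_{\mathbb C^2}=\pi^*K_X$, and log discrepancies transform well under such finite maps. Therefore $\lct(X\ni x;S)=\lct(\mathbb C^2\ni0;\pi^*S)$, and $\pi^*S$ is still an integral effective divisor. We are thus reduced to an integral curve germ $C=\pi^*S$ on a smooth surface, where the standard estimate is $\lct\leq 2/\operatorname{mult}_0C$.
\begin{itemize}
\item If $\operatorname{mult}_0 C=1$, then $\lct=1$.
\item If $\operatorname{mult}_0 C\geq3$, then $\lct\leq2/3$.
\item If $\operatorname{mult}_0 C=2$, then $C$ is either non-reduced (a double smooth curve, with $\lct=1/2$) or an $A_k$ curve singularity $y^2=x^{k+1}$. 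In the $A_k$ case $\lct=\min(1,\tfrac12+\tfrac1{k+1})$, which is $1$ for $k=1$ and at most $5/6$ for $k\geq2$, with equality for the cusp.
\end{itemize}

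In every case $c\in\{1\}\cup[0,5/6]$, contradicting $5/6<c<1$. The main obstacle I expect is the justification in Step 3 that the lct is preserved under the quotient map. This needs the ramification formula $K_{\mathbb C^2}+\pi^*(cS)=\pi^*(K_X+cS)$ together with the fact that lc-ness descends and ascends under finite morphisms (\cite[Proposition 5.20]{KM98}). The plt case of Step 2 is the other delicate point, because the correct different coefficient must be used.
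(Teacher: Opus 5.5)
Your argument is correct. The paper gives no proof of its own; it simply quotes \cite{Kuw99} and \cite[Corollary~3.3]{Pro02}. So there is no in-paper route to compare with, and what you give is a self-contained version of the standard reduction behind such statements:
\begin{enumerate}
\item points of $\operatorname{Supp}S$ where $(X,T)$ is not plt force $c=0$;
\item at a plt point of $T$, the local model $\tfrac1m(1,q)$ with $\operatorname{Diff}_T(0)=(1-\tfrac1m)x$ and $mS$ Cartier gives $c=1/(md)$ with $md\in\mathbb Z_{>0}$;
\item at a klt point off $T$, the quotient map $\mathbb C^2\to X$ is \'etale in codimension one, so $\pi^*S$ stays integral and the problem becomes one about a curve germ on a smooth surface.
\end{enumerate}
In the last step you only need the bound $2/\operatorname{mult}_0$ and the normal form $y^2=x^{k+1}$ of reduced double points. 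The citation buys brevity. Your route buys transparency, and shows that only the elementary multiplicity-two part of the plane-curve computation is actually used.

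When writing it up, make three points explicit.
\begin{itemize}
\item The statement tacitly assumes $T$ reduced and $(X,T)$ lc. This is what gives $\operatorname{mult}_PT\in\{0,1\}$ in Step~1.
\item If $T\subseteq\operatorname{Supp}S$, then $c=0$ already. You need to dispose of this case before $d=(S\cdot T)_x$ makes sense in Step~2.
\item The cover $\pi$ exists only analytically (or \'etale) locally, whereas \cite[Proposition~5.20]{KM98} is phrased for finite morphisms of varieties. This is harmless, because log canonicity is tested on a log resolution and is insensitive to this localisation.
\end{itemize}
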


\begin{thm}[{cf. \cite[Theorem 3.36]{Kol13b}}]\label{thm:adjunction-surface}
    Let $(X,S)$ be a surface lc pair such that $S$ is non-singular. Then we have
    \begin{equation}
(K_X+S)|_S=K_S+\sum_{x\in S}1+\sum_{y\in S}\frac{r_y-1}{r_y}
\end{equation}
    where $x$ runs through all non-cyclic quotient singularities of $X$ on $S$ and $y$ runs through all cyclic quotient singularities of $X$ on $S$.
\end{thm}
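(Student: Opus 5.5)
The formula is local along $S$, so I would compute the different $\mathrm{Diff}_S(0)$ point by point on a resolution. Let $f\colon Y\to X$ be the minimal resolution and $S_Y$ the strict transform of $S$. Write
\[
f^*(K_X+S)=K_Y+S_Y+\sum_i a_iE_i ,
\]
where the $E_i$ are the $f$-exceptional curves. Since $S\cong S_Y$, restricting to $S_Y$ and using adjunction on the smooth surface $Y$ gives
\[
(K_X+S)|_S=K_S+\sum_{p\in S}\Bigl(\sum_{E_i\ni p'} a_i\,(E_i\cdot S_Y)_{p'}\Bigr)p,
\]
where $p'$ runs over the points of $S_Y$ over $p$. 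Because $S\cong S_Y$, there is exactly one such point for each $p$. It therefore suffices to show, for each $p\in S$:
\begin{itemize}
\item the local contribution is $0$ if $X$ is smooth at $p$;
\item it is $(r_p-1)/r_p$ if $X\ni p$ is a cyclic quotient singularity;
\item it is $1$ otherwise.
\end{itemize}
The smooth case is immediate, since then no exceptional curve lies over $p$.

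The key step is the local classification of lc germs $(X\ni p,S)$ with $S$ a smooth irreducible curve through a singular point $p$. This is the classical classification of log canonical surface pairs with reduced boundary (Kawamata/Alexeev; cf.\ \cite{Ale92,KM98}), which I would invoke rather than reprove. Write $\Gamma$ for the dual graph of $f^{-1}(p)$, with the vertex of $S_Y$ attached. There are two cases.
\begin{enumerate}
\item If $(X\ni p,S)$ is plt, then $\Gamma$ is a chain $[S_Y;E_1,\dots,E_n]$, with $S_Y$ meeting only the end curve $E_1$ transversally in one point. In this case $X\ni p$ is a cyclic quotient singularity $\frac1r(1,q)$, with $r=r_p$.
\item If $(X\ni p,S)$ is lc but not plt, then, because $S$ is locally irreducible, the only possibility is that $S_Y$ meets the end $E_1$ of a chain $E_1,\dots,E_n$ whose other end $E_n$ carries two $(-2)$-curves. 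This is a $D$-type configuration, so $X\ni p$ is a non-cyclic quotient singularity.
\end{enumerate}
The excluded alternative is the cyclic configuration in which both ends of the chain meet the boundary. It would force $S$ to have two branches at $p$, which is impossible since $S$ is nonsingular. Conversely, a cyclic quotient germ with $S$ through $p$ cannot be lc but not plt. Hence the plt case corresponds exactly to the cyclic quotient points, and the strictly lc case exactly to the non-cyclic ones.

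It then remains to compute $a_1$ in each case, since only $E_1$ meets $S_Y$, transversally.

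In the chain case, the coefficients are determined by
\[
(K_Y+S_Y+\textstyle\sum a_iE_i)\cdot E_j=0\quad\text{for all } j,
\]
with $K_Y\cdot E_j=e_j-2$ for $E_j^2=-e_j$. Put $b_j=1-a_j$. These equations become $e_jb_j=b_{j-1}+b_{j+1}$, where $b_0=0$ (this encodes the coefficient $1$ of $S_Y$) and $b_{n+1}$ is fixed by the other end. This is the standard continued-fraction recursion, and it gives $b_1=1/r$ with $r$ the numerator of $[e_1,\dots,e_n]$. Hence $a_1=(r-1)/r$.

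In the $D$-type case, the same linear system has every $a_j=1$ along the chain and coefficient $1/2$ on the two $(-2)$-curves. I would check this directly; it also follows from the pair being lc and not plt along the whole chain. Hence $a_1=1$.

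Summing the local contributions gives the stated formula. The main obstacle is not the computation but correctly citing the classification of lc pairs $(X\ni p,S)$ with $S$ smooth at $p$. In particular one must confirm that the strictly lc cases are exactly the $D$-type chains and never cyclic quotient germs, so that ``non-cyclic quotient'' and ``coefficient $1$'' coincide.
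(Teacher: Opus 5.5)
The paper gives no argument of its own here; it only cites Koll\'ar's classification. Your route (minimal resolution, classification of lc germs $(X\ni p,S)$ with $S$ unibranch, then the coefficient of the exceptional curve meeting $S_Y$) is the standard argument behind that citation, and your local computations are correct. There is, however, a genuine gap, and it is exactly the step you flagged: identifying ``plt'' with ``cyclic'' and ``lc but not plt'' with ``non-cyclic''. This identification is false.

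In your case (2) take $n=1$. Then $E_1$ meets $S_Y$ and the two $(-2)$-curves $F_1,F_2$, so $f^{-1}(p)=F_1\cup E_1\cup F_2$ is the chain $[2,e_1,2]$. Hence $X\ni p$ is a cyclic quotient singularity of order $\det[2,e_1,2]=4e_1-4$. Yet your own computation gives local contribution $a_1=1$, not $(r_p-1)/r_p$.

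This really occurs with $S$ smooth. Take the $A_3$ point $X=\{xy=z^4\}=\mathbb C^2/\mu_4$, with $\mu_4$ acting by $(u,v)\mapsto(\zeta u,\zeta^{-1}v)$ and $x=u^4$, $y=v^4$, $z=uv$. Let $S=\{x=y=z^2\}$, which is smooth since it is parametrized by $z$. Its preimage in $\mathbb C^2$ is the node $\{u^2=v^2\}$, so $(X,S)$ is lc but not plt at the origin. Correspondingly, $S_Y$ meets the middle $(-2)$-curve and $f^*(K_X+S)=K_Y+S_Y+\tfrac12F_1+E_1+\tfrac12F_2$. The different therefore has coefficient $1$, not $\tfrac34$.

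You can also check this directly on the cover. The line $L=\{u=v\}$ maps two-to-one onto $S$ by $t\mapsto t^2$. We have $(K_{\mathbb C^2}+\{u^2=v^2\})|_L=K_L+o$, where $o$ is the origin. The pullback of $K_S+d\,p$ is $K_L+(2d-1)o$, which forces $d=1$.

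So your claim ``a cyclic quotient germ with $S$ through $p$ cannot be lc but not plt'' is false. The statement as formulated, indexed by cyclic versus non-cyclic singularities, cannot be proved.

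Once the false identification is dropped, your argument proves the correct version:
\begin{itemize}
\item the contribution of $p\in S$ is $(r_p-1)/r_p$ if $(X,S)$ is plt at $p$;
\item it is $1$ if $(X,S)$ is lc but not plt at $p$.
\end{itemize}
The non-plt points are the $D$-type singularities ($n\geq 2$) and the cyclic quotients $[2,e,2]$ with $S_Y$ through the middle curve ($n=1$).

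You should also state why every singular point of $X$ on $S$ is a quotient singularity. For $E$ exceptional over $p\in S$ one has $\mathrm{mult}_E f^*S>0$, so $a(E,X)>a(E,X,S)\geq 0$, and $X$ is klt along $S$.
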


\subsection{Hirzebruch--Jung string}

\begin{defn}
Let $n$ be a non-negative integer and $e_1,\dots,e_n$ real numbers. We denote by
\begin{equation}
\det\emptyset=1,\quad \det[e_n]:=e_n,\quad\text{and}\quad \det[e_1,\dots,e_n]:=e_1\det[e_2,\dots,e_n]-\det[e_3,\dots,e_n].
\end{equation}
\end{defn}

\begin{defthm}\label{defthm:HJS-CQR}
There is a one-to-one correspondence $\Ii_1\xleftrightarrow{\phi}\Ii_2$ between:
\begin{itemize}
    \item $\Ii_1$: pairs of coprime integers $(r,a)$ such that $r>a$, and
    \item $\Ii_2$: tuples $[e_1,\dots,e_n]$ such that $n\geq 1$ and each $e_i$ is an integer that is $\geq 2$
\end{itemize}
given in the following way. 
\begin{enumerate}
    \item For $(r,a)\in\Ii_1$, it corresponds to $[r]$ if $a=1$, and it corresponds to $[e_1,e_2,\dots,e_n]$ with
    \begin{equation}
e_1:=\left\lceil\frac{r}{a}\right\rceil\quad\text{and}\quad (a,\,e_1a-r)\xleftrightarrow{\phi}[e_2,\dots,e_n].
\end{equation}
    We say that $[e_1,\dots,e_n]$ is the Hirzebruch--Jung sequence ($\HJseq$ for short) of $(r,a)$ and denote by
    \begin{equation}
[e_1,\dots,e_n]=\HJseq(r,a).
\end{equation}
    \item For $[e_1,\dots,e_n]\in\Ii_2$, it corresponds to $(e_1,1)$ if $n=1$, and it corresponds to $(r,a)$ with \begin{equation}
r=e_1a-b,\quad [e_2,\dots,e_n]\xleftrightarrow{\phi}(a,b).
\end{equation}
\end{enumerate}
For any cyclic quotient surface singularity $X\ni x$ of type $\tfrac{1}{r}(1,a)$, we also say that $X\ni x$ is of type $(r,a)$, and also say that $X\ni x$ is of type $\HJseq(r,a)$.
\end{defthm}

\begin{defn}\label{defn:HJC eHJC}
Let $X$ be a surface. A \emph{Hirzebruch--Jung string} (HJS for short) on $X$ is a chain of non-singular rational curves $[E_1,\dots,E_n]$, such that $X$ is non-singular near each $E_i$, and for any $1\leq i,j\leq n$,
\begin{equation}
e_i:=-E_i^2\geq 2,\quad  E_i\cdot E_j=1\text{ if }|i-j|=1,\quad E_i\cdot E_j=0\text{ if }|i-j|>1.
\end{equation}
We say that $[e_1,\dots,e_n]$ is the $\HJseq$ associated to $[E_1,\dots,E_n]$. An \emph{extended Hirzebruch--Jung string} (eHJS for short) $[S;E_1,\dots,E_n]$ on $X$ consists of an irreducible curve $S$ on $X$ and an HJS $[E_1,\dots,E_n]$ on $X$, such that
\begin{equation}
S_Y\cdot E_1=1\quad \text{and}\quad S_Y\cdot E_i=0\quad \text{for any}\quad i\geq 2.
\end{equation}
We also say that $[e_1,\dots,e_n]$ is the $\HJseq$ associated to $[S;E_1,\dots,E_n]$. The \emph{dual graph} of an HJS $[E_1,\dots,E_n]$ (resp.\ of an eHJS $[S;E_1,\dots,E_n]$) is the underlying combinatorial graph: a path with $n$ vertices decorated by $e_i:=-E_i^2$ (resp.\ the same path with an extra vertex attached to $E_1$ representing $S$).
\end{defn}

\begin{defn}
  Let $X\ni x$ be a cyclic quotient surface singularity and $f: Y\rightarrow X$ the minimal resolution of $X$ with exceptional divisors $E_1,\dots,E_n$. Possibly reordering indices, we have that $[E_1,\dots,E_n]$ is an HJS. We call
  \[
[E_1,\dots,E_n]
\]
  the \emph{resolution configuration} of $X\ni x$. The \emph{dual graph} of $X\ni x$ is the dual graph of this resolution configuration (equivalently, of the underlying HJS): a path with $n$ vertices decorated by $-E_i^2$. Likewise, if $S$ is a curve on $X$ passing through $x$ with strict transform $S_Y:=f^{-1}_*S$ on $Y$ such that $[S_Y;E_1,\dots,E_n]$ is an eHJS, we call
  \[
[S_Y;E_1,\dots,E_n]
\]
  the \emph{extended configuration} of $(X\ni x,S)$.
\end{defn}

\begin{defn}\label{defn:q-x-s}
Let $X\ni x$ be a klt surface germ and let $S$ be a prime divisor on $X$ (equivalently, an irreducible reduced curve on $X$) such that $x\in S$. We say that $(X\ni x,S)$ is \emph{of extended A type} if $(X\ni x,S)$ is plt.

Assume that $(X\ni x,S)$ is of extended A type and $X\ni x$ is not smooth. We let $f: Y\rightarrow X$ be the minimal resolution of $X$ at $x$ and $S_Y:=f^{-1}_*S$. By \cite[Notation-Lemma 4.5]{LS23}, there exists an eHJS
\[
[S_Y;E_1,\dots,E_n]
\]
such that $E_1,\dots,E_n$ are the prime $f$-exceptional divisors. Let $[e_1,\dots,e_n]$ be the $\HJseq$ associated to $[S;E_1,\dots,E_n]$. We denote by
\begin{equation}
q_x(S):=\det[e_2,\dots,e_n].
\end{equation}
Note that \cite[Notation-Lemma~4.5]{LS23} uses the notation $q(X\ni x,S)$; we simplify the notation by suppressing the redundant ``$X$''.
\end{defn}

\begin{lem}\label{lem:intersection-lemma}
Let $Y$ be a surface and $[S_Y;E_1,\dots,E_n]$ an eHJS on $Y$ associated with $\HJseq$ $[e_1,\dots,e_n]$. Denote by
\begin{equation}
r:=\det[e_1,\dots,e_n]\quad \text{and}\quad q:=\det[e_2,\dots,e_n].
\end{equation}
Then there exists a unique contraction $f: Y\rightarrow X$ of $E_1,\dots,E_n$ satisfying the following. Let $S:=f_*S_Y$ and let $x:=f(E_i)$ for any $i$. Then:
\begin{enumerate}
    \item $X\ni x$ is a cyclic quotient singularity of type $(r,q)$.
    \item $(X\ni x,S)$ is of extended A type and $q_x(S)=q$.
   \item $S^2=S_Y^2+\frac{q}{r}$.
    \item The multiplicity of $f^*S$ at $E_1$ is $\frac{q}{r}$.
\end{enumerate}
\end{lem}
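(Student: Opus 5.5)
The contraction comes from Artin--Grauert, and (1)--(4) are linear algebra on the intersection matrix of the chain. Since $e_i\geq 2$, the intersection matrix $M=(E_i\cdot E_j)$ of the HJS $[E_1,\dots,E_n]$ is negative definite; indeed $\det(-M)=\det[e_1,\dots,e_n]$, and every leading principal minor of $-M$ is again a continuant of integers $\geq 2$, hence positive. By Grauert's criterion there is therefore a contraction $f\colon Y\to X$ of $E_1\cup\dots\cup E_n$ to a normal point $x$, possibly only in the analytic category. Because the configuration is a chain of smooth rational curves with self-intersection $\leq -2$, the singularity is rational, so the contraction exists algebraically by Artin's criterion. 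Uniqueness holds because a contraction of given curves to a normal surface is determined by the curves: it is the Stein factorization, equivalently the unique normal surface obtained by collapsing the connected exceptional set.

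For (1), I will use the classical fact, due to Hirzebruch and Jung and recalled via Definition--Theorem~\ref{defthm:HJS-CQR}, that the minimal resolution of $\tfrac1r(1,a)$ is an HJS with sequence $\HJseq(r,a)$. Here $f$ is the minimal resolution since there are no $(-1)$-curves. The correspondence $\phi$ in Definition--Theorem~\ref{defthm:HJS-CQR}(2) sends $[e_1,\dots,e_n]$ to $(r,a)$ with $r=e_1a-b$ and $[e_2,\dots,e_n]\leftrightarrow(a,b)$. By induction on $n$ this gives $r=\det[e_1,\dots,e_n]$ and $a=\det[e_2,\dots,e_n]=q$, which is (1).

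For (4), I will write $f^*S=S_Y+\sum m_iE_i$ and solve $f^*S\cdot E_j=0$, that is, $\sum_i m_i E_i\cdot E_j=-\delta_{1j}$. Cramer's rule, together with the tridiagonal structure, gives $m_1=\det[e_2,\dots,e_n]/\det[e_1,\dots,e_n]=q/r$. One can also check this directly: set $m_i=\det[e_{i+1},\dots,e_n]/r$ and verify the recursion $e_i m_i-m_{i-1}-m_{i+1}=\delta_{1i}$ from the continuant recursion. Then (3) follows from
\[
S^2=f^*S\cdot S_Y=S_Y^2+m_1\,(E_1\cdot S_Y)=S_Y^2+\tfrac{q}{r}.
\]

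For (2), $(X\ni x,S)$ is plt: the pair $(Y,S_Y+\sum E_i)$ is snc near the exceptional locus, and the log discrepancies of the $E_i$ with respect to $(X,S)$ are positive. This positivity holds because $f^*(K_X+S)$ has coefficients in $(-1,1)$ along the $E_i$ for a chain with $S_Y$ meeting an end, a standard computation that I will quote from \cite[Notation-Lemma 4.5]{LS23}, or else check by the same Cramer-type solution. Then $q_x(S)=\det[e_2,\dots,e_n]=q$ holds by Definition~\ref{defn:q-x-s}, since $[S_Y;E_1,\dots,E_n]$ is the extended configuration. The only genuinely nontrivial inputs are the existence of the contraction in the algebraic category and the identification in (1) with the standard cyclic quotient. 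Both are classical, and I would cite them rather than reprove them.
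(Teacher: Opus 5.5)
Your proposal is correct, but it reaches (3) and (4) by a different route from the paper.

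\textbf{The paper's route.} It cites \cite[Proposition~4.10]{KM98} for the contraction and treats (1)--(2) as obvious. It proves (3) by induction on $n$:
\begin{itemize}
\item it factors $f$ through the contraction $g\colon Y\to Z$ of $E_2,\dots,E_n$;
\item it applies the $(n-1)$-case to the eHJS $[E_1;E_2,\dots,E_n]$, with $E_1$ playing the role of $S_Y$, to get $(g_*E_1)^2=-e_1+\det[e_3,\dots,e_n]/q$;
\item it then contracts the single curve $g_*E_1$ using the one-curve formula and the continuant identity $r=e_1q-\det[e_3,\dots,e_n]$.
\end{itemize}
Statement (4) is then read off as $\mu=f^*S\cdot S_Y-S_Y^2=S^2-S_Y^2$.

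\textbf{Your route.} You go in the opposite order. You solve $\sum_i m_iE_i\cdot E_j=-\delta_{1j}$ directly by Cramer's rule, which gives all multiplicities $m_i=\det[e_{i+1},\dots,e_n]/r$ at once. (In your recursion check, use the conventions $m_0=m_{n+1}=0$, not the formula at $i=0$.) You then deduce (3) from $S^2=f^*S\cdot S_Y$. The paper itself mentions this linear-system computation as an alternative for (4), citing \cite{Ale93}.

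\textbf{What each buys.} Your route is more uniform. The same cofactor computation, applied to $(K_Y+S_Y+\sum E_i)\cdot E_j=-\delta_{jn}$, gives the log discrepancies $a(E_i,X,S)=\det[e_1,\dots,e_{i-1}]/r>0$. That settles the plt part of (2) self-containedly. This is better than leaning on \cite[Notation-Lemma~4.5]{LS23}, which this paper uses for the converse implication (plt $\Rightarrow$ eHJS). The paper's induction needs only the one-curve formula and mirrors the recursive definition of $\HJseq$. Its cost is working with intersection numbers on the intermediate singular surface $Z$.

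\textbf{A logical point in (1).} The fact you quote, that the minimal resolution of $\tfrac1r(1,a)$ is the chain $\HJseq(r,a)$, is the wrong direction. What you need is that a normal surface singularity whose minimal resolution is this chain is isomorphic to $\tfrac1r(1,q)$. That is the classical tautness of cyclic quotient singularities: they are determined by their resolution graph. Your closing remark shows you know this input is required, so cite it in that form.
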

\begin{proof}
The existence and uniqueness of $f$ follows from \cite[Proposition 4.10]{KM98}. (1-2) are obvious and we are left to prove (3). 

We apply induction on $n$. When $n=1$ we have
\begin{equation}
S^2=S_Y^2+\frac{(S_Y\cdot E_1)^2}{(-E_1^2)}=S_Y^2+\frac{1}{e_1}=S_Y^2+\frac{q}{r}.
\end{equation}
Suppose that $n\geq 2$ and that the lemma holds for $n-1$. Set $m=\det[e_3,\dots,e_n]$. Then there exists a unique contraction $g: Y\rightarrow Z$ of $E_2,\dots,E_n$ such that
\begin{equation}
(g_*E_1)^2=E_1^2+\frac{m}{q}=-e_1+\frac{m}{q}
\end{equation}
and that there exists a unique contraction $h: Z\rightarrow X$ such that $f=h\circ g$. We have
\begin{equation}
S^2=(g_*S_Y)^2+\frac{\left(g_*S_Y\cdot g_*E_1\right)^2}{-\left(g_*E_1\right)^2}=S_Y^2+\frac{1}{e_1-\frac{m}{q}}.
\end{equation}
Since $r=e_1q-m$, (3) follows.

Let $\mu$ be the multiplicity of $f^*S$ at $E_1$. Then by (iii)
\[
\mu = (f^*S)\cdot S_Y - S_Y^2 = S^2 - S_Y^2 = \frac{q}{r}
\]
Or one can write $f^*S =S_Y+\sum_{1\leq i\leq n} b_i E_i$ and compute $\mu$ by solving the linear equations $(f^*S)\cdot E_i=0$ for $1\leq i\leq n$, as in \cite{Ale93}.
\end{proof}

\begin{prop}\label{prop:q-integer-formula}
Let $(X,S)$ be a surface pair such that $S$ is a non-singular curve. Assume that for any singular point $x$ of $X$ on $S$, $(X\ni x,S)$ is of extended A type. Let $f: Y\rightarrow X$ be the minimal resolution of $X$ and let $S_Y:=f^{-1}_*S$. Then we have that
\begin{equation}
S^2-\sum_{x\in S}\frac{q_x(S)}{r_x}=S_Y^2\in\mathbb Z.
\end{equation}
where the sum runs through all singular points of $X$ that are contained in $S$.
\end{prop}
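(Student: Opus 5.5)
The plan is to reduce Proposition~\ref{prop:q-integer-formula} to Lemma~\ref{lem:intersection-lemma}(3), applied one singular point at a time. First we note that only finitely many singular points of $X$ lie on $S$, since $X$ is normal. At each such point $x$, the hypothesis that $(X\ni x,S)$ is of extended A type lets us invoke \cite[Notation-Lemma 4.5]{LS23}. This gives an eHJS $[S_Y;E^x_1,\dots,E^x_{n_x}]$ whose members $E^x_i$ are exactly the $f$-exceptional curves over $x$, and we write $r_x=\det[e^x_1,\dots,e^x_{n_x}]$ and $q_x(S)=\det[e^x_2,\dots,e^x_{n_x}]$.

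Next we localize the minimal resolution. Let $f_S\colon Y\to X'$ be the contraction of only those exceptional curves lying over the singular points of $X$ on $S$. Then $f$ factors as $Y\to X'\to X$, where $X'\to X$ resolves the singularities of $X$ away from $S$. Since that second morphism is an isomorphism near $S$, and intersection numbers of $S$ with itself can be computed by pulling back $S$, which is $\mathbb Q$-Cartier because $X$ is $\mathbb Q$-factorial at quotient singularities, this step does not change $S^2$.

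The main computation is done one point at a time. Order the singular points on $S$ as $x_1,\dots,x_k$. Contract the string over $x_1$ first, then the one over $x_2$, and so on. The strings over distinct points are disjoint, and $S_Y$ meets each string only at its first curve. So at each stage the intermediate surface contains the next string as an eHJS with respect to the current image of $S_Y$. By Lemma~\ref{lem:intersection-lemma}(3), each contraction raises the self-intersection of the image of $S$ by exactly $q_{x_j}(S)/r_{x_j}$. Summing these increments gives $S^2=S_Y^2+\sum_x q_x(S)/r_x$.

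Finally, $S_Y^2\in\mathbb Z$ because $S_Y$ is a Cartier divisor on the smooth surface $Y$.

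One check needs care. Lemma~\ref{lem:intersection-lemma} is stated with $Y$ smooth near the string, but in the inductive step the intermediate surface is singular at the points already contracted. Since those points lie away from the next string, the lemma's proof, which is local projection-formula computations, goes through verbatim. Alternatively, one can avoid induction altogether. Write $f^*S=S_Y+\sum_{x,i}b^x_iE^x_i$, where the coefficients for different $x$ decouple because the strings are disjoint. Then $S^2=f^*S\cdot S_Y=S_Y^2+\sum_x b^x_1$, and Lemma~\ref{lem:intersection-lemma}(4) identifies $b^x_1=q_x(S)/r_x$. I would present this second route because it is cleaner; the only point to verify is that the coefficients of $f^*S$ along each string are determined locally, which follows from the negative definiteness of each string's intersection matrix.
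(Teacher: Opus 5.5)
Your proposal is correct and is essentially the paper's argument. The paper's proof is the one-line remark that the formula follows from Lemma~\ref{lem:intersection-lemma} applied at each singular point of $X$ on $S$; your second route (decoupling the coefficients of $f^*S$ over the disjoint strings by negative definiteness, then reading off $b^x_1=q_x(S)/r_x$ from part (4)) makes that implicit step explicit. Also, the concern in your inductive route is unnecessary: an HJS only requires the ambient surface to be smooth near its curves, so Lemma~\ref{lem:intersection-lemma}(3) applies as stated to the intermediate surfaces.
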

\begin{proof}
This is an immediate consequence of Lemma~\ref{lem:intersection-lemma}.
\end{proof}

\subsection{Liu--Shokurov's $\gamma$-invariant}

We recall some basic behavior of Liu--Shokurov's $\gamma$-invariant~\cite[Notation~4.2]{LS23}.

\begin{defn}\label{defn:gamma-x}
    Let $X\ni x$ be a klt surface germ and let $f: Y\rightarrow X$ be the minimal resolution of $X$ at $x$. We let $E_1,\dots,E_n$ be the prime $f$-exceptional prime divisors and write
    \begin{equation}
f^*K_X=K_Y+\sum_{i=1}^nb_iE_i,\quad e_i:=-E_i^2\text{ for any }i.
\end{equation}
    Then we have $b_i=1-a(E_i,X)$ for any $i$. We denote by
    \begin{equation}
\gamma_x:=n-\sum_{i=1}^nb_i(e_i-2).
\end{equation}
\end{defn}

\begin{defn}\label{defn:gamma-X}
     Let $X$ be a klt surface. We denote by 
     \begin{equation}
\gamma(X):=\sum_{x\in X}\gamma_x
\end{equation}
     where $x$ runs through all singularities of $X$.
\end{defn}

\begin{thm}[{\cite[Lemma 4.3]{LS23}}]\label{thm:gamma-invariant-formula}
Let $X$ be a klt projective surface and let $f: Y\rightarrow X$ be the minimal resolution of $X$. Then
\begin{equation}
\gamma(X)=\rho(Y/X)+K_Y^2-K_X^2.
\end{equation}
In particular, if $X$ is rational, then
\begin{equation}
\gamma(X)=10-\rho(X)-K_X^2.
\end{equation}
\end{thm}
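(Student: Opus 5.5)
The plan is a direct intersection-number computation on the minimal resolution, followed by Noether's formula for the rational case.

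First I would set up the global notation. Since $X$ is a klt surface, it is $\mathbb Q$-factorial (Proposition~\ref{prop:klt-surface-q-factorial}), so $f^*K_X$ makes sense. Let $E_1,\dots,E_n$ be all prime $f$-exceptional divisors, taken over all singular points of $X$, and write
\[
f^*K_X=K_Y+\sum_{i=1}^n b_iE_i,\qquad e_i:=-E_i^2 .
\]
Then $\gamma(X)=n-\sum_i b_i(e_i-2)$ is just the sum of the local invariants $\gamma_x$ of Definition~\ref{defn:gamma-x}.

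Next I would compute $K_X^2$. Put $B:=\sum_i b_iE_i$. By the projection formula, $f^*K_X\cdot E_i=0$ for every $i$, so
\[
B^2=(f^*K_X-K_Y)\cdot B=-K_Y\cdot B .
\]
Hence
\[
K_X^2=(f^*K_X)^2=(K_Y+B)^2=K_Y^2+2K_Y\cdot B+B^2=K_Y^2+K_Y\cdot B .
\]
Since $X$ has quotient singularities, each exceptional curve of the minimal resolution is a smooth rational curve. Adjunction then gives $K_Y\cdot E_i=-2-E_i^2=e_i-2$. Therefore
\[
K_Y^2-K_X^2=-\sum_i b_i(e_i-2).
\]
Adding $n$ to both sides gives $\gamma(X)=n+K_Y^2-K_X^2$.

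It remains to identify $n$ with $\rho(Y/X)$. The $E_i$ are numerically independent because their intersection matrix is negative definite. They are also orthogonal to $f^*N^1(X)$, so $f^*N^1(X)$ and the classes of the $E_i$ together span $N^1(Y)$ as a direct sum. This gives $\rho(Y)=\rho(X)+n$, i.e.\ $\rho(Y/X)=n$, which proves the first formula.

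For the rational case, $Y$ is a smooth projective rational surface. Noether's formula with $\chi(\mathcal O_Y)=1$ gives $K_Y^2+e_{\mathrm{top}}(Y)=12$. Since $e_{\mathrm{top}}(Y)=2+b_2(Y)=2+\rho(Y)$, this becomes $K_Y^2+\rho(Y)=10$. Substituting $\rho(Y)=\rho(X)+\rho(Y/X)$ into the first formula yields $\gamma(X)=10-\rho(X)-K_X^2$.

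There is no real obstacle in this argument. The only points requiring care are that all exceptional curves are smooth rational, so adjunction applies, and the Picard-number bookkeeping $\rho(Y)=\rho(X)+n$.
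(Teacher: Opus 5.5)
Your argument is correct. The paper gives no proof of its own here: it cites \cite[Lemma~4.3]{LS23} for $\gamma(X)=\rho(Y/X)+K_Y^2-K_X^2$ and \cite[Lemma~3.19]{LS23} for the rational case, and your computation ($K_X^2=K_Y^2+K_Y\cdot B$ with $K_Y\cdot E_i=e_i-2$, then Noether's formula in the form $K_Y^2+\rho(Y)=10$) is the standard argument behind those citations. One step is stated a bit quickly: spanning in $\rho(Y)=\rho(X)+n$ does not follow from orthogonality. It comes from writing any divisor $D$ on $Y$ as $f^*f_*D$ plus an exceptional combination, which uses $\mathbb{Q}$-factoriality of $X$; orthogonality and negative definiteness only show that the sum is direct.
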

\begin{proof}
It is \cite[Lemma~4.3]{LS23}. The ``in particular'' part follows from \cite[Lemma~3.19]{LS23}.
\end{proof}

\section{A strengthening of Liu--Shokurov}\label{sec:ls23-6/7}

The goal of this section is to prove Theorem~\ref{thm:-ls23-6/7}, a strengthened version of \cite[Theorem~1.1]{LS23}. Roughly speaking, the main result of~\cite[Theorem~1.1]{LS23} concerns surfaces with extremal numerical invariants (in a certain range corresponding to ``$\leq 1/13$ or $1/11$'' for the mld), while Theorem~\ref{thm:-ls23-6/7} extends the conclusion to the broader range corresponding to ``$\leq 1/7-\epsilon$''.

\begin{thm}\label{thm:-ls23-6/7}
Let $\epsilon:=1/938$. Let $(X,bS)$ be a klt Calabi--Yau surface pair such that $S\geq 0$ is a non-zero Weil divisor, $b\geq 6/7+\epsilon$, and $\rho(X)=1$. Then one of the following holds:
\begin{enumerate}
\item $S$ is not a non-singular rational curve. In this case $(X,S)$ is isomorphic to one of the surface pairs in \cite[Table, 4-8,10-14,16]{Abe23}.
\item $S$ is a non-singular rational curve. In this case, one of the following three subcases holds.
\begin{enumerate}
    \item $b=12/13$ and $(X,bS)$ is isomorphic to
    \begin{equation}
\left(\mathbb P(3,4,5),\frac{12}{13}(x^3y+y^2z+z^2x=0)\right).
\end{equation}
    \item $b=10/11$. $X$ has three singularities $x_1,x_2,x_3$, all of which are on $S$. Moreover, $x_1,x_2,x_3$ are cyclic quotient singularities of type $\tfrac{1}{2}(1,1)$, $\tfrac{1}{5}(1,4)$, $\tfrac{1}{7}(1,3)$ respectively, with $q_{x_3}(S)=3$. Moreover, in this case, $(X,bS)$ is unique up to isomorphism.
    \item $b=15/17$. $X$ has three singularities $x_1,x_2,x_3$, all of which are on $S$. Moreover, $x_1,x_2,x_3$ are cyclic quotient singularities of type $\tfrac{1}{3}(1,2)$, $\tfrac{1}{5}(1,4)$, $\tfrac{1}{7}(1,2)$ respectively, with $q_{x_3}(S)=2$.
\end{enumerate}
\end{enumerate}
\end{thm}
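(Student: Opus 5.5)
We follow the strategy of \cite[Theorem~1.1]{LS23} and push its numerical estimates down to $b\geq 6/7+\epsilon$. First we dispose of case (1). If $S$ is not a non-singular rational curve, then either $p_a(S)\geq 1$ or $S$ is singular, or $S$ is reducible. Since $K_X+bS\equiv 0$ and $(X,bS)$ is klt, adjunction forces $S$ to be rather degenerate. By Theorem~\ref{thm:surface-lct-gap}, $b\le 5/6$ would hold unless $\lct(X;S)=1$ near each point. Our $b>6/7>5/6$, so we get $\lct(X,0;S)=1$ locally, i.e.\ $(X,S)$ is lc. Then $(X,S)$ is an lc pair with $-(K_X+S)\equiv -(1-b)S$ anti-ample and $\rho(X)=1$. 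We would quote Abe's classification \cite{Abe23} of such pairs, with $S$ non-smooth-rational and the lct in our range, to land in the listed entries.

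In case (2), $S\cong\mathbb P^1$ and $(X,S)$ is lc, in fact plt, at every point of $S$ except possibly one. Write the singularities on $S$ as $x_i$ with orders $r_i$ and indices $q_i=q_{x_i}(S)$. Theorem~\ref{thm:adjunction-surface} gives
\[
(K_X+S)\cdot S=-2+\sum_i \frac{r_i-1}{r_i}\quad\text{(or with terms $1$ for non-cyclic points)}.
\]
Also $(K_X+S)\cdot S=(1-b)S^2>0$ is small. Hence $\sum (r_i-1)/r_i$ lies slightly above $2$. The only possibilities are three cyclic points (a hyperbolic triangle triple $(r_1,r_2,r_3)$), or a non-cyclic point together with one cyclic point. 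Set $t:=\sum(r_i-1)/r_i-2$. Then $b=1-t/S^2$. Proposition~\ref{prop:q-integer-formula} gives $S^2=S_Y^2+\sum q_i/r_i$ with $S_Y^2\in\mathbb Z$. The condition $b\geq 6/7+\epsilon$ becomes $S^2\geq 7t/(1-7\epsilon)$. On the other hand, $S^2$ is bounded above by $\sum q_i/r_i+S_Y^2$. We then need an upper bound on $S_Y^2$. For this we use $K_Y\cdot S_Y=-2-S_Y^2$, rationality of $X$ (from $-K_X\equiv bS$ ample), and Theorem~\ref{thm:gamma-invariant-formula}, which requires $\gamma(X)=9-K_X^2<9$. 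This should force $S_Y^2\le -1$, or a small list of values. Together these make $t$ tiny relative to $1/r_i$. That yields a finite enumeration of triples $(r_1,r_2,r_3)$ and $q_i$ with $t\le \tfrac17\sum q_i/r_i$, roughly. The cases $(2,3,7)$, $(2,4,5)$, $(2,5,7)$, $(3,3,4)$, $(3,5,7)$, $\dots$ appear naturally. We would treat this step by computer-style enumeration, as in \cite{LS23}.

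For each surviving numerical candidate we need existence and uniqueness checks. First, $X$ must have the prescribed basket, with possible extra singularities off $S$ constrained by Theorem~\ref{thm:four-sing} and Theorem~\ref{thm:bogomolov-bound}. Second, $\gamma(X)=9-K_X^2$ must hold with $K_X^2=b^2S^2$. Third, $K_X\cdot S=-bS^2$ must be consistent with the local contributions $f^*K_X$ along $S_Y$. Candidates failing these integrality and $\gamma$ identities are discarded. The survivors should be exactly $b=12/13$, realised by $\mathbb P(3,4,5)$, $b=10/11$ and $b=15/17$. Uniqueness in (2.b) follows by running the MMP on $Y$ contracting down to a Hirzebruch surface or $\mathbb P^2$ and reconstructing the configuration. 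This is as in the uniqueness argument of \cite{LS23}.

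The main obstacle is the step bounding $S_Y^2$ and handling singular points off $S$ in the widened range. When $b$ drops from $10/11$ to about $6/7$, the inequality no longer forces all singularities onto $S$. We must control extra points off $S$ via the $\gamma$-formula and the Bogomolov bound. The tiny $\epsilon=1/938$ suggests that some borderline triple (perhaps with $b$ just below $6/7+1/938$) must be cut off precisely. Getting that enumeration finite and exhaustive is where most work lies.
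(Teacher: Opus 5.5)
Citing \cite{Abe23} for case (1), as you do, is also what the paper does. The problem is case (2), where your outline has a genuine gap exactly at the step that should make the enumeration finite, and the structural claims feeding into it are wrong.

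\emph{The structural claims.} Nothing bounds $S^2$ a priori, so $(K_X+S)\cdot S=(1-b)S^2$ need not be small. The numerically admissible basket $\tfrac13(1,1),\tfrac14(1,1),\tfrac14(1,1),\tfrac14(1,1)$ on $S$ with $b=10/11$ has $(K_X+S)\cdot S=11/12$. Hence four cyclic points on $S$ must be allowed. Your alternative ``one non-cyclic point plus one cyclic point'' is impossible outright, since it gives $(K_X+S)\cdot S=-1/r<0$. Nor is $S_Y^2\le -1$: the identity $S_Y^2=\frac{(K_X+S)\cdot S}{1-b}-\sum_i q_i/r_i$ gives $S_Y^2=1,0,1$ in cases (2.a), (2.b), (2.c).

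\emph{No finiteness from your bounds.} No upper bound on $S_Y^2$ can bound the orders. Any bound $S_Y^2\le M$ valid for all pairs in question has $M\ge 1$ by (2.a). Then your inequality $7t/(1-7\epsilon)\le M+\sum_i q_i/r_i$ holds for orders $(2,3,r)$ and every $r$, since $t=\tfrac16-\tfrac1r$. The $\gamma$-identity $b^2S^2=9-\sum_i\gamma_{x_i}$ does not give a bound either, because $\gamma_x$ is unbounded below (e.g.\ $\gamma_x=1-(r-2)^2/r$ for $\tfrac1r(1,1)$).

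\emph{The missing idea.} The paper first reduces, via \cite[5.1.2, 5.1.3]{Sho00}, to the case $a(E,X,bS)>\tfrac17$ for every exceptional $E$. Then \cite[Lemma~5.1]{LS23} shows every singular point on $S$ is cyclic with $(X\ni x_i,S)$ plt. The log discrepancy of the curve adjacent to $S_Y$ is $(1+(1-b)q_i)/r_i$, so the condition $>\tfrac17$ forces $r_i<7b/(7b-6)$. This is where $b\ge 6/7+\epsilon$ is used (the bound degenerates as $b\to 6/7$), not to cut off a borderline triple. Adjunction then gives at least three points on $S$, and Belousov's theorem gives at most four singular points. Bogomolov bounds a fourth point off $S$ by $r_4\le 42$. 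So points off $S$ are the easy part, not the main obstacle.

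\emph{Numerical checks do not finish the job.} Even granting finiteness, your claim that the basket, $\gamma$ and integrality checks leave exactly $b=12/13,10/11,15/17$ is unsupported and in fact false. The paper's numerical filters are adjunction, Bogomolov, rationality of the discriminant from the $\gamma$-identity, integrality and non-negativity of $S_Y^2$, and the $\tfrac17$-klt check. They leave $29$ baskets, for instance orders $(2,9,22)$ with $b=15/17$, and the four-point basket above. Removing them needs geometric input on the minimal resolution $Y$:
\begin{enumerate}
\item Since $\rho(Y)\ge 3$ there is a $(-1)$-curve $C\ne S_Y$, and $1=-K_Y\cdot C=b(S_Y\cdot C)+\sum b_{i,j}(E_{i,j}\cdot C)$ must be solvable in non-negative integers.
\item For the surviving baskets with a point $x_4\notin S$ resolved by a single curve $E_{4,1}$ with $E_{4,1}^2\le -3$, one MMP step is also a $(-E_{4,1})$-MMP. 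It contracts a $(-1)$-curve meeting both $E_{4,1}$ and some chain over $S$, which gives a sharper equation of the same type.
\item The last basket, orders $(2,3,7)$ on $S$ plus $\tfrac{1}{11}(1,5)$ off $S$, has $(K_X+S)^2=1/462$. It is excluded by the classification of such lc pairs in \cite[Theorem~3.4]{LL23}.
\end{enumerate}

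\emph{Uniqueness in (2.b).} The paper gets it from the same classification. A weighted blow-up at the $\tfrac15(1,4)$ point, extracting a divisor with log discrepancy $1$ for $(X,\tfrac{10}{11}S)$, yields $(Z,S_Z)$ with $(K_Z+S_Z)^2=1/462$. Your ``run the MMP on $Y$ and reconstruct the configuration'' is not yet an argument for this.
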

\begin{proof} The proof is long, so we divide it into several steps.

\medskip

\noindent\textbf{Step 1.} In this step we reduce to the case when $S$ is a non-singular rational curve and the singularities of $X$ form a finite set in the ADE classification.

    Assume that $X$ has exactly $m$ singularities $x_1,\dots,x_m$ and let $n\leq m$ be the (non-negative) integer such that $x_1,\dots,x_n\in S$ and $x_{n+1},\dots,x_m\not\in S$. We denote by $r_i:=r_{x_i}$ for any $i$. By \cite[Table]{Abe23} and \cite[5.1.2, 5.1.3]{Sho00}, we may assume that $S$ is a non-singular rational curve and $a(E,X,bS)>1/7$ for any prime divisor $E$ that is exceptional$/X$. By \cite[Lemma~5.1]{LS23}, for any $1\leq i\leq n$, $r_i\leq \tfrac{7b}{7b-6}$ and $x_i$ is a cyclic quotient singularity such that $(X\ni x_i,S)$ is of extended A type. Let $q_i:=q_{x_i}(S)$ for any $1\leq i\leq n$ and $\gamma_i:=\gamma_{x_i}$ for any $1\leq i\leq m$.
    
    By Proposition \ref{prop:klt-surface-q-factorial}, $X$ is $\mathbb Q$-factorial klt. Since $b>5/6$, by Theorem \ref{thm:surface-lct-gap}, $(X,S)$ is lc and $K_X+S\not\equiv 0$. Since $\rho(X)=1$ and $K_X+bS\equiv 0$, $K_X+S$ is ample. By Theorem \ref{thm:adjunction-surface},
\begin{equation}\label{equ:adjunction-precise-index}
 0<(K_X+S)\cdot S=\deg\left((K_X+S)|_S\right)=-2+\sum_{i=1}^n\left(1-\frac{1}{r_i}\right).
\end{equation}
Thus $n\geq 3$. By Theorem \ref{thm:four-sing}, $m\leq 4$. So one of the following holds:
    \begin{itemize}
        \item (\textbf{Case 1}) $n=m=3$.
        \item (\textbf{Case 2}) $n=m=4$.
        \item (\textbf{Case 3}) $n=3$ and $m=4$.
    \end{itemize}
Moreover, if we are in \textbf{Case 3}, then \eqref{equ:adjunction-precise-index} implies that 
\begin{equation}
   1>\sum_{i=1}^3\frac{1}{r_i} 
\end{equation}
hence
\begin{equation}
\sum_{i=1}^3\frac{1}{r_i}\leq\frac{41}{42}.
\end{equation}
By Theorem \ref{thm:bogomolov-bound},
\begin{equation}
\sum_{i=1}^4\frac{1}{r_i}\geq 1
\end{equation}
which implies that
\begin{equation}
r_4\leq 42.
\end{equation}
Therefore, there are only finitely many possibilities of
$[x_1,x_2,x_3,x_4]$
up to equivalence in the ADE classification, as $r_i\leq\frac{7b}{7b-6}\leq 763$ for any $i$. In particular, there are only finitely many possibilities of $\gamma_i$.

A short computer program can easily list out all possibilities. For the classification of D and E types based on the order of local fundamental groups, we refer the reader to \cite[Satz 2.11]{Bri68}. 

\medskip

\noindent\textbf{Step 2.} In this step we filter the finitely many singularity cases using the five filters introduced in \cite[Lemmas~6.2, 6.5, 6.10]{LS23}. We list them as numbered items, in the order in which we apply them.

\begin{enumerate}[label=\textup{(F\arabic*)},leftmargin=2.6em]
\item\label{step2:adjunction} \textbf{The adjunction filter.} The inequality~\eqref{equ:adjunction-precise-index}.

\item\label{step2:bogomolov} \textbf{The Bogomolov bound filter.} Theorem~\ref{thm:bogomolov-bound}, applied to \textbf{Case~2} and \textbf{Case~3}.

\item\label{step2:gamma} \textbf{The Liu--Shokurov $\gamma$-invariant filter.} Since $K_X+bS\equiv 0$ and $\rho(X)=1$, Theorem~\ref{thm:gamma-invariant-formula} and~\eqref{equ:adjunction-precise-index} give
\begin{align}\label{equ:b-and-gamma}
\sum_{i=1}^m\gamma_i&=\gamma(X)=9-K_X^2=9-\frac{b^2}{1-b}(K_X+S)\cdot S=9-\frac{b^2}{1-b}\left(-2+\sum_{i=1}^n\frac{r_i-1}{r_i}\right).
\end{align}
Since $b$ is rational, the discriminant
\begin{align}\label{equ:gamma-rational-filter}
\sqrt{\left(9-\sum_{i=1}^m\gamma_i\right)^2-4\left(9-\sum_{i=1}^m\gamma_i\right)\left(-2+\sum_{i=1}^n\frac{r_i-1}{r_i}\right)}
\end{align}
is forced to be rational.

\item\label{step2:self-intersection} \textbf{The self-intersection filter.} By Proposition~\ref{prop:q-integer-formula} together with $K_X+bS\equiv 0$,
\begin{align}\label{equ:q-integral-check}
S_Y^2=S^2-\sum_{i=1}^n\frac{q_i}{r_i}=\frac{1}{1-b}(K_X+S)\cdot S-\sum_{i=1}^n\frac{q_i}{r_i}=\frac{1}{1-b}\left(-2+\sum_{i=1}^n\frac{r_i-1}{r_i}\right)-\sum_{i=1}^n\frac{q_i}{r_i},
\end{align}
must be a non-negative integer.

\item\label{step2:1/7-klt} \textbf{The $\tfrac17$-klt filter.} Since $b>0$, $b$ is uniquely determined by the data $(\gamma_i,r_i)$ via~\eqref{equ:b-and-gamma}. The hypothesis of Theorem~\ref{thm:-ls23-6/7} forces $a(E,X,bS)>\tfrac17$ for every prime divisor $E$ exceptional over $X$ (in particular for every $E$ on the minimal resolution of $X$); each such log discrepancy is explicitly computable from $(r_i,q_i,b)$, so this is a finite check.
\end{enumerate}

\smallskip

Applying the filters~\ref{step2:adjunction}--\ref{step2:1/7-klt} (which are precisely those of \cite[Lemmas~6.2, 6.5, 6.10]{LS23}), we obtain the following table of possibilities. In the table below, for $1\leq i\leq n$ we set $q_i:=q_{x_i}(S)$. After these filters, in \textbf{Case 3} the singularity $x_4$ is always a cyclic quotient singularity; in this case we record $x_4$ in the table by its cyclic quotient type $(r_4,q_4)$, where $q_4$ is the minimal positive integer such that $X\ni x_4$ is of type $\tfrac{1}{r_4}(1,q_4)$.

\begin{table}[htbp]
\centering
\small
\setlength{\tabcolsep}{3.5pt}
\renewcommand{\arraystretch}{1.08}
\caption{Singularity types surviving the five filters of \cite{LS23} in the proof of Theorem~\ref{thm:-ls23-6/7}.}\label{table:ls-6/7-more}

\begin{minipage}[t]{0.46\textwidth}
\vspace{0pt}
\centering
\begin{tabular}{|c|c|c|}
\hline
\multicolumn{3}{|c|}{\textbf{Case 1}} \\
\hline
\textbf{No.} & $[(r_i,q_i)]$ & $b$ \\
\hline
1.1  & $[(2,1),(5,4),(7,3)]$ & $10/11$ \\
\hline
1.2  & $[(2,1),(9,5),(22,19)]$ & $15/17$ \\
\hline
1.3  & $[(2,1),(13,10),(21,17)]$ & $36/41$ \\
\hline
1.4  & $[(3,1),(4,1),(7,5)]$ & $22/23$ \\
\hline
1.5  & $[(3,2),(4,3),(5,2)]$ & $12/13$ \\
\hline
1.6  & $[(3,2),(5,1),(7,1)]$ & $33/34$ \\
\hline
1.7  & $[(3,2),(5,4),(7,2)]$ & $15/17$ \\
\hline
1.8  & $[(4,1),(5,3),(8,3)]$ & $16/17$ \\
\hline
1.9  & $[(5,1),(5,1),(8,5)]$ & $18/19$ \\
\hline
1.10 & $[(5,3),(7,5),(9,5)]$ & $39/43$ \\
\hline
1.11 & $[(6,1),(7,2),(10,7)]$ & $29/31$ \\
\hline
1.12 & $[(8,5),(13,9),(13,9)]$ & $22/25$ \\
\hline
1.13 & $[(6,1),(19,15),(21,16)]$ & $34/39$ \\
\hline
\multicolumn{3}{|c|}{\textbf{Case 2}} \\
\hline
2.1  & $[(2,1),(5,1),(5,1),(6,1)]$ & $13/14$ \\
\hline
2.2  & $[(3,1),(4,1),(4,1),(4,1)]$ & $10/11$ \\
\hline
\end{tabular}
\end{minipage}
\hfill
\begin{minipage}[t]{0.50\textwidth}
\vspace{0pt}
\centering
\begin{tabular}{|c|c|c|}
\hline
\multicolumn{3}{|c|}{\textbf{Case 3}} \\
\hline
\textbf{No.} & $[(r_i,q_i)]$ & $b$ \\
\hline
3.1  & $[(2,1),(3,1),(7,3),(11,5)]$ & $10/11$ \\
\hline
3.2  & $[(2,1),(3,1),(7,6),(29,16)]$ & $28/29$ \\
\hline
3.3  & $[(2,1),(3,2),(7,2),(19,6)]$ & $18/19$ \\
\hline
3.4  & $[(2,1),(3,2),(7,4),(31,22)]$ & $30/31$ \\
\hline
3.5  & $[(2,1),(5,2),(7,2),(3,1)]$ & $32/33$ \\
\hline
3.6  & $[(2,1),(5,4),(7,3),(4,1)]$ & $10/11$ \\
\hline
3.7  & $[(2,1),(13,9),(16,13),(2,1)]$ & $22/25$ \\
\hline
3.8  & $[(3,1),(4,1),(9,7),(3,2)]$ & $10/11$ \\
\hline
3.9  & $[(3,1),(5,3),(7,4),(2,1)]$ & $16/17$ \\
\hline
3.10 & $[(3,1),(5,3),(13,10),(2,1)]$ & $17/19$ \\
\hline
3.11 & $[(3,2),(4,1),(9,4),(3,1)]$ & $10/11$ \\
\hline
3.12 & $[(3,2),(4,3),(5,2),(4,1)]$ & $12/13$ \\
\hline
3.13 & $[(3,2),(5,1),(6,5),(3,1)]$ & $8/9$ \\
\hline
3.14 & $[(4,1),(5,2),(6,1),(2,1)]$ & $22/23$ \\
\hline
\end{tabular}
\end{minipage}

\end{table}

\medskip

\noindent\textbf{Step 3.} In this step we introduce two additional filters and reduce the remaining possibilities to a single case. We do not need to consider \textbf{Case 1.5} as this corresponds to (2.a) by \cite[Lemma 6.3]{LS23}. We also do not need to exclude \textbf{Cases 1.1, 1.7} as they correspond to (2.b) and (2.c) respectively. In the following we shall exclude all other cases. 

\smallskip

\noindent\textbf{Filtered cases of Liu--Shokurov.} By \cite[Lemmas~6.2, 6.5, 6.10]{LS23}, every entry of Table~\ref{table:ls-6/7-more} with $b>10/11$ except Cases~1.5 and~2.1 is excluded. Case~1.5 corresponds to~(2.a) of the present theorem. Case~2.1 is not addressed in \cite[Section~6]{LS23}; an analogous application of the $(-1)$-curve filter introduced below excludes it (the coefficient vector $(b_{i,j})$ is computed from $K_Y+bS_Y+\sum b_{i,j}E_{i,j}\equiv 0$ at $b=13/14$, $S_Y^2>0$ as in Table~\ref{table:minus-one-curve-exclusions}, and the constraint $cb+\sum c_{i,j}b_{i,j}=1$ has no $\mathbb Z_{\geq 0}$-solution for $c$ and $c_{ij}$).

\smallskip

\noindent\textbf{$(-1)$-curve filter.} For any $(X,bS)$ as in Table~\ref{table:ls-6/7-more}, let $f\colon Y\rightarrow X$ be the minimal resolution of $X$, set $S_Y:=f^{-1}_*S$, and write
\begin{equation}
   K_Y+bS_Y+\sum_{i=1}^m\sum_jb_{i,j}E_{i,j}=f^*(K_X+bS),\quad \Center_XE_{i,j}=x_i,
\end{equation}
where $E_{i,j}$ are the prime $f$-exceptional divisors and $b_{i,j}=a(E_{i,j},X,bS)$ for any $i,j$. For all the cases above, we have $\rho(Y)\geq 3$ and $Y$ is smooth, so there exists a $(-1)$-curve $C$ on $Y$. Since $K_X+bS\equiv 0$, we have that
\begin{equation}
K_Y+bS_Y+\sum_{i=1}^m\sum_jb_{i,j}E_{i,j}\equiv 0
\end{equation}
and so
\begin{equation}
1=-K_Y\cdot C=b\left(S_Y\cdot C\right)+\sum_{i=1}^m\sum_jb_{i,j}\left(E_{i,j}\cdot C\right).
\end{equation}
Let $c:=S_Y\cdot C$ and $c_{i,j}:=E_{i,j}\cdot C$ for any $i,j$. Then $C\neq E_{i,j}$ for any $i,j$ as $E_{i,j}^2\leq -2$, and $C\neq S_Y$ if $S_Y^2\neq-1$. Therefore,
\begin{align}\label{equ:-1-curve-filter}
S_Y^2\neq-1\quad \Rightarrow\quad  cb+\sum c_{i,j}b_{i,j}=1\quad \text{for some}\quad c,c_{i,j}\in\mathbb Z_{\geq 0}.
\end{align}
Applying the two filters above, the only subcases that survive are \textbf{Cases 3.1, 3.6, 3.11, 3.13}.  All other Case-1 and Case-2 entries of Table~\ref{table:ls-6/7-more} are excluded by the $(-1)$-curve filter \eqref{equ:-1-curve-filter}; the case-by-case verifications are summarized in Table~\ref{table:minus-one-curve-exclusions} below.  In each row the value of $S_Y^2$ is computed from \eqref{equ:q-integral-check} and is strictly positive (so $S_Y^2\neq -1$, and \eqref{equ:-1-curve-filter} applies); the displayed coefficient vectors $(b_{i,j})$ are then computed from $K_Y+bS_Y+\sum b_{i,j}E_{i,j}\equiv 0$, and a direct inspection shows that $cb+\sum c_{i,j}b_{i,j}=1$ has no solution with $c,c_{i,j}\in\mathbb Z_{\geq 0}$.

\begin{table}[htbp]
\centering
\small
\caption{The cases of Table~\ref{table:ls-6/7-more} with $b\leq 10/11$ that are not (2.b) or (2.c), excluded by the $(-1)$-curve filter \eqref{equ:-1-curve-filter}. (Cases with $b> 10/11$ are already excluded by ``Filtered cases of Liu--Shokurov'' above.) In each row the resolution $f\colon Y\to X$ produces an effective $\mathbb Q$-divisor $K_Y+bS_Y+\sum b_{i,j}E_{i,j}\equiv 0$ with $S_Y^2>0$; the constraint $cb+\sum c_{i,j}b_{i,j}=1$ then has no $\mathbb Z_{\geq 0}$-solution.}\label{table:minus-one-curve-exclusions}
\begin{tabular}{|c|c|c|c|c|}
\hline
\textbf{Case} & $[(r_i,q_i)]$ & $b$ & Coefficient vector $(b_{i,j})$ & $S_Y^2$ \\\hline
1.2 & $[(2,1),(9,5),(22,19)]$ & $15/17$ & $(15;28,26,29,28,27,26,25,24,23)/34$ & $1$ \\\hline
1.3 & $[(2,1),(13,10),(21,17)]$ & $36/41$ & $(18;34,32,30,28,35,34,33,32,31)/41$ & $1$ \\\hline
1.10 & $[(5,3),(7,5),(9,5)]$ & $39/43$ & $(32;25;34,29,24;36,33)/43$ & $4$ \\\hline
1.12 & $[(8,5),(13,9),(13,9)]$ & $22/25$ & $(20,18,9;21,20,19;21,20,19)/25$ & $4$ \\\hline
1.13 & $[(6,1),(19,15),(21,16)]$ & $34/39$ & $(95;99,96,93,90,60,30;100,98,96,94)/117$ & $4$ \\\hline
2.2 & $[(3,1),(4,1),(4,1),(4,1)]$ & $10/11$ & $(7;8;8;8)/11$ & $9$ \\\hline
3.7 & $[(2,1),(13,9),(16,13),(2,1)]$ & $22/25$ & $(11;21,20,19;21,20,19,18,17;0)/25$ & $1$ \\\hline
3.8 & $[(3,1),(4,1),(9,7),(3,2)]$ & $10/11$ & $(7;8;9,8,7,6;0,0)/11$ & $2$ \\\hline
3.10 & $[(3,1),(5,3),(13,10),(2,1)]$ & $17/19$ & $(12;14,11;16,15,14,13;0)/19$ & $2$ \\\hline
\end{tabular}
\end{table}

\medskip

\noindent\textbf{Step 4.} Exclusion case-by-case. For \textbf{Cases 3.6, 3.11, 3.13}, we run a step of a 
\begin{equation}
\left(K_Y+bS_Y+\sum_{i=1}^3\sum_jb_{i,j}E_{i,j}\right)\text{-MMP}:\quad \phi: Y\rightarrow Z.
\end{equation} 
Since $f^{-1}(x_4)=E_{4,1}$ and $E_{4,1}^2\leq -3$, this is also a step of a $(-E_{4,1})$-MMP. We have $\rho(Y)\geq 3$, so we may let $L$ be the curve contracted by this MMP. Since $E_{4,1}$ does not intersect $E_{i,j}$ for any $1\leq i\leq 3$ and any $j$, nor $S_Y$ in these cases, we have that $L\neq E_{i,j}$ for any $i,j$. Thus $\phi$ is also a step of a $K_Y$-MMP, hence $L$ is a $(-1)$-curve. Moreover, if $L$ does not intersect $E_{i,j}$ for any $1\leq i\leq 3$ and any $j$, then we may let $\pi: Z\rightarrow T$ be the contraction of all $E_{i,j}$ such that $1\leq i\leq 3$ and let $S_T:=(\pi\circ\phi)_*S_Y$. Then $S_T$ does not intersect $E_{4,1,T}:=(\pi\circ\phi)_*E_{4,1}$, but this is not possible as $\rho(T)=1$. Therefore, $L$ intersects $E_{i,j}$ for some $1\leq i\leq 3$ and some $j$. We let $\lambda_{i,j}:=L\cdot E_{i,j}$ for any $i,j$ and let $\lambda:=L\cdot S_Y$. Then we have
\begin{equation}\label{equ:ls23-6.9-like-filter}
b\lambda+\sum_{i=1}^4\sum_jb_{i,j}\lambda_{i,j}=1,\lambda,\lambda_{i,j}\in\mathbb Z_{\geq 0},\quad \lambda_{4,1}>0,\quad \lambda_{i,j}>0\quad \text{for some }1\leq i\leq 3\text{ and } j.
\end{equation}
The non-existence of solutions of \eqref{equ:ls23-6.9-like-filter} for \textbf{Cases 3.6, 3.11, 3.13} is a finite check, summarized in Table~\ref{table:ls-6.9-exclusions}; consequently each of these cases is excluded.

\begin{table}[htbp]
\centering
\small
\caption{Cases 3.6, 3.11, 3.13: exclusion via the MMP filter~\eqref{equ:ls23-6.9-like-filter}. In each row the displayed coefficient vector has both an $E_{i,j}$ part (with $1\leq i\leq 3$) and an $E_{4,1}$ part; the constraint $b\lambda+\sum b_{i,j}\lambda_{i,j}=1$ with $\lambda_{4,1}>0$ has no solution with $\lambda,\lambda_{i,j}\in\mathbb Z_{\geq 0}$.}\label{table:ls-6.9-exclusions}
\begin{tabular}{|c|c|c|c|}
\hline
\textbf{Case} & $[(r_i,q_i)]$ & $b$ & Coefficient vector \\\hline
3.6  & $[(2,1),(5,4),(7,3),(4,1)]$ & $10/11$ & $(5;8,6,4,2;9,6,3)/11\;|\;b_{4,1}=1/2$ \\\hline
3.11 & $[(3,2),(4,1),(9,4),(3,1)]$ & $10/11$ & $(20;10;24,28,21,14,7)/33\;|\;b_{4,1}=11/33$ \\\hline
3.13 & $[(3,2),(5,1),(6,5),(3,1)]$ & $8/9$  & $(16;8;21,20,16,12,8,4)/27\;|\;b_{4,1}=9/27$ \\\hline
\end{tabular}
\end{table}

A representative verification: in \textbf{Case 3.13}, the numerators of  all $b_{i,j}$'s with $1\leq i\leq 3$ except for the entry $21/27$ are divisible by $4$, and $b=8/9=24/27$ together with $b_{4,1}=9/27$ leaves no room to balance the constraint $b\lambda+\sum b_{i,j}\lambda_{i,j}=27/27$ with $\lambda_{4,1}>0$ and the other $\lambda_{i,j}\in\mathbb Z_{\geq 0}$. The verifications for Cases~3.6 and~3.11 are analogous and reduce to a similar parity-type check on the displayed coefficient vector.

Finally, we show that \textbf{Case 3.1} is not possible. In this case, we have that $K_X+S$ is ample and $(K_X+S)^2=1/462$. By \cite[Theorem 3.4]{LL23}, $(X,S)$ is isomorphic to the surface as in \cite[Example 3.2]{LL23}. This is not possible because the surface in \cite[Example 3.2]{LL23} has $3$ singularities but \textbf{Case 3.1} has $4$ singularities.

\medskip

\noindent\textbf{Step 5.} Finally, we prove the uniqueness of the pair $(X,S)$ as in~(2.b). Since we already have an explicit characterization of the singularities, for any $(X,S)$ as in~(2.b) there exists a unique weighted blow-up $\pi\colon Z\rightarrow X$ of $x_2$ that extracts a curve $C$ with $a(C,X,\tfrac{10}{11}S)=1$ such that $C$ contains exactly two singularities: one $\tfrac{1}{3}(1,1)$ singularity and one $\tfrac{1}{22}(1,13)$ singularity. Let $S_Z:=\pi^{-1}_*S$. Then $(Z,S_Z)$ is lc and $(K_Z+S_Z)^2=1/462$. By \cite[Theorem~3.4]{LL23}, $(Z,S_Z)$ is unique, so $(X,S)$ is unique. 
\end{proof}

\begin{rem}
Following the same lines as the proofs of \cite[Theorem~1.9 and Corollary~1.11]{LS23} and substituting Theorem~\ref{thm:-ls23-6/7} in place of \cite[Theorem~1.1]{LS23}, one obtains improved explicit bounds for exceptional log del Pezzo surfaces and klt Calabi--Yau surfaces with small minimal log discrepancies, and consequently for Tian's $\alpha$-invariant~\cite{Tia87} on surfaces and other related invariants. These applications are beyond the scope of the present paper and we leave the details to the interested reader.
\end{rem}

\begin{rem}
We expect that the condition ``$b\geq 6/7+\epsilon$'' in Theorem~\ref{thm:-ls23-6/7} can be replaced by ``$b>6/7$''. In other words, we expect that $b>6/7$ already implies $b\geq 6/7+\epsilon$. We know this holds when $\epsilon$ is sufficiently small (cf.\ \cite[Section~11]{HMX14}), but we do not know whether the explicit value $\epsilon=1/938$ chosen in Theorem~\ref{thm:-ls23-6/7} is small enough.
\end{rem}

\begin{rem}
For our main result, Theorem \ref{thm:-ls23-6/7} mainly plays the role of excluding a lot of redundant numerical data before any further filtration applies. More precisely, later we need to classify all Picard number one surfaces with ample canonical class, small volume, and with minimal log discrepancy $\geq\frac{1}{9.2}$. The value $\frac{1}{9.2}$ is associated to the value $1/7-\epsilon$ in Theorem \ref{thm:-ls23-6/7}. If we only apply \cite{LS23}, then we need to use the value $1/11$ instead of $1/7-\epsilon$ here. In that case, we need to classify all surfaces with ample canonical class and with minimal log discrepancy $\geq\frac{1}{15.8}$. For our $\geq 1/9.2$ input, the output is about $130{,}000$ cases. This is already large; for the $\geq 1/15.8$ input, the output is in the tens of millions, which our personal laptop cannot further analyze. 
\end{rem}

\section{Special divisorial valuation}\label{sec:special-curve}

\begin{defn}
Let $X\ni x$ be a klt surface singularity that is not cyclic quotient. By the ADE classification, $X\ni x$ is either a D type singularity or an E type singularity. We let $f: Y\rightarrow X$ be the minimal resolution of $X\ni x$ whose dual graph is a tree with a unique fork $E_0$ and three branches. We also say that $E_0$ is the fork of $X\ni x$. Then there are three eHJS
\begin{equation}
[E_0;E_{1,1},\dots,E_{1,n_1}],\quad [E_0;E_{2,1},\dots,E_{2,n_2}],\quad [E_0;E_{3,1},\dots,E_{3,n_3}]
\end{equation}
on $Y$, where $E_0,E_{i,j}$ are distinct prime $f$-exceptional divisors and \begin{equation}
\Exc(f)=E_0\cup\bigcup_{i,j}E_{i,j}.
\end{equation}
We set 
\begin{equation}
e_0:=-E_0^2\quad \text{and}\quad e_{i,j}:=-E_{i,j}^2\quad \text{for any}\quad i,j
\end{equation}
and let
\begin{equation}
r_i:=\det[e_{i,1},\dots,e_{i,n_i}]\quad \text{and}\quad q_i:=\det[e_{i,2},\dots,e_{i,n_i}].
\end{equation}
After possibly permuting the eHJSs, we may assume that for any $i<j$, 
\begin{equation}
\text{either}\quad r_i<r_j,\quad \text{or}\quad r_i=r_j\quad \text{and}\quad q_i\leq q_j.
\end{equation}
After the permutation, we call
\begin{equation}
[E_0;E_{1,1},\dots,E_{1,n_1};E_{2,1},\dots,E_{2,n_2};E_{3,1},\dots,E_{3,n_3}]
\end{equation}
the \emph{resolution configuration} of $X\ni x$; its dual graph (the underlying combinatorial graph: a tree with central vertex $E_0$ and three branches) is the dual graph of $X\ni x$. We say that $X\ni x$ is of type
\[
[e_0;(r_1,q_1);(r_2,q_2);(r_3,q_3)].
\]
(Throughout the paper, each pair $(r_i,q_i)$ is written in parentheses to emphasise that it represents a single cyclic quotient singularity $\tfrac{1}{r_i}(1,q_i)$, rather than a pair of entries in a chain.) By the ADE classification and \cite[Satz 2.11]{Bri68}, we have $(r_1,q_1)=(2,1)$ and one of the following holds:
\begin{enumerate}
    \item $X\ni x$ is a D type singularity and $e_0\geq 3$. In this case, we say that $X\ni x$ is a \emph{D-I type singularity}, and we have $(r_2,q_2)=(2,1)$.
    \item  $X\ni x$ is a D type singularity and $e_0=2$. In this case, we say that $X\ni x$ is a \emph{D-II type singularity}, and we have $(r_2,q_2)=(2,1)$.
    \item $X\ni x$ is an E type singularity and $e_0\geq 3$. In this case, we say that $X\ni x$ is an \emph{E-I type singularity}, and we have $r_2=3$.
    \item $X\ni x$ is an E type singularity and $e_0=2$. In this case, we say that $X\ni x$ is an \emph{E-II type singularity}, and we have $r_2=3$.
\end{enumerate}
\end{defn}

\begin{defn}[Special divisorial valuation]\label{defn:special-curve}
Let $X\ni x$ be a klt surface singularity that is not Du Val, and $f: Y\rightarrow X$ the minimal resolution of $X$. A \emph{special divisorial valuation over $x$} is an $f$-exceptional prime divisor $E$ (equivalently, the valuation $\mathrm{ord}_E$) satisfying the following.
\begin{enumerate}
    \item Assume that $X\ni x$ is a cyclic quotient singularity and $[E_1,\dots,E_n]$ is the resolution configuration of $X\ni x$. Then any $E_i$ such that $E_i^2\leq -3$ is called a special divisorial valuation over $x$.
    \item Assume that $X\ni x$ is a D-I type, E-I type, or E-II type singularity and 
    $[E_0;[E_{i,j}]]$
    is the resolution configuration of $X\ni x$. Then $E_0$ is called the special divisorial valuation over $x$.
    \item Assume that $X\ni x$ is a D-II type singularity and
    \begin{equation}
[E_0;E_{1,1};E_{2,1};E_{3,1},\dots,E_{3,n_3}]
\end{equation}
    is the resolution configuration of $X\ni x$. Then there exists a unique integer $m$ such that $E_{3,j}^2=-2$ for any $j<m$ and $E_{3,m}^2\leq -3$. We say that $E_{3,m}$ is the special divisorial valuation over $x$.
\end{enumerate}
\end{defn}

\begin{defn}\label{defn:e-invariant}
Let $X\ni x$ be a klt surface singularity that is not Du Val and let $E$ be a special divisorial valuation over $x$. Then there exists a unique projective birational morphism $h\colon Z\rightarrow X$ which extracts exactly $E$ (cf.\ \cite[Corollary~1.4.3]{BCHM10}). We identify $E$ with the image of $E$ on $Z$ and define 
\begin{equation}
    e_E:=-E^2,\quad c_E:=1-a(E,X)
\end{equation}
If $X\ni x$ is not a cyclic quotient singularity, then we set $e_x:=e_E$ and $c_x:=c_E$; these are well-defined since the special divisorial valuation over $x$ is unique in this case.
\end{defn}

\begin{prop}\label{prop:7/30}
    Let $X\ni x$ be a klt surface singularity that is not Du Val and let $E$ be a special divisorial valuation over $x$. Then:
    \begin{enumerate}
    \item If $X\ni x$ is of E-II type, then $e_E\geq 7/30$.
    \item If $X\ni x$ is not of E-II type, then $e_E>1$.
    \end{enumerate}
\end{prop}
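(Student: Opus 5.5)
The plan is to compute $e_E=-E^2$ on the extraction $h\colon Z\to X$ directly from the minimal resolution $f\colon Y\to X$. The morphism $Y\to Z$ contracts every $f$-exceptional curve except $E$. The resulting connected components are attached to $E$ at one point each, so by the argument of Lemma~\ref{lem:intersection-lemma} (projection formula, one component at a time) we get
\[
E_Z^2=E_Y^2+\sum_{\text{components }B}\bigl(M_B^{-1}\bigr)_{vv}.
\]
Here $M_B$ is the negated intersection matrix of $B$ and $v$ is the vertex of $B$ adjacent to $E$. When $B$ is a chain $[E_1,\dots,E_k]$ attached at $E_1$, this correction is $q/r$ with $r=\det[e_1,\dots,e_k]$ and $q=\det[e_2,\dots,e_k]$, exactly as in Lemma~\ref{lem:intersection-lemma}(3). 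In particular it lies in $(0,1)$, and for fixed $r$ it equals $(r-1)/r$ if and only if the chain consists of $(-2)$-curves. Thus $e_E=e-\sum_B(\text{correction})$, where $e=-E_Y^2$, and the task is to bound the corrections.

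\textbf{Cyclic case.} Here $E=E_i$ with $e_i\geq 3$, and at most two chains are attached to it. Each contributes less than $1$, so $e_E>3-2=1$.

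\textbf{D-I and E-I cases.} Here $E=E_0$ with $e_0\geq3$, and there are three branches. Since $X\ni x$ is klt but not cyclic, the branch orders $(r_1,r_2,r_3)$ form a platonic triple $(2,2,n)$, $(2,3,3)$, $(2,3,4)$ or $(2,3,5)$. For D-I the corrections are $1/2$, $1/2$ and something $<1$, so $e_E>1$. For E-I they are at most $1/2$, $2/3$ and $(r_3-1)/r_3\leq 4/5$, so
\[
e_E\geq 3-\tfrac12-\tfrac23-\tfrac45=\tfrac{31}{30}>1.
\]

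\textbf{D-II case.} Here $E=E_{3,m}$, and one component attached to it is $\{E_{1,1},E_{2,1},E_0,E_{3,1},\dots,E_{3,m-1}\}$. This is a $(-2)$-configuration of type $D_{m+2}$, attached at the end of its long arm; when $m=1$ it degenerates to $A_3$ attached at its middle vertex. In both cases the relevant diagonal entry of the inverse Cartan matrix equals $1$; this is a routine check, or follows by induction along the arm. The other component is a chain and contributes $<1$. Since $e_{3,m}\geq 3$, we get $e_E>3-1-1=1$.

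\textbf{E-II case.} This is where the constant $7/30$ comes from and is the only delicate step. Here $e_0=2$ and $(r_1,q_1)=(2,1)$, $r_2=3$, $r_3\in\{3,4,5\}$. Since $X\ni x$ is not Du Val, the branches cannot all be $(-2)$-chains. I would enumerate the possibilities for $(q_2,q_3)$ to minimize $e_E=2-\tfrac12-\tfrac{q_2}{3}-\tfrac{q_3}{r_3}$, excluding the Du Val cases $E_6$, $E_7$, $E_8$:
\begin{itemize}
\item $r_3=5$: the minimum is attained either by $(q_2,q_3)=(2,3)$, giving $2-\tfrac12-\tfrac23-\tfrac35=\tfrac{7}{30}$, or by $(1,4)$, giving $\tfrac{11}{30}$.
\item $r_3=4$: the values are at least $2-\tfrac12-\tfrac23-\tfrac14$ or $2-\tfrac12-\tfrac13-\tfrac34=\tfrac5{12}$.
\item $r_3=3$: the values are at least $2-\tfrac12-\tfrac23-\tfrac13=\tfrac12$.
\end{itemize}
Hence $e_E\geq 7/30$. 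The main care needed is to confirm that the list of admissible $q$ for each $r$ (namely, $q$ coprime to $r$ with $q<r$) and the exclusion of the Du Val cases are handled correctly.
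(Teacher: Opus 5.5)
Your proof is correct. In the cyclic, D-I, E-I and E-II cases it follows the paper's argument. The paper simply says these cases follow from Lemma~\ref{lem:intersection-lemma}(3), applied branch by branch, together with the non-Du Val hypothesis. Your enumeration makes the E-II case explicit; the minimum $7/30$ is Case~5 of Table~\ref{table:E-II}.

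The real difference is the D-II case. There one component of $\Exc(f)\setminus E_Y$ is not a chain, so Lemma~\ref{lem:intersection-lemma} does not apply directly. The paper handles this in three steps:
\begin{enumerate}
\item it imports $c_E=1-\frac{1}{r-q}$ from \cite[Lemma~3.3]{Ale93};
\item it computes $(K_Z+E)\cdot E=-\frac{1}{q'}$ by adjunction, where the point at which $E$ meets the contracted $D$-type (or $A_3$) configuration contributes $1$ to the different, because $(Z,E)$ is lc but not plt there;
\item it divides to get the exact value $e_E=\frac{r-q}{q'}\geq 1+\frac{1}{q'}$.
\end{enumerate}

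You instead keep the projection-formula computation and evaluate the non-chain correction as a diagonal entry of an inverse Cartan matrix. The value $1$ follows from the Schur-complement recursion $(M_B^{-1})_{vv}=\bigl(2-\sum(\text{corrections of the components of }B\setminus v)\bigr)^{-1}$, starting from $\bigl(2-\frac12-\frac12\bigr)^{-1}=1$ for $A_3$ at its middle vertex.

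What each route buys:
\begin{itemize}
\item Your route is uniform across all types and self-contained: it needs neither Alexeev's formula nor the different at a non-plt point. Carried through, it gives the same exact value $e_E=e_{3,m}-1-(\text{chain correction})=\frac{r-q}{q'}$.
\item The paper's route ties $e_E$ directly to $c_E$, i.e.\ to $\mld(X\ni x)=\frac{1}{r-q}$, which is used elsewhere in the paper.
\end{itemize}
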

\begin{proof}
Let $f: Y\rightarrow X$ be the minimal resolution at $x$ and let $E_Y$ be the image of $E$ on $Y$. Let $h: Z\rightarrow X$ be the extraction of $E$ and let $g: Y\rightarrow Z$ be the associated contraction. If $X\ni x$ is not of D-II type then the proposition follows from Lemma \ref{lem:intersection-lemma}(3), using the fact that $X\ni x$ is not Du Val. Suppose now that $X\ni x$ is of D-II type, so that we may assume $X\ni x$ is of type
\[
[2;(2,1);(2,1);(r,q)].
\]
Here $(r,q)$ is the cyclic quotient singularity attached to the long third branch; in general we only know that $r>q\geq 1$ and $\gcd(r,q)=1$, and \emph{not} that $r\geq 2q+1$. To locate the special divisorial valuation $E_Y$ inside the third branch, we write
\[
r=u(r-q)+q',\qquad 0\leq q'<r-q,\qquad u\geq 1,
\]
and set $r':=(r-q)+q'$. A direct check using Definition--Theorem~\ref{defthm:HJS-CQR} shows that
\[
\HJseq(r,q)=\bigl[\underbrace{2,2,\dots,2}_{u-1},\;\HJseq(r',q')\bigr];
\]
since $q'\leq r-q-1$, we have $r'=(r-q)+q'\geq 2q'+1$, and in particular the first entry of $\HJseq(r',q')$ is $\lceil r'/q'\rceil\geq 3$. Thus $E_Y$ is the first curve of the sub-HJS corresponding to $\HJseq(r',q')$, i.e.\ $E_Y=E_{3,u}$.

By \cite[Lemma 3.3]{Ale93} (applied to the D-II type),
\begin{equation}\label{equ:D-I-cE}
   c_E=1-\frac{1}{r-q}\quad \text{and}\quad \left(K_Z+c_EE\right)\cdot E=0.
\end{equation}
By Theorem~\ref{thm:adjunction-surface}, we have
\begin{equation}\label{equ:D-I-1E}
    \left(K_Z+E\right)\cdot E=-2+1+\left(1-\frac{1}{q'}\right)=-\frac{1}{q'}.
\end{equation}
Since $E_Y^2\leq -3$ is the first entry of $\HJseq(r',q')$, we have
\begin{equation}\label{equ:D-I-rq}
 r-q=r'-q'\geq 2q'+1-q'=q'+1.   
\end{equation}
Combining \eqref{equ:D-I-cE}, \eqref{equ:D-I-1E}, and \eqref{equ:D-I-rq}, we have
\begin{equation}
 e_E=-E^2=\frac{-\left(K_Z+E\right)\cdot E}{1-c_E}=\frac{r-q}{q'}\geq 1+\frac{1}{q'}>1.   
\end{equation}
The proposition follows.
\end{proof}

\begin{defn}\label{defn:mld}
    Let $X\ni x$ be a klt surface germ. We denote by
    \begin{equation}
\mld(X\ni x):=\inf\{a(E,X)\mid E\text{ is a prime divisor over }X, \Center_XE=x\}
\end{equation}
    the \emph{minimal log discrepancy} of $X\ni x$. We denote by \begin{equation}
\mld(X):=\inf_{x\in X}\mld(X\ni x)
\end{equation}
    the \emph{minimal log discrepancy} of $X$.
\end{defn}

\begin{prop}\label{prop:mld-computing-special}
Let $X\ni x$ be a klt singularity that is not Du Val. Then there exists a special divisorial valuation $E$ over $x$ such that $a(E,X)=\mld(X\ni x)$.
\end{prop}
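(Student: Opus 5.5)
The plan is to reduce to the exceptional curves of the minimal resolution, and then analyse the linear relations these curves' log discrepancies satisfy.

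\emph{Reduction to the minimal resolution.} Let $f\colon Y\to X$ be the minimal resolution at $x$, and write $a_i:=a(E_i,X)$ for the prime exceptional curves, so $0<a_i\leq 1$ by klt. Any other divisor $F$ with centre $x$ is reached by a sequence of point blow-ups over $Y$. Blowing up a smooth point lying on exactly one divisor, of log discrepancy $a$, creates a divisor of log discrepancy $a+1>a$. Blowing up a node of two divisors with log discrepancies $a,a'$ creates one with log discrepancy $a+a'>\min(a,a')$. Hence, by induction, $\mld(X\ni x)=\min_i a_i$, and it suffices to show that this minimum is attained at a special divisorial valuation.

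\emph{The basic linear relation.} Intersect $f^*K_X=K_Y+\sum(1-a_j)E_j$ with $E_i$, and use $K_Y\cdot E_i=e_i-2$ with $e_i=-E_i^2$. This gives
\[
\sum_{E_j\ \text{adjacent to}\ E_i}a_j=e_ia_i+\deg(E_i)-2,
\]
where $\deg$ is the valence in the dual graph. The consequences are:
\begin{itemize}
\item At an interior vertex of a chain, $a_{i-1}+a_{i+1}=e_ia_i$. This is $\geq 2a_i$, with equality iff $e_i=2$, so $(a_i)$ is a convex sequence along any chain.
\item At a leaf with $e_i=2$ we get $a_{\rm nbr}=2a_i-1<a_i$, since $a_i<1$. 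So such a leaf never realises the minimum.
\end{itemize}
It is also convenient to adjoin a virtual vertex of value $1$ beyond each leaf, so that the leaf relation reads like an interior relation.

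\emph{Cyclic case.} Suppose the chain $[E_1,\dots,E_n]$ attains its minimum at some $E_i$ with $e_i=2$.
\begin{itemize}
\item If $E_i$ is interior, convexity forces $a_{i-1}=a_{i+1}=a_i$. Propagating, the minimum is constant along the maximal run of $(-2)$-curves through $E_i$.
\item This run cannot end at a leaf, by the leaf observation. It cannot exhaust the chain either, since $X\ni x$ is not Du Val.
\end{itemize}
Hence the run ends at a curve with $e\geq 3$ carrying the same value, and that curve is special.

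\emph{Non-cyclic case.} Each branch of the fork is a chain with boundary values $a_0$ (at $E_0$) and $1$ (virtual). Solving the chain relations expresses each $a_{i,j}$ as a combination $\lambda a_0+\mu\cdot 1$ with $\lambda,\mu\geq 0$ and $\lambda+\mu\geq 1$. Since $a_0\leq 1$, this gives $a_{i,j}\geq a_0$. I would derive this either from the explicit formula in \cite{Ale93}, or directly from convexity together with the fact that the leaf value is at least the minimum of the boundary values. Two sub-cases follow.
\begin{itemize}
\item For D-I, E-I and E-II type, this shows the minimum is attained at $E_0$, which is the special valuation.
\item For D-II type, the two $(2,1)$-branches give $a_{1,1}=a_{2,1}=(1+a_0)/2$. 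The fork relation with $e_0=2$ then forces $a_{3,1}=a_0$. By the convexity argument on the third branch, the value stays equal to $a_0$ along the run of $(-2)$-curves $E_{3,1},\dots,E_{3,m-1}$ and reaches $E_{3,m}$, which therefore also realises the minimum $a_0$. This $E_{3,m}$ is exactly the special valuation of Definition~\ref{defn:special-curve}(3).
\end{itemize}

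\emph{Main obstacle.} The delicate point is the branch inequality $a_{i,j}\geq a_0$, i.e.\ controlling a convex sequence whose far end is governed by the leaf relation rather than a fixed value. I would handle it cleanly through the virtual vertex of value $1$. A convex sequence on a path with endpoint values $a_0\leq 1$ is bounded below by its minimum over the endpoints and interior minima. An interior minimum can only occur with equality along $(-2)$-runs, and such a run meets the leaf relation or the boundary, which pins its value to at least $a_0$.
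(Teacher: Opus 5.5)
\paragraph{Verdict.} There is a gap, in the branch inequality $a_{i,j}\geq a_0$ for the non-cyclic case. The rest of your proposal is sound.

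\paragraph{What works.} Your reduction to the minimal resolution is correct. Your cyclic-case argument is also correct, and it is exactly the paper's ``concavity'' argument. You do need $a_i<1$ for every $i$; this holds in the non-Du Val case by the negativity lemma and connectedness of $f^{-1}(x)$.

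\paragraph{The gap.} The inequality $a_{i,j}\geq a_0$ does \emph{not} follow from the chain relations on a branch alone.
\begin{itemize}
\item With boundary values $a_0$ and $1$, a branch of type $(r_i,q_i)$ gives $a_{i,1}=(q_ia_0+1)/r_i$. So $\lambda+\mu=(q_i+1)/r_i<1$ as soon as the branch is not a string of $(-2)$-curves, and your claim $\lambda+\mu\geq 1$ fails.
\item For a branch consisting of one $(-3)$-curve, $a_{i,1}=(a_0+1)/3$, which is $<a_0$ whenever $a_0>1/2$.
\item The same example defeats your fallback argument. A vertex with $e\geq 3$ can be a strict interior minimum of a convex sequence, and the leaf value need not be $\geq a_0$.
\end{itemize}
So $a_{i,1}\geq a_0$, which is equivalent to $(r_i-q_i)a_0\leq 1$, is a statement about the actual value of $a_0$. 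Your argument never uses the one relation that determines that value.

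\paragraph{The missing input.} That relation is the fork relation $\sum_k a_{k,1}=e_0a_0+1$. Equivalently, $a_0D=N$ with $D:=e_0-\sum_k q_k/r_k>0$ (by negative definiteness) and $N:=\sum_k 1/r_k-1$. Set $d_k:=r_k-q_k\geq 1$. Then
\[
D-d_iN=(e_0-2)+(d_i-1)-\sum_{k\neq i}\frac{d_i-d_k}{r_k}\;\geq\; e_0-2\;\geq\;0 .
\]
The inequality uses $d_i-d_k\leq d_i-1$, $d_i-1\geq 0$, and $\sum_{k\neq i}1/r_k\leq 1$ (since $r_k\geq 2$). Hence $(r_i-q_i)a_0\leq 1$, i.e.\ $a_{i,1}\geq a_0$, for every branch. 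Convexity along the branch then makes the first increment, and hence every increment, non-negative, giving $a_{i,j}\geq a_0$. Alternatively, you could check $(r_i-q_i)a_0\leq 1$ directly on the finite list of E types.

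\paragraph{Comparison with the paper.} The paper does not prove this step itself. It quotes \cite[Theorem~1.5]{LX23} for D-I and E types, and \cite[Lemma~3.3]{Ale93} for D-II. Your D-II bullet is actually self-sufficient, because the fork relation gives $a_{3,1}=a_0$ directly. The same computation gives $a_{3,1}=(e_0-1)a_0>a_0$ for D-I. So the missing input is genuinely needed only for the E types.
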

\begin{proof}
When $X\ni x$ is a cyclic quotient singularity, the proposition follows from the concavity of log discrepancies. When $X\ni x$ is a D-I type or E type singularity, the proposition follows from \cite[Theorem 1.5]{LX23}. When $X\ni x$ is a D-II type singularity, the proposition follows from \cite[Lemma 3.3]{Ale93}.
\end{proof}

\begin{defn}\label{defn:n-p}
Let $X$ be a klt projective surface such that $K_X$ is ample, $x$ a singularity on $X$ that is not Du Val, and $E$ a special divisorial valuation over $x$. Let $h: Z\rightarrow X$ be the extraction of $E$. We denote by
\begin{equation}
n_E:=\inf\{t\geq 0\mid K_Z+tE\text{ is nef}\}\quad \text{and}\quad  p_E:=\inf\{t\geq 0\mid K_Z+tE\text{ is pseudo-effective}\}.
\end{equation}
If $X\ni x$ is not a cyclic quotient singularity, then we set $n_x:=n_E$; this is well-defined since the special divisorial valuation over $x$ is unique in this case.
\end{defn}

\begin{lem}\label{lem:c>n>=p}
 Let $X$ be a klt projective surface such that $K_X$ is ample, $x$ a singularity on $X$ that is not Du Val, and $E$ a special divisorial valuation over $x$. Then
 \begin{equation}
     c_E>n_E\geq p_E.
 \end{equation}
\end{lem}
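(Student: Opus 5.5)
The inequality $n_E\geq p_E$ is formal: a nef divisor on a projective surface is pseudo-effective. So $\{t\mid K_Z+tE \text{ nef}\}\subseteq\{t\mid K_Z+tE\text{ pseudo-effective}\}$, and taking infima gives $n_E\geq p_E$. Before this, I will note that $n_E$ is finite and that the infimum is attained. Indeed, $K_Z+c_EE=h^*K_X$ is nef, and it is not numerically trivial because $K_X$ is ample. So $n_E\leq c_E$, and by closedness of the nef cone $K_Z+n_EE$ is nef.

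The real content is the strict inequality $c_E>n_E$. Since $\rho(Z)=\rho(X)+1$ (and $Z$ is $\mathbb Q$-factorial by Proposition~\ref{prop:klt-surface-q-factorial} applied to a dlt structure, or simply because $h$ is a divisorial extraction from a $\mathbb Q$-factorial klt surface), the cone $\overline{NE}(Z)$ is spanned by finitely many extremal rays. In the rank-one setting of the paper, $\rho(Z)=2$, and the cone has exactly two extremal rays: the class of $E$ and some other ray $R$. I will write $D_t:=K_Z+tE=h^*K_X-(c_E-t)E$.

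I then check nefness of $D_t$ for $t$ slightly less than $c_E$.
\begin{itemize}
\item On $E$: $D_t\cdot E=-(c_E-t)E^2=(c_E-t)e_E>0$ for $t<c_E$, because $E^2<0$ (negativity of the contracted curve).
\item On a curve $C\neq E$ (equivalently, on the ray $R$): $D_t\cdot C=h^*K_X\cdot C-(c_E-t)E\cdot C$. Here $h^*K_X\cdot C=K_X\cdot h_*C>0$ by ampleness, since $h_*C$ is a curve. Hence $D_t\cdot C>0$ for all $t$ with $c_E-t$ small, uniformly in $C$.
\end{itemize}
To make the uniformity rigorous, I will use that $\overline{NE}(Z)$ is a closed cone of dimension $2$ generated by $[E]$ and $R$. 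It therefore suffices to test the single class $\xi$ spanning $R$. This class satisfies $h^*K_X\cdot\xi>0$, because otherwise $\xi$ would be $h$-exceptional and proportional to $[E]$. Thus there is $\varepsilon>0$ such that $D_t$ is positive on both extremal rays for $t\in(c_E-\varepsilon,c_E]$, and hence $n_E\leq c_E-\varepsilon<c_E$.

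The main obstacle is that the paper's definitions of $n_E$ and $p_E$ are stated for general $X$ with $K_X$ ample, whereas the two-ray argument uses $\rho(X)=1$. Without that assumption I would instead argue as follows. The ample cone is open, so $h^*K_X-\delta E$ is ample on $Z$ for small $\delta>0$; this is the standard fact that $h^*A-\delta E$ is ample for $A$ ample and $-E$ $h$-ample. This argument works in any Picard number and directly gives $K_Z+(c_E-\delta)E$ ample, hence nef, so $n_E\leq c_E-\delta<c_E$. I will therefore use this general argument as the main proof, and treat the two-ray picture only as intuition.
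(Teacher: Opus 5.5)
Your main argument is the paper's: $K_Z+(c_E-\delta)E=h^*K_X-\delta E$ is ample for $0<\delta\ll 1$, because $K_X$ is ample and $-E$ is $h$-ample, and $n_E\geq p_E$ because nef divisors are pseudo-effective. One step is missing, however, and it is exactly where the hypothesis that $x$ is not Du Val enters. Your proof never uses that hypothesis, which is already a warning sign.

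Since $n_E$ is an infimum over $t\geq 0$, ampleness of $K_Z+(c_E-\delta)E$ only gives $n_E\leq c_E-\delta$ if $c_E-\delta\geq 0$. So you need $c_E>0$; your earlier step $n_E\leq c_E$ already tacitly assumes $c_E\geq 0$. This is not automatic. If you extract an exceptional curve over a Du Val point, then $K_Z=h^*K_X$, so $c_E=0=n_E$, yet your argument would still (wrongly) output $n_E<c_E$. The paper's proof opens with precisely this observation.

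To close the gap, let $f\colon Y\to X$ be the minimal resolution near $x$ and write $f^*K_X=K_Y+\sum_i b_iE_i$. Then $c_E$ is the coefficient $b_i$ of the strict transform of $E$ on $Y$. Since $K_Y$ is $f$-nef, the negativity lemma gives $b_i\geq 0$ for all $i$. Since $f^{-1}(x)$ is connected, either all $b_i>0$ or all $b_i=0$. In the latter case $K_Y=f^*K_X$ near $x$, so $x$ is canonical, i.e.\ Du Val. Hence $c_E>0$, and the rest of your proof goes through.

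Two minor remarks:
\begin{enumerate}
\item Openness of the ample cone by itself does not give ampleness of $h^*K_X-\delta E$, since $h^*K_X$ lies on the boundary of the nef cone. The relative-ampleness fact you quote right afterwards is the actual justification.
\item The claim that $\overline{NE}(Z)$ is spanned by finitely many extremal rays because $\rho(Z)=\rho(X)+1$ is false in general; it holds when $\rho(Z)=2$. This is harmless, since you use the two-ray picture only as intuition.
\end{enumerate}
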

\begin{proof}
Since $X\ni x$ is klt but not Du Val and $E$ is on the minimal resolution of $X$, $c_E>0$.  Let $h: Z\rightarrow X$ be the extraction of $E$. We have that
\begin{equation}
   K_Z+(c_E-\epsilon)E=h^*K_X-\epsilon E 
\end{equation}
is ample for any $0<\epsilon\ll 1$, so $n_E<c_E$. The inequality $n_E\geq p_E$ is clear.
\end{proof}

\begin{lem}\label{lem:weak-kx-bound}
 Let $X$ be a klt projective surface such that $K_X$ is ample, $x$ a singularity on $X$ that is not Du Val, and $E$ a special divisorial valuation over $x$. Then
 \begin{equation}
     K_X^2\geq (c_E-n_E)^2e_E.
 \end{equation}
\end{lem}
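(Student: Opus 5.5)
We pass to the extraction $h\colon Z\rightarrow X$ of $E$ and compare $K_X^2=(h^*K_X)^2$ with the square of the nef divisor $K_Z+n_EE$. We have $h^*K_X=K_Z+c_EE$, and we write
\[
h^*K_X=(K_Z+n_EE)+(c_E-n_E)E .
\]
Set $N:=K_Z+n_EE$, which is nef by the definition of $n_E$ (the infimum is attained, since nefness is a closed condition). Set $t:=c_E-n_E$, which is positive by Lemma~\ref{lem:c>n>=p}. Expanding the square gives
\[
K_X^2=(h^*K_X)^2=h^*K_X\cdot(N+tE)=h^*K_X\cdot N,
\]
because $h^*K_X\cdot E=0$ by the projection formula, as $E$ is $h$-exceptional.

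Next we rewrite $h^*K_X\cdot N$ as $(N+tE)\cdot N=N^2+tN\cdot E$. Since $N=h^*K_X-tE$, we get $N\cdot E=-tE^2=te_E$, and hence
\[
K_X^2=N^2+t^2e_E .
\]
As $N$ is nef on the projective surface $Z$, we have $N^2\geq 0$. This yields $K_X^2\geq t^2e_E=(c_E-n_E)^2e_E$, as required.

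The only point needing care is that $Z$ is a $\mathbb Q$-factorial klt surface, so that intersection numbers of Weil divisors make sense. This holds because $Z$ is obtained by extracting a divisor with log discrepancy in $(0,1]$ from a klt surface, and klt surfaces are $\mathbb Q$-factorial by Proposition~\ref{prop:klt-surface-q-factorial}. We also need $E^2=-e_E<0$ on $Z$, which is negativity of the exceptional curve and is consistent with Proposition~\ref{prop:7/30}. No step here is a genuine obstacle; the argument is a direct computation once the decomposition $h^*K_X=N+tE$ is written down.
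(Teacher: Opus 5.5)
Your proof is correct and is essentially the paper's argument: on the extraction $h\colon Z\to X$ the paper also writes $h^*K_X=K_Z+c_EE=(K_Z+n_EE)+(c_E-n_E)E$, kills the cross term using $(K_Z+c_EE)\cdot E=0$, and concludes from $(K_Z+n_EE)^2\geq 0$. Your two-step expansion through $h^*K_X\cdot N$ is just a rearrangement of the same identity $K_X^2=(K_Z+n_EE)^2+(c_E-n_E)^2e_E$.
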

\begin{proof}
Let $h: Z\rightarrow X$ be the extraction of $E$. Since
\begin{equation}
\left(K_Z+c_EE\right)\cdot E=0,
\end{equation}
we have
\begin{align*}
  0&\leq
  (K_Z+n_EE)^2=(K_Z+c_EE+(n_E-c_E)E)^2\\
  &=(K_Z+c_EE)^2+(c_E-n_E)^2E^2
=K_X^2+(c_E-n_E)^2E^2.  
\end{align*}
The lemma follows.
\end{proof}

\begin{lem}\label{lem:ne-5/6}
Let $X$ be a klt projective surface such that $K_X$ is ample, $x$ a singularity on $X$ that is not Du Val, and $E$ a special divisorial valuation over $x$. Assume that $K_X^2\leq 1/6351$ and $n_E\leq 5/6$. Then:
\begin{enumerate}
\item If $X\ni x$ is an E-II type singularity, then $X\ni x$ is of one of the types in Table \ref{table:E-II}.\begin{table}[htbp]
\centering
\small
\setlength{\tabcolsep}{4pt}
\renewcommand{\arraystretch}{1.08}
\caption{E-II type singularities with $n_x\leq 5/6$}\label{table:E-II}

\begin{tabular}{|c|c|c|c|}
\hline
\textbf{Case} & \textbf{Type} & $e_x$ & $c_x$ \\
\hline
1 & $[2;(2,1);(3,2);(3,1)]$ & $1/2$  & $2/3$ \\
\hline
2 & $[2;(2,1);(3,1);(3,1)]$ & $5/6$  & $4/5$ \\
\hline
3 & $[2;(2,1);(3,1);(4,3)]$ & $5/12$ & $4/5$ \\
\hline
4 & $[2;(2,1);(3,2);(4,1)]$ & $7/12$ & $6/7$ \\
\hline
5 & $[2;(2,1);(3,2);(5,3)]$ & $7/30$ & $6/7$ \\
\hline
\end{tabular}
\end{table}
\item If $X\ni x$ is not an E-II type singularity, then
\begin{equation}
    a(E,X)=1-c_E>\frac{1}{6}-\sqrt{\frac{1}{6351}}>\frac{1}{6.4886}.
\end{equation}
\end{enumerate}
\end{lem}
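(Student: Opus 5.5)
The whole argument rests on Lemma~\ref{lem:weak-kx-bound}. Since $n_E\le 5/6$ and $c_E>n_E$ by Lemma~\ref{lem:c>n>=p}, we have $c_E-n_E>0$, and Lemma~\ref{lem:weak-kx-bound} together with $K_X^2\le 1/6351$ gives
\[
(c_E-n_E)^2e_E\le \frac{1}{6351},\qquad\text{hence}\qquad c_E\le n_E+\frac{1}{\sqrt{6351\,e_E}}\le \frac56+\frac{1}{\sqrt{6351\,e_E}}.
\]
From here the two parts split according to the lower bounds on $e_E$ in Proposition~\ref{prop:7/30}.

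For (2), suppose $X\ni x$ is not of E-II type. Proposition~\ref{prop:7/30}(2) gives $e_E>1$, so $c_E<5/6+1/\sqrt{6351}$. Hence $a(E,X)=1-c_E>1/6-\sqrt{1/6351}$. The final numerical comparison with $1/6.4886$ is a direct arithmetic check: $\sqrt{6351}\approx 79.69$, so the difference is about $0.16667-0.012548=0.15412$, and $1/6.4886\approx 0.15412$. I would verify this to enough digits, or rationally by squaring.

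For (1), suppose $X\ni x$ is of E-II type. Then $E=E_0$ is the fork, $r_1=2$, $r_2=3$ and $e_0=2$. By Proposition~\ref{prop:7/30}(1), $e_E\ge 7/30$, which yields
\[
c_E\le \frac56+\sqrt{\frac{30}{7\cdot 6351}}<0.8594 .
\]
The plan is to compute $c_x$ and $e_x$ explicitly as functions of the third branch $(r_3,q_3)$ and of $(r_2,q_2)\in\{(3,1),(3,2)\}$. The quantity $e_x$ comes from Lemma~\ref{lem:intersection-lemma}(3) applied along each branch, giving $e_x=2-\sum_i q_i/r_i$. The quantity $c_x$ comes from $(K_Z+c_xE)\cdot E=0$ combined with the adjunction of Theorem~\ref{thm:adjunction-surface}, giving
\[
(1-c_x)\,e_x=-(K_Z+E)\cdot E=-2+\sum_i\Bigl(1-\frac1{r_i}\Bigr)=1-\sum_i\frac1{r_i}.
\]
Therefore
\[
c_x=1-\frac{1-\frac12-\frac13-\frac1{r_3}}{e_x}.
\]
Since $e_x>0$ (klt, negative definite), we need $q_3/r_3<2-1/2-q_2/3$. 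Moreover $e_x$ decreases and $1/6-1/r_3$ increases as $r_3$ grows, while the constraint $\tfrac16-\tfrac1{r_3}>0$ forces $r_3\ge 7$ for E-type... Here one must be careful: E-II type has $r_3\in\{3,4,5\}$ by the ADE classification (E$_6$, E$_7$, E$_8$ branch lengths), so the finite list is $(r_2,q_2)\in\{(3,1),(3,2)\}$ and $(r_3,q_3)$ with $r_3\le 5$. In fact $1-\sum 1/r_i>0$ only for these non-Du Val choices.

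So the remaining work is a finite enumeration over the finitely many E-II types. For each type I would compute $(e_x,c_x)$ and impose $(c_x-n_x)^2e_x\le 1/6351$ with $n_x\le 5/6$, that is, $c_x\le 5/6+1/\sqrt{6351e_x}$. The survivors should be exactly the five rows of Table~\ref{table:E-II}. As spot checks, $[2;(2,1);(3,2);(5,3)]$ gives $e_x=2-1/2-2/3-3/5=7/30$ and $c_x=1-(1/30)/(7/30)=6/7$, and type~1 gives $e_x=1/2$, $c_x=2/3$, both matching the table. The main obstacle is bookkeeping. I need the complete list of E-II types, including both orderings within the convention $r_i<r_j$ or ($r_i=r_j$ and $q_i\le q_j$), and I need to confirm which ones fail the inequality: those with $c_x$ larger than about $5/6+1/\sqrt{6351e_x}$, for instance the $r_3=5$ types other than case~5. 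Each of these is a one-line rational check.
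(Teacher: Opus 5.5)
Your proposal follows the paper's proof, which is just Lemma~\ref{lem:weak-kx-bound} combined with Proposition~\ref{prop:7/30}. For part (1) the paper uses the uniform bound $e_E\geq 7/30$, giving $c_E<0.8594$, and then the Brieskorn enumeration of E-II types (whose $(e_x,c_x)$ values you compute correctly in your spot checks); for part (2) it uses $e_E>1$.

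Two small corrections:
\begin{enumerate}
\item Your displayed formula for $c_x$ has the wrong sign. For E types $\sum_i 1/r_i>1$, so $(K_Z+E)\cdot E=1-\sum_i 1/r_i<0$ and $(1-c_x)e_x=\tfrac12+\tfrac13+\tfrac1{r_3}-1$. This is the value your spot checks actually use; the claim that $1-\sum_i 1/r_i>0$ is false.
\item The uniform bound $c_E<0.8594$ is what produces exactly the five rows of the table. Your sharper per-type test $c_x\leq 5/6+1/\sqrt{6351e_x}$ also eliminates Case~4, since $(6/7-5/6)^2\cdot 7/12=1/3024>1/6351$. This is harmless, because the lemma only asserts membership in the table.
\end{enumerate}
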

\begin{proof}
(1) follows from Lemma \ref{lem:weak-kx-bound}, Proposition \ref{prop:7/30}(1), and \cite[Satz 2.11]{Bri68}. (2) follows from Lemma \ref{lem:weak-kx-bound} and Proposition \ref{prop:7/30}(2).
\end{proof}

\begin{lem}\label{lem:average-lemma}
Let $A,B$ be positive real numbers and $a,b$ non-negative real numbers. Then
\begin{equation}
    Aa^2+Bb^2\geq \frac{(a+b)^2}{\frac{1}{A}+\frac{1}{B}}
\end{equation}
and the equality holds if and only if $Aa=Bb$.
\end{lem}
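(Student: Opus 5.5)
This is a weighted Cauchy--Schwarz (or Lagrange multiplier) inequality. My plan is to reduce it to a completed square. Set $t:=a+b$ and write $b=t-a$. The claim is equivalent to
\[
(Aa^2+Bb^2)\Bigl(\frac{1}{A}+\frac{1}{B}\Bigr)\geq (a+b)^2,
\]
since $\frac{1}{A}+\frac{1}{B}>0$ because $A,B>0$. Expanding the left side gives
\[
a^2+b^2+\frac{A}{B}a^2+\frac{B}{A}b^2 .
\]
Subtracting $(a+b)^2=a^2+2ab+b^2$ leaves
\[
\frac{A}{B}a^2-2ab+\frac{B}{A}b^2=\frac{(Aa-Bb)^2}{AB},
\]
which is non-negative because $AB>0$.

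So the key steps are, in order: clear the denominator $\frac{1}{A}+\frac{1}{B}$; expand; and recognize the remainder as $(Aa-Bb)^2/(AB)$.

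The equality statement follows directly. Equality holds if and only if $(Aa-Bb)^2=0$, that is, $Aa=Bb$. Both directions are immediate, since each manipulation above was an equivalence (we only multiplied by the positive quantity $\frac{1}{A}+\frac{1}{B}$).

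Alternatively, one can apply Cauchy--Schwarz to the vectors $(\sqrt{A}a,\sqrt{B}b)$ and $(1/\sqrt{A},1/\sqrt{B})$. This gives $(a+b)^2\leq (Aa^2+Bb^2)(1/A+1/B)$, with equality exactly when the vectors are proportional, i.e.\ $Aa=Bb$.

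Non-negativity of $a,b$ is not actually needed, so there is no real obstacle. The only point requiring care is the equality case, and the square identity handles it cleanly.
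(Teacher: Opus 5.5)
Your proof is correct and is essentially the paper's argument: the paper also reduces the inequality to the square $\bigl(\sqrt{B/A}\,b-\sqrt{A/B}\,a\bigr)^2\geq 0$, which is exactly your $(Aa-Bb)^2/(AB)$ after clearing the positive factor $\frac{1}{A}+\frac{1}{B}$. The equality case $Aa=Bb$ follows in the same way, and your unused substitution $t=a+b$ can simply be dropped.
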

\begin{proof}
The inequality is equivalent to
    \begin{equation}
        \left(\sqrt{\frac{B}{A}}b-\sqrt{\frac{A}{B}}a\right)^2\geq 0.
    \end{equation}
\end{proof}

\begin{consthm}\label{consthm:extract-contract}
Let $X$ be a klt projective surface such that $K_X$ is ample, $\rho(X)=1$, $x$ a singularity on $X$ that is not Du Val, and $E$ a special divisorial valuation over $x$. Assume that
\begin{equation}
K_X^2\leq\frac{1}{6351}\quad \text{and}\quad n_E\geq\frac{5}{6}.
\end{equation}
Then there exist two unique projective birational morphisms
\begin{equation}
h: Z\rightarrow X,\quad g: Z\rightarrow T,
\end{equation}
a rational curve $C$ on $Z$, a rational curve $E_T$ on $T$, and a positive rational number $\lambda$ satisfying the following.
\begin{enumerate}
    \item $h$ is the extraction of $E$, $g$ is the divisorial contraction of $C$, and $\rho(T)=1$,
    \item $C\neq E$ and $E_T=g_{*}E$.
    \item $\lambda=(K_{T}+E_T)\cdot E_T$.
    \item $(T,E_T)$ is lc and $T$ is klt. In particular, $T$ is $\mathbb Q$-factorial.
    \item $\lambda\geq 1/42$.
    \item $p_E>0$ and $K_{T}+p_EE_T\equiv 0$.
    \item $T$ is Fano.
    \item We have
    \begin{equation}
    \lambda(n_E-p_E)=(c_E-n_E)e_E(1-p_E).
    \end{equation}
    \item We have
    \begin{equation}\label{equ:kx2-formula}
    K_X^2=\frac{(n_E-p_E)^2}{1-p_E}\cdot\lambda+(c_E-n_E)^2\cdot e_E=\frac{(c_E-p_E)^2}{\frac{1-p_E}{\lambda}+\frac{1}{e_E}}.
\end{equation}
\end{enumerate}
\end{consthm}
\begin{proof}
The existence and uniqueness of $h$ follows from \cite[Corollary 1.4.3]{BCHM10}. We run a step of a $K_{Z}$-MMP with scaling of $E$. The scaling number equals $n_E$ by the definition of $n_E$, and we obtain a $(K_{Z}+n_EE)$-trivial extremal contraction $g: Z\rightarrow T$ that is $E$-positive. Since $\rho(X)=1$, $\rho(Z)=2$, so $\overline{NE}(Z)$ is spanned by two extremal rays and one of them is spanned by $E$. Let $R$ be the other extremal ray in $\overline{NE}(Z)$. Since $g$ is $E$-positive, $g$ is the contraction of $R$. Thus $g$ is unique.

Since $\rho(Z)=2$, $T$ is not a closed point. If $\dim Z>\dim T$, then $\dim T=1$ and we may let $F$ be a general fiber of $g$. Then we have
\begin{equation}
    0=(K_{Z}+n_EE)\cdot F=-2+n_E(E\cdot F)
\end{equation}
hence 
\begin{equation}
    n_E=\frac{2}{E\cdot F}\in\left\{\frac{2}{m}\middle|\ m\in\mathbb Z_{>0}\right\}
\end{equation}
which is not possible as we have
\begin{equation}
1>c_E>n_E\geq\frac{5}{6}>\frac{2}{3}
\end{equation}
by the assumption $n_E\geq 5/6$ in Construction-Theorem~\ref{consthm:extract-contract}, Lemma~\ref{lem:c>n>=p}, and the fact that $c_E=1-a(E,X)<1$ since $X\ni x$ is klt. Therefore, $g$ is a divisorial contraction and $\rho(T)=1$. We let $C$ be the curve contracted by $g$. Then $C$ is a rational curve and $C\neq E$. We let \begin{equation}
E_T:=g_{*}E\quad \text{and}\quad \lambda:=(K_{T}+E_T)\cdot E_T.
\end{equation}
Now (1)--(3) immediately follow from our construction. Since $g$ is $(K_{Z}+n_EE)$-trivial, we have
\begin{equation}
K_{Z}+n_EE=g^*\left(K_{T}+n_EE_T\right).
\end{equation}
Since $(Z,n_EE)$ is klt, $(T,n_EE_T)$ is klt. By Proposition~\ref{prop:klt-surface-q-factorial}, $T$ is $\mathbb Q$-factorial klt. By Theorem \ref{thm:surface-lct-gap}, $(T,E_T)$ is lc. This implies (4). Since 
\begin{equation}
K_{Z}+c_EE=h^*K_X
\end{equation}
is big and nef,
\begin{equation}
g_{*}(K_{Z}+c_EE)=K_{T}+c_EE_T
\end{equation}
is big and nef. Since $\rho(T)=1$, $K_{T}+c_EE_T$ is ample, so $K_{T}+E_T$ is ample. Thus $\lambda>0$. To deduce (5), apply Theorem~\ref{thm:adjunction-surface} on $E_T$:
\[
\lambda=(K_{T}+E_T)\cdot E_T=\deg\bigl((K_{T}+E_T)|_{E_T}\bigr)=-2+\#\{\text{singular points of }T\text{ on }E_T\}-\sum_{y}\frac{1}{r_y},
\]
where the second sum runs through the cyclic quotient singularities $y$ of $T$ on $E_T$ and $N$ is the number of non-cyclic-quotient singular points of $T$ on $E_T$. 
Since $\lambda>0$, we have $-2+N+\sum(1-1/r_y)>0$, where $N\geq 0$ is an integer. A simple enumeration indicates that
\[
\lambda\geq\frac{1}{42}.
\]
We have
\begin{align*}
    K_X^2&=(K_{Z}+c_EE)^2=(K_{Z}+n_EE)^2+(c_E-n_E)^2(-E^2)\\
    &=(K_{T}+n_EE_T)^2+(c_E-n_E)^2e_E.
\end{align*}
If $K_{T}$ is pseudo-effective, then by (5),
\begin{equation}
K_X^2\geq n_E^2(K_{T}+E_T)\cdot E_T=\lambda\cdot n_E^2\geq\frac{25}{36\cdot 42}>\frac{1}{6351},
\end{equation}
which is not possible. Thus $K_{T}$ is not pseudo-effective, so $p_E>0$. Since $\rho(T)=1$, $T$ is Fano and $K_{T}+p_EE_T\equiv 0$. This implies (6) and (7).

We prove (8). Set $u:=E\cdot C$ and $s:=-C^2$. From $(K_{Z}+n_EE)\cdot C=0$ we deduce
\begin{equation}\label{equ:CT-8a}
K_{Z}\cdot C=-n_Eu\quad\text{and}\quad (K_{Z}+E)\cdot C=(1-n_E)u,
\end{equation}
and from $E^2=-e_E$ we obtain
\begin{equation}\label{equ:CT-8b}
(K_{Z}+E)\cdot E=(1-c_E)E^2=-(1-c_E)e_E.
\end{equation}
The cycle $K_{Z}+p_E E-\frac{(n_E-p_E)u}{s}C$ has zero intersection with $C$ by~\eqref{equ:CT-8a}, and so
\begin{equation}\label{equ:CT-8c}
K_{Z}+p_E E-\frac{(n_E-p_E)u}{s}C=g^*\bigl(K_{T}+p_E E_T\bigr)\equiv 0.
\end{equation}
Intersecting~\eqref{equ:CT-8c} with $E$ and using $E\cdot E=-e_E$, $E\cdot C=u$ yields
\begin{equation}\label{equ:CT-8d}
0=(p_E-c_E)E^2-\frac{(n_E-p_E)u}{s}\cdot u=(c_E-p_E)e_E-\frac{(n_E-p_E)u^2}{s},
\end{equation}
which gives
\begin{equation}\label{equ:CT-8e}
s=\frac{(n_E-p_E)u^2}{(c_E-p_E)e_E}.
\end{equation}
Now $E_T=g_*E$, so by \cite[Lemma~3.20]{LS23},
\begin{equation}\label{equ:CT-8f}
\lambda=(K_{T}+E_T)\cdot E_T=(K_{Z}+E)\cdot E+\frac{\bigl((K_{Z}+E)\cdot C\bigr)(E\cdot C)}{-C^2}.
\end{equation}
Plugging~\eqref{equ:CT-8a},~\eqref{equ:CT-8b}, and~\eqref{equ:CT-8e} into~\eqref{equ:CT-8f}, we obtain
\begin{equation}\label{equ:CT-8g}
\lambda=-(1-c_E)e_E+\frac{(1-n_E)u^2}{s}=-(1-c_E)e_E+\frac{(1-n_E)(c_E-p_E)e_E}{n_E-p_E}=\frac{(c_E-n_E)e_E(1-p_E)}{n_E-p_E},
\end{equation}
which is (8). Finally, we have
\begin{align*}
K_X^2&=(K_{Z}+c_EE)^2=(K_{Z}+n_EE)^2+(c_E-n_E)^2(-E^2)\\
&=(K_{T}+n_EE_T)^2+(c_E-n_E)^2e_E=(n_E-p_E)^2E_T^2+(c_E-n_E)^2e_E\\
&=\frac{(n_E-p_E)^2}{1-p_E}(K_{T}+E_T)\cdot E_T+(c_E-n_E)^2e_E=\frac{(n_E-p_E)^2}{1-p_E}\lambda+(c_E-n_E)^2e_E.
\end{align*}
The second equality of
(9) follows from Lemma \ref{lem:average-lemma} and (8). 
\end{proof}

\section{The small mld case}\label{sec:small-mld}

\begin{prop}\label{prop:9.2-to-6/7}
Notation and conditions as in Construction-Theorem \ref{consthm:extract-contract}. Let $\epsilon:=1/938$. Assume that 
\begin{equation}
1-c_E=a(E,X)\leq\frac{5}{46}=\frac{1}{9.2}.
\end{equation}
Then $p_E>6/7+\epsilon$. In particular, $(T,p_EE_T)$ is isomorphic to one of the pairs $(X,bS)$ as in Theorem \ref{thm:-ls23-6/7}.
\end{prop}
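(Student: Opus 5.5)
The plan is to argue by contradiction from the volume formula \eqref{equ:kx2-formula} of Construction-Theorem~\ref{consthm:extract-contract}(9),
\[
K_X^2=\frac{(c_E-p_E)^2}{\frac{1-p_E}{\lambda}+\frac{1}{e_E}},
\]
combined with the constraints $K_X^2\leq 1/6351$, $\lambda\geq 1/42$ (item (5)) and $c_E\geq 1-5/46=41/46$. We will assume $p_E\leq 6/7+\epsilon$ and show that the right-hand side then exceeds $1/6351$.

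\textbf{Step 1: excluding E-II type and bounding $e_E$.} First we dispose of the E-II case. An E-II singularity has $e_0=2$, $(r_1,q_1)=(2,1)$ and $r_2=3$. Klt forces $\tfrac12+\tfrac13+\tfrac1{r_3}>1$, i.e.\ $r_3\leq 5$, so there are only finitely many E-II types. By \cite[Lemma 3.3]{Ale93}, or a direct computation of the discrepancy of the fork $E_0$, each has $c_x\leq 6/7<41/46$. Thus the hypothesis $a(E,X)\leq 5/46$ rules out E-II type. By Proposition~\ref{prop:7/30}(2) we then have $e_E>1$, so $1/e_E<1$.

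\textbf{Step 2: monotonicity.} Set $D(p):=\frac{1-p}{\lambda}+\frac1{e_E}$ and $F(p):=(c_E-p)^2/D(p)$, viewed as a function of $p\in(0,c_E)$. The quantity $F$ is increasing in $c_E$ and decreasing in $\lambda$ and $e_E$ in the appropriate directions. Its logarithmic derivative in $p$ is
\[
-\frac{2}{c_E-p}+\frac{1/\lambda}{D(p)},
\]
which is negative because $2D(p)>(1-p)/\lambda\cdot 2>(c_E-p)/\lambda$. Hence $F$ is decreasing in $p$. Since $p_E<n_E<c_E$ by Lemma~\ref{lem:c>n>=p}, the assumption $p_E\leq p_0:=6/7+\epsilon$ gives
\[
K_X^2=F(p_E)\geq F(p_0)\geq \frac{(41/46-p_0)^2}{42(1-p_0)+1}.
\]
Here we used $1/\lambda\leq 42$, $1/e_E<1$ and $c_E\geq 41/46$. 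The last inequality is strict because $1/e_E<1$.

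\textbf{Step 3: the numerical check (main obstacle).} It remains to verify
\[
\frac{(41/46-6/7-1/938)^2}{42\,(1/7-1/938)+1}\geq\frac{1}{6351},
\]
which then yields $K_X^2>1/6351$, a contradiction. Numerically the left-hand side is approximately $0.0010953/6.955\approx 1.575\cdot 10^{-4}$, against $1/6351\approx 1.5746\cdot 10^{-4}$. The margin is extremely thin; presumably $\epsilon=1/938$ was chosen so that this works. This exact rational comparison is the step I expect to be delicate, and I would carry it out in exact arithmetic. If it fails by a hair, the fallback is to sharpen $\lambda\geq 1/42$: the value $\lambda=1/42$ forces the configuration $(2,3,7)$ on $E_T$, which can be treated separately. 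The other fallback is to sharpen $e_E>1$ using Lemma~\ref{lem:intersection-lemma}(3), for instance $e_E\geq 1+1/q'$ as in the proof of Proposition~\ref{prop:7/30}.

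\textbf{Step 4: the ``in particular'' part.} By items (4), (6) and (7) of Construction-Theorem~\ref{consthm:extract-contract}, $T$ is $\mathbb Q$-factorial klt with $\rho(T)=1$, $K_T+p_EE_T\equiv 0$, and $E_T$ is a prime Weil divisor. The pair $(T,p_EE_T)$ is klt since $p_E<n_E$ and $(T,n_EE_T)$ is klt. Hence $(T,p_EE_T)$, with $b=p_E\geq 6/7+\epsilon$, is exactly a pair as in Theorem~\ref{thm:-ls23-6/7}.
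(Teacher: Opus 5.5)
\textbf{Gap in Step 1: E-II type is not excluded.} Your strategy is the paper's: insert $\lambda\ge 1/42$, $c_E\ge 41/46$ and a lower bound on $e_E$ into Construction-Theorem~\ref{consthm:extract-contract}(9), and use that the right-hand side decreases in $p$. The gap is Step~1. It is false that every E-II singularity has $c_x\le 6/7$; that bound holds only for the E-II types with $n_x\le 5/6$ listed in Table~\ref{table:E-II}.

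For the fork one has $a(E_0,X)=\bigl(\sum_i 1/r_i-1\bigr)/\bigl(e_0-\sum_i q_i/r_i\bigr)$ and $e_E=e_0-\sum_i q_i/r_i$. Two examples:
\begin{itemize}
\item $[2;(2,1);(3,1);(4,1)]$ has $a(E_0,X)=1/11<5/46$ and $e_E=11/12$.
\item $[2;(2,1);(3,1);(5,4)]$ has $a(E_0,X)=1/11$ and $e_E=11/30$.
\end{itemize}
In fact every non-Du Val E-II type with $a(E_0,X)\le 5/46$ has $a(E_0,X)\in\{1/11,1/13,1/17,1/19,1/23,1/29\}$ and $e_E<1$. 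So the hypothesis does not rule out E-II. In exactly these cases your bound $1/e_E<1$ fails, and Step~2 does not cover them.

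\textbf{Repair.} Handle E-II separately, as the paper does. Proposition~\ref{prop:7/30}(1) gives $e_E\ge 7/30$. The fork log discrepancies of non-Du Val E-II types are $1/3,1/5,1/7,1/11,1/13,1/17,1/19,1/23,1/29$, so $a(E,X)\le 5/46$ forces $a(E,X)\le 1/11$, i.e.\ $c_E\ge 10/11$. Your monotonicity argument then gives, for $p_E\le p_0:=6/7+\epsilon$,
\[
K_X^2\ \ge\ \frac{(10/11-p_0)^2}{42(1-p_0)+30/7}\ \approx\ 2.5\cdot 10^{-4}\ >\ \frac{1}{6351}.
\]

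\textbf{Remaining steps.} Outside E-II your argument is correct. Step~3 needs no fallback: $41/46-p_0=51/1541$ and $42(1-p_0)+1=466/67$, so your bound equals $2601/16516438$. Since $2601\cdot 6351=16518951>16516438$, the inequality holds strictly. A minor point: Lemma~\ref{lem:c>n>=p} gives only $n_E\ge p_E$, not $p_E<n_E$. Strictness follows from item~(8), but you only need $p_E<c_E$ and the non-strict inequality for the klt claim in Step~4.
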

\begin{proof}
By Construction-Theorem \ref{consthm:extract-contract}(5)(9), we have
\begin{equation}
\frac{1}{6351}\geq \frac{(c_E-p_E)^2}{\frac{1-p_E}{\lambda}+\frac{1}{e_E}}\geq\frac{(c_E-p_E)^2}{42(1-p_E)+\frac{1}{e_E}}.
\end{equation}
Assume that $X\ni x$ is of E-II type, then Proposition \ref{prop:7/30}(1) implies that $e_E\geq 7/30$, and \cite[Satz 2.11]{Bri68} implies that $c_E\geq 10/11$. Thus
\begin{equation}
\frac{1}{6351}\geq\frac{(10/11-p_E)^2}{42(1-p_E)+30/7}
\end{equation}
which implies that $p_E>0.869>6/7+\epsilon$. 

Assume that $X\ni x$ is not of E-II type, then Proposition \ref{prop:7/30}(2) implies that $e_E>1$. Thus
\begin{equation}
\frac{1}{6351}\geq\frac{(41/46-p_E)^2}{42(1-p_E)+1}.
\end{equation}
This implies that $p_E>6/7+\epsilon$. The ``in particular'' part follows from Construction-Theorem~\ref{consthm:extract-contract}(6) and Theorem~\ref{thm:-ls23-6/7}.
\end{proof}

\begin{thm}\label{thm:small-mld-case}
    Notation and conditions as in Construction-Theorem~\ref{consthm:extract-contract}. Then, except for one case with $K_X^2=1/6351$, we have $p_E<6/7+\epsilon$ and $c_E<41/46$. Moreover, in the exceptional case with $K_X^2=1/6351$, $X$ is unique up to isomorphism.
\end{thm}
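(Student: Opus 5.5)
The plan is to argue by contradiction. Suppose $p_E\geq 6/7+\epsilon$ or $c_E\geq 41/46$. The first step reduces the second alternative to the first. If $c_E\geq 41/46$, i.e.\ $a(E,X)\leq 5/46$, then Proposition~\ref{prop:9.2-to-6/7} gives $p_E>6/7+\epsilon$. So in all cases we may assume $p_E\geq 6/7+\epsilon$. By Construction-Theorem~\ref{consthm:extract-contract}(4)(6)(7), the pair $(T,p_EE_T)$ is then a klt Calabi--Yau pair with $\rho(T)=1$ and $E_T$ a non-zero Weil divisor. Hence Theorem~\ref{thm:-ls23-6/7} applies with $b=p_E$ and $S=E_T$. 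The rest of the proof is a case analysis over its finite list of outputs.

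\textbf{Case (1) of Theorem~\ref{thm:-ls23-6/7}.} Here $E_T$ is not a non-singular rational curve. This cannot happen: $E$ is a smooth rational curve on the minimal resolution, and $E_T=g_*E$ with $(T,E_T)$ lc. So I expect $E_T$ to be smooth, except possibly at the image point of $C$. I would rule this out directly: if $g(C)\in E_T$ were a singular point of $E_T$, then $(T,E_T)$ would fail to be plt there, and the lc condition forces a nodal configuration. Alternatively, I would just check the finitely many pairs in \cite[Table]{Abe23} against \eqref{equ:kx2-formula}.

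\textbf{Cases (2.a)--(2.c).} Here $p_E\in\{12/13,10/11,15/17\}$, and the singularities of $T$ along $E_T$ are known explicitly. Thus $\lambda=(K_T+E_T)\cdot E_T$ is determined by Theorem~\ref{thm:adjunction-surface}:
\begin{itemize}
\item in (2.a), $\lambda=-2+\tfrac23+\tfrac34+\tfrac45$;
\item in (2.b), $\lambda=-2+\tfrac12+\tfrac45+\tfrac67=1/70$;
\item in (2.c), $\lambda=-2+\tfrac23+\tfrac45+\tfrac67=34/105$.
\end{itemize}
The surface $X$ is recovered from $T$ by extracting $C$ over a point $g(C)\in E_T$ and contracting $E$. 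The point $g(C)$ is either a smooth point or one of the listed singular points. The invariants $(c_E,n_E,e_E)$ are then constrained by Construction-Theorem~\ref{consthm:extract-contract}(8) and by the requirement that $E$ be a special divisorial valuation of a klt non-Du Val point. I would parametrize the possible extractions of $C$ as weighted blow-ups, which produce the singularities of $Z$ along $C$ and $E$. Then I would compute $K_X^2$ via \eqref{equ:kx2-formula}, namely
\[
K_X^2=\frac{(c_E-p_E)^2}{\frac{1-p_E}{\lambda}+\frac{1}{e_E}}.
\]
I would impose $K_X^2\leq 1/6351$. Since $1-p_E$ and $\lambda$ are fixed, this bounds $c_E-p_E$ from above in terms of $e_E$. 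The intended conclusion is that only finitely many configurations survive. I expect all of them except one to give $K_X^2>1/6351$ or to violate integrality (Proposition~\ref{prop:q-integer-formula}). The surviving configuration should be in (2.b), since its $\lambda=1/70$ is smallest, and it should give exactly $1/6351$, recovering Example~\ref{ex:6351}.

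\textbf{Uniqueness.} Once the unique surviving configuration is identified, uniqueness of $X$ follows from three facts. First, $(T,E_T)$ is unique in case (2.b) by Theorem~\ref{thm:-ls23-6/7}. Second, the point $g(C)$ and the weights of the extraction are forced. Third, $X$ is obtained from $Z$ by the unique contraction of $E$.

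\textbf{Main obstacle.} I expect the hardest part to be this enumeration in cases (2.a)--(2.c). It requires controlling all possible extractions $Z\to T$ of $C$ centred on $E_T$ and the resulting singularities along $E$ so that only finitely many cases remain. The strategy is to use (8) together with the rationality and integrality of $e_E$, $c_E$ coming from Hirzebruch--Jung data to bound the weights, and then to finish with a finite computer check.
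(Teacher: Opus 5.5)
\textbf{The gap: case (1) of Theorem~\ref{thm:-ls23-6/7}.} You correctly note that $E_T$ can be singular only at $g(C)$, and that lc-ness makes this singularity a node. But this is not a contradiction. Construction-Theorem~\ref{consthm:extract-contract}(4) gives only that $(T,E_T)$ is lc, not plt, and a node at $g(C)$ arises precisely when $C$ meets $E$ in two points.

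Your fallback, checking the finitely many pairs of \cite[Table]{Abe23} against \eqref{equ:kx2-formula}, is not a finite check either. Those pairs fix $p_E$ and $\lambda$, but $c_E$ and $e_E$ depend on the extraction $g$, which ranges over an infinite family.

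The paper treats this as a genuine case:
\begin{enumerate}
\item The node $t_0=g(C)$ is a smooth point of $T$, and $g^*(K_T+E_T)=K_Z+E+C$.
\item Hence $g$ is a weighted blow-up of $t_0$ with weights $(m_1,m_2)$, whose coordinate axes are the two branches of $E_T$.
\item The curve $E\subset Z$ then passes through three singular points, of orders $r_{t_1}$, $m_1$, $m_2$.
\item Klt-ness of $X\ni x$ forces $m_1\leq 3$.
\item $K_X^2$ becomes an explicit ratio of quadratics in $m_2$, which must be shown incompatible with $K_X^2\leq 1/6351$.
\end{enumerate}
Some analysis of this kind is needed; as written, case (1) is open.

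\textbf{Case (2).} Your reduction to $p_E\geq 6/7+\epsilon$ and your overall plan here match the paper, but two things need fixing.

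First, in (2.b) one has $\lambda=1-\tfrac12-\tfrac15-\tfrac17=\tfrac{11}{70}$, not $\tfrac{1}{70}$. With the correct value, the family $[2,m,2,2,2,2]$ (with $C$ extracted over the $\tfrac17(1,3)$ point) gives $K_X^2=(m-9)^2/((10m-13)(7m+3))$. This equals $1/6351$ exactly at $m=10$. Your value of $\lambda$ would not reproduce the example.

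Second, finiteness does not come from (8) or from integrality; it comes from klt-ness of $x$. Suppose $g(C)$ were not one of the three singular points $t_1,t_2,t_3$ of $T$ on $E_T$. Then $E\subset Z$ would carry three cyclic quotient points of orders $\{3,4,5\}$, $\{2,5,7\}$ or $\{3,5,7\}$, and none of these is a platonic triple. Hence $g$ is an isomorphism near two of the $t_i$. The configurations of $X\ni x$ then fall into finitely many families indexed by $m=-E_Y^2$, and on each family $K_X^2$ is an explicit ratio of quadratics in $m$. This is what makes the final check finite.
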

\begin{proof}
\emph{Reduction.} Proposition~\ref{prop:9.2-to-6/7} (applied to the present setting with $a(E,X)=1-c_E$) says that $c_E\geq 41/46$ implies $p_E>6/7+\epsilon$; equivalently, $p_E\leq 6/7+\epsilon$ implies $c_E<41/46$. It therefore suffices to prove $p_E<6/7+\epsilon$, except in the unique configuration yielding $K_X^2=1/6351$.

So suppose on the contrary that
$p_E\geq 6/7+\epsilon$. By Construction-Theorem~\ref{consthm:extract-contract}(6) and Theorem~\ref{thm:-ls23-6/7}, the pair $(T,p_E E_T)$ is then isomorphic to one of the pairs $(X,bS)$ enumerated in Theorem~\ref{thm:-ls23-6/7}, and we run a finite enumeration on the resulting resolution configurations of $X\ni x$ to derive a contradiction with $K_X^2\leq 1/6351$ (or to identify the unique $K_X^2=1/6351$ realisation).

Let $f\colon Y\rightarrow X$ be the minimal resolution of $X$ at $x$ and let $E_Y$ be the strict transform of $E$ on $Y$. Set $m:=-E_Y^2$. By Theorem \ref{thm:-ls23-6/7}, $(T,p_EE_T)$ is isomorphic to one of the pairs $(X,bS)$ as in Theorem \ref{thm:-ls23-6/7}. First we consider the case when $(T,p_EE_T)$ is of one of the types as in Theorem \ref{thm:-ls23-6/7}(2). In this case, $E_T$ is a non-singular rational curve and contains exactly $3$ singularities $t_1,t_2,t_3$ of $T$. Moreover, since $g$ is $(K_{Z}+E)$-positive,
\[
a(F,Z,E)\geq a(F,T,E_T)
\]
for any prime divisor $F$ over $T$. Since $g$ only contracts one curve, $g^{-1}$ is an isomorphism near at least two of $t_1,t_2,t_3$. Thus, for a fixed value $m$, the possible types of $x$ are finite. More precisely, if  $(T,p_EE_T)$ is isomorphic to the pair as in Theorem \ref{thm:-ls23-6/7}(2.a), then the resolution configuration of $X\ni x$ is one of the following:
\begin{equation}\label{equ:dual-graphs-2a}
[2,2,m,2,2,2];\quad [2,2,m,3,2];\quad [2,2,2,m,3,2];\quad [m;(2,1);(3,2);(5,2)];\quad [m;(2,1);(3,2);(4,3)]
\end{equation}
and similarly when $(T,p_EE_T)$ is isomorphic to the pair as in Theorem~\ref{thm:-ls23-6/7}(2.b) or (2.c). In all these cases, $p_E,\lambda,c_E,e_E$ are explicitly computable rational functions of $m$, and by Construction-Theorem~\ref{consthm:extract-contract}, $K_X^2$ is an explicitly computable function in the form of $P(m)/Q(m)$ with $P,Q$ quadratic.
As a representative computation, consider the family in Theorem~\ref{thm:-ls23-6/7}(2.b) where the resolution configuration of $X\ni x$ is $[2,m,2,2,2,2]$, i.e.\ $C$ contracts to the $\tfrac{1}{7}(1,3)$ singularity of $T$. In this case the singularities of $T$ on $E_T$ are $\tfrac{1}{2}(1,1)$, $\tfrac{1}{5}(1,4)$, $\tfrac{1}{7}(1,3)$, so by Theorem~\ref{thm:-ls23-6/7}(2.b)
\[
p_E=\frac{10}{11},\qquad \lambda=1-\frac{1}{2}-\frac{1}{5}-\frac{1}{7}=\frac{11}{70};
\]
moreover $X\ni x$ is the cyclic quotient $\tfrac{1}{10m-13}(1,5m-4)$ (one reads off $r_x=\det[2,m,2,2,2,2]=10m-13$ and $q_{x}(C)=\det[m,2,2,2,2]=5m-4$ from the HJS), and the special divisorial valuation over $x$ is the $(-m)$-curve, at which the log discrepancy is $\frac{7}{10m-13}$ (apply Construction-Theorem~\ref{consthm:extract-contract}(8) to the singularity $\tfrac{1}{7}(1,3)$ on $E_T$). Hence
\[
e_E=m-\frac{1}{2}-\frac{4}{5}=m-\frac{13}{10},\qquad c_E=\frac{10m-20}{10m-13}.
\]
Plugging into Construction-Theorem~\ref{consthm:extract-contract}(9):
\[
c_E-p_E=\frac{10(m-9)}{11(10m-13)},\qquad\frac{1-p_E}{\lambda}+\frac{1}{e_E}=\frac{70}{121}+\frac{10}{10m-13}=\frac{100(7m+3)}{121(10m-13)},
\]
and therefore
\begin{equation}\label{equ:KX2-2b-family}
K_X^2\;=\;\frac{(c_E-p_E)^2}{\frac{1-p_E}{\lambda}+\frac{1}{e_E}}\;=\;\frac{(m-9)^2}{(10m-13)(7m+3)}\;=\;\frac{1}{\bigl(10+\tfrac{77}{m-9}\bigr)\bigl(7+\tfrac{66}{m-9}\bigr)}\,.
\end{equation}
Since $K_X^2>0$, we have $m\neq 9$.
For $m\geq 10$ both factors in the denominator are positive and strictly decreasing in $m$, so $K_X^2$ is strictly increasing in $m$, with $K_X^2=1/6351$ at $m=10$ and $K_X^2>1/6351$ for $m\geq 11$; for $2\leq m\leq 8$ both factors are negative, the denominator product is at most $67\cdot 59=3953$, hence $K_X^2\geq 1/3953>1/6351$. Combined with the assumption $K_X^2\leq 1/6351$, we conclude $m=10$ and $K_X^2=1/6351$.

The remaining four dual-graph families in \eqref{equ:dual-graphs-2a}, as well as the analogous lists obtained when $(T,p_EE_T)$ is of type (2.b) or (2.c), give rise to similar one-parameter families of expressions $K_X^2=P(m)/Q(m)$ with $P,Q$ quadratic; an elementary enumeration on the same lines as above shows that all of them give $K_X^2<1/6351$, except for the one case $[2,m,2,2,2,2]$ at $m=10$ above. Hence in case (2), the only configuration with $K_X^2\geq 1/6351$ is the one yielding $K_X^2=1/6351$ exactly. Moreover, for this case, $(T,E_T)$ is unique up to isomorphism by Theorem~\ref{thm:-ls23-6/7}(2.b), and $g$ is uniquely determined. Thus $(Z,E)$ is unique up to isomorphism, so $X$ is unique up to isomorphism.

Next we consider the case when $(T,p_EE_T)$ is of one of the types as in Theorem \ref{thm:-ls23-6/7}(1). Then there are two closed points $t_0,t_1$ on $T$, such that $t_1$ is a singular point of $T$, $(T\ni t_1,E_T)$ is of extended A type, and $t_0$ is a node of $E_T$ that is a non-singular point of $T$. We write
\begin{equation}
g^*(K_{T}+E_T)=K_{Z}+E+\mu C.
\end{equation}
Since $a(C,T,n_EE_T)=1$ and $g$ is $E_T$-positive, $\mu>0$. Since $t_0$ is a node of $E_T$ and is a non-singular point of $T$, $\mu=1$, and $g$ is a weighted blow-up with weights $(m_1,m_2)$, $m_1\leq m_2$, so that locally analytically, the two branches of $E_T$ at $t_0$ correspond to the coordinate axes of the weighted blow-up. Moreover, we have $m_1\leq 3$: indeed, on the  curve $E$ the weighted blow-up creates three singularities of orders 
$r_{t_1}, m_1,m_2$, but $X\ni x$ is klt, so the ADE classification on the minimal resolution of $X$ at $x$ forces the smaller one of $m_1,m_2$ to be at most $3$ (and we allow $m_1=1$, in which case $E$ contains only two singularities). Thus the resolution configuration of $X\ni x$ belongs to finitely many explicitly computable families, all of which are parametrized by $m_2$. In all these cases, $p_E,\lambda,c_E,e_E$ are explicitly computable rational functions of $m_2$, and by Construction-Theorem~\ref{consthm:extract-contract}, $K_X^2$ is an explicitly computable function of the form $P(m_2)/Q(m_2)$ with $P,Q$ quadratic, and an elementary enumeration on $m_1\in\{1,2,3\}$ and bounded $m_2$ shows that all these cases satisfy $K_X^2<1/6351$, a contradiction.
\end{proof}

\begin{cor}\label{cor:mld-is-large}
    Notation and conditions as in Construction-Theorem \ref{consthm:extract-contract}. Assume that $K_X^2<1/6351$, then $\mld(X)>5/46$.
\end{cor}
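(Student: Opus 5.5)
The plan is to argue by contradiction and reduce to the previous results. Suppose $\mld(X)\leq 5/46$. Since $\mld(X)$ is a minimum over finitely many singular points (smooth points and Du Val points have $\mld\geq 1$), there is a singular point $x\in X$ with $\mld(X\ni x)\leq 5/46<1$. Then $X\ni x$ is not Du Val. By Proposition~\ref{prop:mld-computing-special} we may choose a special divisorial valuation $E$ over $x$ with $a(E,X)=\mld(X\ni x)\leq 5/46$. Equivalently, $c_E=1-a(E,X)\geq 41/46$.

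Next we check that $(X,x,E)$ satisfies the hypotheses of Construction-Theorem~\ref{consthm:extract-contract}. The conditions $K_X$ ample, $\rho(X)=1$ and $K_X^2<1/6351$ are given, so only $n_E\geq 5/6$ needs proof. Suppose instead that $n_E<5/6$. If $X\ni x$ is not of E-II type, Lemma~\ref{lem:ne-5/6}(2) gives $a(E,X)>1/6.4886>5/46$, which is a contradiction. If $X\ni x$ is of E-II type, Lemma~\ref{lem:ne-5/6}(1) restricts $X\ni x$ to the five types of Table~\ref{table:E-II}, and there $c_x\leq 6/7<41/46$. This again contradicts $c_E\geq 41/46$, since for non-cyclic singularities the special valuation is unique and $c_E=c_x$. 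Hence $n_E\geq 5/6$.

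We now apply Theorem~\ref{thm:small-mld-case} in the setting of Construction-Theorem~\ref{consthm:extract-contract}. Outside the single exceptional case, it gives $c_E<41/46$, which contradicts $c_E\geq 41/46$. In the exceptional case $K_X^2=1/6351$, which contradicts the strict inequality $K_X^2<1/6351$. Either way we reach a contradiction, so $\mld(X)>5/46$.

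The main point needing care is the second step, ensuring $n_E\geq 5/6$ so that the Construction-Theorem applies. This rests on the numerical values in Table~\ref{table:E-II} and the comparison $1/6.4886>5/46\approx 1/9.2$, and both are immediate. Alternatively, Proposition~\ref{prop:9.2-to-6/7} followed by Theorem~\ref{thm:small-mld-case} gives the same conclusion via $p_E$.
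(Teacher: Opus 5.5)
Your proof is correct and follows the paper's route: choose the mld-computing special valuation via Proposition~\ref{prop:mld-computing-special}, then invoke Theorem~\ref{thm:small-mld-case}, with strict $K_X^2<1/6351$ ruling out the exceptional case. Your explicit check via Lemma~\ref{lem:ne-5/6} that $n_E\geq 5/6$ fills in a step the paper's one-line proof leaves implicit; it is needed because the mld-computing point may differ from the fixed $x$, and the paper does this check the same way in the proof of Theorem~\ref{thm:vol>=1/6351}.
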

\begin{proof}
It follows from Theorem \ref{thm:small-mld-case} and Proposition \ref{prop:mld-computing-special}.
\end{proof}

\section{Reduction to finitely many singularity baskets and filters}\label{sec:reduction-and-filters}

In this section we reduce the small-volume problem to a finite list of singularity baskets and present the filters used to cut down that list. Subsection~\ref{ssec:finite-family-mld} gives an algorithm enumerating all surface singularities with $\mathrm{mld}\geq a$ for a given $a>0$. Subsection~\ref{ssec:finite-case-mld} uses this algorithm to reduce the proof of Theorem~\ref{thm:vol>=1/6351} to finitely many baskets. Subsection~\ref{ssec:filter} introduces the filters that we apply, in combination, to exclude all but a single residual basket; the AI-rediscovered filter (Filter~\ref{fil:ai}) is stated in \S\ref{ssec:filter} and proved in Appendix~\ref{app:filter ai}.

\subsection{Algorithm for classification of surface singularities with large mlds}\label{ssec:finite-family-mld}

By \cite[Lemma~3.3]{Ale93}, the following structural finiteness statement holds: given a positive real number $a$, every klt surface singularity germ $X\ni x$ with $\mathrm{mld}(X\ni x)\geq a$ lies either in a finite list of ``isolated'' singularity germs, or in one of finitely many one-parameter families of resolution configurations as in~\cite[Figure~2]{Ale93}. The natural question is whether one can effectively (i.e.\ algorithmically) compute these isolated germs and one-parameter families from the input rational number $a$.
\begin{align}\label{equ:algorithm-compute-mld}
    \textbf{Input:}\ \ a\in\mathbb Q_{>0}.\quad \textbf{Output:}\ \ &\text{a finite list of isolated singularity germs with }\mathrm{mld}\geq a,\\\notag
    &\text{together with a finite list of one-parameter}\\\notag
    &\text{families of resolution configurations.}
\end{align}
\begin{thm}\label{thm:algorithm-mld-low-bound}
There is a deterministic algorithm that, on input $a\in\mathbb Q_{>0}$, halts and outputs the data described in~\eqref{equ:algorithm-compute-mld}.
\end{thm}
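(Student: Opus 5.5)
The plan is to make the classification of klt surface germs with $\mld\geq a$ effective. The input is the explicit formula for log discrepancies on the minimal resolution. By the ADE classification, every non-smooth klt germ is of one of two kinds. It is either a cyclic quotient with chain $\HJseq(r,q)$, or a D/E type germ $[e_0;(r_1,q_1);(r_2,q_2);(r_3,q_3)]$ with $(r_1,q_1)=(2,1)$ and $r_2\in\{2,3\}$ (and $r_3\leq 5$ in the E case). In all cases the discrepancies $b_i=1-a(E_i,X)$ are obtained by solving the linear system $(K_Y+\sum b_iE_i)\cdot E_j=0$. By Proposition~\ref{prop:mld-computing-special} (and the fact that Du Val germs have $\mld=1$), the mld is the minimum of $a(E_i,X)$ over the vertices of the minimal resolution. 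Hence, given a configuration, deciding whether $\mld\geq a$ is a finite rational computation. The algorithm will enumerate configurations, pruning with explicit a priori bounds, and group the infinite remainder into explicitly described one-parameter families.

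\textbf{Step 1: bound the entries.} For a cyclic quotient $[e_1,\dots,e_n]$, the adjunction computation of Theorem~\ref{thm:adjunction-surface} applied to the extraction of $E_i$ gives $a(E_i,X)\cdot e_{E_i}=\,$(explicit quantity $\leq 2$). Concretely, for the chain one has $a(E_i,X)=(\alpha_i+\beta_i)/r$, where $\alpha_i,\beta_i$ are the determinants of the two sub-chains, $r$ is the full determinant, and $r\geq e_i\alpha_i\beta_i/2$ roughly. From this I would derive $e_i\leq 2/a+1$ for every $i$. An analogous bound at the fork $E_0$ of a D/E germ comes from $c_x$ and \cite[Lemma~3.3]{Ale93}. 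So all self-intersections lie in a finite computable set $\{2,\dots,\lfloor 2/a\rfloor+1\}$.

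\textbf{Step 2: bound the length modulo $(-2)$-runs.} Entries $\geq 3$ contribute convexly to the discrepancy function along the chain. Since the $b_i$ along a chain are concave in $i$, I would show that each entry $\geq3$ forces a drop of the log discrepancy by a definite amount depending on $a$. This bounds the number of entries $\geq 3$ by an explicit $N(a)$. It also bounds the number of maximal blocks of $2$'s, and shows that at most one block of $2$'s can be unbounded (in the D-II case, the initial block of the third branch). All but one block then has length $\leq L(a)$, again computed from the same linear recursion $\det[\dots]$.

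\textbf{Step 3: enumerate and detect families.} Enumerate all words in the alphabet $\{2,\dots,\lfloor2/a\rfloor+1\}$ of bounded shape from Steps 1--2, together with the D/E decorations, inserting a block $2^k$ at each admissible position. For a fixed skeleton with one free block $2^k$, the log discrepancies are rational functions of $k$ with numerator and denominator linear in $k$, via the continuant recursion. So $\lim_{k\to\infty}\mld$ and monotonicity are computable. If the limit is $\geq a$ and the inequality holds for all $k\geq k_0$ (decidable by comparing linear functions), output the skeleton as a one-parameter family and the finitely many $k<k_0$ that pass as isolated germs. Otherwise only finitely many $k$ satisfy $\mld\geq a$, and they are listed individually. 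Termination follows because every branch of the search is finite, and correctness follows from \cite[Lemma~3.3]{Ale93}, which ensures nothing outside these shapes has $\mld\geq a$.

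\textbf{Main obstacle.} I expect the main difficulty to be Step 2: producing an explicit bound $N(a)$, $L(a)$, rather than Alexeev's existential finiteness. I would first try to show that each non-$2$ entry at distance $d$ from the end of the chain forces some $a(E_j,X)\leq 2/(d+1)$, using the explicit continuant formulas and convexity. If this is too weak, I would fall back on a breadth-first search that extends chains one vertex at a time. This works because appending vertices only decreases the mld, so any branch whose partial mld is already $<a$ can be pruned. Termination of this search then follows from Alexeev's finiteness together with the limit-detection test in Step 3.
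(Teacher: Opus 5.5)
Your approach is correct, but it is organized differently from the paper's. The paper works \emph{locally at a vertex $E_k$ computing the mld}. It cuts the chain there into $(r_1,q_1)$ and $(r_2,q_2)$, and uses Alexeev's closed formula $a(E_k,X)=(r_2+q_1)/(r_1r_2-q_1q_2)$ together with $r_1-q_1\geq r_2-q_2$. This bounds successively $q_1$, $e_k$, $r_1$ and $r_2-q_2$, leaving $a(E_k,X)=(q_2+A)/(Bq_2+C)$ in the single free parameter $q_2$. The dichotomy $1/B<a$ versus $1/B\geq a$ then gives either finitely many germs, or the tail and middle families $[e_1,\dots,e_k,2,\dots,2]$ and $[e_1,\dots,e_k,2,\dots,2,\HJseq(v+r_2-q_2,v)]$. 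Members of these families are certified by Lemma~\ref{lem:mld-decrease-add-curve} and the limit $1/B$; D-II is handled via $\mld=1/(r-q)$, and D-I and E via Brieskorn. You instead work \emph{globally along the chain*}: you bound its shape by convexity of the log discrepancies, then decide each skeleton $[A,2^k,B]$ exactly, using that all continuants are affine in $k$. Your version is more uniform and needs no monotonicity lemma, since membership for every $k$ reduces to comparing affine functions, with an explicit threshold $k_0$. The paper's arithmetic invariants give sharper a priori bounds (e.g.\ $q_1\leq 2(1/a-1)/(e_k-2)$) and produce the families directly in closed form.

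The step you flag as the main obstacle is short, and you should drop your fallback. A breadth-first search over the infinite tree of chains with $\mld\geq a$ never terminates, and Alexeev's finiteness is existential, so it gives no stopping rule. Your first heuristic is also true only if $d$ is the distance to the \emph{nearer} end: $[2,\dots,2,3]$ with $d$ twos has $\mld=(d+2)/(2d+3)$. Instead, set $a_0=a_{n+1}=1$ and $a_i=a(E_i,X)$. The relations $(K_Y+\sum b_jE_j)\cdot E_i=0$ read $a_{i-1}+a_{i+1}=e_ia_i$, which already gives Step~1 in the form $e_i\leq 2/a$. It is the slope, not the value, that jumps. The slopes $d_i:=a_{i+1}-a_i$ are nondecreasing, with $d_i-d_{i-1}=(e_i-2)a_i\geq (e_i-2)a$ and $d_n-d_0=(1-a_n)+(1-a_1)<2$; hence $\sum_i(e_i-2)<2/a$. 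Two distinct maximal blocks of $2$'s are separated by a slope jump of at least $a$, so all but at most one of them have constant slope $|d|\geq a/2$. A block of length $L$ with slope $d$ satisfies $(L+1)|d|<1$, so each of those blocks has $L<2/a$. For D types, the two $(-2)$-leaves give $(e_0-1)a_0=a_{3,1}$, so the fork behaves like a chain vertex of weight $e_0-1$ with boundary value $0$. For $e_0\geq 3$ all slopes are at least $a$, so the third branch has length $<1/a$ and D-I is finite. For $e_0=2$, only the initial run of $2$'s in the third branch (slope $0$) is unbounded, which is exactly the D-II family.
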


Before we prove Theorem \ref{thm:algorithm-mld-low-bound}, we need the following easy lemma.

\begin{lem}\label{lem:mld-decrease-add-curve}
Let $X\ni x$ and $Y\ni y$ be two cyclic quotient singularities of type
\begin{equation}
[e_1,\dots,e_n]\quad \text{and}\quad [e_1,\dots,e_k,e',e_{k+1},\dots,e_n]
\end{equation}
respectively for some $0\leq k\leq n$. Then $\mld(X\ni x)\geq\mld(Y\ni y)$.
\end{lem}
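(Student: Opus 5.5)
The idea is to realise $Y\ni y$ as a partial resolution of $X\ni x$ and compare log discrepancies by the negativity lemma. Let $f_Y\colon \tilde Y\to Y$ be the minimal resolution of $Y\ni y$, with HJS $[E_1,\dots,E_k,E',E_{k+1},\dots,E_n]$ and self-intersections $-e_1,\dots,-e_k,-e',-e_{k+1},\dots,-e_n$. If $e'\geq 2$ (which is implicit, since the second sequence must be an $\HJseq$), blow up the point $E_k\cap E_{k+1}$ on the minimal resolution $\tilde X$ of $X\ni x$, and then blow up $e'-1$ further times at general points of the new exceptional curve. This produces a smooth surface $W$ containing the chain $[E_1,\dots,E_k,F,E_{k+1},\dots,E_n]$, with $F^2=-e'$, together with $e'-1$ disjoint $(-1)$-curves meeting $F$. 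If $k=0$ or $k=n$, one blows up instead a general point of $E_1$ (resp.\ $E_n$), and the curve $F$ is attached at the end of the chain. The self-intersections of the original $E_i$ drop by one, however, so this construction does not reproduce $Y$ directly. I would therefore prefer a cleaner route through the log discrepancy formula for chains.

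For a cyclic quotient singularity $[E_1,\dots,E_n]$, write $f^*K_X=K_{\tilde X}+\sum b_iE_i$. The discrepancies $a_i=1-b_i$ form the unique solution of the linear system
\begin{equation}
a_{i-1}-e_ia_i+a_{i+1}=2-e_i,\qquad a_0=a_{n+1}=1.
\end{equation}
By concavity (Proposition~\ref{prop:mld-computing-special} and its proof), $\mld(X\ni x)=\min_i a_i$. It is also equal to the minimum over all divisors over $x$, but divisors that are not on the minimal resolution have larger discrepancy, so only the $a_i$ matter. I will show that the discrepancy vector $a'$ for $Y$, restricted to the positions of $E_1,\dots,E_n$, satisfies $a'_i\leq a_i$ for all $i$. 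Then $\mld(Y\ni y)\leq\min_i a'_i\leq\min_i a_i=\mld(X\ni x)$.

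To compare the two vectors, extend the sequence $a$ by linear interpolation at the inserted position: set $\tilde a:=(a_1,\dots,a_k,\alpha,a_{k+1},\dots,a_n)$ with $\alpha:=\tfrac12(a_k+a_{k+1})$, using the boundary values $a_0=a_{n+1}=1$. Then check whether $\tilde a$ is a supersolution of the $Y$-system. At index $k$ the $Y$-equation uses the neighbour $\alpha$ instead of $a_{k+1}$; at the new index it reads $a_k-e'\alpha+a_{k+1}$ against $2-e'$. Because log discrepancies of a nonsmooth germ satisfy $a_i<1$, the new equation gives the residual $(2-e')(\alpha-1)\ge 0$ in the correct direction. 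Since the $Y$-matrix is negative definite with nonnegative off-diagonal entries (an M-matrix), the discrete maximum principle (equivalently, the negativity lemma on $\tilde Y$) turns a one-sided residual into the inequality $a'\leq\tilde a$ entrywise.

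The main obstacle is the sign bookkeeping at indices $k$ and $k+1$. There, replacing the neighbour $a_{k+1}$ by the smaller quantity $\alpha$ (or $a_k$ by $\alpha$) perturbs the equations, and one must verify that the perturbation has the same sign as the residual at the new vertex. If the interpolation choice fails, I would instead take $\alpha:=\min(a_k,a_{k+1})$ or argue geometrically. In the geometric version, $Y\ni y$ is obtained from $X\ni x$ by a weighted blow-up followed by a deformation. Alternatively, use the characterisation $\mld=\min_i a_i$ together with monotonicity of $r$ and of the weights $(r,q)\mapsto \tfrac{1+q}{r}$-type expressions: for cyclic quotients $\tfrac1r(1,q)$, the mld is $\min$ over $j$ of $(\alpha_j+\beta_j)/r$, where $\alpha_j,\beta_j$ are the continuant numerators on the two sides of $E_j$. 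Inserting $e'$ multiplies $r$ by a factor that is at least as large as the factor applied to the numerators $\alpha_j+\beta_j$ for the surviving $j$. This continuant inequality, $\det$ of the longer chain $\geq$ the product estimate, is what I expect to require the real work.
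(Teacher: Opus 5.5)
There is a genuine gap: the entrywise inequality $a'_i\le a_i$ you set out to prove is false, so no choice of $\alpha$ can rescue the argument. Take $X=[2,5]$ and $Y=[2,2,5]$ ($k=1$, $e'=2$). The log discrepancies satisfy $e_ia_i=a_{i-1}+a_{i+1}$ with $a_0=a_{n+1}=1$. This gives $(a_1,a_2)=(2/3,1/3)$ for $X$ and $(10/13,7/13,4/13)$ for $Y$.

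At $E_1$ the log discrepancy \emph{increases}, from $2/3$ to $10/13$, even though the mld does drop ($4/13<1/3$). So neither a supersolution/maximum-principle argument nor a continuant ``growth factor'' comparison can hold for all surviving positions. In this example $r$ grows by $13/9$, while the numerator at $E_1$ grows by $10/6$.

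Your linear system is also misstated. The inhomogeneous term $2-e_i$ belongs to the coefficients $b_i=1-a_i$, which have boundary values $0$. For the $a_i$ the system is homogeneous with boundary values $1$. With the correct system, your interpolated $\tilde a$ has residual $(2-e')\alpha\le 0$ at the new vertex. At $E_k$ and $E_{k+1}$, however, the residuals are $\tfrac12(a_k-a_{k+1})$ and $\tfrac12(a_{k+1}-a_k)$, which have opposite signs. This is exactly the ``sign bookkeeping'' obstacle you flagged, and it reflects a true failure of the conclusion rather than a technicality.

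The missing idea is that the comparison holds only on one side of the insertion: the side containing an mld-computing curve.

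First reduce to $e'=2$, since increasing $e'$ only lowers discrepancies. Handle end insertions separately (there the entrywise comparison does hold). Next use Proposition~\ref{prop:mld-computing-special} to pick $E_l$ with $a_l=\mld(X\ni x)$, and reverse the chain if necessary so that $l\le k$. Concavity then gives $a_k\le a_{k+1}$, which is exactly what is needed at the junction.

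To see this, write $r_1=\det[e_1,\dots,e_k]$, $q_1=\det[e_1,\dots,e_{k-1}]$, $r_2=\det[e_{k+1},\dots,e_n]$ and $q_2=\det[e_{k+2},\dots,e_n]$. A continuant computation shows that $a_k-a_{k,Y}$ is a nonnegative multiple of $(r_1+q_2)-(r_2+q_1)$. That quantity has the sign of $a_{k+1}-a_k$, so $a_{k,Y}\le a_k$.

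Finally, the sub-chain $E_1,\dots,E_k$ is unchanged in $Y$, and both discrepancy sequences obey the same recursion starting from $a_0=1$. Hence each $a_j$ with $j\le k$ is the same increasing affine function of $a_1$ for $X$ and for $Y$. So $a_{k,Y}\le a_k$ forces $a_{1,Y}\le a_1$, and therefore $\mld(Y\ni y)\le a_{l,Y}\le a_l=\mld(X\ni x)$.
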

\begin{proof}
We let
\begin{equation}
[E_1,\dots,E_n]\quad \text{and}\quad [F_1,\dots,F_k,F',F_{k+1},\dots,F_n]
\end{equation}
be the resolution configuration of $X\ni x$ and $Y\ni y$ respectively so that
\begin{equation}
E_i^2=F_i^2=-e_i\quad \text{and}\quad F'^2=-e'.
\end{equation}
We let
\begin{equation}
a_i:=a(E_i,X),\quad a_{i,Y}:=a(F_i,Y)\quad \text{for any}\quad i,\quad \text{and}\quad a':=a(F',Y).
\end{equation}
 By \cite[L.1 Lemma]{KM99}, we may assume that $1\leq k<n-1$ and we may assume that $e'=2$. If $X\ni x$ is a Du Val singularity, then the lemma is trivial. Thus we may assume that $X\ni x$ is not Du Val. By Proposition \ref{prop:mld-computing-special}, there exists $1\leq l\leq n$ such that $a_l=\mld(X\ni x)$ and $e_l\geq 3$. Possibly replacing $[E_1,\dots,E_n]$ with $[E_n,\dots,E_1]$ and replacing $[F_1,\dots,F_k,F',F_{k+1},\dots,F_n]$ with $[F_n,\dots,F_{k+1},F',F_{k},\dots,F_1]$, we may assume that $l\leq k$. We let
\begin{equation}
r_1:=\det[e_1,\dots,e_k], q_1:=\det[e_1,\dots,e_{k-1}], r_2:=\det[e_{k+1},\dots,e_n], \text{ and } q_2:=\det[e_{k+2},\dots,e_n].
\end{equation}
We have
\begin{equation}
a_{k}=\frac{r_2+q_1}{r_1r_2-q_1q_2},\quad a_{k+1}=\frac{r_1+q_2}{r_1r_2-q_1q_2},\quad \text{and}\quad a_{k,Y}=\frac{2r_2-q_2+q_1}{2r_1r_2-r_1q_2-r_2q_1}.
\end{equation}
Thus
\begin{equation}
  a_k-a_{k,Y}=\frac{q_2(r_2-q_2)(r_1+q_2-(r_2+q_1))}{(r_1r_2-q_1q_2)(2r_1r_2-r_1q_2-r_2q_1)}.  
\end{equation}
Since $a_l=\mld(X\ni x)$, by concavity of log discrepancies, we have $a_{i+1}\geq a_i$ for any $i\geq l$. In particular, $a_{k+1}\geq a_k$, so
\begin{equation}
r_1+q_2\geq r_2+q_1,
\end{equation}
and so $a_k-a_{k,Y}\geq 0$. We now propagate the inequality $a_k\geq a_{k,Y}$ back from index $k$ to index $l$ through the linear recursion
\begin{equation}\label{equ:linear-recursion-a}
a_ie_i=a_{i-1}+a_{i+1},\qquad a_{i,Y}e_i=a_{i-1,Y}+a_{i+1,Y},\qquad 1\leq i\leq k-1,
\end{equation}
together with the boundary convention
\begin{equation}\label{equ:boundary-a}
a_0:=a_{0,Y}:=1.
\end{equation}
Solving \eqref{equ:linear-recursion-a} from $i=1$ upward by the standard determinantal recursion of Hirzebruch--Jung continued fractions (see Definition~\ref{defthm:HJS-CQR}), we obtain
\begin{equation}\label{equ:al-explicit}
a_j=\det[e_1,\dots,e_{j-1}]\,a_1-\det[e_2,\dots,e_{j-1}]\qquad (j\geq 2),
\end{equation}
and the same formula with $a_j$ replaced by $a_{j,Y}$ on both sides, where by convention $\det\emptyset=1$ (so the second term is zero when $j=1$). In particular, putting $j=l$ and $j=k$, we get
\begin{equation}\label{equ:ABCD-formula}
\begin{aligned}
A&=\det[e_1,\dots,e_{l-1}], & B&=\begin{cases}\det[e_2,\dots,e_{l-1}]&\text{if }l\geq 2,\\ 0&\text{if }l=1,\end{cases}\\
C&=\det[e_1,\dots,e_{k-1}], & D&=\begin{cases}\det[e_2,\dots,e_{k-1}]&\text{if }k\geq 2,\\ 0&\text{if }k=1,\end{cases}
\end{aligned}
\end{equation}
so that
\begin{equation}
a_l=Aa_1-B,\quad a_{l,Y}=Aa_{1,Y}-B,\quad a_k=Ca_1-D,\quad a_{k,Y}=Ca_{1,Y}-D.
\end{equation}
Note that $A,C>0$ (they are HJ-determinants of chains arising from HJ sequences) and $B,D\geq 0$. From $a_k\geq a_{k,Y}$ and $C>0$ we obtain $a_1\geq a_{1,Y}$, and therefore
\begin{equation}
\mld(X\ni x)=a_l=Aa_1-B\geq Aa_{1,Y}-B=a_{l,Y}\geq\mld(Y\ni y).
\end{equation}
The lemma follows.
\end{proof}

\begin{proof}[Proof of Theorem \ref{thm:algorithm-mld-low-bound}]
Du Val singularities are classified so we ignore them (there are three singularities $E_6,E_7,E_8$ and two infinite families
$D_n$ for $n\geq 4$ and $A_n$ for $n\geq 1$). For non-Du Val singularities, by \cite[Satz 2.11]{Bri68}, there are only finitely many possibilities of E type or D-I type singularities and are all algorithmically computable. For D-II type singularities $X\ni x$ of form
\[
[2;(2,1);(2,1);(r,q)],
\]
we have that
\begin{equation}
\mld(X\ni x)=\frac{1}{r-q}\geq a
\end{equation}
so there are only finitely many possibilities of $m:=r-q$. We write
\begin{equation}
r=um+v
\end{equation}
where $v\leq m-1$ and $u\geq 1$. Then the resolution configuration of $(r,q)$ is of the form
\begin{equation}
[2,\dots,2,\HJseq(v+m,v)]
\end{equation}
where there is a total of $u-1$ leading ``$2$''s in this HJS. This corresponds to the second family of \cite[Lemma 3.3]{Ale93}.

Next we consider the case when $X\ni x$ is an A type singularity that is not Du Val. Now assume that $X\ni x$ is of type $(r,q)$,
\[
[E_1,\dots,E_n]
\]
is the resolution configuration of $X\ni x$, and
\begin{equation}
e_i:=-E_i^2
\end{equation}
for any $i$. We pick any $1\leq k\leq n$ such that $a(E_k,X)=\mld(X\ni x)$. Then $e_k\geq 3$. Set
\begin{equation}
  r_1:=\det[e_1,\dots,e_k],\quad q_1:=\det[e_1,\dots,e_{k-1}],\quad r_2:=\det[e_{k+1},\dots,e_n],  
\end{equation}
and set
\begin{equation}
    q_2:=\det[e_{k+2},\dots,e_n]\quad \text{if}\quad k<n,\quad q_2:=0\quad \text{if}\quad k=n.
\end{equation}
Possibly replacing the chain $[E_1,\dots,E_n]$ with $[E_n,\dots,E_1]$, we may assume that
\begin{equation}\label{equ:q1-r2}
    q_1\leq r_2.
\end{equation}
By \cite[Lemma 3.3]{Ale93}, we have
\begin{equation}\label{equ:mld-explicit-compute-ale93}
a\leq \mld(X\ni x)=a(E_k,X)=\frac{r_2+q_1}{r_1r_2-q_1q_2}
\end{equation}
and
\begin{equation}\label{equ:r1-q1-r2-q2}
r_1-q_1\geq r_2-q_2.
\end{equation}
Since $e_k\geq 3$, we have
\begin{equation}\label{equ:r-q-compare}
e_kq_1-1\geq r_1\geq (e_k-1)q_1+1\geq 2q_1+1.
\end{equation}
Finally, we obviously have
\begin{equation}\label{equ:r2-q2-1}
    r_2\geq q_2+1.
\end{equation}
\eqref{equ:mld-explicit-compute-ale93} implies that
\begin{equation}\label{equ:inequality-explicit-mld}
\frac{1}{a}\geq\frac{1}{a(E_k,X)}=1+\frac{q_1(r_2-q_2-1)}{q_1+r_2}+\frac{r_1-(e_k-1)q_1-1}{q_1+r_2}+\frac{e_k-2}{\frac{1}{q_1}+\frac{1}{r_2}}.
\end{equation}
\eqref{equ:q1-r2}, \eqref{equ:r-q-compare}, \eqref{equ:r2-q2-1}, and \eqref{equ:inequality-explicit-mld} imply that
\begin{equation}\label{equ:q1-bound}
    \frac{1}{a}\geq 1+\frac{e_k-2}{\frac{1}{q_1}+\frac{1}{r_2}}\geq 1+\frac{(e_k-2)q_1}{2}.
\end{equation}
This implies that there are finitely many algorithmically computable possibilities of $q_1$ and $e_k$. \eqref{equ:r-q-compare} implies that there are finitely many algorithmically computable possibilities of $r_1$. \eqref{equ:r1-q1-r2-q2} implies that there are finitely many algorithmically computable possibilities of $r_2-q_2$. \eqref{equ:mld-explicit-compute-ale93} implies that
\begin{align*}
    a(E_k,X)=\frac{q_2+A}{Bq_2+C}
\end{align*}
where 
\begin{equation}
A:=q_1+(r_2-q_2),\quad B:=r_1-q_1,\quad \text{and}\quad C:=r_1(r_2-q_2)
\end{equation}
belong to an algorithmically computable finite set of positive integers.
There are two possibilities.

\smallskip

\noindent\textbf{Case 1.}  $\frac{1}{B}<a$. In this case, there are only finitely many possibilities of $q_2$, algorithmically computable. 

\smallskip

\noindent\textbf{Case 2.} $\frac{1}{B}\geq a$. In this case, either $r_2-q_2=1$, in which case we have that $X\ni x$ is of the form
    \begin{equation}\label{equ:tail-case-algorithm}
        [e_1,\dots,e_k,2,\dots,2],
    \end{equation}
    or $r_2-q_2\geq 2$, and we may write
    \begin{equation}
r_2=u(r_2-q_2)+v,\quad 1\leq v<r_2-q_2,
\end{equation}
    and in this case the resolution configuration of $X\ni x$ is of the form
    \begin{equation}\label{equ:middle-case-algorithm}
      [e_1,\dots,e_k,2,\dots,2,e_l,\dots,e_n]  
    \end{equation}
    where
    \begin{equation}
[e_l,\dots,e_n]=\HJseq(v+r_2-q_2,v).
\end{equation}
    We have that $e_1,\dots,e_k,e_l,\dots,e_n$ belong to an algorithmically computable finite set. This gives finitely many algorithmically computable families. 

\smallskip

Note that, in the constructions given in \textbf{Case 1} and \textbf{Case 2}, it is only guaranteed that the resolution configuration of any non-Du Val A type singularity $X\ni x$ lies in one of these finitely many cases or families; in other words, lying in one of these cases or families is a necessary but not in general sufficient condition for $\mld(X\ni x)\geq a$. To get an exact classification, we may algorithmically filter those finitely many cases. On the other hand, for the finitely many families in \eqref{equ:tail-case-algorithm} and \eqref{equ:middle-case-algorithm}, we claim that the list is complete without redundancy. Indeed, for any fixed $e_1,\dots,e_k$ (resp. $e_1,\dots,e_k$, $e_l,\dots,e_n$), we let $X_s\ni x_s$ be the singularity so that
\begin{equation}
\HJseq(X_s\ni x_s)=[e_1,\dots,e_k,2,\dots,2]\quad \left(\text{resp.}\quad [e_1,\dots,e_k,2,\dots,2,e_l,\dots,e_n]\right),
\end{equation}
with $s$ trailing ``$2$''s in the formulas above. Then for any $i\geq 0$, by Lemma \ref{lem:mld-decrease-add-curve} and \cite[Lemma 3.3]{Ale93}, we have
\begin{equation}
\mld(X_i\ni x_i)\geq\lim_{s\rightarrow+\infty}\mld(X_s\ni x_s)=\lim_{q_2\rightarrow+\infty}\frac{q_2+A}{Bq_2+C}=\frac{1}{B}\geq a.
\end{equation}
The existence of the algorithm now follows.
\end{proof}

\subsection{Reduction to finite singularity types}\label{ssec:finite-case-mld}

\begin{thm}\label{thm:finite singularity type}
    Notation and conditions as in Construction-Theorem~\ref{consthm:extract-contract}. Assume that $\mld(X)\geq 5/46$. Letting $x_1,\dots,x_n$ denote the singularities of $X$, the unordered tuple
    \begin{equation}
\{x_i\}_{i=1}^n
\end{equation}
    of (pointed) singularity germs ranges over a finite set (depending only on the bounds on $K_X^2$ and $\mld(X)$, not on the particular $X$), and all possibilities are algorithmically computable.
\end{thm}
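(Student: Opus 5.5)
Our approach is to combine the algorithmic classification of Theorem~\ref{thm:algorithm-mld-low-bound} (applied with $a=5/46$) with global constraints on $X$ that cut each one-parameter family down to finitely many members. First, by Theorem~\ref{thm:algorithm-mld-low-bound}, every singularity $x_i$ either lies in a finite, computable list of isolated germs or lies in one of finitely many families. These families are the Du Val families $A_n$ and $D_n$, the D-II family $[2,\dots,2,\HJseq(v+m,v)]$, and the A-type families \eqref{equ:tail-case-algorithm} and \eqref{equ:middle-case-algorithm}, each parametrized by the length $s$ of an inserted string of $(-2)$-curves. The number of singular points is bounded once we know $X$ is rational. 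We will argue that $X$ is rational: $K_X^2$ is tiny and $\rho=1$, so via Blache's bound $\chi(\mathcal O_X)>0$-type arguments together with $p_g=q=0$ from the small-volume Noether/Bogomolov--Miyaoka--Yau inequality, we are reduced to the setting of Theorem~\ref{thm:gamma-invariant-formula}. Alternatively, and more safely, we can invoke \cite{LX25} for a uniform bound on the number of singular points. With that bound, it suffices to bound the parameter $s$ in each family.

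To bound $s$ we use the $\gamma$-invariant. By Theorem~\ref{thm:gamma-invariant-formula}, $\gamma(X)=10-1-K_X^2<9$ when $X$ is rational; in general, $\gamma(X)=\rho(Y/X)+K_Y^2-K_X^2$ is bounded above via bounds on $K_Y^2$ for the minimal resolution. Each $\gamma_x=n-\sum b_i(e_i-2)$ is a sum over exceptional curves, and each $(-2)$-curve contributes exactly $1$. The remaining curves contribute $1-b_i(e_i-2)\geq 1-(e_i-2)$, and there are boundedly many of them with bounded $e_i$ in each family. Along a family with $s$ inserted $(-2)$-curves we therefore get $\gamma_x\geq s-O(1)$, with the constant computable from the fixed part. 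Du Val points contribute $\gamma_x=n\geq 0$, so their length is bounded as well. Moreover, $\gamma_x\geq 0$ should hold for every klt point (this is the content of \cite[Lemma~4.3]{LS23} and the neighbouring lemmas, and we would check it for the finite list). Summing, $\sum_i\gamma_{x_i}\leq 9$ bounds every $s$ and every Du Val length. This gives a finite, explicitly enumerable set of baskets.

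The main obstacle is the \emph{global} step: justifying that $\gamma(X)$ is bounded above and $\gamma_x$ is bounded below uniformly, so that no negative contribution can offset a long $(-2)$-string. If rationality is not available directly, we would instead use Bogomolov's bound (Theorem~\ref{thm:bogomolov-bound}) or the orbifold BMY inequality $\sum(1-1/r_i)\leq 3+K_X^2/3$-type estimate. This bounds the local orders $r_i$, and within each family $r$ grows linearly in $s$, which again bounds $s$. We would try the Bogomolov/orbifold route first, since it needs only $r_i$, and use the $\gamma$-bound as a cross-check. Finally, all bounds are explicit, so the enumeration is algorithmic.
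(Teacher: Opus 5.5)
Your local analysis matches the paper's. For germs with $\mld\geq 5/46$, the exceptional curves of self-intersection $\leq -3$ are bounded in number and in $K_Y\cdot E_i$, so $\gamma_x\geq l-N$, where $l$ is the number of exceptional curves over $x$. One claim needs correcting. The assertion that $\gamma_x\geq 0$ for every klt point is false: for $\tfrac15(1,1)$ one has $b=3/5$ and $\gamma_x=1-9/5=-4/5$, and for the germ $[10]$ in Table~\ref{table:251-baskets}, $\gamma_x=-27/5$. It is also not the content of \cite[Lemma~4.3]{LS23}, which is the global identity of Theorem~\ref{thm:gamma-invariant-formula}. You do not need it. Your own computation gives the uniform bound $\gamma_x\geq -N$ for every germ with $\mld\geq 5/46$, and that is what the paper uses.

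The genuine gap is the global step.
\begin{itemize}
\item An upper bound on $\gamma(X)$ that does not involve the number of exceptional curves needs a bound on $K_Y^2+\rho(Y)$. By Noether's formula this requires control of $p_g(Y)$ and $q(Y)$; for rational $Y$ one has $K_Y^2+\rho(Y)=10$.
\item The general identity $\gamma(X)=\rho(Y/X)+K_Y^2-K_X^2$ contains $\rho(Y/X)$, which is exactly the quantity you are trying to bound. So ``bounding $K_Y^2$'' is circular.
\item Your sketch of rationality (a ``Blache $\chi>0$-type argument plus $p_g=q=0$ from a small-volume Noether/BMY inequality'') is not an argument.
\item The \cite{LX25} fallback bounds only the number of singular points, not $\gamma(X)$.
\end{itemize}

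The route you say you would try first also fails.
\begin{itemize}
\item The inequality $\sum(1-1/r_i)\leq 3$ says nothing about an individual $r_i$ when $n\leq 3$, since each summand is $<1$.
\item The orbifold BMY version only gives $\sum 1/r_i\geq K_X^2/3$ for $n=3$. This is vacuous without an a priori lower bound on $K_X^2$, which is the very thing being proved.
\item Theorem~\ref{thm:bogomolov-bound} itself assumes rationality.
\end{itemize}
Since almost all candidate baskets have $n=3$, this cannot bound $s$.

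The paper supplies the missing input by citing \cite[Corollary~6.12]{Liu25} for the rationality of $X$. Then Theorem~\ref{thm:bogomolov-bound} gives $n\leq 6$, and Theorem~\ref{thm:gamma-invariant-formula} gives $\gamma(X)=9-K_X^2<9$. Combining, $l-N\leq\gamma_x<9+(n-1)N$, which yields $l\leq 9+6N$.

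With rationality in hand, your $\gamma$-argument is exactly the paper's proof. You need to promote it from ``cross-check'' to the main argument and replace $\gamma_x\geq 0$ by $\gamma_x\geq -N$.
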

\begin{proof}
By \cite[Corollary~6.12]{Liu25}, $X$ is rational. By Theorem~\ref{thm:bogomolov-bound}, $X$ has at most $6$ singularities, so $n\in\{1,2,\dots,6\}$ (the cases $n=5,6$ will be further excluded after the application of Filter~\ref{fil:gamma}; cf.\ Remark~\ref{rem:practical-filter-counts}). By Theorem~\ref{thm:algorithm-mld-low-bound}, for any singularity $x$ of $X$, $x$ belongs to an algorithmically computable finite set, or to finitely many algorithmically computable families as in \cite[Lemma~3.3]{Ale93}. We only need to show that, if $x$ belongs to one of these finitely many algorithmically computable families, then the minimal resolution of $X$ at $x$ contains at most $L$ curves for some algorithmically computable positive integer $L$.

Let $f\colon Y\rightarrow X$ be the minimal resolution of $X$ at $x$, and let $E_1,\dots,E_m$ be all $f$-exceptional prime divisors with self-intersection $\leq -3$. Then $[K_Y\cdot E_1,\dots,K_Y\cdot E_m]$ belongs to an algorithmically computable finite set. In particular, there exists an algorithmically computable positive integer $N$ which does not depend on $X$ such that 
\begin{equation}
\sum_{i=1}^mK_Y\cdot E_i\leq N.
\end{equation}
Let $l\geq m$ be the positive integer such that $E_1,\dots,E_l$ are all prime $f$-exceptional divisors. Then we have
\begin{equation}\label{equ:gamma-low-bound}
   \gamma_x=l-\sum_{i=1}^l(1-a(E_i,X))(K_Y\cdot E_i)=l-\sum_{i=1}^m(1-a(E_i,X))(K_Y\cdot E_i)\geq l-N\geq -N. 
\end{equation}
Now let $x_1,\dots,x_n$ be all singularities of $X$ and $x=x_k$ for some $1\leq k\leq n$. By Theorem~\ref{thm:gamma-invariant-formula} (using rationality of $X$),
\begin{equation}
    9>9-K_X^2=\gamma(X)=\sum_{i=1}^n\gamma_{x_i}\geq \gamma_{x}-(n-1)N.
\end{equation}
Therefore,
\begin{equation}
l-N\leq \gamma_{x}\leq 9+(n-1)N
\end{equation}
implies that
\begin{equation}
    l\leq 9+nN.
\end{equation}
We may let $L:=9+6N$. The theorem follows.
\end{proof}

\subsection{The filters}\label{ssec:filter}

Notation and conditions as in Construction-Theorem~\ref{consthm:extract-contract} and assume that $\mld(X)\geq 5/46$. By Theorem~\ref{thm:finite singularity type}, the number of possible singularity tuples on $X$ is finite and algorithmically computable. In the rest of the paper, we shall exclude them all by means of filters, i.e.\ necessary conditions that these singularities must satisfy.

\begin{nota}\label{nota:I0}
In the rest of this section, $\Ii_0$ denotes the finite set, algorithmically computed as in Theorem~\ref{thm:finite singularity type}, of unordered tuples $\{x_i\}_{i=1}^n$ of (pointed) singularity germs occurring on a rational projective surface $X$ with $\rho(X)=1$, $K_X$ ample, $K_X^2\leq 1/6351$, and $\mathrm{mld}(X)\geq 5/46$. By a slight abuse of notation, we write $\{x_i\}_{i=1}^n\in\Ii_0$, treating each $x_i$ as shorthand for the corresponding germ $X\ni x_i$ on the unspecified ambient surface; the ambient $X$ will be clear from context.
\end{nota}

First we have the following two most natural filters:

\begin{fil}[Bogomolov bound]\label{fil:bogomolov}
For any $\{x_i\}_{i=1}^n\in\Ii_0$, we have 
\begin{equation}
\sum_{i=1}^n\frac{r_{x_i}-1}{r_{x_i}}\leq 3.
\end{equation}
In particular, $1\leq n\leq 6$.
\end{fil}
\begin{proof}
    It is a special case of Theorem \ref{thm:bogomolov-bound}.
\end{proof}

\begin{fil}[Liu--Shokurov $\gamma$-invariant filter]\label{fil:gamma}
For any $\{x_i\}_{i=1}^n\in\Ii_0$, we have
\begin{equation}
\sum_{i=1}^n\gamma_{x_i}=9-K_X^2\in \left[9-\frac{1}{6351},9\right).
\end{equation}
\end{fil}
\begin{proof}
This is an immediate consequence of Theorem~\ref{thm:gamma-invariant-formula} together with $K_X^2\in (0,1/6351]$.
\end{proof}

\begin{rem}\label{rem:practical-filter-counts}
The set $\Ii_0$ is huge: each individual singularity has on the order of $10^4$ possibilities, so for $n=3$ a naive enumeration of $\Ii_0$ already involves $\sim 10^{12}$ combinations. Although such an enumeration cannot be tabulated in print, it is well within the reach of a personal laptop. In practice we therefore start with the intersection of $\Ii_0$ and Filter~\ref{fil:bogomolov}, then apply Filter~\ref{fil:gamma}; this reduces the count to approximately $130{,}000$ surviving baskets, distributed as follows:
\[
\#\{n=2\}=158,\qquad \#\{n=3\}=131{,}498,\qquad \#\{n=4\}=34,
\]
while $n=1,5,6$ are excluded outright.

It is also at this step that Theorem~\ref{thm:-ls23-6/7} becomes essential. Without our improvement of \cite[Theorem~1.1]{LS23}, the same two filters would leave on the order of tens of millions of cases, which is not feasible for the subsequent passes.
\end{rem}

\begin{fil}[Hwang--Keum complete-square filter]\label{fil:complete-square}
    For any $\{x_i\}_{i=1}^n\in\Ii_0$, we have that
    \begin{equation}
    \prod_{i=1}^nr_{x_i}\cdot K_X^2= \prod_{i=1}^nr_{x_i}\cdot\left(9-\sum_{i=1}^n\gamma_{x_i}\right)
    \end{equation}
    is a complete square.
\end{fil}
\begin{proof}
    It follows from \cite[Lemma 3.3]{HK11a}.
\end{proof}

Further applying Filter~\ref{fil:complete-square} to the cases that survived Filters~\ref{fil:bogomolov} and \ref{fil:gamma}, the surviving counts become
\[
\#\{n=2\}=149,\qquad \#\{n=3\}=32{,}234,\qquad \#\{n=4\}=5.
\]

The next filter is specific to A type singularities.

\begin{fil}[Tail filter]\label{fil:tail}
For any $\{x_i\}_{i=1}^n\in\Ii_0$, pick $x\in\{x_1,\dots,x_n\}$ such that $X\ni x$ is a cyclic quotient singularity. Let $[E_1,\dots,E_k]$ be the resolution configuration of $X\ni x$ and let $E$ be either $E_1$ or $E_k$. Assume that $E^2\leq -3$. Then
       \[
a(E,X)>1/6.4886.
\]
\end{fil}
\begin{proof}
By Lemma~\ref{lem:ne-5/6} we may assume that $n_E>5/6$. The conditions of Construction-Theorem~\ref{consthm:extract-contract} are satisfied, and we let $h,Z,g,T,C,\lambda,E_T$ be as in Construction-Theorem~\ref{consthm:extract-contract}. Since $E$ is a non-singular rational curve, Construction-Theorem~\ref{consthm:extract-contract}(3)(4) and Theorem~\ref{thm:adjunction-surface} together imply that either $E_T$ is not non-singular, or $E_T$ contains $\geq 3$ singular points of $T$. The latter case is impossible because $E$ contains at most $1$ singular point $y$ of $Z$ and $g$ is a divisorial contraction. Moreover, since $(T,E_T)$ is lc, $C\cap E$ consists of exactly $2$ points and $t_0:=g(C)$ is a node of $E_T$. By Construction-Theorem~\ref{consthm:extract-contract}(3)(4) we have $y\not\in C$, so $C$ passes through two non-singular points of $E$ and $C\cdot E=2$. Now $t_0$ is a node of $E_T$ and $(T\ni t_0,E_T)$ is locally analytically log toroidal, so $g$ is a weighted blow-up whose two (locally analytic) branches of $E_T$ are the coordinate axes. Since $C$ intersects $E$ at non-singular points of $Z$, $t_0$ is a smooth point of $T$ and $g$ is the ordinary blow-up. Therefore $K_{Z}\cdot C=-1$, so $n_E=1/2$, a contradiction.
\end{proof}

Further applying Filter~\ref{fil:tail}, the surviving counts become
\[
\#\{n=2\}=87,\qquad \#\{n=3\}=12{,}166,\qquad \#\{n=4\}=1.
\]

The next filter is based on Blache's $\delta$-invariants.

\begin{defn}\label{defn:plurigenus}
Let $X$ be a normal projective variety (of arbitrary dimension). For any non-negative integer $n$, view $K_X$ as a Weil divisor on $X$ and view $\mathcal O_X(nK_X)$ as the corresponding rank-one reflexive sheaf, defined as the pushforward to $X$ of the line bundle $\mathcal O_{X^{\rm sm}}(nK_{X^{\rm sm}})$ on the smooth locus $X^{\rm sm}\subset X$. We define
\begin{equation}
P_n(X):=h^0(X,\mathcal O_X(nK_X)).
\end{equation}
We emphasize that no $\mathbb Q$-Cartier hypothesis on $K_X$ is needed for this definition.
\end{defn}

\begin{defn}
Let $X$ be a projective klt surface and $f: Y\rightarrow X$ the minimal resolution of $X$. Write
\begin{equation}
f^*K_X=K_Y+B_Y.
\end{equation}
We define
\begin{equation}
\delta_n(X):=\frac{1}{2}\left(K_Y+\{nB_Y\}\right)\cdot\{nB_Y\}
\end{equation}
for any non-negative integer $n$. 
\end{defn}

\begin{thm}[{\cite[Proposition~5.2]{Bla95}}]\label{thm:RR}
Let $X$ be a projective klt surface. Then
\begin{equation}
\chi(X,\mathcal{O}_X(nK_X))=\chi(\mathcal{O}_X)+\frac{n(n-1)}{2}K_X^2+\delta_n(X)
\end{equation}
for any integer $n$.
\end{thm}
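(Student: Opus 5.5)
The statement is Blache's singular Riemann--Roch, and I would prove it by reducing to ordinary Riemann--Roch on the minimal resolution $f\colon Y\to X$. Write $f^*K_X=K_Y+B_Y$ with $B_Y\ge 0$ supported on the exceptional locus; this is effective because $X$ is klt and $f$ is minimal. For an integer $n$ I would consider the integral divisor
\[
D_n:=\lfloor f^*(nK_X)\rfloor = nK_Y+\lfloor nB_Y\rfloor = K_Y+(n-1)K_Y+\lfloor nB_Y\rfloor .
\]

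The first step is to identify $f_*\mathcal O_Y(D_n)=\mathcal O_X(nK_X)$. For any Weil divisor $D$ on a normal surface with rational singularities, $f_*\mathcal O_Y(\lfloor f^*D\rfloor)=\mathcal O_X(D)$. Indeed, a section of $\mathcal O_X(D)$ pulls back to a rational function $\phi$ with $(f^*\phi)+f^*D\ge 0$. Since $(f^*\phi)$ is integral, we may round down the coefficients of $f^*D$.

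The second step is the vanishing $R^1f_*\mathcal O_Y(D_n)=0$. I would write
\[
D_n-K_Y=f^*((n-1)K_X)+\{-nB_Y\}+\bigl(\text{correction}\bigr).
\]
More cleanly, $D_n=K_Y+\lceil f^*((n-1)K_X)-\{nB_Y\}\cdot 0\ldots\rceil$. Concretely,
\[
D_n-K_Y=f^*((n-1)K_X)+(B_Y-\{nB_Y\}),
\]
and $B_Y-\{nB_Y\}$ is $f$-nef up to a fractional part with coefficients in $[0,1)$. To get relative Kawamata--Viehweg vanishing, I would instead write
\[
D_n=K_Y+\lceil f^*((n-1)K_X)+B_Y - \{nB_Y\} \rceil
\]
and use that $-\{nB_Y\}+B_Y\equiv_f -K_Y-\{nB_Y\}\ldots$. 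This is the step I expect to be the main obstacle. The cleanest route I would try first is the known statement that, for rational surface singularities, $R^1f_*\mathcal O_Y(\lfloor f^*D\rfloor)=0$ for every Weil divisor $D$. This follows from Kawamata--Viehweg vanishing in the relative setting applied to $\lfloor f^*D\rfloor-K_Y\equiv_f -K_Y-\{f^*D\}$, together with the fact that $-K_Y$ is $f$-nef on the minimal resolution of a klt singularity, since $B_Y\ge 0$ gives $K_Y\cdot E\ge 0$ and $-K_Y\equiv_f B_Y$. The boundary $\{f^*D\}$ has snc support and coefficients less than $1$. When $-K_Y-\{f^*D\}$ is not $f$-nef, I would fall back on a Laufer-type computation sequence argument on each exceptional curve.

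The third step is to combine. Granting the first two steps, Leray gives $\chi(X,\mathcal O_X(nK_X))=\chi(Y,\mathcal O_Y(D_n))$. Riemann--Roch on the smooth surface $Y$ gives
\[
\chi(Y,\mathcal O_Y(D_n))=\chi(\mathcal O_Y)+\tfrac12 D_n\cdot(D_n-K_Y).
\]
I would use $\chi(\mathcal O_Y)=\chi(\mathcal O_X)$, which holds since the singularities are rational. Then substitute $D_n=f^*(nK_X)-\{nB_Y\}$ and use $f^*K_X\cdot(\text{exceptional})=0$:
\[
\tfrac12\bigl(nf^*K_X-\{nB_Y\}\bigr)\cdot\bigl((n-1)f^*K_X+B_Y-\{nB_Y\}\bigr)
=\tfrac{n(n-1)}2K_X^2+\tfrac12\{nB_Y\}\cdot(\{nB_Y\}-B_Y).
\]
Finally, $-B_Y\equiv_f K_Y$ on exceptional classes, so the last term becomes $\tfrac12(K_Y+\{nB_Y\})\cdot\{nB_Y\}=\delta_n(X)$. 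This yields the formula of Theorem~\ref{thm:RR}.
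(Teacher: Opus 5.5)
The paper gives no proof of its own here; it simply quotes \cite[Proposition~5.2]{Bla95}. Most of your framework is sound: $f_*\mathcal O_Y(\lfloor f^*D\rfloor)=\mathcal O_X(D)$, $\chi(\mathcal O_Y)=\chi(\mathcal O_X)$, Riemann--Roch on $Y$, and your intersection computation. That computation shows unconditionally that $\chi(Y,\mathcal O_Y(D_n))=\chi(\mathcal O_X)+\frac{n(n-1)}{2}K_X^2+\delta_n(X)$.

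However, Leray gives $\chi(X,\mathcal O_X(nK_X))=\chi(Y,\mathcal O_Y(D_n))+h^0(R^1f_*\mathcal O_Y(D_n))$. So the theorem is \emph{equivalent} to $R^1f_*\mathcal O_Y(D_n)=0$, and that is exactly the step you have not proved. Your justification for it fails in two places.

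First, there is a sign error. On the minimal resolution $K_Y\cdot E=-2-E^2\geq 0$, so $K_Y$ is $f$-nef. Hence $-K_Y\equiv_f B_Y$ is $f$-nef only when $B_Y=0$, since a nonzero effective exceptional divisor is never $f$-nef. So relative Kawamata--Viehweg applied to $-K_Y-\{f^*D\}$ is not available.

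Second, the ``known statement'' you fall back on is false for rational singularities. Take the rational (log canonical, not klt) singularity whose minimal resolution is a $(-4)$-curve $E_0$ with four $(-2)$-curves $E_1,\dots,E_4$ attached. Let $D=D_1+D_2-D_3-D_4$, where $D_i$ is the image of a smooth arc meeting $E_i$ transversally. Then $\{f^*D\}=\frac12(E_1+E_2+E_3+E_4)$ and $\lfloor f^*D\rfloor\cdot E_0=-2$. So $R^1f_*\mathcal O_Y(\lfloor f^*D\rfloor)$ surjects onto $H^1(\mathcal O_{\mathbb P^1}(-2))\neq 0$. Klt-ness therefore has to be used in an essential way, and the ``Laufer-type'' fallback, which is where it would enter, is not carried out.

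One way to close the gap is as follows. The divisor $f^*((n-1)K_X)$ is $f$-numerically trivial, so relative Kawamata--Viehweg gives $R^1f_*\mathcal O_Y(D_n')=0$ for $D_n':=K_Y+\lceil f^*((n-1)K_X)\rceil=nK_Y+\lceil (n-1)B_Y\rceil$.

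Since every coefficient $b_i$ of $B_Y$ is $<1$, we have $D_n'\geq D_n$, and $W:=D_n'-D_n=\sum_{\{nb_i\}>b_i}E_i$ is reduced. Also $f_*\mathcal O_Y(D_n')=\mathcal O_X(nK_X)$. Pushing forward $0\to\mathcal O_Y(D_n)\to\mathcal O_Y(D_n')\to\mathcal O_W(D_n')\to 0$ gives $R^1f_*\mathcal O_Y(D_n)\cong H^0(\mathcal O_W(D_n'))$ and $H^1(\mathcal O_W(D_n'))=0$.

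Let $W'$ be a connected component of $W$; it is a tree of $\mathbb P^1$'s. Set $F:=\{nB_Y\}=\sum f_iE_i$. Then $h^0(\mathcal O_{W'}(D_n'))=1-(F-W')\cdot W'$, so you must show the integer $(F-W')\cdot W'$ is positive. Expanding,
$(F-W')\cdot W'=\sum_{i\in W'}(1-f_i)(e_i-d_i)+\sum_{k\notin W'}f_k\,(E_k\cdot W')$,
where $d_i$ is the number of neighbours of $E_i$ inside $W'$.

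This is visibly positive unless $W'$ contains a fork $E_0$ with $E_0^2=-2$ together with all three of its neighbours. In that case you must compare the contributions of the three arms with the fork's term $-(1-f_0)$. This uses the congruences $F\cdot E_j\in\mathbb Z$ and the D/E shape of the graph; for example, a $[2]$-arm contributes at least $(1-f_0)/2$. This is where the klt condition $\sum_a 1/r_a>1$ at the fork enters, and it is the input your proposal is missing.
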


\begin{cor}\label{cor:Pn}
Let $X$ be a projective klt surface such that $K_X$ is ample. Then
\begin{equation}\label{equ:blache-plurigenus}
P_n(X)=\chi(\mathcal{O}_X)+\frac{n(n-1)}{2}K_X^2+\delta_n(X)
\end{equation}
for any integer $n\geq 2$.
\end{cor}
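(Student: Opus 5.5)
The identity will follow from Theorem~\ref{thm:RR} once we show that the higher cohomology of $\mathcal O_X(nK_X)$ vanishes for $n\geq 2$. That is, we will prove
\[
h^1(X,\mathcal O_X(nK_X))=h^2(X,\mathcal O_X(nK_X))=0\qquad (n\geq 2),
\]
so that $P_n(X)=h^0(X,\mathcal O_X(nK_X))=\chi(X,\mathcal O_X(nK_X))$. Substituting this into Theorem~\ref{thm:RR} gives exactly \eqref{equ:blache-plurigenus}.

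For the vanishing we use Kawamata--Viehweg vanishing in its form for Weil divisors on klt varieties (cf.\ \cite[Theorem~2.70]{KM98} or its generalized form \cite[Theorem~10.19]{Kol13b}). The statement is: if $X$ is a projective klt variety, hence $\mathbb Q$-factorial in the surface case by Proposition~\ref{prop:klt-surface-q-factorial}, and $D$ is a $\mathbb Z$-Weil divisor with $D-K_X$ $\mathbb Q$-Cartier, nef and big, then $H^i(X,\mathcal O_X(D))=0$ for $i>0$. We apply it with $D:=nK_X$. Then $D-K_X=(n-1)K_X$, which is ample for $n\geq 2$ because $K_X$ is ample, so the hypothesis holds.

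If one prefers to avoid the Weil-divisor version, there is an alternative route through the minimal resolution $f\colon Y\to X$. Write $f^*K_X=K_Y+B_Y$ with $0\leq B_Y$ and $\lfloor B_Y\rfloor=0$, using klt and minimality. We then check two facts:
\begin{enumerate}
\item $f_*\mathcal O_Y(\lfloor nf^*K_X\rfloor)$, or rather $f_*\mathcal O_Y(K_Y+\lceil (n-1)f^*K_X\rceil)$, equals $\mathcal O_X(nK_X)$;
\item relative and absolute Kawamata--Viehweg vanishing on $Y$ apply to $(n-1)f^*K_X$, which is nef and big.
\end{enumerate}
This is the standard argument behind Blache's formula.

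The only delicate point is the reflexive-sheaf bookkeeping in this second route. The pushforward must be identified with $\mathcal O_X(nK_X)$ exactly, with no torsion or correction term, so that it agrees with Definition~\ref{defn:plurigenus}. Citing the Weil-divisor KV vanishing directly avoids this, so that is the approach I would take.
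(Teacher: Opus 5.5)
Your proposal is correct and is essentially the paper's own argument: the paper's proof just combines Blache's Riemann--Roch (Theorem~\ref{thm:RR}) with Kawamata--Viehweg vanishing for the Weil divisor $nK_X$. That vanishing applies because $nK_X-K_X=(n-1)K_X$ is ample for $n\geq 2$, and it gives $P_n(X)=\chi(X,\mathcal O_X(nK_X))$; the minimal-resolution route you also sketch is not needed.
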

\begin{proof}
It follows from Theorem~\ref{thm:RR} and the Kawamata--Viehweg vanishing theorem.
\end{proof}

Note that $\delta_n(X)$ is explicitly computable once the singularity types of $X$ are explicitly characterized. Therefore, we have:

\begin{fil}[Blache filter (weak form)]\label{fil:blache-weak}
For any $\{x_i\}_{i=1}^n\in\Ii_0$ and any integer $m\geq 2$, we have
    \begin{equation}
       0\leq P_m(X)=1+\frac{m(m-1)}{2}\left(9-\sum_{i=1}^n\gamma_{x_i}\right)-\delta_m(X).
    \end{equation}
\end{fil}
\begin{proof}
Since $X$ is rational, $\chi(\mathcal{O}_X)=1$, and the filter follows from Corollary~\ref{cor:Pn}.
\end{proof}
\begin{rem}\label{rem:blache-weak-vs-strong}
The pluricanonical product-dimension filter (Filter~\ref{fil:ai} below), which we shall obtain in the form $P_{a+b}\geq P_a+P_b-1$, can naturally be viewed as the \emph{strong form} of Filter~\ref{fil:blache-weak}: instead of forcing each individual $P_n$ to be a non-negative integer, it forces the family $\{P_n\}_{n\geq 1}$ to be subadditive in the sense of \eqref{equ:ai-filter}. Logically, one could apply Filter~\ref{fil:ai} directly at this point and skip the intermediate Filters~\ref{fil:tail}, \ref{fil:blache-weak} and~\ref{fil:non-vertex-strong}; we have chosen instead to apply the previously known filters first and to defer Filter~\ref{fil:ai} to the very end so as to make explicit that Filter~\ref{fil:ai} -- the only filter re-derived in our work by an AI chatbot -- is essential, in the sense that no combination of the previously known filters \ref{fil:bogomolov}--\ref{fil:non-vertex-strong} cuts the candidate baskets down to a single one.
\end{rem}

Further applying Filter~\ref{fil:blache-weak}, the surviving counts become
\[
\#\{n=2\}=2,\qquad \#\{n=3\}=855,\qquad \#\{n=4\}=0,
\]
i.e.\ the last $n=4$ case is excluded as well. The next filter is a quantitative consequence of the special-curve construction. We isolate the relevant computation as a self-contained lemma so as to make the input to Filter~\ref{fil:non-vertex-strong} fully explicit.

\begin{lem}[Quantitative bound from a special divisorial valuation at a cyclic quotient]\label{lem:lambda-E-bound}
Let $\{x_i\}_{i=1}^n\in\Ii_0$. Fix $x\in\{x_1,\dots,x_n\}$ such that $X\ni x$ is a cyclic quotient singularity and pick a prime divisor $E$ which is special over $x\in X$ with $a(E,X)\leq 1/6.4886$. Let $h,Z,g,T,C,E_T,\lambda$ be as in Construction-Theorem~\ref{consthm:extract-contract}. Since Filter~\ref{fil:tail} has already been applied, $E$ contains exactly two singularities $z_1,z_2\in Z$; set $r_i:=r_{z_i}$ and $q_i:=q_{z_i}(E)$ for $i\in\{1,2\}$. Define
\begin{equation}\label{equ:mu-tau-E}
\mu_E:=\min\left\{\,1-\frac{1}{r_1}-\frac{1}{r_2}-\frac{1}{r}\;\middle|\;r\in\mathbb Z_{>0},\ 1-\frac{1}{r_1}-\frac{1}{r_2}-\frac{1}{r}>0\,\right\},\quad \tau_E:=\min\!\left\{1-\frac{1}{r_1},\,1-\frac{1}{r_2}\right\}.
\end{equation}
Both $\mu_E$ and $\tau_E$ are explicitly computable from the type of $x$ and $E$ alone. Then
\begin{equation}\label{equ:lambda-E-cases}
\lambda\;\geq\;\begin{cases}\mu_E&\text{if $E_T$ is non-singular,}\\ \tau_E&\text{if $E_T$ is singular,}\end{cases}
\end{equation}
and consequently, with $p_E<6/7+\epsilon$ from Theorem~\ref{thm:small-mld-case},
\begin{equation}\label{equ:KX2-from-special}
K_X^2=\frac{(c_E-p_E)^2}{\frac{1-p_E}{\lambda}+\frac{1}{e_E}}\;\geq\;\min_{0\leq p\leq 6/7+\epsilon}\min\!\left\{\frac{(c_E-p)^2}{\frac{1-p}{\tau_E}+\frac{1}{e_E}},\;\frac{(c_E-p)^2}{\frac{1-p}{\mu_E}+\frac{1}{e_E}}\right\}.
\end{equation}
\end{lem}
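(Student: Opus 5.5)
The first step is to check that Construction-Theorem~\ref{consthm:extract-contract} applies. Since $a(E,X)\leq 1/6.4886$, Lemma~\ref{lem:ne-5/6}(2) gives $n_E>5/6$ (we are at a cyclic quotient, so $X\ni x$ is not of E-II type). Hence $h,Z,g,T,C,E_T,\lambda$ are defined and $\lambda=(K_T+E_T)\cdot E_T$. Filter~\ref{fil:tail} guarantees that $E$ is an interior curve of the chain. So on $Z$, the curve $E$ is a smooth rational curve carrying exactly two cyclic quotient points $z_1,z_2$, namely the contractions of the two sub-chains. The plan is to compute $\lambda$ through adjunction on $E_T$ (Theorem~\ref{thm:adjunction-surface}, or its lc-pair version for a nodal curve), splitting into two cases according to whether $E_T$ is singular.

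\emph{Case 1: $E_T$ non-singular.} Then $g|_E\colon E\to E_T$ is an isomorphism, and $C$ meets $E$ in a single point $p$ with $t:=g(C)=g(p)$. Away from $p$, $g$ is an isomorphism. The points $g(z_1),g(z_2)$ are therefore still singular points of $T$ on $E_T$ of the same orders $r_1,r_2$, unless $p$ coincides with some $z_i$. The singular points of $T$ on $E_T$ are then $g(z_1)$, $g(z_2)$ and possibly $t$. Since $(T,E_T)$ is lc, each such point is either a cyclic quotient of some order $r$ or a non-cyclic point contributing $1$. Adjunction then writes $\lambda$ as $-2$ plus a sum of terms $1-1/r_y$ (or $1$).

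If $p\notin\{z_1,z_2\}$, we get $\lambda=1-1/r_1-1/r_2-1/r$ with $r\geq1$ (or a contribution $1$ from $t$). Since $\lambda>0$ by Construction-Theorem~\ref{consthm:extract-contract}, this is $\geq\mu_E$ directly from the definition, or $\geq1-1/r_1-1/r_2$, which is larger.

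If $p=z_1$, say, the point $t$ replaces $z_1$. Because $g$ is $E$-positive and $K_Z+E$-positive on $C$, log discrepancies of $(T,E_T)$ at divisors over $t$ are at most those of $(Z,E)$. For an extended A type pair the coefficient $1-1/r$ is monotone in these discrepancies, which gives $r_t\geq r_1$, with the non-cyclic case contributing $1\geq 1-1/r_1$. So $\lambda\geq 1-1/r_t-1/r_2-1$ is still of the form $1-1/r_1-1/r_2-1/r$ with $r$ replaced suitably. The cleanest fix is $\lambda\geq -1+(1-1/r_1)+(1-1/r_2)-\ldots$; if that route gets awkward I would instead argue that $\lambda$ lies in the set of values appearing in the definition of $\mu_E$ after allowing $r=1$ (trivial third point). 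This subcase is the step I expect to be the main obstacle, because one must make sure that merging $C$ into an existing singular point only increases the local index.

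\emph{Case 2: $E_T$ singular.} Then $E_T$ has a node at $t=g(C)$, with $C\cdot E=2$, and lc forces $t$ to be a smooth point of $T$ as in the proof of Filter~\ref{fil:tail}, up to a weighted blow-up. Normalising, adjunction for the nodal curve gives
\[
\lambda=\deg\bigl((K_T+E_T)|_{E_T}\bigr)\geq -2+2+\sum_{y}\Bigl(1-\frac{1}{r_y}\Bigr),
\]
since the two preimages of the node contribute $2$. At least one of $g(z_1),g(z_2)$ survives as a singular point of order at least $r_1$ or $r_2$, by the same monotonicity as in Case~1. Hence $\lambda\geq\tau_E$.

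Finally, I would plug $\lambda\geq\mu_E$ or $\lambda\geq\tau_E$ into Construction-Theorem~\ref{consthm:extract-contract}(9). The expression is increasing in $\lambda$ because $1-p_E>0$. Since $0<p_E<6/7+\epsilon$ by Theorem~\ref{thm:small-mld-case} and Construction-Theorem~\ref{consthm:extract-contract}(6), taking the minimum over $p\in[0,6/7+\epsilon]$ gives \eqref{equ:KX2-from-special}.
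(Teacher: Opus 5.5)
Your overall route matches the paper's. You apply Lemma~\ref{lem:ne-5/6}(2) to get $n_E>5/6$, compute $\lambda$ by adjunction on $E_T$ separately for $E_T$ smooth and singular, then use that Construction-Theorem~\ref{consthm:extract-contract}(9) is increasing in $\lambda$ and minimize over $p$.

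\textbf{Gap in Case~1.} The subcase you flag as the main obstacle is not closed, and your proposed handling fails. If $C\cap E=\{z_1\}$, your bound $\lambda\geq 1-1/r_t-1/r_2-1=-1/r_t-1/r_2$ is negative. ``Allowing $r=1$'' does not help, since the set defining $\mu_E$ contains only positive values. The monotonicity $r_t\geq r_1$ is beside the point.

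The missing observation is that this subcase cannot occur, because $\lambda>0$ ($K_T+E_T$ is ample by Construction-Theorem~\ref{consthm:extract-contract}). If $E_T$ is smooth then $E_T\cong\mathbb P^1$. If $C$ met $E$ only at $z_1$, the only singular points of $T$ on $E_T$ would be $t=g(C)$, contributing at most $1$, and $g(z_2)$, contributing $1-1/r_2$. Theorem~\ref{thm:adjunction-surface} then gives $\lambda\leq -1/r_2<0$.

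Similarly, $C\cap E=\emptyset$ is impossible, which your proposal silently assumes: then $g$ is an isomorphism near $E$ and $\lambda=(K_Z+E)\cdot E=-(1-c_E)e_E<0$. Hence $C$ meets $E$ at a single smooth point of $Z$, and $t$ must be a third singular point. So $\lambda$ equals $1-1/r_1-1/r_2-1/r_3$, or $1-1/r_1-1/r_2$ if $(T\ni t,E_T)$ is lc but not plt; both are $\geq\mu_E$. This is exactly how the paper gets ``exactly three singular points'' from Construction-Theorem~\ref{consthm:extract-contract}(3)--(5) and adjunction.

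\textbf{Case~2.} What you need is that not both $z_1,z_2$ lie on $C$. This again follows from $\lambda>0$, not from monotonicity: if both lay on $C$, the node would be the only contribution and $\lambda=-2+2=0$. If exactly one $z_i$ lies on $C$, you still get $\lambda\geq 1-1/r_j\geq\tau_E$ with $j\neq i$.

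Your claims that $t$ is a smooth point of $T$ and $C\cdot E=2$ are not justified here. In the proof of Filter~\ref{fil:tail} this used that the unique singular point of $E$ avoids $C$, which you do not know when $E$ carries two singular points. They are also unnecessary: an lc pair with a node at $t$ is toroidal there, and the two preimages of the node contribute $1$ each to the different whether or not $t$ is singular.

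With these two uses of $\lambda>0$ inserted, your argument coincides with the paper's.
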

\begin{proof}
By Lemma~\ref{lem:ne-5/6} we have $n_E>5/6$, so the assumptions of Construction-Theorem~\ref{consthm:extract-contract} are satisfied. We split into the two cases of \eqref{equ:lambda-E-cases}.

\smallskip

\noindent\textbf{Case 1.} \emph{$E_T$ is non-singular.} By Construction-Theorem~\ref{consthm:extract-contract}(3)--(5) and Theorem~\ref{thm:adjunction-surface}, $E_T$ has exactly three singular points: $z_1,z_2$ and a third one of order $r_3\geq 2$. Therefore
\[
\lambda\;=\;1-\frac{1}{r_1}-\frac{1}{r_2}-\frac{1}{r_3}\geq\mu_E.
\]

\smallskip

\noindent\textbf{Case 2.} \emph{$E_T$ is singular.} Again by Construction-Theorem~\ref{consthm:extract-contract}(3)--(5), $E_T$ has at most node singularities and
\[
\lambda\;\geq\;\min\!\left\{1-\frac{1}{r_1},\,1-\frac{1}{r_2}\right\}\;=\;\tau_E.
\]

In either case, the lower bound \eqref{equ:KX2-from-special} on $K_X^2$ now follows from Construction-Theorem~\ref{consthm:extract-contract}(9) by minimizing the rational function in $p$ over $0\leq p\leq 6/7+\epsilon$ (and over the two possibilities for $\lambda$).
\end{proof}

Lemma~\ref{lem:lambda-E-bound} immediately gives the next filter:

\begin{fil}[Non-tail filter]\label{fil:non-vertex-strong}
For any $\{x_i\}_{i=1}^n\in\Ii_0$, pick $x\in\{x_1,\dots,x_n\}$ such that $X\ni x$ is a cyclic quotient singularity, and pick any $E$ which is special over $X$ with $a(E,X)\leq 1/6.4886$. Then
\begin{equation}\label{equ:vertex-filter}
      K_X^2\geq\min_{0\leq p\leq 6/7+\epsilon}\min\!\left\{\frac{(c_E-p)^2}{\frac{1-p}{\tau_E}+\frac{1}{e_E}},\,\frac{(c_E-p)^2}{\frac{1-p}{\mu_E}+\frac{1}{e_E}}\right\},
\end{equation}
where $\mu_E,\tau_E$ are defined in \eqref{equ:mu-tau-E}.
\end{fil}
\begin{proof}
This is the lower bound \eqref{equ:KX2-from-special} of Lemma~\ref{lem:lambda-E-bound}.
\end{proof}

Further applying Filter~\ref{fil:non-vertex-strong}, the surviving counts become
\[
\#\{n=2\}=0,\qquad \#\{n=3\}=252,\qquad \#\{n=4\}=0,
\]
i.e.\ the $n=2$ case is excluded and exactly $252$ candidate baskets remain for $n=3$.

The last filter, which is the one that finally cuts the 252 remaining baskets down to a single one, was re-derived in our work by an AI chatbot. In the first arXiv version of this paper we were not aware of any direct references and so provided a full proof; thanks to the experts' suggestions, we now simply cite the relevant references.

\begin{lem}[Pluricanonical product-dimension lemma]\label{lem:pluricanonical-product-obstruction-general}
Let $X$ be a normal proper variety, and let $P_n(X):=h^0(X,\mathcal O_X(nK_X))$ be defined as in Definition~\ref{defn:plurigenus}. Then for any integers $a,b\geq 1$ with $P_a(X)>0$ and $P_b(X)>0$,
\begin{equation}\label{equ:ai-filter-general}
P_{a+b}(X)\;\geq\;P_a(X)+P_b(X)-1.
\end{equation}
\end{lem}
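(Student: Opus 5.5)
The plan is to reduce the statement to the classical fact that multiplying sections gives a map from a product of two projective spaces into a third projective space which is finite onto its image. Set $V_a:=H^0(X,\mathcal O_X(aK_X))$ and $V_b:=H^0(X,\mathcal O_X(bK_X))$, both nonzero by assumption. Since $X$ is normal and the sheaves are reflexive of rank one, every section is determined by its restriction to the smooth locus $X^{\rm sm}$, where the sheaves are line bundles. Multiplication of sections on $X^{\rm sm}$ followed by reflexive extension (Hartogs, since $\operatorname{codim}(X\setminus X^{\rm sm})\geq 2$) therefore gives a bilinear map
\[
\mu\colon V_a\times V_b\longrightarrow V_{a+b}.
\]
The map $\mu$ has no zero divisors: if $s\neq 0$ and $t\neq 0$, then $st\neq 0$, because $X$ is integral and $s,t$ are nonzero rational sections of line bundles on the integral open set $X^{\rm sm}$.

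The key step is the dimension estimate for a bilinear map without zero divisors: if $\mu\colon U\times W\to V$ is bilinear over $\mathbb C$ and $\mu(u,w)\neq 0$ whenever $u,w\neq 0$, then $\dim V\geq \dim U+\dim W-1$. This is the Hopf lemma (ACGH, p.~108). I would prove it as follows. The map $\mu$ induces a morphism
\[
\phi\colon \PP(U)\times\PP(W)\to\PP(V).
\]
This morphism is well defined precisely because $\mu$ has no zero divisors. Let $H$ be the hyperplane class on $\PP(V)$. The pullback $\phi^*\mathcal O(1)$ is $\mathcal O(1,1)$, which is ample on the product. A morphism along which an ample line bundle pulls back to an ample one contracts no curve, so $\phi$ is finite onto its image. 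Consequently
\[
\dim\PP(V)\geq \dim\phi(\PP(U)\times\PP(W))=(\dim U-1)+(\dim W-1),
\]
which is the claim.

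Applying this with $U=V_a$, $W=V_b$, $V=V_{a+b}$ yields $P_{a+b}\geq P_a+P_b-1$. An alternative route avoids projective geometry and is closer to the additive-combinatorics reference. Choose a general smooth point $p$ and a valuation at $p$ (for instance lexicographic order of vanishing along a flag, which is well defined because $\mathcal O_X(nK_X)$ is locally free near $p$). The value sets then satisfy $\nu(V_a)+\nu(V_b)\subseteq\nu(V_{a+b})$, with $|\nu(V_n)|=P_n$. Since $\mathbb Z^d$ with lexicographic order is totally ordered, the elementary bound $|A+B|\geq|A|+|B|-1$ gives the inequality. I would present the projective argument as the main proof and mention this one as a remark.

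I expect the main obstacle to be only a bookkeeping point: checking that multiplication is well defined and injective on nonzero pairs for reflexive (non-$\mathbb Q$-Cartier) sheaves on a merely normal proper variety. This is handled entirely by restricting to $X^{\rm sm}$ and using normality. In the proper, non-projective case nothing changes, since only finite dimensionality of $H^0$ and integrality are used.
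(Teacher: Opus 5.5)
Your proposal is correct and is essentially the paper's own argument. The paper proves the lemma by citing Hopf's theorem via \cite[p.~108]{ACGH85}, and its sketch of that theorem in Section~\ref{sec:remarks-AI} uses the same map $\PP(A)\times\PP(B)\to\PP(AB)$; the only difference is that it bounds the image dimension by the nonvanishing of $(u+v)^{\dim\PP(A)+\dim\PP(B)}$ in cohomology, where you use ampleness of $\mathcal O(1,1)$ to get finiteness of $\phi$. Your alternative valuation route is likewise the appendix proof (Lemma~\ref{lem:product-space-dimension}), except that you take a flag valuation at a smooth point directly rather than first cutting down to a general complete-intersection curve and using $\operatorname{ord}_q$.
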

\begin{proof}
This is an immediate consequence of the theorem of H.~Hopf (cf.~\cite[p.~108]{ACGH85}), or of \cite[Lemma~9.5.1 and Corollary~9.5.2]{Kol93}, or of \cite[Lemma~15.6.2]{Kol96}. See also, e.g., \cite[(2.3)]{CC08} and \cite[Proof of Lemma~9(4)]{CH09}. The AI's verbatim proof for the surface case is reproduced in Appendix~\ref{app:filter ai} (see Lemmas~\ref{lem:product-space-dimension} and~\ref{lem:pluricanonical-product-obstruction} there).
\end{proof}

\begin{fil}[Pluricanonical product-dimension filter]\label{fil:ai}
Let $X$ be a normal integral projective surface over $\mathbb C$ and let $P_n:=h^0(X,nK_X)$ for any non-negative integer $n$. For any $a,b\in\mathbb Z_{>0}$ with $P_a>0$ and $P_b>0$,
\begin{equation}\label{equ:ai-filter}
P_{a+b}\;\geq\;P_a+P_b-1.
\end{equation}
Consequently, any candidate basket $\{x_i\}_{i=1}^n\in\Ii_0$ whose plurigenera (computed from Blache's formula, see~\eqref{equ:blache-plurigenus}) violate~\eqref{equ:ai-filter} for some pair $(a,b)$ cannot occur on $X$.
\end{fil}
\begin{proof}
It is an immediate consequenc of Lemma \ref{lem:pluricanonical-product-obstruction-general}.
\end{proof}

\section{Proof of the main theorem}\label{sec:proof-main}

Filter~\ref{fil:ai} (equivalently, Lemma~\ref{lem:pluricanonical-product-obstruction} of Appendix~\ref{app:filter ai}) is the last filter we need. It excludes $251$ out of the $252$ cases left by all the previous filters in \S\ref{ssec:filter}, leaving exactly one residual case. Summarizing what we have up to now, we obtain the following proposition.

\begin{prop}\label{prop:to last case}
Notation and conditions as in Construction-Theorem \ref{consthm:extract-contract}. Assume that $\mld(X)\geq 5/46$. Then $X$ consists of $3$ singularities, all of which are cyclic quotient singularities, and they are of type
\begin{equation}
[2,7,2,2,2],\quad [2,2,5,2,3], \quad [2,2,2,2,2,3,3,2]
\end{equation}
with $K_X^2=1/8533$.
\end{prop}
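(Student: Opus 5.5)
The proposition is essentially a bookkeeping statement: it records the output of the filter pipeline of \S\ref{ssec:filter} applied to the finite set $\Ii_0$. The plan is to assemble the steps in order. First, the hypotheses of Construction-Theorem~\ref{consthm:extract-contract} together with $\mld(X)\geq 5/46$ put us in the setting of Theorem~\ref{thm:finite singularity type}. In particular $X$ is rational by \cite[Corollary~6.12]{Liu25}, and the singularity basket $\{x_i\}_{i=1}^n$ of $X$ lies in the finite, algorithmically computable set $\Ii_0$ of Notation~\ref{nota:I0}. Every filter in \S\ref{ssec:filter} is a proven necessary condition on the basket of such an $X$. Hence the actual basket of $X$ must survive all of them.

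Next I would run the filters in the stated order. Filters~\ref{fil:bogomolov} and~\ref{fil:gamma} leave about $130{,}000$ baskets and exclude $n=1,5,6$ (Remark~\ref{rem:practical-filter-counts}). The Hwang--Keum square filter (Filter~\ref{fil:complete-square}), the tail filter (Filter~\ref{fil:tail}), the weak Blache filter (Filter~\ref{fil:blache-weak}) and the non-tail filter (Filter~\ref{fil:non-vertex-strong}) then cut this down to $252$ baskets, all with $n=3$. For Filter~\ref{fil:non-vertex-strong}, the lower bound on $\lambda$ from Lemma~\ref{lem:lambda-E-bound} and the bound $p_E<6/7+\epsilon$ from Theorem~\ref{thm:small-mld-case} are what make the minimization over $p$ legitimate.

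For each of the $252$ survivors I would compute $K_X^2=9-\sum\gamma_{x_i}$ and $\delta_m(X)$ from the minimal resolution data. Then I would get the plurigenera $P_m=1+\binom m2K_X^2+\delta_m(X)$ for $m$ up to a moderate bound (say $m\leq 40$), using Corollary~\ref{cor:Pn} with $\chi(\mathcal O_X)=1$. I would check the inequality $P_{a+b}\geq P_a+P_b-1$ of Filter~\ref{fil:ai} over all pairs with $P_a,P_b>0$ and $a+b$ within the range. The claim to verify is that exactly one basket satisfies every such inequality, namely the three cyclic quotients $[2,7,2,2,2]$, $[2,2,5,2,3]$, $[2,2,2,2,2,3,3,2]$, and that its $\gamma$-sum yields $K_X^2=1/8533$. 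That the residual singularities are all cyclic quotients is then read off from this list.

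The main obstacle is not conceptual but computational and expository. One must make sure that each filter is applied only under hypotheses actually available at that point. In particular, Filters~\ref{fil:tail} and~\ref{fil:non-vertex-strong} rely on Lemma~\ref{lem:ne-5/6} and Construction-Theorem~\ref{consthm:extract-contract}, which require $K_X^2\leq 1/6351$. One must also make sure that Blache's formula is applied with the correct fractional parts $\{nB_Y\}$. I would also double-check that the final survivor genuinely passes Filter~\ref{fil:ai} for all relevant $(a,b)$, not just small ones. Since $P_m$ is eventually quadratic with positive leading coefficient plus a periodic correction, checking $a,b$ up to a bound of a few periods of the local indices suffices.
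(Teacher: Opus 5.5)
Your proposal is correct and takes essentially the paper's approach: the proposition is the output of Filters~\ref{fil:bogomolov}--\ref{fil:non-vertex-strong}, followed by the computer check of Filter~\ref{fil:ai} on the $252$ survivors, and since each of the $251$ eliminations in Table~\ref{table:251-baskets} occurs at some $(a,b)$ with $a+b\leq 8$, your range $m\leq 40$ is more than enough. Checking that the residual basket passes every inequality is unnecessary, because the proposition is only a necessary-condition statement: it suffices that the other $251$ baskets fail, and the residual one is ruled out separately in Theorem~\ref{thm:exclude-last-case}.
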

\begin{proof}
This is the conclusion of the chain of filters from \S\ref{ssec:filter} (Filters~\ref{fil:bogomolov}, \ref{fil:gamma}, \ref{fil:complete-square}, \ref{fil:tail}, \ref{fil:blache-weak}, \ref{fil:non-vertex-strong}), followed by Filter~\ref{fil:ai}; the latter $252\to 1$ step is verified by a computer-assisted check (Proposition~\ref{prop:product-dimension-computation} in Appendix~\ref{app:filter ai}), and the explicit list of the $251$ baskets eliminated by Filter~\ref{fil:ai}, together with the failing pair $(a,b)$ for each, is recorded in Table~\ref{table:251-baskets} of Appendix~\ref{app:251 baskets}.
\end{proof}

Let us exclude the last case.

\begin{thm}\label{thm:exclude-last-case}
    There is no normal projective surface $X$ such that $\rho(X)=1$, $K_X$ is ample, $K_X^2=1/8533$, and $X$ has exactly three singularities, all of which are cyclic quotient with respective resolution configurations:
    \begin{equation}
[2,7,2,2,2]\rightarrow (X\ni x_1),\quad [2,2,5,2,3]\rightarrow (X\ni x_2), \quad [2,2,2,2,2,3,3,2]\rightarrow (X\ni x_3).
\end{equation}
\end{thm}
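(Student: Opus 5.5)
Suppose such an $X$ exists. We first compute the numerical invariants. For the three chains, $[2,7,2,2,2]$ has $r_1=\det=49$, $[2,2,5,2,3]$ has $r_2$ computed by the recursion of Definition--Theorem~\ref{defthm:HJS-CQR}, and similarly for $r_3$. We then locate the special divisorial valuations: the $(-7)$-curve over $x_1$, the $(-5)$-curve (or the tail $(-3)$-curve) over $x_2$, and the two $(-3)$-curves over $x_3$. For each we compute the log discrepancy $a(E,X)$ and $e_E$. The plan is to choose the special divisorial valuation $E$ with the smallest log discrepancy, presumably the $(-7)$-curve over $x_1$. We check that $n_E>5/6$ via Lemma~\ref{lem:ne-5/6}; otherwise $a(E,X)>1/6.4886$ gives a direct contradiction once the value is computed. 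We then enter the situation of Construction-Theorem~\ref{consthm:extract-contract}: $h\colon Z\to X$ extracts $E$, and $g\colon Z\to T$ contracts a curve $C$. Here $T$ is a klt Fano surface with $\rho(T)=1$, $(T,E_T)$ is lc, and $K_T+p_EE_T\equiv 0$ with $p_E<6/7+\epsilon$ by Theorem~\ref{thm:small-mld-case}.

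On $Z$, the curve $E$ carries exactly two singular points. These are the cyclic quotients given by the two sub-chains $[2]$ and $[2,2,2]$, of orders $2$ and $4$. Since $E$ is non-singular and $\lambda>0$, the adjunction formula in Theorem~\ref{thm:adjunction-surface} together with $1-\tfrac12-\tfrac14=\tfrac14$ leaves two options. Either $C$ meets $E$ so that $E_T$ acquires a third singular point of order $r\geq 5$. Or $E_T$ becomes nodal at $t_0=g(C)$, as in the proof of Filter~\ref{fil:tail}. In each case Construction-Theorem~\ref{consthm:extract-contract}(8)(9), with $K_X^2=1/8533$ fixed, gives an equation relating $p_E$ and $\lambda$. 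Combined with the constraints that $\lambda$ takes the adjunction values and $p_E$ is rational with the correct denominators, only finitely many possibilities for $(\lambda,p_E)$ survive. Each of these determines the singularities of $T$: the points on $E_T$, together with the images of $x_2,x_3$, which lie off $C$ unless $C$ passes through them.

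The resulting $(T,p_EE_T)$ is a klt log Calabi--Yau pair of Picard number one with a rather large coefficient $p_E$, and $T$ is a log del Pezzo surface. We then invoke the classification of \cite[5.1.3]{Sho00}, or Theorem~\ref{thm:-ls23-6/7} if $p_E\geq 6/7+\epsilon$ happens to hold. We check the candidate singularity configuration of $T$ against Theorem~\ref{thm:four-sing}, Theorem~\ref{thm:bogomolov-bound}, and the $\gamma$-invariant formula of Theorem~\ref{thm:gamma-invariant-formula} applied to $T$. For the surviving options we apply the $(-1)$-curve filter~\eqref{equ:-1-curve-filter} on the minimal resolution of $T$: a $(-1)$-curve $L$ must satisfy $b\lambda'+\sum b_{ij}\lambda_{ij}=1$ with non-negative integer coefficients. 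We aim to show that this equation has no solution, or that the solution forces a configuration incompatible with $\rho(T)=1$, as in Step~4 of the proof of Theorem~\ref{thm:-ls23-6/7}.

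The main obstacle is the middle step: pinning down how $C$ meets the singular points of $Z$, and hence the singularities of $T$. The curve $C$ may pass through $x_2$ or $x_3$, which alters their types on $T$, so several sub-cases arise. If the first choice of $E$ leaves too many possibilities, I would instead run the same construction with $E$ the $(-5)$-curve over $x_2$, and intersect the constraints coming from both extractions.
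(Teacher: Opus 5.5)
You set things up as the paper does: extract the $(-7)$-curve $E$ over $x_1$, note $a(E,X)=3/23<1/6.4886$, and enter Construction-Theorem~\ref{consthm:extract-contract} with $p_E<6/7+1/938$. But the proposal stops where the real work begins, and the step meant to bridge the gap does not hold.

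\textbf{The gap.} The pair $(\lambda,p_E)$ does not determine $\mathrm{Sing}(T)$. In the plt case, $\lambda=1-\tfrac12-\tfrac14-\tfrac1r$ only records the order $r$ of the new point $g(C)\in E_T$, not its type. It also says nothing about whether $C$ passes through $h^{-1}(x_2)$ or $h^{-1}(x_3)$, which is exactly the ``main obstacle'' you leave open.

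Finiteness of $(\lambda,p_E)$ also does not come from rationality; it comes from the bound $p_E<6/7+1/938$. The non-plt case has $\lambda\geq 1/6$ and is excluded by this bound. This covers a node, and also $E_T$ smooth through a non-cyclic point, which your case split omits. In the plt case the same bound forces $\lambda<0.154$, i.e.\ $r\leq 10$. Only then does rationality of $p_E$ single out $r=8$, $\lambda=1/8$, $p_E=6/7$.

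\textbf{How the paper closes it.} Plt forces $C$ to meet $E$ in a single smooth point of $Z$. Then $E_T^2=-\tfrac{23}{4}+\tfrac{1}{-C^2}>0$ gives $-C^2<4/23$. On the minimal resolution, this together with the chain structure at the plt point $g(C)$ forces $\widetilde C$ to be a $(-1)$-curve meeting only the end $E_{3,1}$ of $[2,2,2,2,2,3,3,2]$. Hence $-C^2=8/53$ and $g(C)$ is of type $[2,3,2]$.

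The paper then concludes not with your list of checks but by extracting the $(-5)$-curve $F$ over $t_2$. Since $a(F,T)=1/7$, this gives $K_W+\tfrac67E_W+\tfrac67F\equiv 0$. Contracting yields a Picard-number-one pair $(V,\tfrac67(E_V+F_V))$, which contradicts \cite[5.1.3]{Sho00}. Your appeal to Theorem~\ref{thm:-ls23-6/7} is vacuous, since $p_E<6/7+\epsilon$ is already known.

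\textbf{Your own checks would suffice with the right split.} Split according to which of $h^{-1}(x_2)$ (order $56$) and $h^{-1}(x_3)$ (order $53$) lie on $C$; no fine resolution analysis is then needed.

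If at least one of them is off $C$, then $T$ has singular points of orders $2$, $4$, $r\geq 5$, and $56$ or $53$. Since $\tfrac12+\tfrac34+\tfrac45+\tfrac{52}{53}>3$, this contradicts Theorem~\ref{thm:bogomolov-bound}. In particular, the configuration the paper arrives at, with orders $2,4,8,56$, has $\sum(1-1/r_i)=87/28>3$.

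If both lie on $C$, then $T$ has exactly three singular points. With $\lambda=1/8$ and $p_E=6/7$ one gets $E_T^2=7/8$ and $K_T^2=9/14$. Theorem~\ref{thm:gamma-invariant-formula} would then require $\gamma_{g(C)}=\tfrac{117}{14}-1-3=\tfrac{61}{14}$. No cyclic quotient of order $8$ has this value; the possible values are $-\tfrac72,1,\tfrac52,7$.

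None of this case analysis is in your proposal, so as written the argument is incomplete. (Minor: $\det[2,7,2,2,2]=46$, not $49$.)
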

\begin{proof}
Let $E$ be the unique $(-7)$-curve on the minimal resolution of $X$. We let $h: Z\rightarrow X$ be the extraction of $E$, and adopt the notation $g\colon Z\to T$, $C$, $E_T$, $\lambda$, $c_E$, $e_E$, $n_E$, $p_E$ of Construction-Theorem~\ref{consthm:extract-contract}. We have
\begin{equation}
a(E,X)=\frac{3}{23}<\frac{1}{6.4886},\quad c_E=\frac{20}{23},\quad e_E=\frac{23}{4}.
\end{equation}
By Theorem \ref{thm:small-mld-case}, we have $p_E<6/7+1/938$. By Lemma \ref{lem:ne-5/6}, the conditions of Construction-Theorem \ref{consthm:extract-contract} are satisfied. By Construction-Theorem \ref{consthm:extract-contract}(9), we have
\begin{equation}
\frac{1}{8533}=K_X^2=\frac{(20/23-p_E)^2}{\frac{1-p_E}{\lambda}+\frac{4}{23}}.
\end{equation}
By Construction-Theorem~\ref{consthm:extract-contract}(4)(5), $(T,E_T)$ is lc and $\lambda>0$. Assume that $(T,E_T)$ is not plt. Then by Theorem~\ref{thm:adjunction-surface}, $\lambda\geq 1/6$. Thus 
\begin{equation}
\frac{1}{8533}\geq \frac{(20/23-p_E)^2}{6(1-p_E)+\frac{4}{23}}
\end{equation}
and we have $p_E>6/7+1/938$, a contradiction. Thus $(T,E_T)$ is plt; in particular $E_T$ is non-singular. Since $K_T+E_T$ is ample, $\lambda=(K_T+E_T)\cdot E_T>0$, so by Theorem~\ref{thm:adjunction-surface} (applied on $E_T\cong\mathbb P^1$) the curve $E_T$ contains at least $3$ singular points of $T$. Now $E$ contains exactly $2$ singular points of $Z$, namely the lifts $z_0$ of type $\tfrac{1}{2}(1,1)$ and $z_1$ of type $\tfrac{1}{4}(1,3)$ that arise from contracting the two adjacent sub-chains $[2]$ and $[2,2,2]$ of the resolution configuration $[2,7,2,2,2]$ of $X\ni x_1$. Hence $C$ must meet $E$ at some point that is non-singular in $Z$ (otherwise $E_T$ would acquire at most $2$ singular points). Using once again that $E_T$ is non-singular, we conclude that $E\cap C$ consists of a unique point, which is a non-singular point of $Z$.

Let $\phi: \widetilde{Z}\rightarrow Z$ be the minimal resolution of $Z$, let $E_{i,j}$ denote the exceptional divisors of the minimal resolution of $X$ over $x_i$ for $i\in\{2,3\}$ (these also appear on $\widetilde Z$, since $h$ is an isomorphism away from $x_1$), and let $\widetilde{C}:=\phi^{-1}_*C$ and $\widetilde{E}:=\phi^{-1}_*E$. Since $(T,E_T)$ is plt, the intersection matrix of 
\begin{equation}
    \widetilde{C}\cup\bigcup_{i,j}E_{i,j}
\end{equation}
is negative definite, and the dual graph of
\begin{equation}\label{equ:dual-graph-7.5}
\widetilde{E}\cup \widetilde{C}\cup\bigcup_{i,j}E_{i,j}
\end{equation}
consists of two disjoint chains: one over $g(h^{-1}(x_2))$ comprising $\bigcup_jE_{2,j}$ alone, and one over $g(C\cup h^{-1}(x_3))$ comprising $\widetilde E\cup\widetilde C\cup\bigcup_jE_{3,j}$. In particular the dual graph of \eqref{equ:dual-graph-7.5} has only finitely many possibilities. Since $E_T$ is ample we have $E_T^2>0$; combining this with $e_E=23/4$ forces $\widetilde{C}$ to intersect $E_{3,1}$, and the dual graph of $\widetilde{C}\cup\bigcup_{j}E_{3,j}$ to be of type
\[[1,2,2,2,2,2,3,3,2],\]
where the leading $1$ is $\widetilde C$. Successively contracting $\widetilde C$ and the chain of $(-1)$-curves it produces (six contractions in all) collapses the leading sub-chain $[1,2,2,2,2,2]$ to a point and turns the trailing sub-chain $[3,3,2]$ into $[2,3,2]$. Hence $g(C)$ is a singularity of $T$ of type $[2,3,2]$ (i.e.\ a $\tfrac{1}{8}(1,3)$ singularity), and $g$ is an isomorphism near $h^{-1}(x_2)$. Theorem~\ref{thm:adjunction-surface} applied to $E_T$ then gives $\lambda=1-\tfrac{1}{2}-\tfrac{1}{4}-\tfrac{1}{8}=\tfrac{1}{8}$, and Construction-Theorem~\ref{consthm:extract-contract}(8)--(9) yields
\[p_E=\frac{6}{7}\quad \text{and}\quad n_E=\frac{46}{53}.\]
We let
\begin{equation}
t_0:=g(z_0),\quad t_1:=g(z_1),\quad t_2:=g(h^{-1}(x_2)),\quad \text{and}\quad t_3:=g(C)=g(h^{-1}(x_3)).
\end{equation}
Now let $F$ be the unique $(-5)$-curve on the minimal resolution of $X$ and let $\pi\colon W\rightarrow T$ be the extraction of $F$. Let $E_W:=\pi^{-1}_*E_T$. Then
\begin{equation}\label{equ:w-to-f}
  0\equiv \pi^*\left(K_{T}+\tfrac{6}{7}E_T\right)=K_W+\tfrac{6}{7}E_W+\tfrac{6}{7}F.  
\end{equation}
Note that $\rho(W)=2$, so a single step of a $(K_W+\tfrac{6}{7}E_W+(\tfrac{6}{7}-\delta)F)$-MMP $\psi\colon W\to V$ for $0<\delta\ll 1$ produces $V$ with $\rho(V)=1$. By~\eqref{equ:w-to-f}, $K_W+\tfrac{6}{7}E_W+(\tfrac{6}{7}-\delta)F\equiv -\delta F$, so $\psi$ is a step of a $(-F)$-MMP. If $\psi$ were a Mori fiber space (so $\dim V=1$), then for a general fiber $G$ we would have
\[
0=\bigl(K_W+\tfrac{6}{7}E_W+(\tfrac{6}{7}-\delta)F\bigr)\cdot G=-2+\tfrac{6}{7}(E_W\cdot G)+(\tfrac{6}{7}-\delta)(F\cdot G),
\]
which has no solution with $E_W\cdot G,F\cdot G\in\mathbb Z_{\geq 0}$. Hence $\psi$ is a divisorial contraction of a single curve $D$. Since $F$ does not intersect $E_W$, $\psi$ does not contract $E_W$ or $F$. Let $E_V:=\psi_*E_W$ and $F_V:=\psi_*F$. Then $\rho(V)=1$ and $(V,\tfrac{6}{7}(E_V+F_V))$ is klt Calabi--Yau. In particular, $E_V$ intersects $F_V$, and (by the divisorial contraction) only at the single point $v:=\psi(D)$. By Theorem~\ref{thm:surface-lct-gap}, $(V,E_V+F_V)$ is lc, so $E_V$ and $F_V$ are non-singular at $v$, hence non-singular. This contradicts the classification of \cite[5.1.3]{Sho00}.
\end{proof}

\begin{proof}[Proof of Theorem \ref{thm:vol>=1/6351}]
Suppose that $K_X^2\leq 1/6351$. By \cite[Theorem~1.1]{LL23}, we may assume that $X$ is klt. If $\mld(X)\leq 5/46$, then by Proposition~\ref{prop:mld-computing-special}, there exists a prime divisor $E$ over $X$ such that $a(E,X)=\mld(X)$ and $E$ is special over $x:=\Center_XE$. By Lemma~\ref{lem:ne-5/6}, $n_E\geq 5/6$. Thus the assumptions of Construction-Theorem~\ref{consthm:extract-contract} are satisfied. By Theorem~\ref{thm:small-mld-case} and Corollary~\ref{cor:mld-is-large}, we may assume that $\mld(X)>5/46$. Feeding $a=5/46$ into the algorithm of~\eqref{equ:algorithm-compute-mld}, which exists by Theorem~\ref{thm:algorithm-mld-low-bound}, and combining with Theorem~\ref{thm:finite singularity type}, we obtain a finite list of possible singularity tuples $\{x_i\}_{i=1}^n$ on $X$. Applying Filters \ref{fil:bogomolov}, \ref{fil:gamma}, \ref{fil:complete-square}, \ref{fil:tail}, \ref{fil:blache-weak}, \ref{fil:non-vertex-strong}, and finally the AI-rediscovered Filter~\ref{fil:ai} (Lemma~\ref{lem:pluricanonical-product-obstruction} of Appendix~\ref{app:filter ai}), we know that $X$ consists of $3$ singularities, all of which are cyclic quotient singularities, with respective resolution configurations
\begin{equation}
[2,7,2,2,2],\quad [2,2,5,2,3], \quad [2,2,2,2,2,3,3,2]
\end{equation}
and the volume is
$K_X^2=1/8533$. This is not possible by Theorem \ref{thm:exclude-last-case}.
\end{proof}

\section{Remarks on the formula re-discovered by AI}\label{sec:remarks-AI}

\subsection{History of the formula}

After the first version of this paper was posted to arXiv on May 7, 2026, four senior algebraic geometers -- Hacon, C. Jiang, Koll\'ar, and Totaro -- independently informed us that the inequality~\eqref{equ:ai-filter-general} of Lemma~\ref{lem:pluricanonical-product-obstruction-general} has a substantial classical history that we had not properly indicated in the first version of our paper. We summarize this history below (and hope that we are not missing any earlier references). 

The earliest reference we could find is \cite[Lemma~IV.5.5]{Har77}, which is stated only for curves but whose proof works verbatim in any dimension. A key reference pointed out by the experts is \cite[p.~108]{ACGH85}, which states the lemma for finite-dimensional subspaces of a function field and attributes the underlying argument to H.~Hopf, calling it \emph{``one of the first applications of topology to algebra.''} The argument, in Hopf's spirit, runs as follows (paraphrasing Totaro's exposition to us, for which we thank him). Given finite-dimensional $\mathbb C$-linear subspaces $A,B\subset F$ of a function field, multiplication of sections induces a continuous map
\[
\mathbb P(A)\times\mathbb P(B)\longrightarrow\mathbb P(AB),\quad ([a],[b])\mapsto [ab].
\]
Writing $u\in H^2(\mathbb P(A);\mathbb Z)$ and $v\in H^2(\mathbb P(B);\mathbb Z)$ for the hyperplane classes pulled back from the two factors, the hyperplane class on $\mathbb P(AB)$ pulls back to $u+v$. Since
\[
(u+v)^{\dim\mathbb P(A)+\dim\mathbb P(B)}\in H^*(\mathbb P(A)\times\mathbb P(B);\mathbb Z)
\]
is a positive multiple of a point class (and in particular nonzero), and since this class is pulled back from $\mathbb P(AB)$, it must be supported in degree at most $2\dim\mathbb P(AB)$. Hence
\[
\dim\mathbb P(AB)\geq\dim\mathbb P(A)+\dim\mathbb P(B),
\]
which on rephrasing in terms of dimensions of the underlying vector spaces is exactly the inequality
\[
\dim_{\mathbb C}AB\geq\dim_{\mathbb C}A+\dim_{\mathbb C}B-1
\]
of Lemma~\ref{lem:product-space-dimension}. 

\cite[p.~108]{ACGH85} can be seen as the underlying linear algebra of \eqref{equ:ai-filter-general}. The first statement as an algebraic-geometry result in all dimensions seems to be \cite[Lemma~9.5.1]{Kol93}. \cite[Corollary~9.5.2]{Kol93} applied \cite[Lemma~9.5.1]{Kol93} to pluricanonical systems of smooth projective varieties. \cite[Lemma~9.5.1]{Kol93} also appears as \cite[Lemma~15.6.2]{Kol96}, which was later applied to anti-pluricanonical systems in \cite[(2.3)]{CC08}. 

The result could be well-known and applied implicitly in other algebraic-geometry papers. For example, \cite[Proof of Lemma~9(4)]{CH09} applies it with $a=b=n/2$, and \cite{CC15} uses the slightly weaker special case ``$P_{a+b}\geq P_a$ when $P_b>0$''.

We note that the AI chatbot missed all of the aforementioned references, yet independently found \cite{HLX02,BSZ18} -- closer to \cite{ACGH85}, but less so to the algebraic-geometry literature. One possible explanation, consistent with the AI's reasoning trace, is that its search was biased towards arXiv-indexed results, so that \cite{Har77,ACGH85,Kol93,Kol96} were not in the search region. Moreover, an AI chatbot's search requires textual accuracy: analogical uses such as in \cite{CC08} (anti-pluricanonical rather than pluricanonical), the use of the $a=b$ case in \cite{CH09}, and the use of a weaker form in \cite{CC15} are generally not detected by an AI chatbot as usable formulae.

\subsection{How much mathematics has the AI done?}\label{ssec:how-much-AI}

In view of the history above, it is clear that the formula the AI re-discovered is not new in the literature (particularly if one takes \cite[Lemma~9.5.1]{Kol93} into account): what we have is a re-discovery, and the formula itself is not new.

The AI's contribution lies, rather, in the observation that ``the formula can be applied to solve the problem''. In the context of our $252$ residual baskets, when asked for a further filter to attack them, the AI proposed precisely the inequality~\eqref{equ:ai-filter-general} as a candidate filter on plurigenera --- the right tool in a setting where the previously known filters of \S\ref{ssec:filter} are not enough.

The AI then formulated the inequality in the form usable as a filter on plurigenera arising from singularity baskets, and set up and ran the computer-assisted check that identifies, among the $252$ baskets surviving all previously known filters, the unique residual basket~\eqref{equ:intro-residual-basket}. In our setting this filter is decisive: it eliminates $251$ of the $252$ baskets that remain after all previously known filters.

\subsection{How useful is the AI-rediscovered formula?}

Lemma~\ref{lem:pluricanonical-product-obstruction-general} holds for arbitrary normal proper varieties. As was pointed out to us, some versions of it have already appeared implicitly as a filter in explicit birational geometry; however, it seems that the formula has not been extensively applied. The two corollaries below illustrate its strength: each addresses a classification question of independent interest --- the volume of smooth threefolds of general type with $\delta(V)\geq 13$ (Corollary~\ref{cor:cc15}) and the algebraic Montgomery--Yang problem (Corollary~\ref{cor:amy}) --- and in each case the same inequality rules out additional cases beyond those handled by the existing literature.

\begin{cor}[Application to threefolds, after \cite{CC15}]\label{cor:cc15}
Let $V$ be a smooth projective threefold of general type, and set
\begin{equation}\label{equ:delta-V}
\delta(V):=\min\{m\mid m\in\mathbb Z_{>0},\ P_m(V)\geq 2\}.
\end{equation}
Among the $59$ cases listed in \cite[Theorem 1.4]{CC15} for which $\delta(V)\geq 13$ and $\vol(V)<\tfrac{1}{420}$, the cases in Tables F-1-5b and F-2-25a of \cite{CC15} are excluded by Lemma~\ref{lem:pluricanonical-product-obstruction-general}.

In addition, the case in Table F-2-14 of \cite{CC15} is excluded for an even simpler reason: the value $P_2(V)=-15/11$ recorded there is not a non-negative integer, contradicting the very definition of $P_2(V)$.
\end{cor}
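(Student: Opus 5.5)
The plan is to treat Corollary~\ref{cor:cc15} as a finite verification against the numerical data of \cite{CC15}. Each of the $59$ cases in \cite[Theorem~1.4]{CC15} is given by a basket of terminal threefold singularities together with $\chi(\mathcal O_V)$, $K_V^3$ and $P_2(V)$. From these data the whole sequence of plurigenera $P_m(V)$, $m\geq 1$, is determined by Reid's plurigenus formula. For a smooth threefold of general type, or equivalently for its minimal model, Kawamata--Viehweg vanishing gives $h^i(mK_V)=0$ for $i>0$ and $m\geq 2$. So $P_m(V)=\chi(\mathcal O_V(mK_V))$ is given exactly by Reid's formula in terms of the basket. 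The first step is therefore to recompute, for each of the $59$ cases, the list $P_1,P_2,\dots,P_N$ up to some moderate bound $N$; something like $N\approx 40$ should suffice, since $\delta(V)\geq 13$ forces the first $P_m\geq 2$ to occur late.

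The second step is to apply Lemma~\ref{lem:pluricanonical-product-obstruction-general} to a minimal model of $V$. Plurigenera are birational invariants, and a terminal minimal model is a normal projective variety, so the lemma applies directly. For each case and each pair $a,b\geq 1$ with $a+b\leq N$ and $P_a,P_b>0$, we test whether $P_{a+b}\geq P_a+P_b-1$. The inequality is violated in cases F-1-5b and F-2-25a. The typical mechanism is that some $P_a=2$ with $a\geq 13$ and some $P_b\geq 1$, yet the formula gives $P_{a+b}\leq P_a+P_b-2$, for instance $P_{a+b}=2$ where $3$ is forced. Exhibiting one explicit failing pair $(a,b)$ for each of the two cases, read off from the recomputed table, then excludes them.

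The third step handles F-2-14. Here we simply observe that the value recorded for $P_2(V)$, namely $-15/11$, is not a non-negative integer. This is impossible, since $P_2(V)=h^0(V,2K_V)\in\mathbb Z_{\geq 0}$ by definition. This remark needs no further input; the only point to check is that the recorded value is the one produced by Reid's formula from the stated basket.

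I expect the main obstacle to be bookkeeping rather than theory. One must transcribe the baskets of \cite{CC15} correctly and implement Reid's formula with the correct correction terms $l(m)$ for each $\tfrac1r(1,-1,b)$ point. One must also ensure the search over $(a,b)$ is wide enough to find a violation, so that the two exclusions are not artifacts of a typo. I would cross-check the implementation by reproducing the $P_2$, $\chi$ and $K^3$ values tabulated in \cite{CC15} for all $59$ cases before running the filter.
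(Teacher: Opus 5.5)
Your proposal is correct and is essentially the paper's argument: a finite check of $P_{a+b}\geq P_a+P_b-1$ against the plurigenus data of the relevant cases in \cite{CC15}, together with the observation that $P_2=-15/11$ is not a non-negative integer for F-2-14. The paper simply reads the $P_m$ off the tables instead of recomputing them from Reid's formula.

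One correction to your heuristic. If $\min(P_a,P_b)=1$, the inequality reduces to $P_{a+b}\geq\max(P_a,P_b)$, which is the special case already used in \cite{CC15}. So a new violation needs $P_a,P_b\geq 2$, hence $a,b\geq\delta(V)\geq 13$ and $a+b\geq 26$. Your example with $P_b=1$ therefore cannot force $P_{a+b}\geq 3$, and your cutoff $N$ should comfortably exceed $2\delta(V)$.
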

\begin{proof}
The exclusions of Tables F-1-5b and F-2-25a follow by direct application of Lemma~\ref{lem:pluricanonical-product-obstruction-general} (with $\dim V=3$) to the plurigenera entries of the relevant tables in \cite{CC15}; the verification is a finite check. We remark that \cite{CC15} uses the special case
$$P_{a+b}\geq P_a\quad \text{when}\quad P_b>0.$$
\end{proof}

\begin{cor}[Application to AMY, after \cite{JPP25}]\label{cor:amy}
The Algebraic Montgomery--Yang Problem \cite[Conjecture 30]{Kol08} predicts that every klt rational surface $X$ with $\rho(X)=1$ and $\pi_1(X\setminus \Sing(X))=0$ has at most $3$ singular points. After several reductions \cite{HK11a,HK11b,HK12,HK13,HK14,JPP25}, the remaining cases are surfaces $X$ with $K_X$ ample, having exactly $4$ singular points of orders $2,3,5,p$ respectively. By the recent enumeration in \cite{JPP25}, only $16$ values of $p\leq 50000$ remain to be excluded.

By Lemma~\ref{lem:pluricanonical-product-obstruction-general} (specialized to surfaces), $8$ of these $16$ values, namely
\begin{equation}\label{equ:amy-excluded}
2599,\quad 5203,\quad 5049,\quad 26473,\quad 31309,\quad 32149,\quad 44161,\quad 47929,
\end{equation}
are excluded.
\end{cor}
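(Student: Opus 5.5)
The plan is to recompute, for each of the $16$ residual values of $p$ from \cite{JPP25}, the plurigenera $P_m(X)$ of the candidate surfaces, and then check Lemma~\ref{lem:pluricanonical-product-obstruction-general} on them. For each residual $p$, \cite{JPP25} records the full singularity data: four cyclic quotient (or, for orders $2,3,5$, possibly Du Val/cyclic) singularities of orders $2,3,5,p$, with explicit Hirzebruch--Jung sequences. It also records $K_X^2$, which is forced by Filter~\ref{fil:gamma}. More precisely, $X$ is rational with $\rho(X)=1$, so Theorem~\ref{thm:gamma-invariant-formula} gives $K_X^2=9-\sum\gamma_{x_i}$. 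I will take these data as input. Where several singularity types with the given orders are listed for the same $p$, I will treat each basket separately; the value $p$ is excluded only when every basket attached to it fails.

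For each basket, $K_X$ is ample, so Corollary~\ref{cor:Pn} applies. With $\chi(\mathcal O_X)=1$ it gives
\[
P_m(X)=1+\tfrac{m(m-1)}{2}K_X^2+\delta_m(X), \qquad m\ge 2.
\]
Here $\delta_m(X)$ is the sum of local contributions computed from the fractional parts $\{mB_Y\}$ on each resolution chain. These contributions are periodic in $m$ modulo the local index $r_{x_i}$. I will also set $P_0=1$ and $P_1=h^0(K_X)=0$, since $X$ is rational. I will then tabulate $P_m$ for $m$ up to some bound $M$, say $M$ a few hundred. As a sanity check, each $P_m$ must be a non-negative integer, which is Filter~\ref{fil:blache-weak}.

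Next, for all $a,b\ge 1$ with $a+b\le M$, $P_a>0$ and $P_b>0$, I will test whether $P_{a+b}\ge P_a+P_b-1$. A single violating pair $(a,b)$ excludes the basket. Then I will record, for each of the eight listed values $2599, 5203, 5049, 26473, 31309, 32149, 44161, 47929$, an explicit failing pair. By the same computation, the other eight values pass all checks up to $M$, so the corollary claims nothing about them.

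The main obstacle is carrying out the local computation of $\delta_m$ exactly for chains of length of order $p$. The discrepancies of a cyclic quotient $\tfrac1p(1,q)$ have denominator $p$, and the intersection form on a long Hirzebruch--Jung chain must be handled in exact rational arithmetic. To keep this cheap, I would use the closed formula for the local Riemann--Roch correction of a cyclic quotient singularity: for a $\tfrac1r(1,q)$ point it is a Dedekind-sum-type expression in $m\bmod r$, and it can be evaluated in $O(\log r)$ time via the continued fraction. A secondary risk is that violations may only appear at large $a+b$. Since $K_X^2$ is tiny, the quadratic term grows slowly while the $\delta_m$ oscillate. My first move is therefore to search pairs with small $P_a,P_b$ (e.g.\ $P_a=P_b=2$, needing $P_{a+b}\ge 3$) over a wide range of $m$, and to enlarge $M$ if needed.
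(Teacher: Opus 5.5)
Your proposal is correct and is essentially the paper's argument: the paper's proof is a direct application of Lemma~\ref{lem:pluricanonical-product-obstruction-general} to the plurigenus data attached in \cite{JPP25} to each of the eight values of $p$. The only difference is that you recompute those plurigenera yourself from the singularity baskets via Corollary~\ref{cor:Pn} (with $\chi(\mathcal O_X)=1$) instead of reading them off from \cite{JPP25}, and this is the same finite check.
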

\begin{proof}
A direct application of Lemma~\ref{lem:pluricanonical-product-obstruction-general} to the plurigenus data in \cite{JPP25} attached to each of the eight values listed in \eqref{equ:amy-excluded}.
\end{proof}

\subsection{The level of AI collaboration}\label{ssec:c2-level}

In the classification of Feng et al.~\cite[Tables~8--9]{Fen+26}, we view our work as a C2-level human--AI collaboration; the substance of the claim is the recognition-and-deployment work described in \S\ref{ssec:how-much-AI}.

Neither author works on AI, and we use AI only in the form of general-purpose chatbots. The AI entered our workflow because the ``reduction to finite singularities'' step of \S\S\ref{ssec:finite-family-mld}--\ref{ssec:finite-case-mld} is algorithmic but requires coding: a by-hand pass through the filters of \S\ref{ssec:filter} is impractical for a human (after Filter~\ref{fil:gamma} alone, more than $130{,}000$ baskets remain), and neither of us is a programmer. Modern chatbots can code, which solved this problem for us. Strikingly, when we asked the chatbot for further filters to attack the $252$ baskets that survived all known filters, it produced the inequality of Lemma~\ref{lem:pluricanonical-product-obstruction-general} (in its surface form), which resolves $251$ of those $252$.

\subsection{Explicit birational geometry and AI}

Based on our experience writing this paper, explicit birational geometry seems particularly well suited for AI assistance. Output produced by AI in mathematics is sometimes criticized for being ``merely an optimization or a small improvement, not a full solution'', ``purely combinatorial'', or ``only example-finding''. None of these objections apply to the present setting:
\begin{enumerate}
    \item \emph{Explicit birational geometry consists of central, long-standing questions}, not isolated optimization problems. The minimal-volume question on klt rank-one surfaces of general type goes back to the late 1980s and early 1990s (cf.~\cite{Xia88,Ale94,Kol94}); related questions and more recent contributions are due to Alexeev (cf.~\cite{AM04,AL19a,AL19b}), Birkar (cf.~\cite{BL23} on explicit threefold bounds), Cascini~\cite{Cas21}, Hacon--Langer~\cite{HL21}, Koll\'ar~\cite{Kol08,Kol13a}, Mori~\cite{AM04}, Shokurov~\cite{Sho00,LS23}, Totaro~\cite{TW23,Tot24} and many others. Theorem~\ref{thm:vol>=1/6351} was formally posed in~\cite{AL19a} as the rank-one case. Prior to the present paper the only explicit lower bound in this setting was that of Alexeev--Mori~\cite{AM04}, of order $10^{-3\cdot 10^{10}}$.
    \item \emph{Such problems are intrinsically combinatorial in flavour} and frequently require coding, as our use of the filters in \S\ref{ssec:filter} demonstrates; nevertheless the theoretical input (special divisorial valuations, $\gamma$-invariants, Blache's formula) remains essential.
    \item \emph{Most importantly}, a guiding philosophy of explicit birational geometry is to \emph{reduce} a problem to a finite list of candidate configurations and then \emph{rule them out}. AI is sometimes accused of being good only at finding examples and not at solving problems; but in explicit birational geometry, ruling out a finite list of candidate examples \emph{is} an important part of the proof, and is often as hard as -- or harder than -- the theoretical reduction step that produces the list in the first place. The present paper is exactly such a case.
\end{enumerate}

\subsection{Other AI-assisted algebraic-geometry work}

We record here, for the reader's convenience, a short (and necessarily incomplete) list of other recent AI-assisted papers on the math.AG section of arXiv: \cite{Sch25,Bry+26,Pat26,KSH26}, together with the survey of Feng et al.~\cite{Fen+26}. The list reflects our knowledge at the time of writing; a complete and up-to-date census is beyond the scope of this paper.

\appendix

\section{The filter re-derived by AI: historical record of the AI's proof}\label{app:filter ai}

This appendix reproduces, essentially verbatim, the AI-generated re-derivation and proof of the pluricanonical product-dimension lemma, as it entered our work. We include it as a historical record; the proof we actually use in the paper is the body version (Lemma~\ref{lem:pluricanonical-product-obstruction-general}), which now simply cites the historical references (cf.~\cite{Har77,ACGH85,Kol93,Kol96,CC08,CH09}). The AI re-derived the inequality from later number-theoretic and combinatorial sources (in particular \cite{HLX02,BSZ18}).

\begin{lem}\label{lem:product-space-dimension}
Let \(F\) be the function field of an integral complex variety.  If
\(A,B\subset F\) are nonzero finite-dimensional \(\mathbb C\)-linear
subspaces, and \(AB\) denotes the span of all products \(ab\), then
\[
\dim_{\mathbb C} AB\geq \dim_{\mathbb C} A+\dim_{\mathbb C} B-1.
\]
\end{lem}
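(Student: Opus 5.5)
The plan is to prove the inequality directly with Hopf's topological argument, as recalled in Section~\ref{sec:remarks-AI}, making the one algebraic input explicit. Set $\alpha:=\dim_{\mathbb C}A$, $\beta:=\dim_{\mathbb C}B$, and $\gamma:=\dim_{\mathbb C}AB$. These are finite because $AB$ is spanned by the products of basis elements.

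The key algebraic fact is that $F$ is a field, so it has no zero divisors. Hence for nonzero $a\in A$ and $b\in B$ the product $ab$ is nonzero. Multiplication $A\times B\to AB$ is $\mathbb C$-bilinear, so it descends to a well-defined morphism of projective spaces
\[
\mu\colon \mathbb P(A)\times\mathbb P(B)\longrightarrow \mathbb P(AB),\qquad ([a],[b])\mapsto [ab].
\]
In coordinates $\mu$ is given by bihomogeneous polynomials of bidegree $(1,1)$, and it is defined everywhere precisely because $ab\neq 0$. Pulling back the hyperplane class $h$ of $\mathbb P(AB)$ gives $\mu^*h=u+v$ in $H^2(\mathbb P(A)\times\mathbb P(B);\mathbb Z)$, where $u$ and $v$ are the hyperplane classes of the two factors. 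Equivalently, $\mu^*\mathcal O(1)=\mathcal O(1,1)$.

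Now set $N:=(\alpha-1)+(\beta-1)=\dim(\mathbb P(A)\times\mathbb P(B))$. By Künneth,
\[
(u+v)^N=\binom{N}{\alpha-1}u^{\alpha-1}v^{\beta-1},
\]
which is a nonzero multiple of the point class. On the other hand, $(u+v)^N=\mu^*(h^N)$, and $h^N=0$ whenever $N>\gamma-1=\dim\mathbb P(AB)$. So $N\leq\gamma-1$, which is exactly $\gamma\geq\alpha+\beta-1$.

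An algebraic alternative avoids topology. The line bundle $\mathcal O(1,1)$ is ample on the product, and it is the pullback along $\mu$. A morphism that pulls back an ample bundle to an ample bundle is finite, so $\dim\mu(\mathbb P(A)\times\mathbb P(B))=N$, and this image sits inside $\mathbb P(AB)$.

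The only subtle point, and so the main obstacle, is that $\mu$ is a genuine morphism, i.e.\ has no base locus. This is exactly where the integrality of the variety enters, since it makes $F$ a field. Everything else is routine.
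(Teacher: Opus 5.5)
Your proof is correct, but it is not the argument the paper uses for this lemma. What you give is the Hopf topological argument, which the paper only paraphrases in its historical remarks (Section~\ref{sec:remarks-AI}), together with an algebraic variant of it.

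The paper's proof is valuation-theoretic. It replaces $F$ by the function field of a general complete-intersection curve $C$ on which the elements of $A$, $B$, $AB$ restrict injectively. It then takes $\nu=\operatorname{ord}_q$ at a point $q\in C$; the graded pieces of $\nu$ are one-dimensional because the residue field is $\mathbb C$. It picks bases $a_1,\dots,a_r$ and $b_1,\dots,b_s$ with strictly increasing $\nu$-values. The $r+s-1$ products $a_1b_1,\dots,a_rb_1,a_rb_2,\dots,a_rb_s$ then have pairwise distinct $\nu$-values, so they are linearly independent.

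Your route needs no reduction to a curve. It only uses that the bilinear map $A\times B\to AB$ has no nontrivial zeros over the algebraically closed field $\mathbb C$. So it proves the same bound for any nonsingular bilinear map $\mathbb C^\alpha\times\mathbb C^\beta\to\mathbb C^\gamma$, and applies verbatim to any domain containing $\mathbb C$. The cost is the use of cohomology and K\"unneth, or in your variant, finiteness of a morphism that pulls back an ample bundle to an ample one plus dimension theory. It is also non-constructive.

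The paper's route is elementary and explicit: it exhibits the independent products. But it rests on the curve-restriction step. That step needs a genericity argument, since injectivity must hold on infinitely many elements and is only obtainable through finite-dimensionality, and the paper merely sketches it. One could bypass the curve entirely with a lexicographic monomial valuation of rank $\dim X$ at a smooth point, whose graded pieces are again one-dimensional.
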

\begin{proof}   
It is enough to prove the claim after replacing \(F\) by the function field of
a smooth complex curve to which the finitely many rational functions in
\(A\), \(B\), and \(AB\) restrict injectively.  Such a curve is obtained by
taking a sufficiently general complete-intersection curve on a projective
model; it is not contained in the zero or pole divisor of any nonzero element
of the finite-dimensional spaces under consideration.

Now let \(F=\mathbb C(C)\).  Choose a smooth point \(q\in C\).  The valuation
\(\nu=\operatorname{ord}_q\) has one-dimensional graded pieces because the
residue field is \(\mathbb C\).  Hence every finite-dimensional subspace has a
basis with strictly increasing \(\nu\)-orders.  Write
\[
\nu(a_1)<\cdots<\nu(a_r),\qquad
\nu(b_1)<\cdots<\nu(b_s)
\]
for such bases of \(A\) and \(B\).  The \(r+s-1\) products
\[
a_1b_1,\ldots,a_rb_1,a_rb_2,\ldots,a_rb_s
\]
have pairwise distinct \(\nu\)-orders.  They are therefore linearly
independent over \(\mathbb C\).  All lie in \(AB\), so
\(\dim AB\geq r+s-1\).
\end{proof}

\begin{lem}\label{lem:pluricanonical-product-obstruction}  
Let \(X\) be a normal integral projective surface over \(\mathbb C\).  Put
\[
P_n=h^0(X,\mathcal O_X(nK_X))
\]
for reflexive pluricanonical powers.  If \(P_a>0\) and \(P_b>0\), then
\[
P_{a+b}\geq P_a+P_b-1.
\]
In particular, any proposed singularity basket whose exact plurigenus values
violate this inequality cannot occur on \(X\).
\end{lem}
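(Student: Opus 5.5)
The plan is to reduce the statement to Lemma~\ref{lem:product-space-dimension}. The reduction embeds the spaces of pluricanonical sections into the function field $F=\mathbb C(X)$ and then uses multiplication of rational functions.

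\emph{Step 1: embedding.} Fix a nonzero rational $2$-form $\omega$ on $X$; equivalently, fix a canonical Weil divisor $K_X=\operatorname{div}(\omega)$. By Definition~\ref{defn:plurigenus}, $\mathcal O_X(nK_X)$ is reflexive. Since $X$ is normal, its global sections identify with
\[
L_n:=\{f\in F\mid f=0 \text{ or } \operatorname{div}(f)+nK_X\geq 0\},
\]
via $f\mapsto f\omega^{\otimes n}$. This works because a section over $X^{\rm sm}$ extends uniquely over a codimension-two locus, and the divisorial condition only needs to be checked at codimension-one points. Thus $P_n=\dim_{\mathbb C}L_n$, and each $L_n$ is a finite-dimensional subspace of $F$.

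\emph{Step 2: multiplicativity.} If $f\in L_a$ and $g\in L_b$, then
\[
\operatorname{div}(fg)+(a+b)K_X=\bigl(\operatorname{div}(f)+aK_X\bigr)+\bigl(\operatorname{div}(g)+bK_X\bigr)\geq 0.
\]
Hence $fg\in L_{a+b}$, and therefore $L_aL_b\subseteq L_{a+b}$. This only uses additivity of Weil divisors, so no $\mathbb Q$-Cartier hypothesis is needed.

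\emph{Step 3: conclusion.} Because $P_a,P_b>0$, the subspaces $L_a$ and $L_b$ are nonzero. Lemma~\ref{lem:product-space-dimension} then gives
\[
P_{a+b}\geq\dim L_aL_b\geq P_a+P_b-1.
\]
The ``in particular'' clause follows at once. A basket prescribes, through Corollary~\ref{cor:Pn}, the exact values of $P_n$. If these values violate the inequality for some pair $(a,b)$, no such surface can exist.

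The only step needing care is Step 1: the identification of $H^0$ of the reflexive sheaf with divisorially defined rational functions on a normal (possibly singular) surface. This is standard via Hartogs-type extension on normal varieties, since sections are determined by their restriction to $X^{\rm sm}$, whose complement has codimension $2$. The genuinely nontrivial input is Lemma~\ref{lem:product-space-dimension} itself, which is already proved via the valuation-ordering argument on a curve.
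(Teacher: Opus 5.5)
Your proposal is correct and follows essentially the same route as the paper: embed the pluricanonical spaces into $\mathbb C(X)$, observe that products of sections land in the $(a+b)$-th space, and apply Lemma~\ref{lem:product-space-dimension}. The only difference is cosmetic. The paper normalizes by dividing by chosen nonzero sections $s_a,s_b$, so that $AB$ injects into $(s_as_b)^{-1}H^0(X,\mathcal O_X((a+b)K_X))$. You instead fix a single rational $2$-form $\omega$, which makes the containment $L_aL_b\subseteq L_{a+b}$ immediate.
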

\begin{proof}
Multiplication of rational sections gives a natural map
\[
H^0(X,\mathcal O_X(aK_X))\otimes H^0(X,\mathcal O_X(bK_X))
\longrightarrow
H^0(X,\mathcal O_X((a+b)K_X)).
\]
Since \(X\) is normal and integral, these reflexive rank-one sheaves agree
with line bundles on the smooth locus, and multiplication of two nonzero
sections is nonzero in the function field.

Choose nonzero sections \(s_a\) and \(s_b\).  Dividing by them identifies the
two nonzero pluricanonical spaces with finite-dimensional subspaces
\[
A=s_a^{-1}H^0(X,aK_X),\qquad B=s_b^{-1}H^0(X,bK_X)
\]
of \(\mathbb C(X)\).  Their product subspace injects into
\[
(s_as_b)^{-1}H^0(X,(a+b)K_X).
\]
Lemma \ref{lem:product-space-dimension} gives
\[
\dim AB\geq \dim A+\dim B-1=P_a+P_b-1.
\]
Therefore \(P_{a+b}\geq P_a+P_b-1\).  A candidate basket whose computed
plurigenera violate this necessary inequality is impossible.
\end{proof}

\begin{lem}\label{lem:trusted-plurigenera-input}
For each of the $252$ remaining baskets, Blache's formula computes the exact values
\[
P_n=1+\frac{n(n-1)}2 K_X^2+\sum_{x\in\Sing X}\delta_n(x)
\qquad (2\leq n\leq 500).
\]
\end{lem}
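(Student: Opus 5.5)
The statement is a computational claim: for each of the $252$ surviving baskets, the plurigenera $P_n$ ($2\le n\le 500$) are given by Blache's formula. The plan is to reduce it to Corollary~\ref{cor:Pn} together with rationality of $X$, and then to make the local correction term explicitly computable from the basket data alone.

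\textbf{Step 1: reduce to the global formula.} For any $X$ realizing one of these baskets, the setting of Construction-Theorem~\ref{consthm:extract-contract} applies with $\mld(X)\ge 5/46$. By \cite[Corollary~6.12]{Liu25}, $X$ is rational, so $\chi(\mathcal O_X)=1$. Since $K_X$ is ample and $X$ is klt, Corollary~\ref{cor:Pn} gives
\[
P_n(X)=1+\tfrac{n(n-1)}{2}K_X^2+\delta_n(X)\qquad (n\ge 2).
\]
Here $K_X^2=9-\sum_i\gamma_{x_i}$ by Theorem~\ref{thm:gamma-invariant-formula}, and this value is determined by the basket.

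\textbf{Step 2: localize the correction term.} On the minimal resolution $f\colon Y\to X$ we have $B_Y=\sum_x B_{Y,x}$, a sum of divisors supported over the individual singular points, and these supports are pairwise disjoint. Hence $\{nB_Y\}$ splits as $\sum_x\{nB_{Y,x}\}$, and
\[
\delta_n(X)=\sum_x\delta_n(x),\qquad \delta_n(x):=\tfrac12\bigl(K_Y+\{nB_{Y,x}\}\bigr)\cdot\{nB_{Y,x}\}.
\]
Each $\delta_n(x)$ depends only on the resolution configuration of $X\ni x$. Indeed, the coefficients of $B_{Y,x}$ are $1-a(E_i,X)$. These are obtained by solving the linear system $(K_Y+B_{Y,x})\cdot E_j=0$, using $K_Y\cdot E_j=e_j-2$ and the intersection matrix of the chain or fork.

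\textbf{Step 3: run the finite check.} For each of the $252$ baskets, compute the coefficients in exact rational arithmetic. Then form the fractional parts for $2\le n\le 500$ and evaluate the quadratic forms. Because $B_{Y,x}$ has denominator dividing $r_x$, $\delta_n(x)$ is periodic in $n$ with period $r_x$, so only $n \bmod r_x$ needs to be computed; this serves as a consistency check.

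The main obstacle is bookkeeping rather than theory. The computation must be done in exact rational arithmetic, and the orientation and fork conventions of the resolution configurations must match the definitions. A useful sanity check is that each output $P_n$ is a non-negative integer; this also agrees with Filter~\ref{fil:blache-weak}, which these baskets already passed.
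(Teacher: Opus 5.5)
Your proposal is correct and takes essentially the paper's route. The paper also treats the lemma as the output of the exact-arithmetic Blache computation: Corollary~\ref{cor:Pn} with $\chi(\mathcal O_X)=1$ from rationality, each $\delta_n(x)$ computed from the local minimal resolution graph, and non-negative integrality of every $P_n$ as the consistency check. You simply make the localization of $\delta_n$ and the determination of $K_X^2$ via the $\gamma$-formula more explicit.
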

\begin{proof}
This is precisely the output of the Blache run that supplied the input to Filter~\ref{fil:blache-weak} and is reused here as trusted input. Each local correction $\delta_n(x)$ is computed from the local minimal resolution graph using exact rational arithmetic, and the resulting $P_n$-table for the $252$ baskets surviving Filters~\ref{fil:bogomolov}--\ref{fil:non-vertex-strong} satisfies $P_n\in\mathbb Z_{\geq 0}$ for every tested $n$, prior to the application of the new product-dimension inequality below.
\end{proof}

\begin{prop}\label{prop:product-dimension-computation}
Applying Lemma~\ref{lem:pluricanonical-product-obstruction} to the $252$ remaining three-singularity baskets for all pairs $a,b\geq 2$ with $a+b\leq 500$ eliminates exactly $251$ baskets and leaves exactly one residual basket.
\end{prop}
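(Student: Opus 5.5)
This is a finite, computer-assisted verification, so the plan is to set up the check so that each step is either a cited result or exact rational arithmetic. For each of the $252$ baskets surviving Filters~\ref{fil:bogomolov}--\ref{fil:non-vertex-strong}, we take the plurigenus table from Lemma~\ref{lem:trusted-plurigenera-input}:
\[
P_n=1+\tfrac{n(n-1)}{2}K_X^2+\sum_x\delta_n(x),\qquad 2\le n\le 500.
\]
Here $K_X^2=9-\sum\gamma_{x_i}$ is computed from Theorem~\ref{thm:gamma-invariant-formula}. Each local term $\delta_n(x)=\tfrac12(K_Y+\{nB_Y\})\cdot\{nB_Y\}$ is evaluated on the local minimal resolution graph. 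The coefficients of $B_Y$ come from solving the linear system $(K_Y+B_Y)\cdot E_i=0$, and $\{nB_Y\}$ is taken coefficientwise. All arithmetic is done with exact fractions, and we assert that every resulting $P_n$ is a non-negative integer; this is exactly the input that passed Filter~\ref{fil:blache-weak}.

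For each basket we then loop over all pairs $2\le a\le b$ with $a+b\le 500$ and $P_a,P_b>0$. We test whether $P_{a+b}\ge P_a+P_b-1$. If the inequality fails for some pair, Lemma~\ref{lem:pluricanonical-product-obstruction} (equivalently Lemma~\ref{lem:pluricanonical-product-obstruction-general}) says the basket cannot occur on any normal projective surface, so it is discarded. We record the first failing pair $(a,b)$ for each discarded basket; this is the certificate listed in Table~\ref{table:251-baskets}, and it can be rechecked by hand from just three values of $P_n$.

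Finally, we confirm two things about the output. First, exactly $251$ baskets are discarded. Second, the remaining basket $\{[2,7,2,2,2],[2,2,5,2,3],[2,2,2,2,2,3,3,2]\}$, with $K_X^2=1/8533$, satisfies the inequality for every pair in range. The second check is only needed to make the statement "exactly one survives" accurate; excluding that basket is left to Theorem~\ref{thm:exclude-last-case}.

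The main obstacle is making the $\delta_n$ computation reliable, not the final comparison. The fractional parts $\{nB_Y\}$ are periodic in $n$ with period the local index $r_x$. We would cross-check the computation in two ways: against Blache's formula at $n=1$, where $P_1=h^0(K_X)=0$ since $X$ is rational, and against the integrality of $P_n$ across all $n$. Both checks catch sign or orientation errors in the Hirzebruch--Jung data. A second concern is that the search bound $a+b\le 500$ is only sufficient for elimination; it does not need to be exhaustive, because each exclusion is witnessed by a single explicit pair.
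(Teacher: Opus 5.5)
Your proposal is the same computer-assisted check as the paper's proof: take the exact Blache plurigenera $\{P_n\}_{2\le n\le 500}$ of the $252$ baskets (the data already used for Filter~\ref{fil:blache-weak}), test $P_{a+b}\ge P_a+P_b-1$ for all admissible pairs with $P_a,P_b>0$, record the first failing pair as the certificate in Table~\ref{table:251-baskets}, and observe that exactly one basket survives. One small correction to your proposed sanity check: at $n=1$ Blache's formula computes $\chi(\mathcal O_X(K_X))=1$, not $P_1$, because $\delta_1=\tfrac12 f^*K_X\cdot B_Y=0$ identically and $h^2(\mathcal O_X(K_X))=1$, so comparing it with $P_1=0$ is not a valid test (Kawamata--Viehweg identifies $\chi$ with $P_n$ only for $n\ge 2$).
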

\begin{proof}
The verification is an exact deterministic check structured as follows. For each of the $252$ baskets surviving Filters~\ref{fil:bogomolov}--\ref{fil:non-vertex-strong} (with their plurigenera $\{P_n\}_{2\leq n\leq 500}$ from Lemma~\ref{lem:trusted-plurigenera-input}), one tests, for every pair $(a,b)$ with $a,b\geq 2$, $a+b\leq 500$, $P_a>0$ and $P_b>0$, whether the inequality
\[
P_{a+b}\geq P_a+P_b-1
\]
holds. A basket is eliminated as soon as a violating pair is found; the first violating pair $(a,b)$ for each eliminated basket is recorded in Table~\ref{table:251-baskets}. Running this check on all $252$ baskets eliminates $251$ of them and leaves exactly one residual basket.
\end{proof}

\section{The AI prompt}\label{app:ai prompt}

For full transparency we record here, in Markdown form, the prompt that was given to the AI chatbot prior to its discovery and proof of Lemma~\ref{lem:pluricanonical-product-obstruction}. A few non-mathematical sections of the original prompt have been redacted; the redaction points are clearly marked in the text, and the mathematical content of the prompt is otherwise reproduced verbatim. The accompanying input data --- the list of the $252$ singularity baskets that survive Filters~\ref{fil:bogomolov}--\ref{fil:non-vertex-strong}, together with their plurigenera $\{P_n\}_{2\leq n\leq 500}$ computed via Blache's formula --- is exactly the data produced by the chain of filters of \S\ref{ssec:filter}.

\bigskip

\begin{lstlisting}[basicstyle=\ttfamily\footnotesize, breaklines=true, frame=single, breakatwhitespace=true, columns=flexible]
# Rank-one klt rational surfaces: find a genuinely new obstruction to kill the remaining 252 three-singularity baskets for `mld > 5/46`

## Main task

Work over `C`.

The current mathematical goal is no longer to generate more candidate baskets.  That has already been done.

The real mathematical goal here is:

> **Construct and rigorously justify a genuinely new obstruction / filter which rules out the remaining 252 three-singularity baskets.**

Equivalently, prove that none of the remaining `252` baskets can occur on a projective klt rational surface `X` with

```text
rho(X) = 1,
K_X ample,
mld(X) > 5/46,
K_X^2 <= 1/6351.
```

This is about **new mathematics**, not more brute-force enumeration.
*[A non-mathematical section of the original prompt has been redacted here.]*

## Important strategic instruction

The previous filters have already used:

1. local ADE classification / graph-first local invariants,
2. the gamma identity and gamma window,
3. the four-point Bogomolov inequality,
4. the determinant-square condition,
5. the endpoint-tail A-type filter,
6. the internal-vertex A-type filter,
7. the finite-range Blache plurigenus filter.

You should assume all of those have already been fully exploited.

So do **not** spend the main effort rediscovering or repackaging those same filters.

The purpose of this round of inquiry is to find **new** necessary conditions, or a genuinely new way to combine existing topology/geometry/arithmetic inputs, in order to rule out the remaining `252` cases.

---

*[A non-mathematical section of the original prompt has been redacted here.]*

---

## Current mathematical situation

At this stage, all surviving baskets have exactly **three singularities**.

So the problem is now:

> Can one prove that none of these `252` three-singularity baskets can occur on a rank-one klt rational surface of general type under the given bounds?

The important point is that this is now a **small explicit finite set**.  Therefore new obstructions may be:

- global,
- local-global,
- lattice-theoretic,
- linking-form-theoretic,
- orbifold-geometric,
- or ad hoc, as long as they are rigorous.

Any correct method is allowed.

---

## References and heuristic inspirations

There are papers on the algebraic Montgomery--Yang problem and related rational homology projective planes that impose extra topological assumptions such as:

- `H_1(X^0, Z) = 0`,
- or `H_1(X, Z) = 0`,
- or exactly four singular points.

Examples include the linking-form condition in the paper

```text
arXiv:2402.04569
```

and related Montgomery--Yang literature.

You may inspect such references and their proofs for inspiration.

However:

1. many of those statements assume `H_1(X^0, Z)=0` or similar extra hypotheses that are **not** assumed here;
2. some are tailored to **four** singularities, whereas our current baskets have **three** singularities.

Therefore:

- you may use these papers as a source of ideas,
- but you may **not** simply import a theorem requiring `H_1(X^0, Z)=0` unless you really prove its adaptation under the current hypotheses,
- and you may **not** use a four-singularity argument unchanged if it does not rigorously adapt to three singularities.

If you can adapt such a criterion rigorously to the present setting, that counts as a new valid filter.
If not, then treat it only as inspiration.

---

## Suggested directions (not mandatory)

You are encouraged to test any of the following directions, provided the final argument is rigorous:

1. **Linking-form / boundary-lattice obstructions**
   - especially if some version survives without the `H_1(X^0, Z)=0` assumption,
   - or if a weaker conclusion still rules out the 3-singularity baskets.

2. **Orbifold / log-geometric inequalities**
   - sharpenings of orbifold BMY or Miyaoka-type inequalities,
   - or numerical inequalities involving the basket and `K_X^2`.

3. **Integral / lattice constraints from the minimal resolution**
   - e.g. constraints involving discriminant forms, embeddings, unimodularity, or index calculations.

4. **Plurigenus and canonical ring constraints beyond the previous Blache plurigenus filter**
   - for example, any stronger periodicity/integrality/nonnegativity argument,
   - or contradictions involving small explicit `P_n`.

5. **Adjunction / discrepancy / contraction arguments on the minimal resolution**
   - if some surviving local combination is globally incompatible with `rho(X)=1` and `K_X` ample.

6. **Case-by-case elimination**
   - since only 252 baskets remain, a finite structured case analysis is acceptable if done rigorously.

---

## Strong warning

Do **not** merely say:
- "the basket looks unlikely,"
- "this resembles Montgomery--Yang,"
- "I conjecture these do not exist."

The argument must contain either:
- a rigorous new filter,
- or a rigorous case elimination,
- or a precise residual list plus proof of elimination outside that list.

---

*[Several non-mathematical sections of the original prompt have been redacted here.]*

---

## End goal

Starting from the already filtered `252` three-singularity baskets, find and justify a **new** obstruction that proves, if possible, that no such basket can occur on a projective klt rational surface with

```text
rho(X)=1,
K_X ample,
mld(X) > 5/46,
K_X^2 <= 1/6351.
```

Do not assume `H_1(X^0, Z)=0` unless you separately justify why that assumption is available.
Do not assume four singularities.
Use any rigorous method you can discover.
\end{lstlisting}

\section{The 251 baskets eliminated by Filter~\ref{fil:ai}}\label{app:251 baskets}

Table~\ref{table:251-baskets} below records the $251$ three-singularity baskets eliminated by Filter~\ref{fil:ai} --- i.e.\ those that survive Filters~\ref{fil:bogomolov}--\ref{fil:non-vertex-strong} but fail the inequality $P_{a+b}\geq P_a+P_b-1$ for some pair $(a,b)$ with $a,b\geq 2$, $a+b\leq 500$, $P_a>0$ and $P_b>0$. Each row records the three resolution configurations, the smallest failing pair $(a,b)$, and the hypothetical $K_X^2$.

\smallskip

\noindent\textit{How to read Table~\ref{table:251-baskets}.} Each singularity germ in a basket is encoded by its resolution configuration, in our standard bracket notation:
\begin{itemize}
\item \emph{Cyclic quotient (A type) singularities} are written as a Hirzebruch--Jung sequence $[e_1,e_2,\ldots,e_n]$ with each $e_i\geq 2$, in the sense of Definition--Theorem~\ref{defthm:HJS-CQR}.
\item \emph{Non-cyclic D type and E type singularities} are written as $[e_0;\,(r_1,q_1);\,(r_2,q_2);\,(r_3,q_3)]$, where $e_0$ is the central fork and the three semicolons separate the three branches; each pair $(r_i,q_i)$ in parentheses denotes a single cyclic quotient singularity $\tfrac{1}{r_i}(1,q_i)$, corresponding to a branch whose underlying HJS contracts to a cyclic quotient of that type. For instance, $[2;(2,1);(2,1);(31,26)]$ stands for the D-II type singularity whose third branch is the HJS $[2,2,2,2,2,6]$ (the unique HJS with $\det[e_1,\ldots,e_n]=31$ and $\det[e_2,\ldots,e_n]=26$; cf.\ Definition--Theorem~\ref{defthm:HJS-CQR}).
\end{itemize}

{\small
\renewcommand{\arraystretch}{1.05}
\begin{longtable}{|c|l|c|c|}
\caption{The 251 three-singularity baskets eliminated by Filter~\ref{fil:ai}. 
Each basket consists of three klt singularity germs (cyclic-quotient of A-type, or non-cyclic of D/E-type), 
with resolution configurations listed in column~2 in our standard bracket notation. 
Column~3 records the smallest pair $(a,b)$ of integers with $a,b\ge 2$, $a+b\le 500$, 
$P_a>0$, $P_b>0$ at which the inequality $P_{a+b}\ge P_a+P_b-1$ fails; 
column~4 records the corresponding hypothetical value of $K_X^2$. Baskets are listed 
in increasing order of $K_X^2$, and ties are broken by the lexicographic order on the underlying 
triples of resolution-configuration strings.}\label{table:251-baskets}\\
\hline
\textbf{No.} & \multicolumn{1}{c|}{\textbf{Singularity types}} & $(a,b)$ & $K_X^2$ \\
\hline\endfirsthead
\multicolumn{4}{c}%
{\tablename\ \thetable\ -- continued from previous page}\\
\hline
\textbf{No.} & \multicolumn{1}{c|}{\textbf{Singularity types}} & $(a,b)$ & $K_X^2$ \\
\hline\endhead
\hline\multicolumn{4}{r}{\textit{(continued on next page)}}\\\endfoot
\hline\endlastfoot
1 & $[6,2,7]$, $[5,2,2,3,5]$, $[2;(2,1);(2,1);(91,85)]$ & $(2,2)$ & $1/58362$ \\\hline
2 & $[2,5,7]$, $[4,3,2,6]$, $[2;(2,1);(2,1);(94,85)]$ & $(2,4)$ & $1/53253$ \\\hline
3 & $[2,5,2,2,3,4]$, $[2,3,2,2,2,2,2,7]$, $[2,2,4,2,2,2,4,2]$ & $(2,3)$ & $1/48503$ \\\hline
4 & $[2,2,2,4,7]$, $[2;(2,1);(2,1);(82,73)]$, $[2,5,2,2,2,2,2,2,4,3]$ & $(2,4)$ & $1/37323$ \\\hline
5 & $[3,2,3,5]$, $[3,4,2,2,3,4]$, $[2;(2,1);(2,1);(14,11)]$ & $(2,3)$ & $1/36465$ \\\hline
6 & $[5,2,6]$, $[2,3,3,2,2,2,2,4]$, $[3,3,2,2,2,2,2,2,2,2,6]$ & $(2,2)$ & $1/35035$ \\\hline
7 & $[4,3,2,6]$, $[2,3,2,2,2,2,5]$, $[3,3,2,2,2,2,2,2,2,2,6]$ & $(2,2)$ & $1/34435$ \\\hline
8 & $[2,5,5]$, $[2,2,3,3,5]$, $[2;(2,1);(2,1);(86,77)]$ & $(2,3)$ & $1/32121$ \\\hline
9 & $[2,4,6,2]$, $[3,3,2,2,5]$, $[2,4,2,2,2,2,2,2,2,2,4,2]$ & $(2,3)$ & $1/28105$ \\\hline
10 & $[2,4,6,2]$, $[3,3,2,2,5]$, $[2;(2,1);(2,1);(46,41)]$ & $(2,3)$ & $1/28105$ \\\hline
11 & $[2,5,2,2,2,7]$, $[2,3,2,2,6]$, $[6,2,2,2,2,2,2,2,2,2,2,2,2,2,6]$ & $(2,2)$ & $1/27115$ \\\hline
12 & $[2,3,6,2]$, $[2,4,4,2]$, $[2,2,2,2,2,3,2,3,2,2]$ & $(3,3)$ & $1/25755$ \\\hline
13 & $[2,3,6,2]$, $[2;(2,1);(2,1);(6,1)]$, $[2,2,2,2,2,3,2,3,2,2]$ & $(3,3)$ & $1/25755$ \\\hline
14 & $[3,4,7]$, $[2,3,2,2,4,3]$, $[2,2,2,2,2,3,2,2,2,2,2,5,2]$ & $(2,4)$ & $1/25123$ \\\hline
15 & $[4,3,2,6]$, $[2,2,3,4,2]$, $[2;(2,1);(2,1);(55,49)]$ & $(2,3)$ & $1/25026$ \\\hline
16 & $[2,2,3,2,2,7]$, $[4,3,2,2,2,2,6]$, $[3,3,2,2,2,2,2,2,2,2,6]$ & $(2,2)$ & $1/23735$ \\\hline
17 & $[3,3,4,3]$, $[2,3,2,3,3]$, $[2;(2,1);(2,1);(43,37)]$ & $(2,4)$ & $1/23226$ \\\hline
18 & $[3,2,3,4,3]$, $[3,4,2,2,4]$, $[2,2,3,2,2,3,2,2]$ & $(2,4)$ & $1/20826$ \\\hline
19 & $[3,2,3,4,3]$, $[3,4,2,2,4]$, $[2;(2,1);(2,1);(9,7)]$ & $(2,4)$ & $1/20826$ \\\hline
20 & $[2,5,2,2,3]$, $[4,3,2,2,2,2,5]$, $[2,2,4,2,2,4,2,2]$ & $(2,3)$ & $3/60473$ \\\hline
21 & $[3,2,3,5]$, $[2,5,2,2,2,5,2]$, $[2,2,2,2,2,2,2,2,4,4]$ & $(2,2)$ & $2/39655$ \\\hline
22 & $[5,3,6]$, $[2,2,3,2,2,2,2,2,7]$, $[4,2,2,2,2,2,2,2,2,2,4]$ & $(2,2)$ & $1/19671$ \\\hline
23 & $[5,3,2,6]$, $[2,2,2,2,2,3,4,2]$, $[2;(2,1);(2,1);(73,65)]$ & $(2,3)$ & $1/19592$ \\\hline
24 & $[4,3,2,2,7]$, $[2,2,5,2]$, $[2,4,2,2,2,2,2,2,2,2,4,2]$ & $(2,3)$ & $1/18055$ \\\hline
25 & $[4,3,2,2,7]$, $[2,2,5,2]$, $[2;(2,1);(2,1);(46,41)]$ & $(2,3)$ & $1/18055$ \\\hline
26 & $[2,3,8]$, $[4,2,2,2,2,2,2,7]$, $[3,2,3,2,2,2,2,2,2,2,2,3,4]$ & $(2,2)$ & $1/17955$ \\\hline
27 & $[6,2,2,2,2,2,2,7]$, $[2,2,2,7,2]$, $[2;(2,1);(2,1);(46,39)]$ & $(2,4)$ & $2/35581$ \\\hline
28 & $[2,5,2,4]$, $[2,2,2,3,2,7]$, $[2,2,4,2,2,2,2,2,3,2,3]$ & $(2,4)$ & $2/34265$ \\\hline
29 & $[2,5,2,2,5,2]$, $[2,3,2,2,2,2,2,7]$, $[2,4,2,2,2,3,2,3]$ & $(2,3)$ & $3/51373$ \\\hline
30 & $[2;(2,1);(2,1);(22,15)]$, $[2,3,2,2,2,2,2,7]$, $[2,4,2,2,2,3,2,3]$ & $(2,3)$ & $3/51373$ \\\hline
31 & $[3,3,4,3]$, $[3,2,3,2,2,2,6]$, $[2;(2,1);(2,1);(22,17)]$ & $(2,4)$ & $1/16985$ \\\hline
32 & $[3,3,4,3]$, $[3,2,3,2,2,2,6]$, $[3,3,2,2,2,2,2,2,2,2,6]$ & $(2,2)$ & $1/16985$ \\\hline
33 & $[2,2,2,4,4]$, $[3,3,2,2,2,4,3]$, $[2;(2,1);(2,1);(43,36)]$ & $(2,4)$ & $1/16716$ \\\hline
34 & $[6,2,2,2,2,6]$, $[2,5,2,2,2,2,7]$, $[3,2,2,2,2,2,2,2,2,2,4,3]$ & $(2,2)$ & $2/33205$ \\\hline
35 & $[2,3,2,2,7]$, $[2,5,2,2,3,4]$, $[2,2,2,2,2,2,2,4,2,3]$ & $(2,2)$ & $1/16261$ \\\hline
36 & $[3,2,3,5]$, $[2;(2,1);(2,1);(5,2)]$, $[3,4,2,2,2,2,2,3,4]$ & $(2,3)$ & $4/64185$ \\\hline
37 & $[2,5,2,2,3,4]$, $[3,2,2,2,6]$, $[2,2,3,2,2,2,4,2,2]$ & $(2,3)$ & $3/47705$ \\\hline
38 & $[3,3,5]$, $[2,2,2,4,2,3]$, $[2;(2,1);(2,1);(73,65)]$ & $(2,6)$ & $1/15688$ \\\hline
39 & $[4,3,2,6]$, $[2,2,4,2,2,4,2,2]$, $[2,4,2,2,2,2,2,3]$ & $(2,3)$ & $1/15617$ \\\hline
40 & $[2,4,4,2]$, $[2,2,3,2,2,5,2]$, $[2,2,2,2,4,2]$ & $(3,3)$ & $1/15255$ \\\hline
41 & $[2;(2,1);(2,1);(6,1)]$, $[2,2,3,2,2,5,2]$, $[2,2,2,2,4,2]$ & $(3,3)$ & $1/15255$ \\\hline
42 & $[4,7]$, $[2,2,3,2,2,5,2]$, $[2,4,2,2,2,2,2,2,2,2,4,2]$ & $(2,3)$ & $1/15255$ \\\hline
43 & $[4,7]$, $[2,2,3,2,2,5,2]$, $[2;(2,1);(2,1);(46,41)]$ & $(2,3)$ & $1/15255$ \\\hline
44 & $[5,2,3,5]$, $[2,2,4,4,2]$, $[2;(2,1);(2,1);(77,68)]$ & $(2,3)$ & $1/14544$ \\\hline
45 & $[2,4,4,2]$, $[3;(2,1);(3,1);(3,2)]$, $[2,3,2,2,3,3,2]$ & $(3,3)$ & $1/13905$ \\\hline
46 & $[2;(2,1);(2,1);(6,1)]$, $[3;(2,1);(3,1);(3,2)]$, $[2,3,2,2,3,3,2]$ & $(3,3)$ & $1/13905$ \\\hline
47 & $[2,3,2,3,6]$, $[2,2,4,4]$, $[2,2,4,2,2,2,2,2,3,2,3]$ & $(2,3)$ & $1/13727$ \\\hline
48 & $[2,3,2,3,6]$, $[2,4,3,2,3]$, $[2;(2,1);(2,1);(64,57)]$ & $(2,3)$ & $1/13727$ \\\hline
49 & $[3,2,8]$, $[2,2,2,5,2,3]$, $[2,4,2,2,2,2,2,2,2,2,4,2]$ & $(2,5)$ & $1/13505$ \\\hline
50 & $[3,2,8]$, $[2,2,2,5,2,3]$, $[2;(2,1);(2,1);(46,41)]$ & $(2,5)$ & $1/13505$ \\\hline
51 & $[2,2,2,7,2]$, $[6,2,2,2,2,2,2,2,7]$, $[2,2,4,2,2,2,2,2,2,5,2]$ & $(2,4)$ & $3/40411$ \\\hline
52 & $[3,4,2,2,5]$, $[3,2,2,3,4]$, $[2;(2,1);(2,1);(29,21)]$ & $(2,2)$ & $1/13224$ \\\hline
53 & $[4,2,2,6]$, $[2,2,2,2,2,6]$, $[2;(2,1);(2,1);(43,35)]$ & $(2,2)$ & $1/13144$ \\\hline
54 & $[6,2,2,2,2,2,6]$, $[2,2,2,3,3]$, $[2,2,4,2,2,3,3,2]$ & $(2,3)$ & $2/26105$ \\\hline
55 & $[2,5,2,2,2,6]$, $[2,3,3,2,4]$, $[2,2,4,2,2,2,2,2,2,5,2]$ & $(2,3)$ & $6/78029$ \\\hline
56 & $[2,3,2,2,7]$, $[3,4,2,2,2,2,6]$, $[3,3,2,2,2,2,2,2,2,2,2,6]$ & $(2,2)$ & $2/25645$ \\\hline
57 & $[3,2,4,3]$, $[3,4,2,2,2,2,6]$, $[2;(2,1);(2,1);(22,17)]$ & $(2,4)$ & $2/25645$ \\\hline
58 & $[3,2,4,3]$, $[3,4,2,2,2,2,6]$, $[3,3,2,2,2,2,2,2,2,2,6]$ & $(2,2)$ & $2/25645$ \\\hline
59 & $[3,4,2,2,5]$, $[2,3,3,2,2,2,6]$, $[2;(2,1);(2,1);(56,47)]$ & $(2,3)$ & $4/50373$ \\\hline
60 & $[3,3,7]$, $[2;(2,1);(2,1);(21,16)]$, $[2,3,2,2,2,2,2,2,2,5,2]$ & $(2,3)$ & $1/12455$ \\\hline
61 & $[2,7,3]$, $[3,2,2,2,2,3,6]$, $[6,2,2,2,2,2,2,2,2,2,2,2,2,2,6]$ & $(2,2)$ & $2/24605$ \\\hline
62 & $[4,3,2,5]$, $[2,5,2,2,2,5,2]$, $[2,2,2,2,2,2,2,3,3]$ & $(2,2)$ & $2/23779$ \\\hline
63 & $[2,5,2,3,3]$, $[2,4,2,2,2,4,2]$, $[2,3,2,2,2,2,2,2,7]$ & $(2,3)$ & $2/23735$ \\\hline
64 & $[2,3,3,2,7]$, $[4,2,3,2,5]$, $[2;(2,1);(2,1);(20,17)]$ & $(2,3)$ & $1/11859$ \\\hline
65 & $[3,3,4,3]$, $[2,4,2,2,2,4,2]$, $[2,4,2,2,2,3,2,3]$ & $(2,4)$ & $6/70705$ \\\hline
66 & $[2,2,4,5]$, $[2;(2,1);(2,1);(21,16)]$, $[3,2,2,3,2,2,3,3]$ & $(2,3)$ & $4/46295$ \\\hline
67 & $[4,5]$, $[4,3,2,2,2,2,2,3,4]$, $[2,4,2,2,2,2,2,2,2,5]$ & $(2,2)$ & $2/23009$ \\\hline
68 & $[2,2,3,3,5]$, $[2,3,3,2,5]$, $[2;(2,1);(2,1);(49,43)]$ & $(2,3)$ & $1/11454$ \\\hline
69 & $[4,3,2,2,2,7]$, $[2,2,2,7,2]$, $[2,4,2,2,2,2,2,2,2,2,4,2]$ & $(2,5)$ & $2/22885$ \\\hline
70 & $[4,3,2,2,2,7]$, $[2,2,2,7,2]$, $[2;(2,1);(2,1);(46,41)]$ & $(2,5)$ & $2/22885$ \\\hline
71 & $[4,3,2,2,2,7]$, $[2,2,5,2]$, $[2,2,2,3,2,2,2,2,3,3]$ & $(2,5)$ & $2/22885$ \\\hline
72 & $[3,4,2,6]$, $[2,2,2,3,3,2,3]$, $[2;(2,1);(2,1);(31,26)]$ & $(2,4)$ & $1/11330$ \\\hline
73 & $[5,3,7]$, $[2,2,2,2,2,2,3,2,5,2]$, $[2;(2,1);(2,1);(49,43)]$ & $(2,3)$ & $1/10974$ \\\hline
74 & $[6,2,2,2,2,6]$, $[2,2,2,3,3]$, $[2,2,4,2,2,2,3,3,2]$ & $(2,3)$ & $3/32545$ \\\hline
75 & $[3,3,2,5]$, $[2,5,2,2,3,4]$, $[2,2,2,2,3,2,2,3,2]$ & $(2,2)$ & $1/10773$ \\\hline
76 & $[2,2,8,2]$, $[2;(2,1);(2,1);(46,37)]$, $[2,2,2,2,2,2,3,2,3,2,2]$ & $(3,3)$ & $1/10701$ \\\hline
77 & $[2,2,8,2]$, $[3,4,2,2,5]$, $[2;(2,1);(2,1);(127,118)]$ & $(2,4)$ & $1/10701$ \\\hline
78 & $[3,2,3,6]$, $[2,2,2,3,2,4,3]$, $[2;(2,1);(2,1);(31,26)]$ & $(2,4)$ & $4/42545$ \\\hline
79 & $[4,3,6]$, $[2,4,2,2,2,2,5]$, $[4,2,2,2,2,2,2,2,2,2,4]$ & $(2,2)$ & $1/10509$ \\\hline
80 & $[6,2,7]$, $[2,2,3,2,2,2,2,3,4]$, $[3,3,2,2,2,2,2,2,2,2,2,6]$ & $(2,2)$ & $2/20945$ \\\hline
81 & $[2,4,4]$, $[2,3,3,2,2,2,7]$, $[2;(2,1);(2,1);(46,39)]$ & $(2,2)$ & $1/10465$ \\\hline
82 & $[4,2,2,6]$, $[2,3,3,2,2,2,5]$, $[3,3,2,2,2,2,2,2,2,2,6]$ & $(2,2)$ & $1/10335$ \\\hline
83 & $[7,2,2,2,2,2,2,2,7]$, $[3,2,2,4,3]$, $[3,2,3,2,2,2,2,3,3]$ & $(2,2)$ & $2/20553$ \\\hline
84 & $[6,2,2,2,2,6]$, $[3,2,2,2,2,6]$, $[2,3,2,2,2,2,2,2,2,5]$ & $(2,2)$ & $1/10165$ \\\hline
85 & $[2,2,2,7,2]$, $[6,2,2,2,2,2,2,2,2,2,7]$, $[2,2,4,2,2,2,2,5,2]$ & $(2,4)$ & $5/50071$ \\\hline
86 & $[2,6,2,5]$, $[3,6,3]$, $[4,3,2,2,2,2,2,2,2,2,2,2,2,2,2,2,2,2,7]$ & $(2,2)$ & $27/265220$ \\\hline
87 & $[6,2,2,2,2,2,6]$, $[2,2,2,2,6]$, $[2,2,2,4,2,2,3,2,2]$ & $(2,3)$ & $1/9815$ \\\hline
88 & $[6,2,2,2,2,6]$, $[2,4,2,2,3]$, $[2,2,2,4,2,2,3,2,2]$ & $(2,3)$ & $1/9815$ \\\hline
89 & $[4,3,2,2,6]$, $[2,3,2,2,2,5]$, $[3,3,2,2,2,2,2,2,2,2,6]$ & $(2,2)$ & $1/9735$ \\\hline
90 & $[6,2,2,7]$, $[2,2,2,6,2,2]$, $[2,2,4,2,2,2,2,2,2,2,3,2,3]$ & $(2,4)$ & $4/38885$ \\\hline
91 & $[6,2,2,7]$, $[2,2,2,6,2,2]$, $[2;(2,1);(2,1);(85,78)]$ & $(2,4)$ & $4/38885$ \\\hline
92 & $[2,6,5]$, $[3,4,2,7]$, $[4,2,2,2,2,2,2,2,2,2,2,2,2,2,2,2,4]$ & $(2,2)$ & $1/9699$ \\\hline
93 & $[2,4,2,3,5]$, $[2;(2,1);(2,1);(11,2)]$, $[2;(2,1);(2,1);(37,29)]$ & $(3,3)$ & $1/9576$ \\\hline
94 & $[2,4,2,3,5]$, $[2;(2,1);(2,1);(13,5)]$, $[2;(2,1);(2,1);(38,29)]$ & $(3,3)$ & $1/9576$ \\\hline
95 & $[2,4,2,3,5]$, $[2;(2,1);(2,1);(13,5)]$, $[2;(2,1);(3,1);(3,2)]$ & $(3,3)$ & $1/9576$ \\\hline
96 & $[2,4,2,3,5]$, $[2;(2,1);(2,1);(20,11)]$, $[2;(2,1);(2,1);(29,21)]$ & $(3,3)$ & $1/9576$ \\\hline
97 & $[2,4,2,3,5]$, $[2;(2,1);(2,1);(29,20)]$, $[2;(2,1);(2,1);(21,13)]$ & $(3,3)$ & $1/9576$ \\\hline
98 & $[2,4,2,3,5]$, $[3;(2,1);(2,1);(5,2)]$, $[2;(2,1);(2,1);(47,38)]$ & $(3,3)$ & $1/9576$ \\\hline
99 & $[2,4,2,3,5]$, $[6;(2,1);(2,1);(2,1)]$, $[2;(2,1);(2,1);(45,37)]$ & $(3,3)$ & $1/9576$ \\\hline
100 & $[3,2,7]$, $[2,4,2,3,5]$, $[2;(2,1);(2,1);(92,83)]$ & $(2,3)$ & $1/9576$ \\\hline
101 & $[5,6]$, $[2,4,2,4]$, $[2;(2,1);(2,1);(69,61)]$ & $(2,2)$ & $1/9512$ \\\hline
102 & $[2,4,2,5]$, $[2;(2,1);(2,1);(21,16)]$, $[2,3,3,2,2,2,2,4]$ & $(2,2)$ & $4/37895$ \\\hline
103 & $[3,4,2,2,2,7]$, $[2,3,3,3]$, $[2;(2,1);(2,1);(28,23)]$ & $(2,5)$ & $1/9265$ \\\hline
104 & $[4,3,2,2,2,2,7]$, $[2,2,5,2]$, $[2,2,2,3,2,2,2,3,3]$ & $(2,5)$ & $3/27715$ \\\hline
105 & $[2,4,2,2,5]$, $[2,4,2,2,4]$, $[2;(2,1);(2,1);(50,41)]$ & $(2,2)$ & $1/9198$ \\\hline
106 & $[5,6]$, $[2,4,2,2,2,2,3,4]$, $[4,2,2,2,2,2,2,2,2,2,4]$ & $(2,2)$ & $2/18183$ \\\hline
107 & $[2,3,2,2,7]$, $[2,4,2,2,2,2,4,3]$, $[2;(2,1);(2,1);(26,21)]$ & $(2,3)$ & $1/8855$ \\\hline
108 & $[3,2,4,3]$, $[2,4,2,2,2,4,2]$, $[2,4,2,2,2,2,4,3]$ & $(2,4)$ & $1/8855$ \\\hline
109 & $[4,2,3,6]$, $[3,2,2,4,3]$, $[4,2,2,2,2,2,2,2,2,2,4]$ & $(2,2)$ & $1/8835$ \\\hline
110 & $[3,4,2,2,7]$, $[2,2,5,2]$, $[2;(2,1);(2,1);(118,109)]$ & $(2,4)$ & $2/17595$ \\\hline
111 & $[2,3,3,6]$, $[3,4,2,2,2,2,2,5]$, $[2;(2,1);(2,1);(56,47)]$ & $(2,3)$ & $4/34821$ \\\hline
112 & $[2,3,3,3]$, $[2,2,2,3,5,2]$, $[4,3,2,2,2,2,2,3,4]$ & $(2,2)$ & $1/8687$ \\\hline
113 & $[2,4,2,4]$, $[2;(2,1);(2,1);(32,23)]$, $[3,2,2,2,2,4,3]$ & $(2,2)$ & $2/17343$ \\\hline
114 & $[2,2,2,7,2]$, $[2,5,2,2,2,5,2]$, $[2,2,2,2,2,3,3,2]$ & $(3,3)$ & $1/8533$ \\\hline
115 & $[2,5,2,2,5,2]$, $[2,2,5,2]$, $[2,2,2,2,2,3,3,2]$ & $(3,3)$ & $1/8533$ \\\hline
116 & $[2;(2,1);(2,1);(22,15)]$, $[2,2,5,2]$, $[2,2,2,2,2,3,3,2]$ & $(3,3)$ & $1/8533$ \\\hline
117 & $[6,2,2,2,2,2,6]$, $[2,4,3]$, $[2,2,2,2,3,2,2,4,2,2]$ & $(2,3)$ & $2/17005$ \\\hline
118 & $[2,5,6]$, $[2;(2,1);(2,1);(37,31)]$, $[2,3,2,2,2,2,2,3,3]$ & $(2,3)$ & $1/8502$ \\\hline
119 & $[5,2,7]$, $[2,2,3,3,4]$, $[2;(2,1);(2,1);(92,83)]$ & $(2,3)$ & $2/16965$ \\\hline
120 & $[6,2,2,2,2,2,6]$, $[2,3,2,2,6]$, $[2,5,2,2,2,2,2,2,2,2,2,2,2,7]$ & $(2,2)$ & $9/75835$ \\\hline
121 & $[6,6]$, $[2,2,2,3,3]$, $[2,2,4,2,2,2,2,2,2,2,3,3,2]$ & $(2,3)$ & $7/58305$ \\\hline
122 & $[2,2,9]$, $[3,2,2,6]$, $[2;(2,1);(2,1);(70,61)]$ & $(2,3)$ & $1/8325$ \\\hline
123 & $[2,2,2,2,3,7]$, $[2,3,3,3,4]$, $[2,3,2,2,2,2,3,2,3]$ & $(2,2)$ & $1/8241$ \\\hline
124 & $[2,2,2,3,6]$, $[2,3,3,3,4]$, $[2,3,2,2,2,2,3,2,3]$ & $(2,2)$ & $1/8241$ \\\hline
125 & $[2,2,3,5]$, $[2,3,3,3,4]$, $[2,3,2,2,2,2,3,2,3]$ & $(2,2)$ & $1/8241$ \\\hline
126 & $[2,2,4,3,4]$, $[2,3,3,3,4]$, $[2,3,2,2,2,2,3,2,3]$ & $(2,2)$ & $1/8241$ \\\hline
127 & $[2,3,4]$, $[2,3,3,3,4]$, $[2,3,2,2,2,2,3,2,3]$ & $(2,2)$ & $1/8241$ \\\hline
128 & $[2,4,3,3]$, $[2,3,3,3,4]$, $[2,3,2,2,2,2,3,2,3]$ & $(2,2)$ & $1/8241$ \\\hline
129 & $[2,5,3,2,3]$, $[2,3,3,3,4]$, $[2,3,2,2,2,2,3,2,3]$ & $(2,2)$ & $1/8241$ \\\hline
130 & $[2]$, $[2,3,3,3,4]$, $[2,3,2,2,2,2,3,2,3]$ & $(2,2)$ & $1/8241$ \\\hline
131 & $[3,3]$, $[2,3,3,3,4]$, $[2,3,2,2,2,2,3,2,3]$ & $(2,2)$ & $1/8241$ \\\hline
132 & $[2,2,2,2,3,2,7]$, $[2,3,2,2,2,2,2,7]$, $[2,4,2,2,2,2,5,2]$ & $(2,3)$ & $1/8159$ \\\hline
133 & $[2,2,2,3,2,6]$, $[2,3,2,2,2,2,2,7]$, $[2,4,2,2,2,2,5,2]$ & $(2,3)$ & $1/8159$ \\\hline
134 & $[2,2,3,2,5]$, $[2,3,2,2,2,2,2,7]$, $[2,4,2,2,2,2,5,2]$ & $(2,3)$ & $1/8159$ \\\hline
135 & $[2,2,4,2,3,4]$, $[2,3,2,2,2,2,2,7]$, $[2,4,2,2,2,2,5,2]$ & $(2,3)$ & $1/8159$ \\\hline
136 & $[2,2]$, $[2,3,2,2,2,2,2,7]$, $[2,4,2,2,2,2,5,2]$ & $(2,3)$ & $1/8159$ \\\hline
137 & $[2,3,2,4]$, $[2,3,2,2,2,2,2,7]$, $[2,4,2,2,2,2,5,2]$ & $(2,3)$ & $1/8159$ \\\hline
138 & $[2,4,2,3,3]$, $[2,3,2,2,2,2,2,7]$, $[2,4,2,2,2,2,5,2]$ & $(2,3)$ & $1/8159$ \\\hline
139 & $[2,5,2,3,2,3]$, $[2,3,2,2,2,2,2,7]$, $[2,4,2,2,2,2,5,2]$ & $(2,3)$ & $1/8159$ \\\hline
140 & $[3,2,3]$, $[2,3,2,2,2,2,2,7]$, $[2,4,2,2,2,2,5,2]$ & $(2,3)$ & $1/8159$ \\\hline
141 & $[3,4,2,2,4]$, $[3,2,3,2,2,4,3]$, $[2,2,3,3,2,2]$ & $(2,4)$ & $5/40762$ \\\hline
142 & $[4,3,2,2,2,2,2,7]$, $[2,2,5,2]$, $[2;(2,1);(2,1);(31,26)]$ & $(2,3)$ & $4/32545$ \\\hline
143 & $[2,4,6,2]$, $[5,2,2,2,6]$, $[2;(2,1);(2,1);(43,38)]$ & $(2,3)$ & $4/32485$ \\\hline
144 & $[2,3,2,3,6]$, $[2,3,3,3]$, $[2;(2,1);(2,1);(100,91)]$ & $(2,3)$ & $1/8109$ \\\hline
145 & $[6,2,2,2,2,6]$, $[2,2,2,5,2,2]$, $[2,3,2,2,2,3,2,3]$ & $(2,3)$ & $3/24295$ \\\hline
146 & $[10]$, $[2,2,2,3,3]$, $[2,2,4,2,2,2,2,2,2,2,2,3,3,2]$ & $(2,3)$ & $8/64745$ \\\hline
147 & $[5,3,7]$, $[2,5,2,2,2,4,3]$, $[2;(2,1);(2,1);(26,23)]$ & $(2,3)$ & $1/8091$ \\\hline
148 & $[2,5,5]$, $[2,3,3,4]$, $[2;(2,1);(2,1);(37,33)]$ & $(2,2)$ & $1/8084$ \\\hline
149 & $[2,2,2,7,2]$, $[3;(2,1);(2,1);(4,1)]$, $[6,2,2,2,2,2,2,2,2,2,2,2,2,7]$ & $(2,4)$ & $8/64561$ \\\hline
150 & $[6,2,2,2,6]$, $[2,3,2,2,6]$, $[2,5,2,2,2,2,2,2,2,2,2,2,2,2,2,7]$ & $(2,2)$ & $11/88015$ \\\hline
151 & $[4,7]$, $[2,2,2,2,4,4]$, $[2;(2,1);(2,1);(38,33)]$ & $(2,2)$ & $1/7965$ \\\hline
152 & $[6,2,7]$, $[2,3,2,2,3,3]$, $[2;(2,1);(2,1);(58,51)]$ & $(2,3)$ & $1/7952$ \\\hline
153 & $[2,5,2,2,3,4]$, $[2,2,4,2,2,5]$, $[2,2,2,2,2,2,2,5]$ & $(2,2)$ & $1/7931$ \\\hline
154 & $[10]$, $[2,3,2,4,2]$, $[2,2,4,2,2,2,2,2,2,2,2,2,3,3,2]$ & $(2,3)$ & $9/71185$ \\\hline
155 & $[2,2,5,2,4]$, $[3,2,2,2,2,3,6]$, $[2,3,2,2,2,3,2]$ & $(2,3)$ & $4/31521$ \\\hline
156 & $[2,2,5,2,4]$, $[3,2,2,2,2,3,6]$, $[2;(2,1);(2,1);(13,10)]$ & $(2,3)$ & $4/31521$ \\\hline
157 & $[4,2,2,7]$, $[2,3,4,2]$, $[2;(2,1);(2,1);(33,29)]$ & $(2,2)$ & $1/7812$ \\\hline
158 & $[6,2,2,2,2,2,2,7]$, $[2,5,2,2,2,3,4]$, $[3,2,2,2,3,2,2,2,4]$ & $(2,2)$ & $1/7735$ \\\hline
159 & $[3,3,4,3]$, $[2,2,2,2,3,7]$, $[2,2,2,3,2,2,2,3,2,3]$ & $(2,4)$ & $2/15247$ \\\hline
160 & $[3,3,4,3]$, $[2,2,2,3,6]$, $[2,2,2,3,2,2,2,3,2,3]$ & $(2,4)$ & $2/15247$ \\\hline
161 & $[3,3,4,3]$, $[2,2,3,5]$, $[2,2,2,3,2,2,2,3,2,3]$ & $(2,4)$ & $2/15247$ \\\hline
162 & $[3,3,4,3]$, $[2,2,4,3,4]$, $[2,2,2,3,2,2,2,3,2,3]$ & $(2,4)$ & $2/15247$ \\\hline
163 & $[3,3,4,3]$, $[2,3,4]$, $[2,2,2,3,2,2,2,3,2,3]$ & $(2,4)$ & $2/15247$ \\\hline
164 & $[3,3,4,3]$, $[2,4,3,3]$, $[2,2,2,3,2,2,2,3,2,3]$ & $(2,4)$ & $2/15247$ \\\hline
165 & $[3,3,4,3]$, $[2,5,3,2,3]$, $[2,2,2,3,2,2,2,3,2,3]$ & $(2,4)$ & $2/15247$ \\\hline
166 & $[3,3,4,3]$, $[2]$, $[2,2,2,3,2,2,2,3,2,3]$ & $(2,4)$ & $2/15247$ \\\hline
167 & $[3,3,4,3]$, $[3,3]$, $[2,2,2,3,2,2,2,3,2,3]$ & $(2,4)$ & $2/15247$ \\\hline
168 & $[2,6,7]$, $[3,3,2,2,3,4]$, $[3,2,3,2,2,2,2,2,2,2,2,2,2,2,2,2,7]$ & $(2,2)$ & $6/45625$ \\\hline
169 & $[2,5,2,2,5,2]$, $[2;(2,1);(2,1);(31,25)]$, $[3,2,2,3,2,2,2,5]$ & $(2,3)$ & $1/7602$ \\\hline
170 & $[2,7,2]$, $[3,2,2,3,2,2,2,5]$, $[2;(2,1);(2,1);(57,50)]$ & $(2,3)$ & $1/7602$ \\\hline
171 & $[2;(2,1);(2,1);(13,7)]$, $[2;(2,1);(2,1);(43,36)]$, $[3,2,2,3,2,2,2,5]$ & $(2,3)$ & $1/7602$ \\\hline
172 & $[2;(2,1);(2,1);(15,8)]$, $[3,2,2,3,2,2,2,5]$, $[2;(2,1);(2,1);(37,31)]$ & $(2,3)$ & $1/7602$ \\\hline
173 & $[2;(2,1);(2,1);(19,13)]$, $[2;(2,1);(2,1);(36,29)]$, $[3,2,2,3,2,2,2,5]$ & $(2,3)$ & $1/7602$ \\\hline
174 & $[2;(2,1);(2,1);(22,15)]$, $[2;(2,1);(2,1);(31,25)]$, $[3,2,2,3,2,2,2,5]$ & $(2,3)$ & $1/7602$ \\\hline
175 & $[2;(2,1);(2,1);(29,22)]$, $[2;(2,1);(2,1);(25,19)]$, $[3,2,2,3,2,2,2,5]$ & $(2,3)$ & $1/7602$ \\\hline
176 & $[2;(2,1);(2,1);(7,1)]$, $[3,2,2,3,2,2,2,5]$, $[2;(2,1);(2,1);(50,43)]$ & $(2,3)$ & $1/7602$ \\\hline
177 & $[2;(2,1);(2,1);(8,1)]$, $[3,2,2,3,2,2,2,5]$, $[2;(2,1);(2,1);(43,37)]$ & $(2,3)$ & $1/7602$ \\\hline
178 & $[3,2,2,6]$, $[2,2,3,3,2]$, $[2,3,3,2,2,2,2,3,4]$ & $(2,2)$ & $49/372331$ \\\hline
179 & $[6,2,2,2,2,6]$, $[2,2,2,2,6,2]$, $[2,2,4,2,2,2,2,2,3]$ & $(2,3)$ & $3/22795$ \\\hline
180 & $[6,6]$, $[2,3,2,2,6]$, $[2,5,2,2,2,2,2,2,2,2,2,2,2,2,2,2,2,2,7]$ & $(2,2)$ & $14/106285$ \\\hline
181 & $[7,2,2,2,2,2,2,2,7]$, $[3,2,3,2,2,4]$, $[2,2,4,2,2,2,2,2,2,6]$ & $(2,2)$ & $1/7565$ \\\hline
182 & $[2,3,6,2]$, $[2,2,2,2,2,7]$, $[2;(2,1);(2,1);(15,11)]$ & $(3,3)$ & $1/7548$ \\\hline
183 & $[4,2,2,6]$, $[2,4,4,3]$, $[2;(2,1);(2,1);(15,13)]$ & $(2,4)$ & $1/7526$ \\\hline
184 & $[3,5,2,4]$, $[2,2,2,2,2,2,8]$, $[2;(2,1);(2,1);(46,39)]$ & $(2,2)$ & $1/7525$ \\\hline
185 & $[2,10]$, $[2,3,2,2,2,3,3]$, $[2,4,2,2,2,2,2,2,2,2,4,2]$ & $(2,3)$ & $1/7505$ \\\hline
186 & $[2,10]$, $[2,3,2,2,2,3,3]$, $[2;(2,1);(2,1);(46,41)]$ & $(2,3)$ & $1/7505$ \\\hline
187 & $[2,2,2,4,6]$, $[3,2,3,3,3]$, $[2;(2,1);(2,1);(36,31)]$ & $(2,4)$ & $2/14985$ \\\hline
188 & $[3,2,2,7]$, $[3,3,2,2,2,5]$, $[2;(2,1);(2,1);(31,24)]$ & $(2,3)$ & $1/7469$ \\\hline
189 & $[2,2,7,2]$, $[2,2,2,7,2]$, $[6,2,2,2,2,2,2,2,2,2,2,2,2,2,2,7]$ & $(2,4)$ & $10/74221$ \\\hline
190 & $[2,2,4,4]$, $[3,3,2,4,3]$, $[2,2,2,3,2,2,2,3,3]$ & $(2,4)$ & $3/22015$ \\\hline
191 & $[4,2,5]$, $[2,4,5,2]$, $[2;(2,1);(2,1);(37,33)]$ & $(2,2)$ & $1/7316$ \\\hline
192 & $[2,4,2,2,6]$, $[2,2,4,2,2,4]$, $[2,3,2,2,2,2,3]$ & $(2,3)$ & $2/14615$ \\\hline
193 & $[6,2,2,2,2,2,6]$, $[2,2,3,2,2,5,2]$, $[2,3,2,2,3,3,2]$ & $(2,3)$ & $8/58195$ \\\hline
194 & $[5,3,6]$, $[2,3,3]$, $[2;(2,1);(2,1);(54,47)]$ & $(2,3)$ & $1/7189$ \\\hline
195 & $[5,3,6]$, $[4,2,2,2,2,2,2,5]$, $[2,2,2,2,2,2,2,2]$ & $(2,2)$ & $1/7189$ \\\hline
196 & $[5,3,6]$, $[4,2,2,2,2,2,2,5]$, $[2,2,2,2,3,2,2,2,2,2,2,2,7]$ & $(2,2)$ & $1/7189$ \\\hline
197 & $[5,3,6]$, $[4,2,2,2,2,2,2,5]$, $[2,2,2,3,2,2,2,2,2,2,2,6]$ & $(2,2)$ & $1/7189$ \\\hline
198 & $[5,3,6]$, $[4,2,2,2,2,2,2,5]$, $[2,2,3,2,2,2,2,2,2,2,5]$ & $(2,2)$ & $1/7189$ \\\hline
199 & $[5,3,6]$, $[4,2,2,2,2,2,2,5]$, $[2,2,4,2,2,2,2,2,2,2,3,4]$ & $(2,2)$ & $1/7189$ \\\hline
200 & $[5,3,6]$, $[4,2,2,2,2,2,2,5]$, $[2,3,2,2,2,2,2,2,2,4]$ & $(2,2)$ & $1/7189$ \\\hline
201 & $[5,3,6]$, $[4,2,2,2,2,2,2,5]$, $[2,4,2,2,2,2,2,2,2,3,3]$ & $(2,2)$ & $1/7189$ \\\hline
202 & $[5,3,6]$, $[4,2,2,2,2,2,2,5]$, $[2,5,2,2,2,2,2,2,2,3,2,3]$ & $(2,2)$ & $1/7189$ \\\hline
203 & $[5,3,6]$, $[4,2,2,2,2,2,2,5]$, $[2;(2,1);(2,1);(6,5)]$ & $(2,2)$ & $1/7189$ \\\hline
204 & $[5,3,6]$, $[4,2,2,2,2,2,2,5]$, $[2;(2,1);(3,2);(5,4)]$ & $(2,2)$ & $1/7189$ \\\hline
205 & $[5,3,6]$, $[4,2,2,2,2,2,2,5]$, $[3,2,2,2,2,2,2,2,3]$ & $(2,2)$ & $1/7189$ \\\hline
206 & $[4,5]$, $[4,3,2,2,2,3,4]$, $[2,4,2,2,2,2,2,2,2,2,2,5]$ & $(2,2)$ & $4/28329$ \\\hline
207 & $[2,6,7]$, $[2,2,4,2,2,3,3]$, $[2;(2,1);(2,1);(67,61)]$ & $(2,4)$ & $1/7050$ \\\hline
208 & $[7,2,2,2,2,2,2,2,7]$, $[2,2,3,4,2,2]$, $[2,2,2,2,3,2,4,3]$ & $(2,4)$ & $2/14091$ \\\hline
209 & $[2,2,5,2]$, $[4,3,2,2,2,2,2,2,2,7]$, $[2,4,2,2,2,4,2]$ & $(2,3)$ & $6/42205$ \\\hline
210 & $[6,2,2,2,2,6]$, $[2,4,3]$, $[2,2,2,2,3,2,2,2,4,2,2]$ & $(2,3)$ & $3/20995$ \\\hline
211 & $[4,2,3,5]$, $[2,4,2,2,2,4,2]$, $[2,2,2,2,3,3,2,3]$ & $(2,2)$ & $1/6955$ \\\hline
212 & $[2,2,5,7]$, $[2,3,2,5]$, $[2;(2,1);(2,1);(95,86)]$ & $(2,3)$ & $1/6930$ \\\hline
213 & $[2,5,2,2,3,3]$, $[2,2,2,3,4,3]$, $[2,5,2,2,2,2,2,3,4]$ & $(2,4)$ & $1/6902$ \\\hline
214 & $[2,4,7]$, $[2;(2,1);(2,1);(73,64)]$, $[2,2,3,2,2,2,2,2,2,4,3]$ & $(2,4)$ & $4/27495$ \\\hline
215 & $[4,2,3,5]$, $[2,2,4,2,2,4,2,2]$, $[2,2,2,2,3,2,2,2,6]$ & $(2,3)$ & $2/13741$ \\\hline
216 & $[3,5,4]$, $[2,2,2,2,2,2,2,3,7]$, $[2,2,4,2,2,2,2,2,2,5,2]$ & $(2,4)$ & $2/13727$ \\\hline
217 & $[2,7,3]$, $[4,2,2,3,4]$, $[2;(2,1);(2,1);(52,43)]$ & $(2,2)$ & $2/13653$ \\\hline
218 & $[2,6,2,2,3]$, $[2,2,3,3,2]$, $[4,3,2,2,2,2,2,3,4]$ & $(2,2)$ & $2/13601$ \\\hline
219 & $[5,2,7]$, $[2,6,2,2,3]$, $[2;(2,1);(2,1);(72,65)]$ & $(2,3)$ & $2/13601$ \\\hline
220 & $[2,4,6,2]$, $[2,4,2,2,2,5]$, $[2,2,2,2,2,2,2,2]$ & $(2,3)$ & $1/6789$ \\\hline
221 & $[2,4,6,2]$, $[2,4,2,2,2,5]$, $[2,2,2,2,3,2,2,2,2,2,2,2,7]$ & $(2,3)$ & $1/6789$ \\\hline
222 & $[2,4,6,2]$, $[2,4,2,2,2,5]$, $[2,2,2,3,2,2,2,2,2,2,2,6]$ & $(2,3)$ & $1/6789$ \\\hline
223 & $[2,4,6,2]$, $[2,4,2,2,2,5]$, $[2,2,3,2,2,2,2,2,2,2,5]$ & $(2,3)$ & $1/6789$ \\\hline
224 & $[2,4,6,2]$, $[2,4,2,2,2,5]$, $[2,2,4,2,2,2,2,2,2,2,3,4]$ & $(2,3)$ & $1/6789$ \\\hline
225 & $[2,4,6,2]$, $[2,4,2,2,2,5]$, $[2,3,2,2,2,2,2,2,2,4]$ & $(2,3)$ & $1/6789$ \\\hline
226 & $[2,4,6,2]$, $[2,4,2,2,2,5]$, $[2,4,2,2,2,2,2,2,2,3,3]$ & $(2,3)$ & $1/6789$ \\\hline
227 & $[2,4,6,2]$, $[2,4,2,2,2,5]$, $[2,5,2,2,2,2,2,2,2,3,2,3]$ & $(2,3)$ & $1/6789$ \\\hline
228 & $[2,4,6,2]$, $[2,4,2,2,2,5]$, $[2;(2,1);(2,1);(6,5)]$ & $(2,3)$ & $1/6789$ \\\hline
229 & $[2,4,6,2]$, $[2,4,2,2,2,5]$, $[2;(2,1);(3,2);(5,4)]$ & $(2,3)$ & $1/6789$ \\\hline
230 & $[2,4,6,2]$, $[2,4,2,2,2,5]$, $[3,2,2,2,2,2,2,2,3]$ & $(2,3)$ & $1/6789$ \\\hline
231 & $[2,4,2,2,6]$, $[2,3,2,3]$, $[2,2,4,2,2,2,2,2,4]$ & $(2,3)$ & $1/6745$ \\\hline
232 & $[3,4,2,2,4]$, $[2,3,2]$, $[3,2,3,2,2,2,2,4,3]$ & $(2,4)$ & $9/60698$ \\\hline
233 & $[2,2,5,2]$, $[2,2,2,4,3]$, $[4,3,2,2,2,2,2,2,2,2,7]$ & $(2,5)$ & $7/47035$ \\\hline
234 & $[5,3,7]$, $[2,5,2,2,2,2,2,2,4,3]$, $[2,2,2,2,3,3,2,2,2,2]$ & $(2,6)$ & $2/13299$ \\\hline
235 & $[5,3,7]$, $[2,5,2,2,2,2,2,2,4,3]$, $[4,2,2,2,2,2,2,2,2,2,4]$ & $(2,2)$ & $2/13299$ \\\hline
236 & $[3,2,3,2,2,6]$, $[2,2,2,3,4,3]$, $[2;(2,1);(2,1);(26,21)]$ & $(2,4)$ & $3/19865$ \\\hline
237 & $[6,2,2,2,2,2,6]$, $[3;(2,1);(3,1);(3,2)]$, $[2,2,2,2,3,2,2,5]$ & $(2,3)$ & $1/6615$ \\\hline
238 & $[2,5,2,2,3,4]$, $[2,2,4,4,2]$, $[2,3,2,2,2,2,2,2,2,2,7]$ & $(2,3)$ & $1/6608$ \\\hline
239 & $[2,3,8]$, $[2,4,2,7]$, $[2;(2,1);(2,1);(88,79)]$ & $(2,3)$ & $2/13167$ \\\hline
240 & $[2,3,2,7]$, $[2,5,2,4]$, $[2,2,4,2,2,2,2,2,2,2,2,2,5,2]$ & $(2,3)$ & $1/6545$ \\\hline
241 & $[2,5,2,4]$, $[2,3,3,2,2,6]$, $[2;(2,1);(2,1);(46,39)]$ & $(2,2)$ & $1/6545$ \\\hline
242 & $[2,5,2,4]$, $[3,2,4]$, $[2;(2,1);(2,1);(46,39)]$ & $(2,2)$ & $1/6545$ \\\hline
243 & $[2,2,2,7,2]$, $[2;(2,1);(2,1);(16,11)]$, $[4,3,2,2,2,2,2,2,2,2,2,7]$ & $(2,5)$ & $8/51865$ \\\hline
244 & $[2,2,5,2]$, $[2,2,4]$, $[4,3,2,2,2,2,2,2,2,2,2,7]$ & $(2,5)$ & $8/51865$ \\\hline
245 & $[2,6,6]$, $[2,4,2,2,2,3,2,3]$, $[2;(2,1);(2,1);(68,59)]$ & $(2,3)$ & $1/6444$ \\\hline
246 & $[3,4,2,2,4]$, $[5,2,2,2,2,5]$, $[3,2,3,2,2,2,2,2,4,3]$ & $(2,2)$ & $11/70666$ \\\hline
247 & $[2,2,5,2]$, $[2,2,2,2,2,6]$, $[2;(2,1);(2,1);(38,29)]$ & $(3,3)$ & $1/6417$ \\\hline
248 & $[11]$, $[2,2,4,2,2,2,2,4]$, $[3,2,3,2,2,2,2,2,2,2,2,2,2,2,2,7]$ & $(2,2)$ & $1/6413$ \\\hline
249 & $[2,3,6]$, $[2,2,3,3,6]$, $[2;(2,1);(2,1);(86,77)]$ & $(2,3)$ & $1/6363$ \\\hline
250 & $[4,2,5]$, $[2,3,5,2]$, $[2,4,2,2,2,2,2,2,2,2,4,2]$ & $(2,2)$ & $1/6355$ \\\hline
251 & $[4,2,5]$, $[2,3,5,2]$, $[2;(2,1);(2,1);(46,41)]$ & $(2,2)$ & $1/6355$ \\\hline
\end{longtable}
\renewcommand{\arraystretch}{1.0}
}

\end{document}